\newtheorem{assumption}{Assumption}
\newtheorem{corollary}{Corollary}
\newtheorem{condition}{Condition}
\newenvironment{proof}[1][Proof]
  {\par\noindent\normalfont{\em \noindent #1.}\par\nopagebreak%
  \begin{mdframed}[
     linewidth=1pt,
     linecolor=black,
     bottomline=false,topline=false,rightline=false,
     innerrightmargin=0pt,innertopmargin=0pt,innerbottommargin=0pt,
     innerleftmargin=1em,
     skipabove=.5\baselineskip,
     skipbelow=.5\baselineskip
   ]}
  {\end{mdframed}}
\newcommand{\ra}{\rightarrow}{}
\def\1{1\!{\rm l}}
\newcommand{\leqa}{\lesssim}
\newcommand{\geqa}{\gtrsim}
\newcommand{\al}{\alpha}
\newcommand{\be}{\beta}
\newcommand{\ep}{\epsilon}
\newcommand{\ga}{\gamma}
\newcommand{\Ga}{\Gamma}
\newcommand{\ka}{\kappa}
\newcommand{\la}{\lambda}
\newcommand{\La}{\Lambda}
\newcommand{\te}{\theta}
\newcommand{\ta}{\tau}
\newcommand{\eps}{\varepsilon}
\newcommand{\veps}{\varepsilon}
\newcommand{\vphi}{\varphi}
\newcommand{\pli}{+\infty}
\newcommand{\EM}{\ensuremath}
\newcommand{\cA}{\EM{\mathcal{A}}}
\newcommand{\cB}{\EM{\mathcal{B}}}
\newcommand{\cC}{\EM{\mathcal{C}}}
\newcommand{\cD}{\EM{\mathcal{D}}}
\newcommand{\cE}{\EM{\mathcal{E}}}
\newcommand{\cF}{\EM{\mathcal{F}}}
\newcommand{\cG}{\EM{\mathcal{G}}}
\newcommand{\cH}{\EM{\mathcal{H}}}
\newcommand{\cJ}{\EM{\mathcal{J}}}
\newcommand{\cL}{\EM{\mathcal{L}}}
\newcommand{\M}{{\mathcal M}}
\newcommand{\cM}{\EM{\mathcal{M}}}
\newcommand{\cN}{\EM{\mathcal{N}}}
\newcommand{\cP}{\EM{\mathcal{P}}}
\newcommand{\cR}{\EM{\mathcal{R}}}
\newcommand{\cS}{\EM{\mathcal{S}}}
\newcommand{\cT}{\EM{\mathcal{T}}}
\newcommand{\cU}{\EM{\mathcal{U}}}
\newcommand{\cV}{\EM{\mathcal{V}}}
\newcommand{\cX}{\EM{\mathcal{X}}}
\newcommand{\psg}{{\langle}}
\newcommand{\psd}{{\rangle}}
\newcommand{\oli}{\overline}
\newcommand{\uli}{\underline}
\newcommand{\qed}{\hfill \square}
\definecolor{pennblue}{rgb}{0.2, 0.2, 0.7}
\definecolor{pennred}{rgb}{0.89, 0.04, 0.36}
\DeclareMathAlphabet{\mathpzc}{OT1}{pzc}{m}{it}
\newcommand{\noi}{\noindent}
\newcommand{\RR}{\mathbb{R}}
\newcommand{\mb}{\mathbb{B}}
\newcommand{\mh}{\mathbb{H}}
\newcommand{\bH}{{\mh}}
\newcommand{\given}{\,|\,}
\definecolor{blendedblue}{rgb}{0.2,0.2,0.7}
\newcommand{\re}[1]{{\color{red}{#1}}}
\newcommand{\sbl}[1]{{\color{blendedblue}{#1}}}
\newcommand{\ma}[1]{{\color{magenta}{#1}}}
\newcommand{\pgr}[1]{{\color{ForestGreen}{#1}}}
\newcommand{\bfi}{\bar{\Phi}}
\newcommand{\R}{\mathds{R}}
\newcommand{\mbf}{\mathbf}
\newcommand{\ind}[1]{\mbf{1}\{ #1 \}}
\newcommand{\rn}{\sqrt{n}}
\newcommand{\nm}{n^{-1/2}}
\newcommand{\ixn}{\mathbb{X}}
\newcommand{\ix}{\mathbb{X}}
\newcommand{\dob}{\mathbb{W}}
\newcommand{\ikl}{I_k^L}
\newcommand{\ti}[1]{\noindent {\sc #1}.}
\newcommand{\ol}{\overline}
\newcommand{\ols}{\ol{s}}
\def\mT{\mathcal{T}}
\newcommand{\vepn}{\veps_n}
\newcommand{\argmax}[1]{\underset{#1}{\, \text{argmax}\ }} 
\newcommand{\argmin}[1]{\underset{#1}{\, \text{argmin}\ }}
\newcommand{\Var}{\operatorname{Var}}
\newcommand{\infoe}{\tilde{I}_{\eta_0}}
\newcommand{\sqe}{\textsf{SqExp}}
\newcommand{\expo}{\textsf{Exp}}
\newcommand{\ba}{\pmb{\alpha}}
\newcommand{\bp}{\mathbf{p}}
\newcommand{\bi}{\begin{itemize}}
\newcommand{\ei}{\end{itemize}}
\newcommand{\bc}{\begin{center}}
\newcommand{\ec}{\end{center}}
\newcommand{\bfr}{\begin{framed}}
\newcommand{\efr}{\end{framed}}
\newcommand{\bpr}{\begin{proof}}
\newcommand{\epr}{\end{proof}}
\newcommand{\vp}{\vspace{.25cm}}
\newcommand{\vm}{\vspace{.5cm}}
\newcommand{\tlk}{\theta_{lk}}
\newcommand{\tolk}{\theta_{0,lk}}
\newcommand{\sil}{\sigma_{l}}
\newcommand{\elk}{\veps_{lk}}
\newcommand{\an}{{\alpha_n}}
\newcommand{\ap}{$\an-$posterior }
\newcommand{\effinf}{\tilde{\psi}_{f_0}}
\DeclareMathOperator{\FDR}{FDR}
\newcommand{\FNR}{\mbox{FNR}}
\DeclareMathOperator{\lc}{L_C}
\DeclarePairedDelimiter{\abs}{\lvert}{\rvert}
\DeclarePairedDelimiter{\braces}{\{}{\}}
\DeclarePairedDelimiter{\norm}{\lVert}{\rVert}
\DeclarePairedDelimiter{\brackets}{(}{)}
\DeclarePairedDelimiter{\floor}{\lfloor}{\rfloor}
\newcommand{\fr}{\mathfrak{R}}
\newcommand{\bb}{\bm{b}}
\numberwithin{figure}{chapter}
\newcounter{defcounter}
\newenvironment{definition}{
\setlength{\topsep}{0pt}
\noindent
\refstepcounter{defcounter}
\bfr
\sbl{Definition \thedefcounter.}
}{\efr\vm }
\newcounter{thmcounter}
\newenvironment{thm}{
\setlength{\topsep}{0pt}
\noindent
\refstepcounter{thmcounter}
\bfr  
\re{Theorem \thethmcounter.}
}{\efr\vm }
\newcounter{propcounter}
\newenvironment{prop}{
\setlength{\topsep}{0pt}
\noindent
\refstepcounter{propcounter}
\bfr
\re{Proposition \thepropcounter.}
}{\efr\vm }
\newcounter{lemcounter}
\newenvironment{lem}{
\setlength{\topsep}{0pt}
\noindent
\refstepcounter{lemcounter}
\bfr
\sbl{Lemma \thelemcounter.}
}{\efr\vm }
\title{
\includegraphics[scale=1.2]{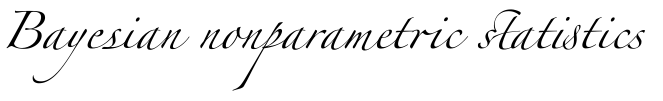}
St-Flour lecture notes
} 
\author{Isma\"el Castillo}
\date{}
\numberwithin{equation}{chapter} 
\begin{document} 
 
\maketitle

\epigraph{\ti{Principe} Si un \'evénement peut être produit par un nombre  $n$ de causes différentes, les probabilités de l'existence de ces causes prises de l’événement sont entre elles comme les probabilités de l’événement prises de ces causes, et la probabilité de l'existence de chacune d'elles est égale à la probabilité de l’événement prise de cette cause, divisée par la somme de toutes les probabilités de l’événement prises de chacune de ces causes.}{\textit{Laplace, Mémoire sur la probabilité de causes par les évènements}} 
 
\newpage 
 
\tableofcontents 
 
\newpage

\section*{Foreword}
\addcontentsline{toc}{chapter}{Foreword}

Bayesian methods are a prominent tool  in statistics, machine learning, and practical applications of statistics. Let us give a few examples from different application fields
\begin{enumerate}
\item medical imaging: often it is not possible to observe directly tissues in the interior of the human body, and many imaging medical devices use indirect measurements. For instance, in PET (Positron Emission Tomography) scans, one observes at the surface of the body the signal resulting of emissions of a radioactive liquid previously absorbed by the patient; in EIT (Electrical Impedance Tomography), electric currents are applied and measured at the level of the skin. In these examples, scanning machines built in the medical industry use Bayesian algorithms; in particular, measures of Bayesian uncertainty quantification are reported. We refer to \cite{stuart10} for a review on the use of Bayesian methods for  statistical {\em inverse problems}, of which PET and EIT are two specific examples.
\item astrostatistics and gravitational wave detection: in 2015--2016, a hundred years after Einstein's prediction of their existence through general relativity, LIGO and Virgo interferometers detected for the first time gravitational waves emitted from the interaction of a pair of black holes. One of the signal processing tools used to reconstruct signals is a Bayesian model, used to extract the waveform from the noisy data; here again Bayesian uncertainty quantification is reported in the form of credible bands on the wave form; see \cite{gravi, gravi14, gravi19}.
\item genomics and microarray data: in the analysis of DNA chips, it is common to have to handle a very high number of tests simultaneously (this is a {\em multiple testing} problem); beyond the famous Benjamini-Hochberg procedure (which can be seen as an empirical Bayes procedure), it is particularly interesting to use Bayesian prior distributions that take into account the existence of a possible {\em structure} in the data: one may think for instance of a Markov dependence structure that takes into account proximity along the DNA strip;  see for instance \cite{efronetal01, sc09, acg22}.
\item clustering in statistics and machine learning: often the statistician needs to classify a number of items such as texts in a number of different classes, taking into account that texts may share topics. Two influential works  \cite{steph00, bnj03} use a hierarchical Bayesian model based on  Dirichlet priors to do inference in such settings, the first in genetics for inference of population structures using multilocus genotype data and the second in the mentioned context of document modelling and text classification. 
\end{enumerate}

{\em Goal and outline.} The purpose of these lectures is to provide a set of tools to understand the behaviour of Bayesian posterior distributions in possibly complex settings (that is those where the parameter is for instance a function, or if there is a large number of unknown parameters), using what is called the {\em frequentist analysis of posterior distributions.} 

In practice many aspects of posterior distributions are used for inference. For instance,  regions with large probability under the posterior (so-called credible regions) are often used as confidence regions; for priors allowing for variable selection, posterior inclusion probabilities (the $\ell$--values in Chapter \ref{chap:mtc}) are routinely used to decide whether a variable should be included in the model or not.  Nevertheless,  mathematical guarantees for doing this are often lacking or limited. Understanding the properties of Bayesian posterior distributions in this context can both serve as a theoretical backup for algorithms used in practice, but also for providing guidelines for prior choices, as certain priors can sometimes be proved to be suboptimal and/or less convenient than others.

We try to cover ideas enabling to deal with the classical statistical trilogy of problems: {\em estimation} (mainly) and also {\em confidence sets} (for $1/\rn$--estimable functionals mostly) and {\em testing} (a bit).

We start from and take as basis the generic results of Ghosal, Ghosh and van der Vaart \cite{ggv00} in Chapter \ref{chap:intro}. We will build from there and discuss a number of examples such as Gaussian processes, tree priors, priors for deep neural networks, and topics such as statistical adaptation to smoothness, sparsity and structure in Chapters \ref{chap:rate1} to \ref{chap:ada2}. 
We then move to study the limiting shape of posterior distributions through results known as Bernstein--von Mises theorems in Chapters \ref{chap:bvm1} and \ref{chap:bvm2}. Bayesian multiple testing questions are considered in Chapter \ref{chap:mtc}. While the previous tools enable to provide theoretical back-up already for a number of practical algorithms, for instance those based on Gaussian processes or on $\ell$--values for testing, it is often the case that one needs to resort to some simulation algorithms that approximate the posterior distribution. While not the main focus of these lectures, we provide some convergence results in Chapter \ref{chap:vb} for Variational Bayes approximations to posteriors, which form a popular alternative to algorithms such as MCMC (Markov Chain Monte Carlo), especially for large models and deep neural networks.\\

{\em Scope.} For simplicity of exposition we work mostly in the setting of well-specified models. In practical applications the degree in which the model can be considered as known or nearly known varies, so depending on the setting, one may have to take into account additional term(s) to account for possible model misspecification. We deal with infinite-dimensional models, so allow for (some) model misspecification in that the precise specification of functions or parameters need not be known, although for convenience we often make assumptions (such as the one of independent and identically distributed data -- i.i.d. or simply iid for short -- in density estimation, or of Gaussian noise in regression). There are ways to make the Bayesian approach more `robust', for instance by replacing the likelihood by some other function; we do not go in this (interesting) direction here, except perhaps when we derive results for tempered posteriors, where the likelihood is raised to some small power.  In a minimal view, the results presented here can be seen as a solution for what can be done if the model is reasonably well known.  An advantage is that we can derive quite sharp results in terms of optimality, related to the fact that we work with the likelihood function. That is, the presented theory can be seen as a benchmark of what can be achieved in case we have already a reasonable amount of knowledge about the model. 

One restriction we make is that we work in the `dominated setting', that is in particular one where there is a Bayes formula (described in the finite case by Laplace in the citation above from \cite{laplace1774}) for the conditional distribution. The general non-dominated case is also very interesting (and relevant among others for the clustering applications mentioned as point 4. above); typical objects arising in that setting are processes with jumps at random locations such as Dirichlet processes or completely random measures, but the theory requires quite different techniques. A unifying theory has not emerged yet and would be of great interest.

Finally, we work mostly in an asymptotic setting to make the statements and arguments more transparent, although often arguments can be made non-asymptotic with fairly explicit constants (some of these arising from the theory may then be quite `large' though, but this is not specific to the Bayesian analysis presented here).\\

{\em On proofs and simplifications.} We have chosen to focus for the main part of these lectures on relatively simple (yet central) models from nonparametric statistics: the Gaussian white noise model, density estimation, sparse sequences and linear regression for high-dimensional models. We treat sometimes slightly more complex settings such as compositional structures; some other relevant settings where the presented theory can be applied include inverse problems, survival analysis, diffusion models, graphical models or matrix estimation problems, to name a few. 
 
We try when possible to present simple arguments that will still be robust to a complexification of the model and setting. Sometimes this requires some slight adaptations: we have at times rewritten arguments from papers  adding one or two assumptions to make the proof simpler. Sometimes we use tempered posteriors to focus on prior mass conditions only, and occasionally we use  boundedness assumptions in high-dimensional models. We have tried to comment on it in the text. \\

{\em Research areas and open problems.} While there is an elegant and general approach via prior mass (plus possibly testing and entropy control) for estimation using Bayesian posteriors in terms of certain losses, presented in Chapters \ref{chap:intro}-\ref{chap:rate1}, there is a lot to understand yet beyond this. We discuss a few other losses particularly relevant for applications, such as the supremum loss, multiple testing and classification losses, although for now in specific settings: a general theory for these is still lacking. In particular, there is a strong potential for testing methods using Bayesian posteriors, but no general theory. 
 There is also a particular need for developing asymptotic normality type results among others in high-dimensional models, which in a sense correspond to a blend of the material presented in Chapters \ref{chap:ada2} and \ref{chap:bvm1}. More generally, uncertainty quantification is a key problem in data science today: there is much to do to understand the theoretical boundaries of what can be done already for any (computable) method, and using a Bayesian posterior distribution as a measure of uncertainty in particular.\\

{\em Acknowledgements.}  These lectures were given at the $51$th \'Ecole d'été de Probabilités de Saint--Flour in 2023. Part of the material is based on joint work with co-authors and it is my great pleasure to thank them all here. I am very grateful for the Scientific Board for having given me this opportunity to lecture at the Summer School. I would like to thank Felix Otto and Ivan Corwin for their inspiring lectures, as well as all the participants,  and especially the local organisers Christophe Bahadoran, Hacène Djellout and Boris Nectoux, for two intense  weeks full of activities including a memorable ascension to the Puy Marie. 
 
I am indebted to Eddie Aamari and Pierre Alquier (both present on spot!), to Kweku Abraham, Sergios Agapiou, Julyan Arbel, Diarra Fall, Matteo Giordano, Guillaume Kon Kam King, Thibault Randrianarisoa, \'Etienne Roquain and Stéphanie van der Pas for comments on the text, as well as to Clarisse Boinay, Lucas Broux, Gabriel Clara,  Mauricio Daros Andrade, Paul Egels, Sascha Gaudlitz, Alessandro Gubbiotti, Mikolaj Kasprzak, Alice L'Huillier,  Felix Otto,  Guillaume Le Mailloux,  Raphaël Maillet, Mathieu Molina, Rémi Peyre, Samis Trevezas and Sumit Vashishtha for their questions and comments during the school.

\chapter{Introduction, rates I} \label{chap:intro}

\section{Statistical models} \label{sec:models}

Let  $\cX^{(n)}$ be a metric space equipped with a $\sigma$-algebra $\cA^{(n)}$, where $n$ is an integer corresponding to the amount of information, for instance the number of available observations.  A  {\em statistical experiment} is a collection of probability measures $\{P_{\eta}^{(n)}\}$ on $\cX^{(n)}$ indexed by a parameter $\eta$ which belongs to some measurable space  $\cH$ to be specified. We use the generic notation $X^{(n)}$ to denote observations from this experiment, which generally (although not always) will mean that $X^{(n)}$ is a draw from $P_{\eta}^{(n)}$. 
 From now on $n\ge 1$ is a given integer, which we may let tend to $\infty$. \\ 

In all following examples, the statistical model is indexed by, either an infinite-dimensional parameter, for example a real-valued function, denoted e.g. by $\eta=f$, over some space, 
or a {\em high-dimensional} parameter, denoted by $\eta=\te$. In the latter case, the model is parametric for each fixed $n$ but the dimension of the parameter increases with $n$. The quantities $f$ or (/and) $\te$ are unknown and the goal of the statistician is to say something about them after having observed data from the model. Many statistical questions can be classified as belonging to one of the following trilogy: {\em estimation}, {\em testing} and {\em confidence sets}. Here we will be mostly interested in estimation and  confidence sets, but will also occasionally mention testing, which sometimes plays an 
important role in proofs. 

\subsubsection{Nonparametric models}

In the next models, the unknown parameter is a single function $\eta=f$, although there could be several functions to estimate, or a combination of a function and a finite-dimensional parameter (which is rather called a semiparametric model, as mentioned below).\\

\ti{Fixed--design nonparametric regression model} One observes, for $f$ (say) a continuous function,
\[ X_i = f(i/n) + \veps_i,\quad 1\le i\le n,\]
where $\veps_i$ are iid $\cN(0,1)$. In this case $\eta=f$ and \[ P_f^{(n)}=\bigotimes_{i=1}^n \cN(f(t_i),1).\] 
This model has independent but not identically distributed observations. \\ 

\ti{Gaussian white noise model}
Let $L^2:=L^2([0,1])$ be the space of square integrable functions on $[0,1]$. For $f \in L^2$, $dW$ standard white noise, consider observing
\begin{equation} \label{mod.gwn}
 dX^{(n)}(t) = f(t)dt + \frac{1}{\rn}dW(t),\quad t\in [0,1].
\end{equation}
Two possible meanings of `observations'   in this context are:  observing the path 
 $X^{(n)}(x)=\int_0^x f(t)dt + n^{-1/2}W(x)$, where $W$ is standard Brownian motion on $[0,1]$; or, given a collection of orthonormal functions $\{\vphi_k,\ k\ge 1\}$ in $L^2$ forming a basis of $L^2$, observing the collection 
\begin{equation}
 X_k = f_k + \frac{1}{\rn}\xi_k, \qquad k\ge 1,
\end{equation} 
where $f_k=\int_0^1 \vphi_k f$ and $\{\xi_k\}_{k\ge 1}$ are independent $\cN(0,1)$ variables. This last version of the model is often also called the (infinite) {\em Gaussian sequence model}. In the first case $\eta=f$ and in the second case $\eta=(f_k)$ the collection of coefficients of $f$ onto the basis.\\

\ti{Density estimation}
On the unit interval, the density model consists of observing  independent identically distributed  data
\begin{equation} \label{mod.dens}
X_1,\ldots,X_n \quad {i.i.d.}\ \sim f,
\end{equation}
with $f$ a density function on the interval $[0,1]$. In this case the common law of the $X_i$s is the distribution $P_f$ of density $f$ with respect to Lebesgue measure on $[0,1]$.  Then $P_f^{(n)}$ is the product measure $\otimes_{i=1}^n P_f$ on $[0,1]^n$. 
For models with i.i.d.~data (and those only) for simplicity in the sequel we denote $P_f^{n}:=P_f^{(n)}$.\\

\ti{Geometric spaces} 
It is of interest to generalise the previous models to the case where data `sits' on a 
geometrical object, say a compact metric space  $\M$. One may think of a torus, a sphere, a manifold, or maybe even a discrete structure such as a tree, a graph etc. 

For instance, the density estimation model on a manifold $\M$ consists in observing 
\begin{equation} \label{gdens}
X_1,\ldots,X_n \quad {i.i.d.}\ \sim f,  
\end{equation}
where $X_i$ are $\M$-valued random variables with positive density function $f$ on $\M$. \\

\subsubsection{High-dimensional models}

High-dimensional models are those where the number of unknown parameters may grow to infinity with the number of data points. In such models, in order for estimation to be possible it is often necessary to assume that only a relatively small number of parameters are truly significant, which is  a {\em sparsity} assumption. In the next two examples, the parameter $\eta=\theta$ is a vector of high dimension; it could also be a matrix as in estimation of low-rank matrices; we refer to e.g. \cite{saravdg-stf} for an overview.\\

\ti{Needles and straw in a haystack}
Suppose that we observe 
\begin{equation} \label{mod.spa}
 X_i = \theta_i + \veps_i,\quad i=1,\ldots,n,
\end{equation}
for independent standard normal random variables $\veps_i$ and
an unknown vector of means $\te=(\te_1,\ldots,\te_n)$.  Suppose $\theta$ is {\em sparse} in that it belongs to the class of {\em nearly black vectors}
\begin{equation} \label{def-nb}
 \ell_0[s_n] = 
\left\{ \theta\in\RR^n:\ \text{Card}\{i: \te_i  \neq 0 \} \le s_n \right\}. 
\end{equation}
Here $s_n$ is a given number, which in theoretical investigations
is typically assumed to be $o(n)$, as $n\ra\infty$. Sparsity may also
mean that many means are small, but possibly not exactly zero. \\

\ti{High-dimensional linear regression}
Consider estimation of a parameter $\te\in\RR^p$ in the linear regression model 
\begin{equation} \label{mod.spahd}
Y = X\te + \veps
\end{equation}
where $X$ is a given, deterministic $(n\times p)$ matrix, and $\ep$ is an $n$-variate standard normal vector. As for the previous model, we are interested in the \emph{sparse} setup, where $n\leq p$, and possibly $n\ll p$, and most of the coefficients $\be_i$ of the parameter vector
are zero, or close to zero.  Model \eqref{mod.spa} is a special case with $X$ the identity matrix of size $n$. Model \eqref{mod.spahd} shares some
features with this special case, but is different in 
that it must take account of the noninvertibility
of $X$ and its interplay with the sparsity assumption, and
does not allow a factorization of the model along the coordinate axes.\\

There are of course further links between all the above models. For instance, the study of the sparse Gaussian sequence model \eqref{mod.spa} is related to the study of certain sparse 
nonparametric classes, namely sparse Besov spaces. 

\subsubsection{Semiparametric models}

Slightly informally and broadly speaking, one may define semiparametric models as those models where the parameter of interest is a (often, but not necessarily) finite-dimensional {\em  aspect} of the parameter $\eta$ of the model, where $\eta$ is typically infinite-dimensional. We give two first examples. \\

\ti{Separated semiparametric models} A model $\{P_\eta^{(n)},\ \eta=(\theta,f),\ \theta\in\Theta,\ f\in \cF\}$, where $\Theta$ is a subset of $\RR^k$ for given $k\ge 1$ and $\cF$ a nonparametric set, is called {\em separated} semiparametric model. The full parameter $\eta$ is a pair $(\theta,f)$, with $\theta$ called parameter {\em of interest} and $f$ {\em nuisance} parameter. Despite the terminology, this does not exclude $f$ to be of interest too.

For instance, 
the following is called shift or {\em translation model}: 
one observes sample paths of the process $X^{(n)}$ such that, for $W$ standard Brownian motion,
\begin{equation} \label{transla}
dX^{(n)}(t) = f(t-\te) dt + \frac{1}{\rn} dW(t), \quad t\in[-1/2,1/2],
\end{equation}
where the unknown function $f$ is {\em symmetric} (that is $f(-x)=f(x)$ for all $x$), smooth and say $1$-periodic, and the unknown parameter of interest $\theta$ is the center of symmetry of the {\em signal} $f(\cdot-\te)$. This model has a 
very specific property: estimation of $\theta$ can be done as efficiently as in the parametric case where $f$ would be known, at least asymptotically. This is called a model {\em without loss of information}. An example of model with {\em loss of information} is the famous Cox proportional hazards model, see Appendix \ref{app:models} for a definition.\\

\ti{Functionals} More generally, given a model $\{P_\eta^{(n)},\ \eta\in\cH\}$, one may be interested in estimating a function $\psi(\eta)$ of the parameter $\eta$, for some function $\psi$  on $\cH$. For instance, in the density model \eqref{mod.dens}, one may consider the estimation of linear functionals of the density $\psi(f)=\int_0^1 a(u)f(u)du$, for $a(\cdot)$ a bounded measurable function on $[0,1]$.\\

Nonparametric models are recurrent in these notes, semiparametric models and functionals will be discussed in Chapters \ref{chap:bvm1}-\ref{chap:bvm2}, while part of Chapter \ref{chap:ada2} and Chapter \ref{chap:mtc} are concerned with high-dimensional models. Sometimes there will be a further unknown `structure' underlying the unknown parameter and that can also be of interest: in high-dimensional model this is often the sparsity pattern (i.e. which coordinates are truly active), or it can be a collection of active variables in compositions, such as in deep neural networks models in Chapter \ref{chap:ada2}.  

\section{The Bayesian paradigm}

To introduce the Bayesian framework and the generic theorems in the first two Chapters, we will use the classical notation $\te$ instead of $\eta$ for the parameter (although $\te$ may be a function). In nonparametric models below we will mostly use $f$, and will go back to $\eta$ for semiparametric models.\\ 

\ti{The Bayesian approach} Given a statistical model $\{P_\te^{(n)},\ \te\in \Theta\}$ and observations $X^{(n)}$,  a Bayesian defines a (new, called `Bayesian') model by attributing a probability distribution to the pair $(X^{(n)},\te)$. To do so, \sbl{first}, one chooses a probability distribution $\Pi$ on $\Theta$, called the \sbl{prior distribution}.  The distribution $P_\te^{(n)}$ is then viewed as the conditional law  $X^{(n)} \given \te$.  
Combining both distributions indeed gives a law on the pair  $(X^{(n)},\te)$. This is often written
\begin{align*}
X^{(n)}\given \te & \sim P_{\te}^{(n)} \\
\te & \sim \Pi.
\end{align*}
The estimator of $\te$, in the Bayesian sense, is then the conditional distribution $\te  \given X^{(n)}$, called {\em a posteriori} distribution or simply {\em posterior}. The posterior is a data-dependent probability {\em measure}
 on $\Theta$ and is denoted $\Pi(\cdot\given X^{(n)})$. One often writes
  \[ \te \given X^{(n)} \sim \Pi(\cdot\given X^{(n)}). \]

Informally, in order to estimate a given parameter $\te$, one first makes it random by choosing a prior distribution on the set of possible parameters.  Next one updates this a priori knowledge by conditioning on the observed data, obtaining the posterior distribution. Of course, many choices of prior are in principle possible, and one can expect this choice to have an important  impact on how the posterior distribution looks like. As the number of observations grows however, one may expect  that the influence of the prior becomes less and less eventually. We shall see through all three next Chapters that in nonparametric and high dimensional models this typically cannot be achieved without special care. \\

Technically speaking, the standard (and broadest) definition of conditional distributions is via desintegration of measures. Here we shall restrict ourselves to a specific setting where the posterior distribution is given by Bayes' formula and hence can be (somewhat) explicitly written.\\

\ti{Dominated framework}  In the sequel, we suppose we are in the following dominated framework: for $\mu^{(n)},\nu$ sigma--finite measures, suppose 
\begin{align*}
dP_\te^{(n)} & = p_\te^{(n)}d\mu^{(n)} \qquad \forall\theta\in\Theta,\\
d\Pi & = \pi d\nu.
\end{align*}
Note that the measure $\mu^{(n)}$ has to dominate all measures $P_\te^{(n)}$, for any possible value of $\theta$. The second line is present for convenience: often we just take $\nu=\Pi$ (and then $\pi=1$),  but sometimes there is a natural measure $\nu$ (for instance Lebesgue measure if one works with continuous priors on $\RR$) to work with.\\

In this setting, the distribution of $(X^{(n)},\theta)$ has density $(x^{(n)},\te)\to p_\te^{(n)}(x^{(n)})\pi(\te)$ with respect to $\mu^{(n)}\otimes\nu$. We will always assume (without mentioning it) that this mapping is measurable for suitable choices of $\sigma$--fields on the space of $X$'s and $\theta$'s, so that the next definition makes sense. 

\begin{definition}
The \sbl{posterior distribution}, denoted $\Pi[\cdot\given X^{(n)}]$, is the conditional distribution $\cL(\te\given X^{(n)})$ of $\te$ given $X^{(n)}$ in the Bayesian setting as above. It is a distribution on $\Theta$, that depends on the data $X^{(n)}$. In the dominated framework as assumed above, it has a density with respect to $\nu$ given by Bayes' formula 
\[ \te \to \frac{p_\te^{(n)}(X^{(n)})\pi(\te)}{\int p_\te^{(n)}(X^{(n)})\pi(\te)d\nu(\te)}.\]
\end{definition}

\sbl{Example: fundamental model.} Consider the model $\cP=\{\cN(\te,1)^{\otimes n},\ \te\in\RR\}$ with observations $X^{(n)}=(X_1,\ldots,X_n)$. Suppose we take a normal prior $\Pi=\cN(0,\sigma^2)$ on $\te$ (with $\sigma^2>0$): as an Exercise it is easy to check that the posterior $\Pi[\cdot\given X^{(n)}]$  is also a Gaussian distribution -- any Gaussian prior leads to a Gaussian posterior, so the class of all such priors is said to be {\em conjugate} -- given by, for $\oli{X}$ the empirical mean $\sum_{i=1}^n X_i/n$,
\begin{equation} \label{mfpost}
  \cL(\te\given X^{(n)})=\cN\left( \frac{n\oli{X}}{n+\sigma^{-2}}, \frac{1}{n+\sigma^{-2}} \right). 
\end{equation}

\vp

The statistical models we have introduced in Section \ref{sec:models} are all dominated: for fixed design regression, density estimation on $[0,1]$, and the high-dimensional models, one can  take the product-Lebesgue measure on $[0,1]^n$ as dominating measure. For the Gaussian white noise and sequence models, one uses the measure induced by the observations when the parameter is zero, see Appendix \ref{app:models}.  \\ 

Compared to standard (point-)estimators such as the maximum likelihood estimator (MLE), the Bayesian posterior distribution is a more complex object, having both a `center' (this can be e.g. the mean, or the median of the posterior) and a measure of `uncertainty' or `spread' (such as the posterior standard deviation or variance if they exist). This can be useful for the purpose of \sbl{uncertainty quantification}.

\begin{definition}
A \sbl{credibility region} of level (at least) $1-\al$, for $\al\in[0,1]$, is a measurable set $A\subset\Theta$ (typically depending on the data $A=A(X)$), such that
\[ \Pi[A\given X] = (\ge) 1-\al.\]
\end{definition}

Natural questions at this point are: how does $\Pi[\cdot\given X^{(n)}]$ behave as $n\to\infty$ and in which sense? Is there convergence? A limiting distribution? Are credibility regions linked in some way to confidence regions? We will attempt to give some answers in these lectures. \\

\noindent {\sc{Why Bayesian estimators?}}
Often, priors have a natural probabilistic interpretation and insights from the construction of stochastic processes in probability theory can be helpful to understand how the prior distribution spreads its mass across  the parameter set. Additional `smoothing' parameters may themselves get a prior, thus leading to  natural constructions of priors via hierarchies. 

Since the posterior is a measure, it can serve various purposes at the same time: for estimation one may use a point estimator deduced from the posterior, for uncertainty quantification one may use credible sets, while testing hypotheses can in principle be done by just comparing their probabilities under the posterior. The fact that one is using the likelihood suggests the posteriors may inherit certain `optimality' properties thereof; this combined with the flexibility of the choice of the prior distribution should make it possible to achieve optimality in many situations. 
Of course proving that the previous steps are legitimate and that certain optimality properties hold is not always an easy task, especially in high dimensional models. 

There are other attractive aspects of the Bayesian approach that we do not discuss here: for instance the fact that there are natural priors corresponding to exchangeable data, as developed among others by the Italian school after de Finetti. 

From the practical perspective, implementation methods of posterior distributions based on e.g. Markov Chain Monte Carlo techniques have been very much developed since the mid-90's, and somewhat more recently Variational Bayes approximations, that we discuss briefly in Chapter \ref{chap:vb} have witnessed an important interest in particular in the machine learning community. This in turn 
leads  to the need of developing theoretical tools to understand convergence properties of the corresponding posterior distributions, and of their approximations.

\section{Nonparametric priors, examples}

Maybe the most natural idea to build a prior on a nonparametric object such as a function is to decompose the object into simple, finite-dimensional, `pieces'. Next put a prior distribution on each piece and finally `combine' the pieces together to form a prior on the whole object. 

If $\eta=f$ is an element of $L^2[0,1]$, one may first decompose $f$ into its coefficients $\{f_k\}$ onto an orthonormal basis $\{\vphi_k\}$ of $L^2[0,1]$, such as the Fourier basis, a wavelet basis etc. Next, draw real-valued independent variables as prior on each coefficient. A natural requirement is to choose the individual laws so that the so-formed function $f$ almost surely belongs to $L^2$. This can be easily accommodated by taking the coordinate variances going to $0$ fast enough. This leads us to set
\begin{equation} \label{gen.prior}
f(\cdot) = \sum_{k=1}^{\infty} \sigma_k A_k \vphi_k(\cdot),
\end{equation} 
where $\{A_k\}$ is a sample from a centered distribution with finite second moment and $\{\sigma_k\}$ is a deterministic sequence in $\ell^2$. This gives ample room for choosing sequences $\{\sigma_k\}$ and the common law of the $\{A_k\}$. And, anticipating slightly, the variety of behaviours of the corresponding posterior distributions in such simple models as white noise \eqref{mod.gwn} is already quite broad. 
\\

\ti{Gaussian process priors} Specialising the previous construction to Gaussian distributions for the law of $A_k$, one obtains particular instances of Gaussian processes taking values in $L^2[0,1]$. 

Another way of building a, say centered, Gaussian process prior $(Z_t)$ on the interval $[0,1]$ is via a covariance kernel $K(s,t)=\mathbb{E}(Z_sZ_t), (s,t)\in [0,1]^2$. The choice $K(s,t)=s\wedge t$ gives Brownian motion, which can be see to have (a version with) paths of H\"older--regularity $1/2-\delta$ for any $\delta>0$.  The choice $K(s,t)=e^{-(s-t)^2}$ corresponds to the so-called squared-exponential Gaussian process, which it turns out induces much smoother paths than those of Brownian motion.

Starting from Brownian motion, one can define a new Gaussian process by integrating it a fractional number $(\al-1/2)$ of times. This leads to the  Riemann-Liouville process of parameter $\al>0$ 
\begin{equation} \label{defrl}
R_t^{\al}=\int_0^t (t-s)^{\al-1/2}dW(s),\quad t\in[0,1],
\end{equation}
where $W$ is standard Brownian motion. One further defines a {\em Riemann-Liouville type process} (RL-type process) as, for $\underline{\al}$ the largest integer smaller than $\al$, 
\begin{equation} \label{defrlt}
 X_t^\al=R_t^{\al}+\sum_{k=0}^{\underline{\al}+1}Z_k t^k, \quad t\in[0,1], 
\end{equation} 
where $Z_0,\ldots,Z_{\underline{\al}+1},R_t$ are independent, $Z_i$ is standard normal and $R_t^{\al}$
is the Riemann-Liouville process of parameter $\al$. If $\al=1/2$ then $R_t^{\al}$ is simply standard Brownian motion and if $\{\al\}=1/2$, with
$\{\al\}\in[0,1)$ the fractional part of $\al$, then $R_t^{\al}$ is a $k$-fold integrated Brownian motion.
The reason for adding the polynomial part to form the RL-type process $X_t^{\al}$ is that the support in $\cC^0[0,1]$ of $X_t^{\al}$ is the whole space $\cC^0[0,1]$, see Chapter \ref{chap:rate1} for more on this.

Yet another, slightly more abstract, way of building a Gaussian prior is by defining it as a Gaussian measure on a separable Banach space $\mb$ (e.g. $L^2[0,1]$, $\cC^0[0,1]$ etc.) with a norm denoted $\|\cdot\|_\mb$ or simply $\|\cdot\|$ if no confusion can arise. 
It can be shown that, in general, this construction coincides with the one starting from a covariance kernel as above. We refer to \cite{rkhs} for a comprehensive review. \\

\ti{Priors on density functions} Now consider the question of building a prior distribution on a density $f$ on the interval $[0,1]$. A difficulty is the presence of two constraints on $f$, that is 
$f\ge 0$ and $\int_0^1 f=1$, which prevents the direct use of a prior such as \eqref{gen.prior}.  
We briefly present some approaches. Although arguably not the first to have been considered historically, a simple possible approach consists in applying a transformation to a given function on $[0,1]$ to make it a density, such as an exponential link function \cite{Leonard78, Lenk88}. Given a, say, continuous function $w$ on $[0,1]$, consider the mapping $w\to p_w$ defined by
\begin{equation} \label{exp-transfo}
 p_w(s) = \frac{ e^{w(s)} }{ \int_0^1 e^{w(u)} du },\quad s\in[0,1]. 
\end{equation} 
Now any prior on continuous functions, such as a random series expansion \eqref{gen.prior} or a Gaussian process prior on $[0,1]$ as before, gives rise to a prior on densities by taking the image measure under the transform \eqref{exp-transfo}.

 A different yet perhaps more `canonical' approach is to build the random density directly via the construction of a random probability measure on $[0,1]$, absolutely continuous with respect to Lebesgue measure. This connects this question to the central topic of construction of random measures. A landmark progress in that area was the construction of the Dirichlet process by Ferguson (1973) \cite{ferguson73}. In terms of density estimation however, samples from the Dirichlet process cannot be used directly since the corresponding random measure is discrete. However, the Dirichlet process turns out to be a particular case of some more general random structures: so called tail-free processes, which where introduced by Freedman (1963) 
 \cite{freedman63} and Fabius (1964) \cite{fabius64}. For well-chosen parameters, the so-obtained random probability measures have a density. This way one obtains as particular case the P\'olya tree processes \cite{msw92}, \cite{lavine92} introduced in Chapter \ref{chap:rate1}. 
  
Other ways to build random densities include random histograms (see  Chapters \ref{chap:rate1}, \ref{chap:ada1} and \ref{chap:bvm2}), random kernel mixtures (Chapter \ref{chap:ada1}) such as Bernstein polynomials \cite{petrone02}, Beta mixtures \cite{r10}, location scale mixtures  \cite{dejongevz10} etc. \\
 
\ti{Priors in semiparametric models} In a separated semiparametric model $\{\cP_{\te,f}\}$, a natural way to build a prior on the pair $(\te,f)$ is simply via a product prior $\pi_\theta\otimes \pi_f$ on each coordinate.

\section{Convergence of the posterior distribution}

Recall that we work with a model $\cP=\{P_\te^{(n)},\ \te\in\Theta\}$, where $\theta$ is typically a nonparametric quantity (a function $\te=f$, a pair $\te=(\ta,f)$,...). We equip the set $\Theta$ of possible parameters with a (semi-)metric $d$. Example of metrics include those induced by metrics on probability measures e.g. $d(\te,\te'):=d(P_{\te}^{(n)},P_{\te'}^{(n)})$ (see Appendix \ref{app-dist}). There are many other distances of interest. For instance, if $\te=f$ a function, one may think of $L^p, p\ge 1$, metrics $d(f,g)=\|f-g\|_p$. The Bayesian approach sets
\begin{align}
\te & \sim \Pi,  \label{prior}\\
X^{(n)} \given \te & \sim P_\te^{(n)},\label{posterior}
\end{align} 
and Bayes' formula (under the domination and measurability assumptions) explicitly gives the mass of any  $B\in\cB$ under the posterior distribution
\begin{equation}\label{bayesfor}
\Pi[B\given X^{(n)}]=\frac{\int_B p_\te^{(n)}(X^{(n)})d\Pi(\te)}{\int p_\te^{(n)}(X^{(n)})d\Pi(\te)}.
\end{equation}
Note that if $\Pi[B]=0$ for a set $B$, then $\Pi[B\given X^{(n)}]=0$. \\

There are several ways in which one can use the Bayesian modelling \eqref{prior}--\eqref{posterior}. The first obvious way is to assume that the  distribution of our actually observed data $X^{(n)}$ is the marginal distribution of  $X^{(n)}$ arising from \eqref{prior}--\eqref{posterior}. Under this setting one can investigate optimality properties with respect to so--called Bayesian risks $E \ell(\te,T(X^{(n)}))$ for a loss function $\ell$, estimators $T(X^{(n)})$ and $E$ the expectation under the previous distributions.  However everything then depends on the actual choice of prior $\Pi$ and two statisticians with two different priors (even if they are `close') may get completely different answers. Another, more `objective' way is to assume that there is a `true' value $\te_0$ of the parameter and that the actually observed data follows $X^{(n)}\sim P_{\te_0}$, and study \eqref{bayesfor} in probability under 
this assumption. This is called frequentist analysis of posterior distributions and is the framework we consider in these notes. This is not to say that we will not use the Bayesian model in itself, it can in fact be very helpful to suggest optimal estimators or procedures (see Chapter \ref{chap:mtc} for an example). \\

\ti{Notation} For simplicity we drop the dependence in $n$ in the notation and write $X=X^{(n)}, p_\te=p_\te^{(n)}$. One should keep in mind that typically all quantities below depend on `$n$'. We denote by $E_{\te}$ the expectation under the law $P_{\te}=P_{\te}^{(n)}$ and $\text{Var}_{\te}$ the variance under $P_{\te}$. \\

\ti{Frequentist analysis of posteriors.} In what follows we study the behaviour of $\Pi[\cdot\given X]$ in probability under $P_{\te_0}$. By dividing by $p_{\te_0}(X)$, which is independent of $\te$, one may rewrite \eqref{bayesfor} as
\[ \Pi[B\given X] = \frac{\int_B \frac{p_\te}{p_{\te_0}}(X) d\Pi(\te)}{\int \frac{p_\te}{p_{\te_0}}(X) d\Pi(\te)}. \]
In order for the ratio in the last display to be well--defined under $P_{\te_0}$, it will be silently assumed that $P_{\te_0}[\int  p_\te(X) d\Pi(\te)>0]=1$, which will always be the case for the priors we shall consider.

\begin{definition}\sbl{[Contraction rate]} We say that a sequence $\veps_n$  (often  tending to $0$ as $n\to\infty$) is a contraction rate around $\te_0$ for $\Pi[\cdot\given X]$,   for $d$ a suitable metric over $\Theta$, if as $n\to\infty$,
\[ E_{\te_0}\Pi[ d(\te,\te_0)>\veps_n \given X] =o(1). \]
\end{definition}
What will be our target rate $\veps_n$? This will depend on $\te_0$, $\Theta$ and $d$. Often, we shall assume that $\te_0$ belongs to some regularity set $S_\be(L)$ (say a Sobolev ball of order $\be$ and radius $L$) and we will try to take $\veps_n$ to be of the order (or as close as possible to) of the minimax rate
\[ \bar{\veps}_n = \inf_T \sup_{\te \in S_\be(L)} E_\te d(T,\te), \]
where the infimum is taken over all possible estimators $T=T(X)$ of $\te$. For standard regularity classes and distances, $\bar{\veps}_n$ will often be of the order  $C(\be,L)n^{-\be/(2\be+1)}$, possibly up to logarithmic factors. \\

For probability measures $P,Q$ and $P\ll Q$ (otherwise set the quantities below to  $+\infty$), let us set
\begin{align*}
K(P,Q)  = \int \log\frac{dP}{dQ} dP,\qquad
V(P,Q) & = \int \left(\log\frac{dP}{dQ}-K(P,Q)\right)^2 dP,
\end{align*}
respectively the Kullback--Leibler (KL) divergence and its ``variance". 

\begin{definition}\sbl{[KL--type neighborhood]}$\,$
For any $\veps>0$, we define  
\begin{equation} \label{bkln}
B_K(\te_0,\veps) =
\left\{ \te:\ K(P_{\te_0},P_\te)\le n\veps^2,\ V(P_{\te_0},P_\te)\le n\veps^2 \right\}.
 \end{equation}
\end{definition}
This neighborhood of $\te_0$ plays an important role. To illustrate this, we start by a key lemma that demonstrates how to use $B_K(\te_0,\veps)$ to bound the denominator in Bayes formula from below. 
\begin{lem} \label{lem:tr1}
For any probability distribution $\Pi$ on $\Theta$, for any $C,\veps>0$, with $P_{\te_0}$--probability at least $1-1/(C^2n\veps^2)$,
\begin{equation} 
\int   \frac{p_\te}{p_{\te_0}}(X) d\Pi(\te) \ge \Pi[ B_K(\te_0,\veps) ]
e^{-(1+C)n\veps^2}.
\end{equation}
\end{lem}
\begin{proof}[Proof of Lemma \ref{lem:tr1}] 
Let $B:=B_K(\te_0,\veps)$ and suppose $\Pi(B)>0$, otherwise the result is immediate. Let us denote $\oli\Pi(\cdot)=\Pi(\cdot\cap B)/\Pi(B)$. Next let us bound from below
\[  \int  \frac{p_\te}{p_{\te_0}}(X) d\Pi(\te) \ge 
\int_B  \frac{p_\te}{p_{\te_0}}(X) d\Pi(\te) = \Pi(B) \int  \frac{p_\te}{p_{\te_0}}(X) d\oli\Pi(\te). \]
As  $\oli\Pi(\cdot)$ is a probability measure on $B$, Jensen's inequality applied to the logarithm gives
\begin{align*}
\log \int  \frac{p_\te}{p_{\te_0}}(X) d\oli\Pi(\te)
& \ge \int_B \log  \frac{p_\te}{p_{\te_0}}(X) d\oli\Pi(\te) \\
& \ \ = - \int_B \left[\log  \frac{p_\te}{p_{\te_0}}(X) -K(P_{\te_0},P_\te)\right] d\oli\Pi(\te)
- \int_B K(P_{\te_0},P_\te)d\oli\Pi(\te)\\
& \ \ \ \ \ge - Z - n\veps^2,
\end{align*}
where we have set $Z:= \int_B \left[\log  \frac{p_\te}{p_{\te_0}}(X) -K(P_{\te_0},P_\te)\right]d\oli\Pi(\te)$ and used that $K(P_{\te_0},P_\te)\le n\veps^2$ on the set $B$ by definition. Define the event $\cB_n := \{ \left| Z \right| \le Cn\veps^2 \}$. 
 By Tchebychev's inequality
\[ P_{\te_0}\left[ \cB_n^c \right] =
P_{\te_0}\left[\left|  Z \right| > Cn\veps^2 \right]  \le \frac{1}{(Cn\veps^2)^2} \text{Var}_{\te_0} Z.\]
Use Cauchy-Schwarz' inequality,  
Fubini's theorem and the fact that $V(f_0,f)\le \veps^2$ on $B$ to deduce
\begin{align*}
\text{Var}_{\te_0}Z & = E_{\te_0}\left[ \left(  
\int_B \left[\log \frac{p_{\te_0}}{p_{\te}}(X) -K(P_{\te_0},P_\te)\right] d\oli\Pi(\te)
\right)^2  \right] \\
&\le E_{\te_0}   
\int_B \left[\log \frac{p_{\te_0}}{p_\te}(X) -K(P_{\te_0},P_\te)\right]^2 d\oli\Pi(\te) 
 = \int_B V(P_{\te_0},P_\te) d\oli\Pi(\te) \le n\veps^2 \oli\Pi(B)=n \veps^2.
\end{align*}
Deduce $P_{\te_0}\left[\cB_n^c\right]\le 1/(C^2 n\veps^2)$. The previous bounds imply that on $\cB_n$,
\[ \log \int   \frac{p_{\te_0}}{p_{\te}}(X) d\oli\Pi(\te) \ge -(C+1)n\veps^2, \]
from which the claimed result follows by taking exponentials and renormalising by $\Pi(B)$.
\end{proof}

Lemma \ref{lem:tr1} is key for proving the next result, which gives a more refined version of the statement $\Pi[B]=0$ implies $\Pi[B\given X]=0$, with $0$ replaced by some suitable $o(1)$. The message is that if the prior distribution puts very little prior mass on some (sequence of) set(s), then the posterior distributions puts little mass over such set(s). \\
\begin{lem} \label{lem:priorm}
Let $A_n$ be a measurable set such that, if $\veps_n$ verifies $n\veps_n^2\to\infty$,  as $n\to\infty$
\begin{equation}  \label{tr:ratio}
\frac{\Pi[A_n]}{e^{-2n\veps_n^2} \Pi[B_K(\te_0,\veps_n)]} = o(1).
\end{equation}
Then we have, as $n\to\infty$,
\[ E_{\te_0}\Pi[A_n\given X] = o(1). \]
\end{lem}
\begin{proof}[Proof of Lemma \ref{lem:priorm}] 
As a preliminary remark, note that, since $p_\te$ is by definition a density,
\[ E_{\te_0} \left[\frac{p_\te}{p_{\te_0}}(X)\right]= \int_{p_{\te_0}>0}  \frac{p_\te}{p_{\te_0}}(x) 
p_{\te_0}(x) d\mu(x) +0\le \int p_\te(x) d\mu(x) =1. \]
Bayes' formula as in \eqref{bayesfor} for the set $A_n$,  is $\Pi[A_n\given X]=N/D$ with $D= \int  \frac{p_\te}{p_{\te_0}}(X) d\Pi(\te)$. Lemma \ref{lem:tr1} implies, on an event $E_n$ with probability at least $1-(Cn\veps_n^2)^{-1}$, 
\[ D \ge \Pi[ B_K(\te_0,\veps_n) ] e^{-(1+C)n\veps_n^2}.\]
Let us now bound $N/D$ from above by
\[ \frac{N}{D} \le \frac{ e^{-(1+C)n\veps_n^2}}{\Pi[ B_{K}(\te_0,\veps_n)] }
\int_{A_n} \frac{p_\te}{p_{\te_0}}(X)  d\Pi(\te) \1_{E_n} + \1_{E_n^c}, \]
where the bound for the last term is obtained noting that $N/D=\Pi[A_n\given X]\le 1$.
 Taking expectations (first note $\1_{E_n}\le 1$), and invoking first Fubini's theorem and then the preliminary remark, 
 \begin{align*}
 E_{\te_0} \frac{N}{D} & \le \frac{ e^{-(1+C)n\veps_n^2}}{\Pi[ B_{K}(\te_0,\veps_n)] }
\int_{A_n}E_{\te_0}\frac{p_\te}{p_{\te_0}}(X) 
 d\Pi(\te)  + E_{\te_0}\1_{E_n^c} \\
 & \le \frac{ e^{-(1+C)n\veps_n^2}}{\Pi[ B_{K}(\te_0,\veps_n)] }\Pi[A_n]
 + P_{\te_0}\left(E_n^c\right).
 \end{align*}
 The last display goes to $0$ by invoking Lemma \ref{lem:tr1} with $C=1$ and $\veps=\veps_n$, \eqref{tr:ratio} and  $n\veps_n^2\to\infty$. 
\end{proof}

\section{A generic result, first version} \label{sec:ggv1}

Let us start with a brief historical perspective. Doob (1949) \cite{doob} showed that posteriors are (nearly) always consistent in a $\Pi$--almost sure sense, which is interesting but prior--dependent. Schwartz (1965) \cite{Schwartz} proved consistency in the sense of the definition above under some sufficient conditions of existence of certain tests and of enough prior mass around the true $f_0$. Diaconis and Freedman (1986) \cite{df86} exhibited an example of a seemingly natural prior whose posterior distribution is not consistent. Ghosal, Ghosh and van der Vaart (2000) \cite{ggv00}, Shen and Wasserman (2001) \cite{sw01} and Ghosal and van der Vaart (2007) \cite{gvni} gave sufficient conditions for rates of convergence. These references are mostly concerned with nonparametric problems, which along with more precise results on $\sqrt{n}$--functionals will be the main focus of these lectures. For this first Chapter we follow mostly \cite{ggv00, gvni} (also presented in the book \cite{gvbook}). 

We note that although with a somewhat different focus, the theory of PAC--Bayes bounds is another relevant theory in this context that has developed since the end of the 90's: roughly speaking, it is more turned towards machine learning applications (in particular classification and regression) for which one does not necessarily wish to assume much on the statistical model -- and 
where therefore one expects somewhat less precise results, in particular in terms or rates or optimal constants --. Early contributions include works by McAllester \cite{mcallester98} and Catoni \cite{catoni01, catoni07}. We refer to the survey paper \cite{alquier21} for an overview of its applications.   \\

A {\em test} based on observations $X$ is a measurable function $X\to\vphi(X)$ taking values in $\{0,1\}$. \\

Given the statistical model $\cP=\{P_\te,\ \te\in\Theta\}$,  let $\Pi$ be a prior distribution on $(\Theta,\cB)$. Suppose also that $\Theta$ is equipped with a distance $d$ (examples will be given below).
 We denote by $\Theta_n^c=\Theta\setminus\Theta_n$ the complement of $\Theta_n\subset\Theta$.
 The next result, based on \cite{ggv00, gvni}, is referred to as GGV theorem below.
 
\begin{thm}[\re{GGV, version with tests}] \label{thm:ggvt}
Let $(\veps_n)$ be a sequence with $n\veps_n^2\to\infty$ as $n\to\infty$. \\
Suppose there exist $C>0$ and measurable sets $\Theta_n\subset \Theta$ such that
\begin{enumerate}
\item[i)] there exist tests $\psi_n=\psi_n(X)$ with
\[ E_{\te_0}\psi_n =o(1),\qquad \sup_{\te\in\Theta_n:\ d(\te,\te_0) >M\veps_n}
E_\te(1-\psi_n)\le e^{-(C+4)n\veps_n^2},
\]
\item[ii)] \[ \Pi\left[\Theta_n^c\right] \le e^{-n\veps_n^2(C+4)},\]
\item[iii)] \[ \Pi[B_K(\te_0,\veps_n)]\ge e^{-Cn\veps_n^2}. \]
\end{enumerate}
Then for large enough $M$, the posterior distribution converges at rate $M\veps_n$ towards $f_0$:  as $n\to\infty$,
\[ E_{\te_0}\Pi[\{\te:\ d(\te,\te_0) > M\veps_n\}\given X ] =o(1).\]
\end{thm}

Let us briefly comment on the conditions. Assumption iii) is natural: there should be enough prior mass around the true $\te_0$. Indeed, recall by Lemma \ref{lem:priorm} above that if the prior mass of a set is too small, its posterior mass will be too: having a too small prior probability of the KL--neighborhood would mean its posterior mass is vanishing, so there could not be convergence at rate $\veps_n$, at least in terms of the `divergence' defined by the KL--type  neighborhood.

 Assumption ii) allows to work on a subset $\Theta_n$, so it gives some flexibility, especially if $\Theta$ is a `large' set: indeed, combining ii) with iii) 
\[ \frac{\Pi[\Theta_n^c]}{\Pi[B_{K}(\te_0,\veps_n)]} \le e^{-4n\veps_n^2}, \]
which leads to $E_{\te_0}\Pi[\Theta_n^c\given X]=o(1)$ using Lemma \ref{lem:priorm}. 

Assumption i) is so far a little more mysterious. It can be seen more as a `meta--condition', that makes the proof of the result quite quick. We will see below another version of the result, where i) is replaced by another, more interpretable, condition. 
Let us just note that the distance $d$ in i) is the same as in the result: one needs to find tests with respect to this distance.\\

{\em Important point about uniformity.} In order to be able to compare to usual optimality results in the minimax sense, it is important to verify the above not only for a single $\te_0$, but rather for all $\theta_0$ in a certain set. For instance, by verifying that the conditions of Theorem \ref{thm:ggvt} hold uniformly over $\te_0\in S(\be,L)$, for some set $S(\be,L)$ (e.g. a Sobolev ball), one gets
\[ \sup_{\te_0\in S(\be,L)} E_{\te_0} \Pi[\{\te:\ d(\te,\te_0)\ge M\veps_n\}\given X ] = o(1). \]
In order not to surcharge notation, we sometimes omit the supremum in stating the results, but it can be verified that they hold uniformly over the relevant classes depending on the context.\\

\begin{proof}[Proof of Theorem \ref{thm:ggvt}]
Since $E_{\te_0}\Pi[\Theta_n^c\given X]=o(1)$ as noted above, is is enough to 
prove that $E_{\te_0}\Pi[\cC_n\given X]=o(1)$, where 
\[ \cC_n = \{\te\in\Theta_n,\ d(\te,\te_0)\ge M\veps_n\}. \]
Using the tests $\psi_n$ from Assumption i), one decomposes
\[ \Pi[\cC_n\given X] = \Pi[\cC_n\given X] \psi_n + \Pi[\cC_n\given X](1-\psi_n). \]
With $\Pi[\cC_n\given X]\le 1$, one gets $E_{\te_0} \Pi[\cC_n\given X] \psi_n
\le E_{\te_0}\psi_n=o(1)$ thanks to i). For the second term, we write, recalling $\psi_n=\psi_n(X_1,\ldots,X_n)=\psi_n(X)$ is a function of the data,
\[ \Pi[\cC_n\given X](1-\psi_n) = \frac{\int_{\cC_n}   \frac{p_\te}{p_{\te_0}}(X) (1-\psi_n(X) d\Pi(\te)}{\int \frac{p_\te}{p_{\te_0}}(X) d\Pi(\te)}=:\frac{N}{D}.\]
In order to bound the denominator from below, let us introduce the event
\[ \cB_n=\left\{ \int   \frac{p_\te}{p_{\te_0}}(X) d\Pi(\te) \ge \Pi[ B_{K}(\te_0,\veps_n) ]
e^{-2n\veps_n^2} \right\}.\]
Lemma \ref{lem:tr1} tells us that $P_{\te_0}[\cB_n]\ge 1-(n\veps_n^2)=1-o(1)$ using $n\veps_n^2\to\infty$. Deduce
\[ \frac{N}{D} \le \frac{e^{2n\veps_n^2}}{\Pi[B_K(\te_0,\veps_n) ]}
\int_{C_n} \frac{p_\te}{p_{\te_0}}(X) (1-\psi_n(X)) d\Pi(\te) +\1_{\cB_n^c}.
\]
Observe, arguing as in the proof of Lemma \ref{lem:priorm}, 
\begin{align*} E_{\te_0}\left[ \frac{p_\te}{p_{\te_0}}(X) (1-\psi_n)(X) \right] 
& =\int_{p_{\te_0}>0} \frac{p_\te}{p_{\te_0}}(x) (1-\psi_n(x)) 
 p_{\te_0}(x) d\mu(x) \\
& \le \int  (1-\psi_n(x)) p_\te(x) d\mu(x) 
= E_\te [1-\psi_n(X)].
\end{align*}
By taking expectations and using Fubini's theorem,
\begin{align*}
 E_{\te_0}\frac{N}{D} & \le \frac{e^{2n\veps_n^2}}{\Pi[B_{K}(\te_0,\veps_n) ]}
\int_{C_n}  E_{\te_0}\left[\frac{p_\te}{p_{\te_0}}(X) (1-\psi_n(X) \right] d\Pi(\te) + P_{\te_0}[\cB_n^c] \\
& \le e^{(C+2)n\veps_n^2}
\int_{C_n}  E_{\te}\left[(1-\psi_n(X) \right] d\Pi(\te) + P_{\te_0}[\cB_n^c]\\
& \le e^{(C+2)n\veps_n^2} e^{-(C+4)n\veps_n^2} + P_{\te_0}[\cB_n^c] \le 
e^{-2n\veps_n^2} + o(1)=o(1). \qquad \qed
\end{align*}
\end{proof}

\section{Testing and entropy, a second generic result}

In Theorem \ref{thm:ggvt}, the testing condition i) requires to be able to test a `point' $\te_0$ versus the `complement of a ball' $\{\te\in\Theta_n,\ d(\te,\te_0)>M\veps_n\}$. The latter set has not a very simple structure (one would prefer a ball for instance instead of a complement!). Let us see how one can simplify this through combining tests of `point' versus `ball'.

\bfr
\sbl{Testing condition (T).} \label{testt} Suppose one can find constants $K>0$ and $a\in(0,1)$ such that  for any $\veps>0$, if $\te_0,\te_1\in\Theta$ are such that $d(\te_0,\te_1)>\veps$, then there exist tests $\vphi_n$ with
\begin{align}
E_{\te_0}\vphi_n & \le e^{-Kn\veps^2} \\
\sup_{\te:\ d(\te,\te_1)<a\veps} E_\te(1-\vphi_n) & \le  e^{-Kn\veps^2}.
\end{align}
\efr
This condition is in fact always verified for certain distances and models. The next result, due to Lucien Le Cam and Lucien Birgé, proves that (T) holds in density estimation for two specific distances.  Regression-type models are considered in Appendix \ref{app:testing}  together with $L^2$--type distances.
\begin{thm} \label{thm:tests}
The testing condition (T) is always verified in the density estimation model for $d$ the $L^1$--distance or the Hellinger distance $h$ (see Definition \ref{defhel}).
\end{thm}
We prove this result in Appendix \ref{app:testing} for the $L^1$--distance. For the Hellinger distance, we refer to \cite{gvbook}, Proposition D.8.
 
\begin{definition}
The $\veps$--covering number of a set $\cE$ for the distance $d$, denoted $N(\veps,\cE,d)$, is the minimal number of $d$--balls of radius $\veps$ necessary to cover $\cE$.
\end{definition}
The entropy of a set measures its `complexity'/`size'. 
Let us give a few examples
\begin{itemize}
\item If $\cE=[0,1]$ and $d(x,y)=|x-y|$, then $N(\veps,\cE,d)$ is of order $1/\veps$.
\item If $\cE$ is the unit ball in $\RR^k$
\[ B(0,1)=\left\{\te\in\RR^k,\ \|\te\|_2^2:=\sum_{i=1}^k\te_i^2 \le 1\right\}, \]
then $N(\veps,\cE,\|\cdot\|_2)$ is of order $\veps^{-k}$. Note that this number grows exponentially with the dimension $k$. This classical result is recalled in Appendix \ref{app:testing}. 
\item As will be seen in the sequel, there are many results available for balls in various function spaces (histograms, Sobolev or H\"older balls etc.). 
\end{itemize}

\begin{lem} \label{lem:testent}
Suppose that the testing condition (T) holds (with constants $a,K$) for a distance $d$ on $\Theta$ and that, for a sequence of measurable sets $\Theta_n$, and a sequence $(\veps_n)$ with $n\veps_n^2\ge 1$,
\[ \log N(\veps_n,\Theta_n,d) \le  Dn\veps_n^2. \]
Then for a given $c>0$ there exists $M=M(a,K,c)$ large enough and tests
 $\psi_n=\psi_n(X)$ such that 
\[ E_{\te_0}\psi_n =o(1),\qquad \sup_{\te\in\Theta_n:\ d(\te,\te_0)>M\veps_n}
E_\te(1-\psi_n)\le e^{-cn\veps_n^2}. \]
\end{lem} 
\begin{proof}
Let us consider the set 
\[ G_n = \{\te\in\Theta_n,\ d(\te,\te_0)>4M\veps_n\} \]
and partition it in `shells' $\cC_j$ as follows
\[ G_n = \bigcup_{j\ge 1} \, \{\te\in\Theta_n,\ 4Mj\veps_n<d(\te,\te_0)\le 4M(j+1)\veps_n\}
=  \bigcup_{j\ge 1}\, \cC_j. \]
Now let us cover each shell $\cC_j$ by balls.
\begin{itemize}
\item Let $\veps=jM\veps_n$ and consider a minimal covering of $\cC_j$ by balls $B_{ij}$ of radius $a\veps$, for $a\in(0,1)$ the constant appearing in condition (T): by definition of the covering number, the number of these balls is $N(a\veps,\cC_j,d)$.
\item Let us denote by $g_{ij}$ the centers of the balls of the previous covering. Since $B_{ij}$ must intersect $\cC_j$ (otherwise it could be removed from the covering which would then not be minimal), we have, as $a\in(0,1)$,
\[ d(\te_0,g_{ij})\ge 4Mj\veps_n-2a\veps=4Mj\veps_n-2ajM\veps_n\ge
2Mj\veps_n>\veps. \]
So, for each $g_{ij}$ there exists a test $\vphi_{ij}$ satisfying the properties given by condition (T). 
\item On the other hand, we also have for any $j\ge 1$, if $M\ge a^{-1}$,
\begin{align*}
N(a\veps,\cC_j,d) = N(ajM\veps_n,\cC_j,d) 
& \le N(ajM\veps_n,\Theta_n,d) \\
& \le N(\veps_n,\Theta_n,d).
\end{align*}
\item Let us now combine the just--contructed tests $\vphi_{ij}$ by setting
\[ \psi :=  \sup_{i,j\ge 1} \vphi_{ij}. \]
\end{itemize}
Let us now verify that the test $\psi$ satisfies the desired properties. First, recalling $\veps=jM\veps_n$,
\begin{align*}
E_{\te_0}\psi \le E_{\te_0}\left(\sum_{i,j}\vphi_{ij} \right) 
&\le \sum_{j\ge 1} \sum_{i} E_{\te_0}\vphi_{ij}
\le \sum_{j\ge 1} N(\veps_n,\Theta_n,d) e^{-Kn\veps^2}\\
& \le \sum_{j\ge 1} N(\veps_n,\Theta_n,d) e^{-KnM^2\veps_n^2 j^2}
 \le N(\veps_n,\Theta_n,d) \frac{e^{-KnM^2\veps_n^2}}{1-e^{-KnM^2\veps_n^2}}\\
& \le Ce^{cn\veps_n^2-KnM^2\veps_n^2}
\end{align*}
which is $o(1)$ if $c<KM^2/2$ say. On the other hand, uniformly for $\te\in\Theta_n$ such that $d(\te,\te_0)>4M\veps_n$,
\begin{align*}
E_\te(1-\psi) & \le \sup_{j,i}\sup_{\te\in B_{ij}} E_\te(1-\psi)
 \le \sup_{j,i} \sup_{\te\in B_{ij}} E_\te(1-\vphi_{ij}) \\
& \le  \sup_{j,i} e^{-Kn(jM\veps_n)^2}  
 \le e^{-Kn(M\veps_n)^2} \le e^{-cn\veps_n^2}
\end{align*}
as soon as $KM^2>c$, which concludes the proof.
\end{proof}

We now state a generic result with the testing condition replaced by an entropy condition, and where we also allow for possibly different rates for the sieve and prior mass conditions (which go together, recalling they originate from applying Lemma \ref{lem:priorm}) and the entropy condition.

\begin{thm}[\re{GGV, entropy version}] \label{thm:ggve}$\ $
Let $(\oli\veps_n,\uli\veps_n)$ 
be sequences with $n(\oli\veps_n^2\wedge \uli\veps_n^2)\to\infty$ as $n\to\infty$. 
Suppose $d$ is a distance on $\Theta$ such that the testing condition \sbl{(T)} holds with constants $a, K>0$.  Assume there exist $C,D>0$ and measurable sets $\Theta_n\subset \Theta$ such that, for $B_K$ as in \eqref{bkln},
\begin{enumerate}
\item[i)] $ \log N(\oli\veps_n,\Theta_n,d)\le Dn\oli\veps_n^2$,
\item[ii)] $ \Pi[\Theta_n^c] \le e^{-n\uli\veps_n^2(C+4)}, $
\item[iii)] $ \Pi[B_K(\te_0,\uli\veps_n)]\ge e^{-Cn\uli\veps_n^2}. $
\end{enumerate}
Set $\veps_n=\oli\veps_n\vee \uli\veps_n$. Then for $M=M(a,K,C,D)$ large enough, the posterior distribution converges at rate $M\veps_n$ towards $f_0$:  as $n\to\infty$,
\[ E_{\te_0}\Pi[\{\te:\ d(\te,\te_0)\ge M\veps_n\}\given X ] =o(1).\]
\end{thm}

\begin{proof}
We start by noting that, for given $n\ge 1$, the maps
\[ \veps\to \log N(\veps,\Theta_n,d),\qquad \veps\to n\veps^2 \]
are respectively non--increasing and increasing: for the first, note that if $\veps'>\veps$, a covering of $\Theta_n$ with $\veps$--balls gives rise to a covering with $\veps'$--balls using the same centers. 
Combining this monotonicity property with the entropy condition i), one now can apply Lemma \ref{lem:testent} with $\veps_n=\oli\veps_n\vee \uli\veps_n$ and $c=C+4$. Indeed, the entropy condition required is also valid with the slower rate $\veps_n$ which gives for large enough $M$ the existence of tests $\psi_n$ with
\[ E_{\te_0}\psi_n =o(1),\qquad \sup_{\te\in\Theta_n:\ d(\te,\te_0)>M\veps_n}
E_\te(1-\psi_n)\le e^{-cn\veps_n^2}, \]
that is, the first condition of Theorem \ref{thm:ggvt}. 
Next by combining ii) and iii) one obtains $E_{\te_0}\Pi[\Theta_n^c\given X]=o(1)$ by using Lemma \ref{lem:priorm}, that requires $n\uli\veps_n^2\to\infty$. The prior mass condition iii)  is automatically verified if one replaces $\uli\veps_n$ by $\veps_n$: indeed by doing so the prior mass does not decrease and the exponential term decreases.  

Now working on the set $\cC_n=\{\te\in\Theta_n,\ d(\te,\te_0)\ge M\veps_n\}$, one can follow line by line the proof of Theorem \ref{thm:ggvt}, which concludes the proof.
\end{proof}

\section{An example for which explicit computations are possible} \label{sec:explicit}

{\em Model.} Consider the Gaussian sequence model as above, with $X=(X_1,\ldots)$ and, for $k\ge 1$,
\[ X_k = \te_k + \veps_k/\rn, \]
where the distribution of $X$ given $\te$ is given by 
\[ P_{\te}^{(n)} = \bigotimes_{k=1}^{\infty} \cN(\te_k,1/n). \]
{\em Prior.} Suppose that as a prior $\Pi$ on $\te$s one takes, for some $\al>0$,
\begin{equation} \label{gsm_prior}
 \Pi = \Pi_\al = \bigotimes_{k=1}^{\infty} \cN(0,\sigma_k^2),\qquad \text{with }
\sigma_k^2:=k^{-1-2\al}. 
\end{equation}
If  working with infinite product distributions looks intimidating at fist, one can just consider truncated versions of both model and prior at $k=n$. All what follows can then be computed in $\RR^n$, and the statistical interpretation remains similar, noticing that the bias induced from not considering the coordinates $k\ge n+1$ is at most $\sum_{k\ge n+1} f_{0,k}^2\leqa n^{-2\be}$ which can be seen to be always negligible compared to the rate $\veps_n$ obtained below. \\
 
{\em Posterior distribution.} Bayes' formula gives that the posterior distribution of $\te_k$ given $X$ only depends on $X_k$ and 
\[ \cL(\te_k\given X_k) = 
\cN\left( \frac{n}{n+\sigma_k^{-2}}X_k,\frac{1}{n+\sigma_k^{-2}} \right).
\]
Furthermore, the complete posterior distribution of $\te$ is
\[ \Pi[\cdot\given X] = 
\bigotimes_{k=1}^{\infty} \cN\left( \frac{n}{n+\sigma_k^{-2}}X_k,\frac{1}{n+\sigma_k^{-2}} \right).
 \]

{\em The true $\te_0$.} We assume the following smoothness condition, for some $\be, L>0$,
\begin{equation} \label{ex1_smooth}
 \te_0 \in S_\be(L):=\left\{\te\in\ell^2:\ \sum_{k=1}^\infty k^{2\be}\te_k^2\le L\right\}.\\
\end{equation}

{\em Posterior convergence under $\te_0$.} Considering a frequentist analysis of the posterior with a fixed truth $\te_0$, it is natural to wonder whether $\Pi[\cdot\given X]$ is consistent at $\te_0$ and if so at which rate it converges for, say, the $\|\cdot\|_2^2$ loss, given by (setting $\|\cdot\|=\|\cdot\|_2$) 
\[ \|\te-\te'\|^2 = \sum_{k\ge 1} (\te_k-\te_k')^2. \]

Let us consider the posterior mean, that is the sequence of means over coefficients
\[ \oli{\theta}(X)=\left(\int \te_k d\Pi(\te_k\given X_k)\right)_{k\ge 1} =
\left( \frac{nX_k}{n+\sigma_k^{-2}} \right)_{k\ge 1}. \]

\paragraph{First step: reduction to a mean/variance problem.}

Using Markov's inequality,
\begin{align*}
\Pi[\|\te-\te_0\|>\veps_n\given X] & \le
\frac{1}{\veps_n^2} \int \|\te-\te_0\|^2 d\Pi(\te\given X) \\
&\le \frac{1}{\veps_n^2} \sum_{k\ge 1} 
\int (\te_k-\te_{0,k})^2 d\Pi(\te\given X).
\end{align*}
The ``bias--variance decomposition" is (observe that the crossed term is zero because we have centered around the posterior mean)
\begin{align*}
 \int (\te_k-\te_{0,k})^2 d\Pi(\te\given X)
& =\int (\te_k-\oli{\te}_k)^2 d\Pi(\te\given X)+
\int (\oli{\te}_k-\te_{0,k})^2 d\Pi(\te\given X) \\
& = \int (\te_k-\oli{\te}_k)^2 d\Pi(\te\given X)+
 (\oli{\te}_k-\te_{0,k})^2.
\end{align*}
as the last term does not depend on $\theta$. Note that the first term in the last sum is Var$(\te_k\given X_k)$. In order to show that, for some $\veps_n=o(1)$ to be determined, 
\[ E_{\te_0}\Pi[\|\te-\te_0\|>\veps_n\given X] =o(1),
\]
it is enough to study the behaviour of the two terms
\begin{align*}
(a) & := \sum_{k\ge 1} E_{\te_0}\text{Var}(\te_k\given X_k)\\
(b) & := \sum_{k\ge 1} E_{\te_0}(\oli\te_k-\te_{0,k})^2.
\end{align*}

\paragraph{Study of the terms (a) and (b).} 
For both terms, we distinguish the regimes $\sigma_k^2< 1/n$ and $\sigma_k^2\ge 1/n$, or equivalently $k> N_\al$ and $k\le N_\al$ respectively, with 
\[ N_\al:=\lfloor n^{\frac{1}{1+2\al}} \rfloor. \]
We can now use the bounds
\begin{align*}
(a) & \le \sum_{k\ge 1} \frac{1}{n+\sigma_k^{-2}} \\
& \le \sum_{k\le N_\al} \frac{1}{n} +  
 \sum_{k> N_\al} \sigma_k^{2}\le \frac{N_\al}{n}
 + CN_\al^{-2\al} \leqa n^{-\frac{2\al}{2\al+1}}.
\end{align*}
For the second term, by using the explicit expression of $\oli\te_k$, a little computation shows
\begin{align*}
E_{\te_0}(\oli\te_k-\te_{0,k})^2
& = \frac{\sigma_k^{-4}}{(n+\sigma_k^{-2})^2}\te_{0,k}^2
+\frac{n}{(n+\sigma_k^{-2})^2}\\
& = \qquad (I) \qquad +\qquad (II).
\end{align*}
The term (II) is the easiest to bound. Its sum is bounded by
\[ \sum_{k\ge 1} \frac{n}{n+\sigma_k^{-2}} \frac{1}{n+\sigma_k^{-2}}\le 
 \sum_{k\ge 1} \frac{1}{n+\sigma_k^{-2}}\leqa n^{-\frac{2\al}{2\al+1}}, \]
by the same reasoning as before. The sum of the term (I) is bounded by, with $a\vee b=\max(a,b)$,
\begin{align*}
\sum_{k\le N_\al} \frac{k^{2+4\al}}{n^2}\te_{0,k}^2
+ \sum_{k> N_\al}  \te_{0,k}^2 & 
\le  n^{-2}\sum_{k\le N_\al} k^{2+4\al-2\be}k^{2\be}\te_{0,k}^2+\sum_{k> N_\al}  k^{-2\be} k^{2\be}\te_{0,k}^2\\
& \le n^{-2} \sum_{k\le N_\al} N_\al^{(2+4\al-2\be)\vee 0}k^{2\be}\te_{0,k}^2 + N_{\al}^{-2\be} L\\
& \le n^{-2} (N_\al^{2+4\al-2\be}\vee 1) L+N_{\al}^{-2\be}L
\leqa (n^{-2}+N_{\al}^{-2\be}) L.
\end{align*}
This last term is at most of order $n^{-2\be/(2\al+1)}+n^{-2}$.
Conclude that one can take  $\veps_n$ to be
\[ \veps_n = M_n n^{-\frac{\al\wedge \be}{2\al+1}}, \]
where $M_n$ is an arbitrary sequence going to infinity. This rate is the fastest for the choice $\al=\be$, but this would then require the statistician to know the smoothness $\be$. The question of {\em adaptation} to the smoothness is discussed in Chapter \ref{chap:ada1}.

\section*{Exercises}
\addcontentsline{toc}{section}{{\em Exercises}}

\begin{enumerate}
\item In the fundamental model $\cP=\{\cN(\te,1)^{\otimes n},\ \te\in\RR\}$ with Gaussian prior $\Pi=\cN(0,\sigma^2)$ on $\theta$,
\begin{enumerate}
\item Show that the posterior distribution $\Pi[\cdot\given X^{(n)}]$ is given by \eqref{mfpost}.
\item Using the explicit form of the posterior, show that, in probability under $(X_1,\ldots,X_n)\sim P_{\te_0}^{\otimes n}$ (i.e. in the frequentist sense), the posterior $\Pi[\cdot\given X^{(n)}]$ converges at rate $M_n/\rn$ towards $\te_0$ in terms of the distance $d(\te,\te')=|\te-\te'|$ on $\RR$, where $M_n$ is an arbitrary sequence going to infinity.
\item Define a $(1-\al)$--credible interval by using the quantiles of the posterior distributions at levels $\al/2=.05$ and $1-\al/2$ respectively. What is the center of this interval? Show that it is of minimum width among intervals of credibility at least $1-\al$. 
\end{enumerate}
\item In the nonparametric example of Section \ref{sec:explicit} with a prior $\Pi=\Pi_\al$ as in \eqref{gsm_prior}, show the following lower-bound-type result: for any $\te_0\in\ell^2$
\[ E_{\te_0} \int \|\te-\te_0\|_2^2d\Pi(\te\given X) \geqa n^{-\frac{\al}{2\al+1}}. \]
Give an interpretation of this result.
\end{enumerate}

\chapter{Rates II and first examples} \label{chap:rate1}

As in the previous chapter we work with  $\cP=\{ P_{\te}^{(n)},\ \te\in \Theta\}$ a dominated model with observations $X=X^{(n)}$. That is $dP_\te^{(n)}=p_\te^{(n)} d\mu^{(n)}$ for any $\te\in\Theta$, for a dominating measure $\mu^{(n)}$ independent of $\te$. When there is no ambiguity we drop the dependence in $n$ and simply write $p_\te=p_{\te}^{(n)}$ and $\mu=\mu^{(n)}$. 

\section{Extensions}

The aim when stating the previous two theorems on posterior concentration was to give simple -- yet general and already fairly broadly applicable -- statements and proofs. These results can in turn be refined in a number of ways. Many refinements are described in the book \cite{gvbook}. We only briefly mention a few
\begin{enumerate}
\item {\em Coupling numerator and denominator when studying Bayes' formula.} 
The previous formulations of the convergence theorem treat denominator and numerator separately. This can be suboptimal, especially when the parameter space is large or unbounded: this situation arises for instance in high-dimensional models, discussed in more details in Chapter \ref{chap:ada2}.

\item {\em Other notions of entropy.} It is already clear from the proof of Lemma \ref{lem:testent} that upper-bounds are possibly generous there, and indeed one can provide more precise conditions. Instead of looking at a `global' entropy, one can also look at a more `local' versions of the entropy.

\item {\em Other distances.} As such the application of the GGV theorem is limited to distances for which certain tests exist. Although there are often natural distances for which such tests exist (e.g. $L^1$-- or Hellinger distances for independent data, $L^2$--distances for Gaussian regression), it may be difficult (or even impossible) to find such tests for other distances of interest. One example relevant in applications is the supremum norm distance between functions, see Chapter \ref{chap:bvm2}.

\end{enumerate}

\section{Fractional posteriors}

 A popular generalisation (in particular in machine learning and PAC--Bayesian theory) of the posterior distribution is the so--called {\em $\alpha$--posterior}, where given a prior $\Pi$ on $\theta$, for $\al>0$ one defines the distribution, for every measurable $B$,
\[ \Pi_\al[B\given X]=\frac{\int_B L_{n,\al}(\te)d\Pi(\te)}{\int L_{n,\al}(\te)d\Pi(\te)},\quad L_{n,\al}(\te)=\left[p_\te^{(n)}(X)\right]^\alpha.\]
In the limiting case $\al\to 0$, one simply obtains the prior distribution itself (so, the data $X$ plays no role), while if one lets $\al_n\to\infty$ one gets close to ``maximum likelihood". 

In the sequel we consider the case where $0<\al<1$, which tempers the influence of the data in the obtained distribution. A technical advantage of working with an $\alpha$--posterior, $\al<1$, is that convergence rate results can be obtained under prior--mass conditions only, without requiring entropy--type bounds, as in Theorem \ref{alpost} below. Results of this type on rates date back to \cite{tzhang06} (see \cite{ltcr23} for the present version; and \cite{walkerhjort01} for an earlier result on consistency). A drawback is that $L_{n,\al}(X)$ is not a likelihood anymore, so the original Bayesian interpretation is lost: optimality properties related to the use of the likelihood may then be lost. Typically, statistical efficiency is lost and `credible' sets from the $\al$--posterior will also often be larger for $\al<1$ than in the posterior case $\al=1$, see Chapter \ref{chap:bvm1} for more on this and possible remedies.

For any $\al\in(0,1)$, the $\al$--R\'enyi divergence between distributions $P,Q$ having densities $p$ and $q$ with respect to $\mu$ is defined as 
\begin{align*}
		D_\alpha(P, Q) = -\frac{1}{1-\alpha}\log \left( \int p^\alpha q^{1-\alpha}d\mu \right).
	\end{align*}
It is related to the standard $L^1$--distance via Pinsker's inequality (see e.g. \cite{vanerven}, Theorem 31)
\[ D_\alpha(P, Q) \ge \al \|P-Q\|_1^2/2. \]
Let us recall the definition \eqref{bkln} of the Kullback--Leibler type neighborhood $B_K(\te,\veps)$ of $\te_0\in\Theta$, and that in the next result we use the simplified notation $P_\te=P_\te^{(n)}$, $P_{\te_0}=P_{\te_0}^{(n)}$. 
  
\begin{thm}\label{alpost} For any non negative sequence $\eps_n$ and $0<\al<1$ such that $n\al\eps_n^2 \rightarrow \infty$ and 
		\begin{align}\label{equation_thm_1}
			\Pi(B_K(\te_0, \eps_n)) \geq e^{-n\al\eps_n^2},
		\end{align}
		there exists a constant $C>0$ independent of $\al$ such that as $n\to\infty$, for $P_0=P_{\te_0}^{(n)}$,
		\begin{align*}
			\Pi_{\al}\left( \te: \: \frac{1}{n}D_{\al}(P_\te, P_{\te_0}) \geq C \frac{\al\eps_n^2}{1-\al}  \, \given\, X\right) = o_{P_0}(1),
		\end{align*}	
where the term $o_{P_0}(1)$ is independent of $\al$.
	\end{thm}

In Theorem \ref{alpost}, one may choose $\al=\al_n$ that possibly goes to $0$ or $1$. For $\al\to 1$, the constant in front of the rate blows up, which suggests that the assumptions do not suffice to get a rate of order $\veps_n$ (this is indeed the case, see \cite{bsw99} for a counterexample showing an inconsistent posterior under a prior mass condition only, whereas in the same setup the previous result yields rate $n^{-1/3}$ for the $\al$--posterior and $\al$ bounded away from $1$). 

As written the result is in terms of the normalised divergence $D_\al(P_{\te}, P_{\te_0})/n$ which still depends both on $\al_n$ and $n$. In case $P_\te$'s are products $P_\te=Q_\te^{\otimes n}$,  the (immediate) tensorisation property of $D_\al$ combined with Pinsker's inequality leads to
\[ D_\al(P_{\te}, P_{\te_0}) = n D_\al(Q_{\te}, Q_{\te_0})
\ge n\al \|Q_{\te} -  Q_{\te_0} \|_1^2/2.
\]
Therefore in the iid setting Theorem \ref{alpost} automatically implies convergence of the posterior in terms of the {\em squared--}$L^1$ distance at rate $\veps_n^2/(1-\al_n)$, for any $\veps_n$ that verifies the stated prior mass condition. 

Note the $\al_n$ inside the exponential in the prior mass condition: this makes it quite different from the related   condition in the GGV theorem in the regime when $\al_n$ tends to $0$. More precisely, one then typically obtains a rate similar to the one obtained from applying the GGV theorem, but with $n$ replaced by $n'=n\al_n$ (precisely due to this extra $\al_n$ factor in the prior mass condition). 
For instance, nonparametric squared rates $n^{-2\be/(2\be+d)}$ typically become $(n\al_n)^{-2\be/(2\be+d)}$. This only changes the constant if $\al_n$ is bounded away from $0$, but otherwise the rate is  slower. \\

\begin{proof}[Proof of Theorem \ref{alpost}]
By Lemma \ref{lem:mino_den_posterior}, on a subset $C_n$ of $P_0$-probability at least $1-\frac{1}{n\eps_n^2}$,  for any measurable set $A \subset \Theta$,
		\begin{equation}\label{eq:Bayes_formula}  
		\begin{split}
			E_0 \Pi_{\al}(A|X)  = E_0 \frac{\int_{A} \frac{p_\te^\al}{p_{\te_0}^\al}(X) d\Pi(\te)}{\int  \frac{p_\te^\al}{p_{\te_0}^\al}(X) d\Pi(\te)}
			& \leq E_0 \frac{\int_{A} \frac{p_\te^\al}{p_{\te_0}^\al}(X) d\Pi(\te)}{\Pi(B_K(\te_0, \eps_n)) e^{-2{\al}n\eps_n^2}}1_{C_n} + P_0(C_n^c) \\
			& =  \frac{\int_{A} \int p_\te(x)^{\al} p_{\te_0}(x)^{1-\al} d\mu(x)d\Pi(\te)}{\Pi(B_K(\te_0, \eps_n)) e^{-2{\al}n\eps_n^2}} +o(1),
		\end{split}		
		\end{equation}
		where the last equality follows from Fubini's theorem. Set
		\begin{align*}
			A=A_n&:= \left\{\te: \: \int p_\te(x)^{\al} p_{\te_0}(x)^{1-\al} d\mu(x) \leq e^{- 4n\al\eps_n^2} \right\} \\
			&= \left\{\te: \: -\frac{1}{n(1-\al)}\log(\int p_\te(x)^{\al} p_{\te_0}(x)^{1-\al} d\mu(x)) \geq 4\frac{\al\eps_n^2}{1-\al}\right\}\\
			& = \left\{\te: \: \frac{1}{n} D_{\al}(p_\te, p_{\te_0}) \geq 4\frac{\al\eps_n^2}{1-\al}\right\}.
		\end{align*}		
Substituting $A_n$ into the second-last display and using the prior mass condition \eqref{equation_thm_1} yields
		\begin{align*}
			E_0 \Pi_{\al}(A_n \given X) 
			& \leq \frac{\int_{A_n} e^{- 4n\al\eps_n^2} d\Pi(\te)}{\Pi(B_K(\te_0, \eps_n)) e^{-2{\al}n\eps_n^2}} +o(1)  \leq e^{- n\al\eps_n^2}  + o(1) = o(1),
		\end{align*}
		since $n\al\eps_n^2 \rightarrow \infty$.

\end{proof}

\begin{lem}\label{lem:mino_den_posterior} For any distribution $\Pi$ on $\Theta$, any $C, \eps>0$ and $0 < \alpha \leq 1$, with $P_{0}$-probability at least $1-(C^2n\eps^2)^{-1}$, we have
		\begin{align*}
			\int \frac{p_\te(X)^\alpha}{p_{\te_0}(X)^\alpha}d\Pi(\te) \geq \Pi(B_K(\te_0, \eps))e^{-\alpha(C+1)n\eps^2}.
		\end{align*}
	\end{lem}

	\begin{proof} The proof is (almost) the same as that of Lemma \ref{lem:tr1}. 
		 Let $B:=B_K(\te_0, \eps)$. 
		Suppose $\Pi(B)>0$ (otherwise the result is immediate), and denote by $\bar{\Pi} = \Pi(\cdot \cap B)/\Pi(B)$.
		One bounds from below
		\begin{align*}\label{eq:ELBO_lb}
			\int \frac{p_\te^\alpha}{p_{\te_0}^\alpha}(X)d\Pi(\te) \geq \int_{B} \frac{p_\te^\alpha}{p_{\te_0}^\alpha}(X) d\Pi(\te) = \Pi(B) \int \frac{p_\te^\alpha}{p_{\te_0}^\alpha}(X)d\bar{\Pi}(\te). 
		\end{align*}
		Since $\bar{\Pi}$ is a probability measure on $\Theta$, Jensen's inequality applied to the logarithm gives
		\begin{align*}
				\log\left(\int \frac{p_\te^\alpha}{p_{\te_0}^\alpha}(X) d\bar{\Pi}(\te)\right) 
				& \geq  \alpha  \int \log\left(\frac{p_\te}{p_{\te_0}}(X)\right)d\bar{\Pi}(\te) \\
& \ge - \al Z -\al \int_B K(P_{\te_0},P_\te)d\bar{\Pi}(\te)\ge 
-\al Z -\al n\veps^2,			
		\end{align*}	
with the random variable $Z := \int \left[ \log\frac{p_\te}{p_{\te_0}}(X) - K(P_{\te_0},P_\te)\right] d\bar{\Pi}(\te)$. In the proof of Lemma \ref{lem:tr1}, we have shown that the event $\{|Z|\le Cn\veps^2\}$ has probability at least $1-1/(C^2n\veps^2)$. On this event  the first display of the proof is larger than $\Pi(B)e^{-\al(C+1)n\veps^2}$, which concludes the proof.
	\end{proof}

\section{Lower bounds} \label{sec:lb}

The following definition mirrors the one given for an upper-bound rate in Chapter \ref{chap:intro}.  

\begin{definition} \label{def-lb}
For $d$ a distance on the parameter set $\Theta$, we say that $\zeta_n$ is a \sbl{lower bound} for the posterior
 $\Pi[\cdot\given X]$ contraction rate, in terms of the distance $d$, if for $X=X^{(n)}$, as $n\to\infty$,
\[ \Pi\left[\, \{\te:\ d(\te,\te_0) \le \zeta_n\} \given X\right] \to 0,\]
in probability under $P_{\te_0}$. 
\end{definition}

The interpretation is that if one looks with a magnifying glass `too close' to a given point $\te_0$ then asymptotically there is no posterior mass around it. The definition may look surprising at first since it may sound counterintuitive that a converging posterior  puts no mass asymptotically on small balls  around $\te_0$. However, this just reflects that $\zeta_n$ is too fast a scaling to capture mass asymptotically. Imagine for instance a situation where the posterior equals a normal variable of variance $v_n$ with $v_n\asymp 1/n$. Then any ball of radius $\zeta_n=o(1/\sqrt{n})$ receives vanishing mass asymptotically, see Exercises. 

A simple yet quite efficient way to show lower bound results for posteriors, introduced in \cite{ic08}, is to apply Lemma \ref{lem:priorm} in combination with the choice of set $A_n=\{\te:\ d(\te,\te_0)\le \zeta_n \}$. This typically handles situations where there is a lack of prior  mass around $\te_0$ at scale $\zeta_n$.

\section{Random Histogram priors}

{\em Histogram prior on $[0,1]$ with deterministic number of jumps.} Let $K=K_n$ be an integer, a number of `jumps' -- to be chosen later --, and let us subdivide $[0,1]$ in $K$ equally spaced intervals: 
 for $I_k=[(k-1)/K, k/K)$, let us set
\begin{equation} \label{priorhist}
 f = \sum_{k=1}^K h_k \1_{I_k},\qquad (h_1,\ldots,h_k)\sim P_\psi^{\otimes K},
\end{equation}
where $P_\psi$ is the common distribution of the (random) histogram heights. For simplicity in what follows we take $P_\psi=\text{Lap}(1)$ the standard Laplace distribution, which has density $x\to e^{-|x|}/2$ on $\RR$, although many other choices are possible.\\

{\em Statistical model.} Let us consider one of the canonical nonparametric models: it turns out the simplest to verify the conditions of Theorem \ref{thm:ggve} is the Gaussian white noise model, but the proof is quite easily adapted for the regression and density models. We shall come back to the density model later. Recall that in the white noise model one observes $X=X^{(n)}$ with $dX(t)=f(t)dt+dW(t)/\sqrt{n}$.  \\

{\em Bayesian setting.} We put as prior on $f\in L^2[0,1]$ a histogram prior $\Pi$ defined as in \eqref{priorhist}, which combined with the law of $X\given f$ in the white noise model gives a posterior distribution $\Pi[\cdot\given X]$. The model is dominated by $P_0^{(n)}$ (the distribution of the data when $f=0$), see Appendix \ref{app:models}, and Bayes formula can be written, for any $B$ in the Borel $\sigma$-field of $\cC^0[0,1]$,
\[ \Pi(B\given X) = \frac{\int_B \exp\left\{n\int_0^1 f(t)dX(t)-n\|f\|_2^2/2\right\} d\Pi(f)}{\int  \exp\left\{n\int_0^1 f(t)dX(t)-n\|f\|_2^2/2\right\} d\Pi(f)}. \]
 \vp

{\em Frequentist study of $\Pi[\cdot\given X]$ and regularity condition on $f_0$.} To study the frequentist behaviour of the posterior, it is usual to impose some regularity conditions on $f_0$, which will then typically influence the expression of the convergence rate one obtains. For $\al\le 1$ define a H\"older--ball $\cC^\al(L)=\{g:[0,1]\to\RR,\ \forall x,y\in[0,1],\ |g(x)-g(y)|\le L|x-y|^\al\}$. We assume that the true $f_0$ belongs to $\cF=\cF(\al, L, M)$, for $L,M>0$ and $\al\le 1$, with
\begin{equation} \label{classhisto}
\cF= \{f: [0,1]\to\RR:\ f\in\cC^\al(L),\ \ \|f\|_\infty\le M\}. 
\end{equation}

{\em Posterior convergence rate.} The specific form of the model will actually not matter much for Theorem \ref{thm:ggve}: it is enough to know we can apply it, since the white noise model verifies the testing condition (T) with $d=\|\cdot\|_2$ as noted earlier. So it is enough to verify the conditions i), ii), iii) of Theorem \ref{thm:ggve} with this distance and suitably chosen sets $\cF_n$. If one can do so, we will obtain a  posterior contraction rate for $\Pi[\cdot\given X]$ in terms of $d=\|\cdot\|_2$. For simplicity in this section we denote $\|\cdot\|=\|\cdot\|_2$ the $L^2$--norm on $[0,1]$.

\begin{thm} \label{thm:hist}
In the Gaussian white noise model, suppose the true $f_0\in\cF(\al,L,M)$ for some $L,M>0$ and $\al\in(0,1]$. Let $\Pi$ be a random histogram prior as above with a number of jumps
\[ K\asymp \left(\frac{n}{\log{n}} \right)^{\frac{1}{2\al+1}}. \] 
Then for $m>0$ a large enough constant, as $n\to\infty$ 
\[ E_{f_0}\Pi[\|f-f_0\|_2\le m\veps_n\given X]\to 1,\qquad 
\veps_n\asymp \left(\frac{\log{n}}{n} \right)^{\frac{\al}{2\al+1}}.
\]
\end{thm}
This result says that if $K$ is appropriately chosen in terms of the smoothness $\alpha$ of $f_0$, then the posterior achieves the optimal rate $n^{-\al/(2\al+1)}$ for $d=\|\cdot\|_2$ up to a logarithmic term.

\subsubsection*{Basic histogram facts}

Let $\cV_K=\text{Vect}_{L^2}(\1_{I_1},\ldots,\1_{I_K})$ denote the subspace of $L^2=L^2[0,1]$ spanned by histograms over the partition $(I_k)$. For a sequence $(u_1,\ldots,u_K)\in\RR^K$, let us denote $\|\cdot\|_K$ the euclidean norm in $\RR^K$
\[ \|u\|_K^2= \sum_{k=1}^K u_k^2.\]
{\em Fact 1.} The orthogonal projection of $f\in L^2$ onto $\cV_K$ is
\[ f^{[K]}=\sum_{k=1}^K \oli{f}_k \1_{I_k},\qquad \text{with }\ \oli{f}_k=K\int_{I_k}f. \]
Let us denote $\oli f:=(\oli f_1,\ldots,\oli f_K)$. For any $f\in L^2$, 
\[ \| f^{[K]}\|^2 = \frac{1}{K}\sum_{k=1}^K \oli f_k^2
= \frac{1}{K} \|\oli f\|_K^2.\]
Thus, up to a factor $K^{-1}$, the $L^2$--norm of $f^{[K]}$ coincides with the $\|\cdot\|_K$--norm of the sequence $\oli f$.\\

\noindent {\em Fact 2.} Let $f_0\in \cC^\alpha(L)$ with $\al\in(0,1]$. Then 
\[ \| f_0 - f_0^{[K]}\|_\infty \le L K^{-\al}. \]
Indeed, by the mean--value theorem $\oli{f}_{0,k}=K\int_{I_k} f_0=f_0(c_k),$ for a  $c_k\in I_k$. For $t\in I_k$, we have $|f_0(t)-f_0^{[K]}(t)|=|f_0(t)-f_0(c)|\le L|t-c|^\al \le L K^{-\al}$. This gives the result by making $k$ range from $1$ to $K$.\\

\subsubsection*{Verifying the conditions of Theorem \ref{thm:ggve}}

\begin{proof}[Proof of Theorem \ref{thm:hist}]
Let us choose some {\em sieve} sets $\cF_n$ as follows
\[ \cF_n = \left\{f\in\cV_K:\ f= \sum_{k=1}^K h_k \1_{I_k},\ \ (h_1,\ldots,h_K)\in\cH_n\right\}, \]
where $\cH_n$ is the set of sequences $h=(h_k)_{1\le k\le K}$ defined as 
\[ \cH_n=\left\{ h=(h_k),\ \ \max_{1\le k\le K}|h_k| \le n\right\}. \]
The upper bound on the heights turns helpful to verify the entropy condition.\\

{\em Entropy condition i).} Since any $f\in\cV_K$ is equivalently characterised by its height sequence $h$ and $\|f\|^2=K^{-1}\|h\|_K^2$, it is enough to cover the set of sequences $\cH_n$. By using Fact 1 above,
\[ N(\veps,\cF_n,\|\cdot\|)=N(\sqrt{K}\veps,\cH_n,\|\cdot\|_K). \]
Now note that $\|h\|_K^2\le K\max_{1\le k\le K} h_k^2\le K n^2$ for any $h\in\cH_n$, so that $\cH_n\subset B_{\RR^K}(0,\sqrt{K}n)$.\\

Lemma \ref{lem:entub} gives $N(\delta,B_{\RR^K}(0,M),\|\cdot\|_K) \le \left(3M/\delta\right)^K$   for $3M/\delta\ge 1$. This implies, for $\veps\le 1$,
\begin{align*}
N(\veps,\cF_n,\|\cdot\|)  \le N(\sqrt{K}\veps,B_{\RR^K}(0,\sqrt{K}n),\|\cdot\|_K) \le (3n/\veps)^K.
\end{align*}
In order to fulfill i), one obtains the condition $K\log(3n/\oli\veps_n)\le Dn\oli\veps_n^2$.\\

{\em Sieve condition ii).} By definition of $\cF_n$, using that $P[|\text{Lap}(1)|>n]= e^{-n}$,
\[ \Pi[\cF_n^c]\le \Pi[\exists k\in\{1,\ldots,K\}:\ |h_k|>n] 
\le Ke^{-n}\le \exp\{\log{K}-n\}.\]
In order to fulfill i), one obtains the condition $\log{K}-n\le -n\uli\veps_n^2(C+4)$. This is always satisfied for large enough $n$ provided $K=K_n$ is chosen so that $K_n=o(n)$. \\

{\em Prior mass condition iii).} Recall that in the white noise model $B_n(f_0,\veps)$ is just the $L^2$--ball $\{f:\ \|f-f_0\|< \veps\}$. Pythagoras theorem gives $\|f-f_0\|^2=\|f-f_0^{[K]}\|^2+\|f_0-f_0^{[K]}\|^2$. So  for any $\eta>0$
\[ \Pi[\|f-f_0\|<\eta] = \Pi[\|f-f_0^{[K]}\|^2<\eta^2-\|f_0-f_0^{[K]}\|^2].\]
By Fact 2 above, $\|f_0-f_0^{[K]}\|\le \|f_0-f_0^{[K]}\|_\infty\le LK^{-\al}$. This means that provided $K$ is chosen large enough in terms of $\eta$ (the condition involving the rate is given below), one can always make sure that $\eta^2-\|f_0-f_0^{[K]}\|^2\le \eta^2/2$. It is thus enough to consider, for $\veps>0$,
\begin{align*}
\Pi[\|f-f_0^{[K]}\|<\veps] & =\Pi\left[K^{-1}\sum_{k=1}^K (\oli f_k-\oli f_{0,k})^2\le \veps^2 \right] = \Pi\left[K^{-1}\sum_{k=1}^K (h_k-\oli f_{0,k})^2\le \veps^2 \right]  \\
& \ge \Pi\left[ \bigcap_{k=1}^K\ \{|h_k-\oli f_{0,k}|\le \veps \right] 
 \ge \prod_{k=1}^K \Pi\left[ |h_k-\oli f_{0,k}|\le \veps \right], 
\end{align*}
using the independence of the heights $h_k$ under the considered prior distribution.
To further bound from below the last display, note that $\Pi\left[ |h_k-\oli f_{0,k}|\le \veps \right]$ is the probability that a standard Laplace variable belongs to a certain interval of length $2\veps$. Since $|\oli f_{0,k}|\le \|f_0\|_\infty\le M$ by assumption, this interval is included in $[-M-\veps,M+\veps]\subset [-2M,2M]$ if $\veps\le 1$. On the latter interval, the standard Laplace density is at least $e^{-2M}/2$. Deduce that 
\[\Pi\left[ |h_k-\oli f_{0,k}|\le \veps \right] \ge 2\veps\cdot e^{-2M}/2=\veps e^{-2M}, \]
so that $\Pi[\|f-f_0^{[K]}\|<\veps]\ge \veps^K \exp\{-2MK\}$. Putting the previous bounds together,  if $LK^{-\al}\le \uli\veps_n/2$
\[ \Pi[\|f-f_0\|_2<\uli\veps_n] \ge \Pi[\|f-f_0^{[K]}\|<\uli\veps_n/2]
\ge  (\uli\veps_n/2)^K \exp\{-2MK\}.\]
So the prior mass condition is verified if 
\begin{align*}
LK^{-\al}& \le \uli\veps_n/2\\
 2MK+K\log(2/\uli\veps_n)& \le Cn\uli\veps_n^2. 
\end{align*} 
It is now easy to verify that conditions i) up to iii) are satisfied for the choices
\[ \uli\veps_n\asymp \oli\veps_n\asymp \left(\frac{\log{n}}{n} \right)^{\frac{\al}{2\al+1}},\qquad K\asymp \left(\frac{n}{\log{n}} \right)^{\frac{1}{2\al+1}}.\]
\end{proof} 
\vp

\section{Gaussian process priors} \label{sec:GPs}

The elements of  theory of Gaussian processes (GPs) needed here are summarised in Appendix \ref{app:gps}. GPs are natural candidates for prior distributions on functions -- we stick for simplicity to functions defined on $[0,1]$ -- ; here they will be seen as random elements taking values in a separable Banach space $\mb$ with norm $\|\cdot\|_\mb$. We consider only the following two cases  in the sequel: $\mb=L^2[0,1]$ the Hilbert space of squared-integrable functions on $[0,1]$ and $\mb=\cC^0[0,1]$ the space of continuous functions on $[0,1]$, equipped with their respective canonical norms $\|\cdot\|_2$ and $\|\cdot\|_\infty$. 

We give a few examples to start with, more will be given along the way. We hint at what their `regularity' (in a sense we do not make explicit here) is, since it  helps interpreting the results below. \\

{\em Example: Brownian motion (BM).} Brownian motion is the GP $(B_t)$ with zero mean and covariance $K(s,t) = E[B_sB_t]=s \wedge t$. It can be shown that there is a version of Brownian motion with sample paths that are $1/2-\veps$ H\"older, for any $\veps>0$, so in a sense its regularity is $1/2$. \\

{\em Example: Riemann-Liouville process $R^\al$.} Consider, for $\al>0$ and $(B_s)$ Brownian motion 
\[ R_t^\al = \int_0^t (t-s)^{\al-1/2} dB(s).\]
For $\al=1/2$ one gets Brownian motion while more generally this can be seen as a $(\al-1/2)$--integrated Brownian motion, having thus `regularity' close to $\al$.\\

{\em Gaussian series prior.} For $\zeta_j$  iid $\cN(0,1)$ variables, and $(e_j)$ an orthonormal basis of $L^2[0,1]$, 
\begin{equation} \label{seriesgp}
 W_t = \sum_{j=1}^\infty j^{-\frac12-\alpha} \zeta_j e_j(t)
\end{equation} 
is, for $\al>0$, again a process whose regularity (here in a Sobolev type sense) is nearly $\al$.\\

All these processes can be seen as random variables in $\mb=L^2$ or $\cC^0$, and there are fairly explicit characterisations of their RKHS, as we see below. The next definition is key in the analysis of GP posterior rates. We denote by 
$\overline{\mh}^\mb$, the closure in $\mb$ (with respect to the norm of $\mb$) of $\mh$.

\begin{definition}
Let $W$ be a Gaussian random variable taking its values in $\mb$ separable Banach space, with RKHS $\mh$. Let $w$ belong to $\overline{\mh}^\mb$. For any $\veps>0$, define
\begin{align*}
 \vphi_w(\veps)
& = \inf_{h\in\mh,\ \|h-w\|_\mb<\veps} \frac12\|h\|_\mh^2
- \log P[ \|W\|_\mb<\veps ] \\
& \ =: \vphi^A_{w}(\veps) + \vphi_0(\veps) 
\end{align*}
The function $\vphi_w(\cdot)$ is called the \sbl{concentration function} of the process $W$. 
\end{definition}

\subsubsection*{Pre-concentration theorem}

Using the just defined concentration function $\vphi_w$ of a Gaussian process, it turns out that one can verify conditions  close to the ones of the main Theorem \ref{thm:ggve}. This is the goal of the next Theorem, due to \cite{vvvz}; we see further below how this is then used to obtain rates in specific models.

\begin{thm}\cite{vvvz} \label{thmvv}$\ $
 Let $W$ be a Gaussian random variable taking values in $\mb$ separable Banach space, with RKHS $\mh$. 
Let $w_0\in \overline{\mh}^{\mb}$ and let $\veps_n>0$ be such that
\begin{equation} \label{conceq}
\vphi_{w_0}(\veps_n)\le n\veps_n^2.
\end{equation}
Then for any $C>1$ with $Cn\veps_n^2>\log{2}$, there exists 
$B_n\subset \mb$ measurable sets such that
\begin{align*}
\sbl{(i)}\quad \log N(3\veps_n,B_n,\|\cdot\|_\mb) & \le 6Cn\veps_n^2 \\
\sbl{(ii)}\quad \ \ \quad\qquad P[ W\notin B_n ] & \le e^{-Cn\veps_n^2}\\
\sbl{(iii)}\quad P[\|W - w_0\|_\mb < 2\veps_n] & \ge e^{-n\veps_n^2}.
\end{align*}
\end{thm}

\begin{proof}
The inequality \sbl{(iii)} is a consequence of Theorem \ref{thm:usb} on probability of balls for Gaussian processes and their link to the concentration function: 
\[ P[\|W - w_0\|_\mb < 2\veps_n] \ge e^{-\vphi_{w_0}(\veps_n)},\] 
which combined with \eqref{conceq} leads to \sbl{(iii)}. \\

In order to prove \sbl{(ii)}, we define
\[ B_n = \veps_n \mb_1 + M_n \mh_1, \]
where $M_n$ is to be chosen. By Borell's inequality, with $\Phi(x)=\int_{-\infty}^x (e^{-u^2/2}/\sqrt{2\pi}) du=P[\cN(0,1)\le u]$, 
\[ P[W\notin B_n ] \le 1 - \Phi( \Phi^{-1}(e^{-\vphi_0(\veps_n)})+M_n). \]
By definition of the concentration function as a sum of two nonnegative terms, 
$\vphi_{0}(\veps_n)\le \vphi_{w_0}(\veps_n)\le n\veps_n^2$ using \eqref{conceq}. 
Let us set, for some $C>1$,
\[ M_n = -2\Phi^{-1}(e^{-Cn\veps_n^2}). \]
Then we have, by monotonicity of $\Phi^{-1}$ and definition of $M_n$,
\[ \Phi^{-1}(e^{-\vphi_0(\veps_n)})\ge \Phi^{-1}(e^{-n\veps_n^2})\ge -M_n/2.\]
Inserting this back into the previous upper-bound on $P[W\notin B_n ]$ leads to
\[ P[W\notin B_n ] \le 1 - \Phi(M_n/2)= 1 - \Phi(-\Phi^{-1}(e^{-Cn\veps_n^2}))=
e^{-Cn\veps_n^2},\]
using that $\Phi(-x)=1-\Phi(x)$ for any real $x$, so \sbl{(ii)} is established.\\

It now remains to check \sbl{(i)}. Let $h_1,\ldots, h_N$ be elements of  $M_n\mh_1$ separated by at least $2\veps_n$ in terms of the $\|\cdot\|_\mb$ norm, and suppose this set of points is {\em maximal} (in the sense that $N$ is the maximal number of $2\veps_n$--separated points in $M_n\mh_1$; the argument below shows that $N$ is necessarily finite).  
 
The balls $h_1+\veps_n\mb_1, \ldots, h_N+\veps_n\mb_1$ are disjoint since the $h_i$'s are $2\veps_n$--separated. This implies
\[ 1 \ge P\left[W\in \bigcup_j \, (h_j+\veps_n\mb_1) \right] = \sum_{j=1}^N 
P[W\in h_j+\veps_n\mb_1].  \]
Applying inequality \eqref{usmallb}, since $h_j$'s belong to $\mh$, and using $\|h_j\|_\mh\le M_n$, one gets
\[ P[W\in h_j+\veps_n\mb_1] \ge e^{-\|h_j\|_\mh^2/2} P[ W\in \veps_n\mb_1]\ge 
e^{-M_n^2/2 -\vphi_0(\veps_n)}. \]
Inserting this into the previous inequality leads to 
\[ 1 \ge Ne^{-M_n^2/2 -\vphi_0(\veps_n)}, \]
from which one sees in particular that $N$ must be finite. Deduce 
\[  N(2\veps_n,M_n\mh_1,\|\cdot\|_\mb) \le N \le e^{M_n^2/2 + \vphi_0(\veps_n)}.\]
This implies 
\[ N(3\veps_n,\veps_n\mb_1+M_n\mh_1,\|\cdot\|_\mb)\le e^{M_n^2/2 + \vphi_0(\veps_n)}.\]
By a standard inequality on $\Phi^{-1}$, the inverse of the Gaussian CDF $\Phi$, we have
\[ 0>\Phi^{-1}(y)\ge -\sqrt{\frac52 \log(1/y)}, \qquad 0<y<1/2.\]
Deduce, using  $Cn\veps_n^2>\log{2}$, that
\[ M_n=-2\Phi^{-1}(e^{-Cn\veps_n^2})\le 2 \sqrt{\frac52 \log(e^{Cn\veps_n^2})}.
\]
Combining with the previous inequality on $N$, one obtains
\[ N(3\veps_n,B_n,\|\cdot\|_\mb)\le e^{5Cn\veps_n^2+\vphi_0(\veps_n)}
\le e^{6Cn\veps_n^2}, \]
using once again \eqref{conceq}, which leads to \sbl{(i)} and concludes the proof.
\end{proof}

\subsubsection*{Application: rates for GP priors in regression}

Recall that the Gaussian white noise model is 
\[ dX^{(n)}(t)=f(t)dt+ \frac{1}{\rn}dW(t),\quad t\in[0,1]\]
and that in this model, tests verifying condition (T) exist for the $\|\cdot\|_2$--norm, and that the neighborhood $B_{KL}$ of the GGV theorem is just the  $L^2$ ball $\{f:\, \|f-f_0\|_2<\veps_n\}$. 

\begin{thm} \label{thmgwn}
Let $X^{(n)}$ be observations from  the Gaussian white noise model. 
Let $\Pi$ be a prior distribution on $f\in L^2[0,1]$, defined as the distribution  of a centered Gaussian random variable in $\mb=L^2$, with RKHS $\mh$. 
Suppose the true $f_0\in \overline{\mh}^\mb$ and let $\veps_n$ be such that
\[ \vphi_{f_0}(\veps_n)\le n\veps_n^2, \]
where $\vphi_{f_0}$ is the concentration function of $W$ in $\mb=L^2$. 
Then for $M$ large enough, as $n\to\infty$,
\[ E_{f_0} \Pi[ \|f-f_0\|_2>M \veps_n\given X^{(n)}] \to 0.\]
\end{thm}

\begin{proof}
It is enough to note that the conclusion of Theorem \ref{thmvv} matches exactly the conditions of the GGV Theorem, noting that $d=\|\cdot\|_2$ and that the neighborhood $B_{KL}$ of the GGV theorem is the  $L^2$ ball $\{f:\, \|f-f_0\|_2<\veps_n\}$. The Theorem thus follows from the GGV theorem (up to setting $\veps_n'=2\veps_n$ and noting that $C>1$ can be taken arbitrarily large).
\end{proof}

\subsubsection*{Application: rates for GP priors in density estimation}

In the density estimation model on $[0,1]$,
\[ X^{(n)}=(X_1,\ldots,X_n) \sim P_f^{\otimes n}, \]
where $P_f$ is the distribution of density $f$ on $[0,1]$. For the next result, we work in $\mb=\cC^0[0,1]$ space of continuous functions on $[0,1]$, equipped with the supremum norm $\|\cdot\|_\infty$. The next result implicitly assumes that $\log{f_0}$ is well--defined, that is, that $f_0$ is bounded from below.  

\begin{thm} \label{thmde}
Let $X^{(n)}$ be observations from the density estimation model. 

Let $\Pi$ be a prior distribution on $f\in \cC^0[0,1]=\mb$, defined as the distribution  of 
\begin{equation}\label{normgp}
t\to \frac{e^{W_t}}{\int_0^1 e^{W_u} du},
\end{equation}
with $(W_t, t\in[0,1])$ 
a centered Gaussian process with continuous sample paths, with RKHS $\mh$. 

Let $w_0:=\log f_0$. Suppose $w_0\in \overline{\mh}^\mb$. Suppose, for some $\veps_n>0$, we have 
\[ \vphi_{w_0}(\veps_n)\le n\veps_n^2, \]
with $\vphi_{f_0}$  the concentration function of $W$ in $\mb=\cC^0[0,1]$. 
Then for $M$ large enough, as $n\to\infty$,
\[ E_{f_0} \Pi[ h(f,f_0)>M \veps_n\given X^{(n)}] \to 0.\]

\end{thm}

\begin{proof}
One can apply Theorem \ref{thmvv} to the function $w_0$: there exist sets $B_n$ such that the conclusions (i)--(ii)--(iii) of that Theorem are satisfied. 

Our goal is to verify the conditions of the GGV theorem with the Hellinger distance $d=h$. 
For such $B_n$, let us set
\[ \cF_n:=\left\{ f= \frac{e^w}{\int_0^1 e^{w(u)} du},\quad \text{ for } w\in B_n\right\}.\]
By (ii), we have $\Pi[\cF_n^c]=P_W[\mb\setminus B_n]\le e^{-Cn\veps_n^2}$, so the second condition of GGV is satisfied (we denote by $P_W$ the distribution of the Gaussian process at the level of $w$'s, while $\Pi$ is the induced distribution at the level of densities $f$).\\

In order to verify the entropy and prior mass conditions of the GGV theorem, one needs to link the distance on $w$'s to the distance on densities. This is done in Lemma \ref{lem:dist}. \\

From the first inequality in Lemma \ref{lem:dist}, one deduces that a covering of $B_n$ by $3\veps_n$--balls using the $\|\cdot\|_\infty$--metric induces a covering of $\cF_n$ by $3\veps_n e^{3\veps_n/2}$--balls for the Hellinger distance $h$. For $\veps_n\to 0$ and large $n$, this implies 
\[ \log N(4\veps_n,\cF_n,h)\le \log N(3\veps_n,B_n,\|\cdot\|_\infty),\]
which means using (i) of Theorem \ref{thmvv} that the entropy condition of the GGV theorem is satisfied.\\

The second and third inequalities in Lemma \ref{lem:dist} imply, for a large enough constant $K>0$,
\[ \Pi[B_{KL}(f_0,K\veps_n)] \ge P_W[\|W-w_0\|_\infty\le 2\veps_n],\]
which is larger than $e^{-n\veps_n^2}$ using (iii) of Theorem \ref{thmvv}, which shows the prior mass condition is satisfied. The result now follows from the GGV theorem. 
\end{proof}

\subsubsection*{Examples}

{\em Brownian motion.} 
Consider Brownian motion $W_t=B_t$ in the setting $(\mb,\|\cdot\|_\mb)=(\cC^0[0,1],\|\cdot\|_\infty)$ (the results are the same up to constants in the $L^2$--setting). The small ball probability of Brownian motion is well--known from the probability literature: one can show (we admit it), as $\veps\to 0$,
\[ \vphi_0(\veps)=- \log P[\|B\|_\infty<\veps] \asymp \veps^{-2}.\]
It remains to study the approximation term in the concentration function.  The RKHS of Brownian motion on $[0,1]$ is $\{\int_0^\cdot g(u)du,\ g\in L^2[0,1]\}$, equipped with the Hilbert norm $\| \int_0^\cdot g\|_\mh^2=\|g\|_2^2$. 

\begin{lem} \label{lem:bmapp}
Let $(\mh,\|\cdot\|_\mh)$ be the RKHS of Brownian motion.  Suppose $w_0\in\cC^\be[0,1]$, for some $\be\ge 0$ and $w_0(0)=0$. Then
\[ \inf_{h\in\mh:\, \|h-w_0\|_\infty<\veps} \|h\|_\mh^2 \leqa \veps^{\frac{2\be-2}{\be}}\vee 1. \]
\end{lem}

\begin{proof}
We define a sequence $h\in\mh$ that approximates $w_0$. The idea is to use a convolution. First, one can restrict to the case $\be\le 1$, otherwise $w_0$ already belongs to the RKHS so one can take $h=w_0$. Also, $w_0$ can be extended to $\mathbb{R}$ while keeping the H\"older-type property $|w_0(x)-w_0(y)|\leqa |x-y|^\be$ (just take $w_0$ the appropriate constant outside of $[0,1]$) \\

Let $\phi_\sigma(u)=\phi(u/\sigma)/\sigma$, for $\sigma>0$, and $\phi(u)=e^{-u^2/2}/\sqrt{2\pi}$ the Gaussian density. Let 
\[ h_\sigma(t) := (\phi_\sigma*w_0)(t) - (\phi_\sigma*w_0)(0), \]
with $\phi_\sigma*w_0(t)=\int_\RR \phi_\sigma(t-u)w_0(u)du$. Note that $h_\sigma(0)=0$ and $h_\sigma$ is a $\cC^\infty$ map (because it is a convolution by a smooth function), so $h_\sigma$ belongs to $\mh$. We now evaluate
\begin{align*}
| \phi_\sigma*w_0(t)- w_0(t) | & = |\int \phi_\sigma(u)(w_0(t-u)-w_0(t)) du \\
& \leqa \int \phi_\sigma(u) |u|^\be du \leqa \sigma^\be \int |v|^\be\phi(v)dv\leqa \sigma^\be.
\end{align*}
Since $w_0(0)=0$, we get a similar bound for $\phi_\sigma*w_0(0)$ by setting $t=0$ in the previous inequality. This shows $\|h_\sigma-w_0\|_\infty\leqa \sigma^\be$. \\

On the other hand, $\|h_\sigma\|_\mh^2=\int_0^1 (h_\sigma)'(t)^2 dt$, where, using 
$\int \phi' =0$,
\begin{align*}
|(h_\sigma)'(t)| & =  | \int w_0(t-u) \frac{1}{\sigma^2} \phi'(u/\sigma) du| 
 = | \int (w_0(t-u)-w_0(t)) \frac{1}{\sigma^2} \phi'(u/\sigma) du|\\
& \leqa  \sigma^{-2} \int |u|^\be |\phi'(u/\sigma)| du  \leqa  \sigma^{\be-1}.
\end{align*}
The result follows by taking $\sigma\asymp \veps^{1/\be}$. 
\end{proof}
It follows also from the proof of Lemma \ref{lem:bmapp} that $\overline{\mh}^\mb$ is the set of continuous functions $f$ such that $f(0)=0$ (one uses the proof for $w_0$ continuous and $w_0(0)=0$, replacing the H\"older condition by absolute continuity of $w_0$). That is, almost all of $\mb$ except for the restriction $f(0)=0$. One can show that to obtain all of $\mh$, it suffices to consider `Brownian motion released at zero'
\[ Z_t = B_t + Y, \]
with $Y$ an $\cN(0,1)$ variable independent of $(B_t)$. The RKHS of $(Z_t)$ can be shown to be $H=\{c+\int_0^\cdot g(u)du,\ g\in L^2[0,1], c\in \RR\}$, for which $\overline{\mh}^\mb=\mb$.

By gathering the small ball probability estimate and Lemma \ref{lem:bmapp}, one gets, with $a\vee b=\max(a,b)$,
\[ \vphi_{w_0}(\veps_n)\leqa \veps_n^{-2} + C \vee \veps_n^{(2\be-2)/\be}.\]
By equating this rate to $n\veps_n^2$, one obtains, with $a\wedge b=\min(a,b)$,
\[ \veps_n \asymp n^{-1/4} \vee n^{-\be/2} = n^{-\left\{\frac14\wedge \frac{\be}{2}\right\}}. \]
The rate is the fastest if $\be=1/2$, for which $\veps_n \asymp n^{-1/4}$. When $\be<1/2$, the rate is $\veps_n \asymp n^{-\be/2}$: the approximation term (the `bias') dominates in the contribution from the concentration function. When $\be\ge 1/2$, the small ball probability term (analog of the `variance') dominates.\\

By using Theorem \ref{thmde} in the density estimation model, with $W_t$ in \eqref{normgp} a Brownian motion released at $0$, one obtains that for any true density $f_0$ such that $w_0=\log{f_0}$ belongs $\cC^\be[0,1]$ then an upper-bound on the posterior convergence rate is given by $\veps_n$ in the last display.

It can be shown that the above rate cannot be improved for Brownian motion: it is the best one that one can get with this prior. From the minimax perspective, the rate $\veps_n$ above matches the minimax rate for estimating $\cC^\be$ functions, that is $n^{-\be/(2\be+1)}$ if and only if $\be=1/2$. \\

{\em Riemann-Liouville process $W_t=R_t^\alpha$.} One can show a similar result as for Brownian motion (again, modulo proper `release' of the process at $0$ so that $\overline{\mh}^\mb$), with the `regularity' $1/2$ of Brownian motion replaced by $\al$. Up to a possible logarithmic factor, the obtained rate is then \cite{vvvz, ic08}
\[ \veps_n \asymp n^{-\frac{\al\wedge \be}{2\al+1}}.\]
Again, the rate is the optimal one (from the minimax perspective) if $\al=\be$, but sub-optimal otherwise.\\

{\em GP series prior.} Recall the random series GP prior,  for $\zeta_j$ iid $\cN(0,1)$ variables
\[ W_t = \sum_{j=1}^\infty j^{-\frac12-\alpha} \zeta_j e_j(t),\]
 and $(e_j)$ an orthonormal basis of $L^2[0,1]$. For $\mb=L^2[0,1]$, it can be shown that a rate solving the concentration function equation $\vphi_{w_0}(\veps_n)\le n\veps_n^2$ is again
 \[ \veps_n \asymp n^{-\frac{\al\wedge \be}{2\al+1}}.\]

\subsubsection*{Take-away message}

The main take-away message from Theorem \ref{thmvv} and its applications in Theorems \ref{thmgwn} and \ref{thmde} is that, when a Gaussian process is used as prior distribution (and provided the $\|\cdot\|_\mb$--norm is easily related to the testing distance $d$, KL and V), the rate of convergence of the posterior distribution in terms of $d$ is essentially determined by solving the equation $\vphi_{w_0}(\veps_n)\leqa n\veps_n^2$, where $w_0=f_0$ in the white noise model (respectively $w_0=\log{f_0}$ in density estimation).  

As applications, we have seen here only a few examples of GPs, but the results can be applied to many others, including `squared-exponential' (which we study in the next Chapter) \cite{vvvz09}, Mat\'ern \cite{vvvz11}... 
Similarly, the results apply much more broadly in terms of  statistical models (e.g. to binary classification, random design regression etc.), see e.g. \cite{vvvz09, vvvz11, gvbook}.  \\

From the rates $\veps_n$ obtained above, we see that one always obtain a convergence rate going to zero (the posterior is said to be consistent), which is optimal if (and only if \cite{ic08}) the prior `regularity' matches $\be$, the regularity of the function to be estimated. As $\be$ is rarely known in practice, this shows that the Gaussian process priors  have to be made more complex if one wishes to derive {\em adaptation}, i.e. obtaining a prior for which the posterior achieves the optimal rate regardless of the actual value of $\be$. This question is considered in Chapter \ref{chap:ada1}. 

\section{P\'olya trees and further basis-related priors} \label{sec:polya}

There are many more interesting prior constructions for nonparametrics, we mention only two others here, several others will appear in the next chapters.\\

\ti{$p$-exponential priors} It is natural to ask what happens if the Gaussian distribution in the series prior \eqref{seriesgp} is replaced by another one. If the distribution of the iid variables $\zeta_i$ has density proportional to $\exp(-|x|^p/p)$, called $p$--exponential or Subbotin distribution ($p=1$ gives the Laplace law, $p=2$ the Gaussian), with $1\le p\le 2$, then \cite{agapiouetal21} develops a theory based on a generalisation of the concentration function $\vphi_w(\veps)$ for Gaussian processes. It then follows that the posterior contraction rate around a $\beta$--smooth unknown for a $p$--exponential series prior (i.e. \eqref{seriesgp} with Gaussian replaced by $p$-exponential) is given by
\begin{equation} \label{ratep}
 \veps_n\leqa 
\begin{cases}
n^{-\frac{\be}{1+2\be+p(\al-\be)}},&\qquad \text{if } \al\ge \be,\\
n^{-\frac{\al}{1+2\al}},&\qquad \text{if } \al \le \be. 
\end{cases} 
\end{equation}
The obtained rate features a similar break-point at $\al=\be$ as for GPs, and the rate is the same in the {\em undersmoothing} case $\al\le \be$, for which it is the prior's own regularity that drives the rate. In the {\em oversmoothing} case $\al>\be$, the rates improves from $n^{-\be/(1+2\al)}$ (Gaussian case $p=2$) to $n^{-\be/(1+\be+\al)}$ (Laplace case $p=1$). The case of even heavier tails for the $\zeta_i$'s is particularly interesting and discussed in details in Chapter \ref{chap:ada1}.\\

\ti{P\'olya trees} Now we explain a way to construct random probability measures using a regular dyadic partition and a tree.  First we introduce some notation relative to dyadic partitions. For any fixed indexes $l\geq 0$ and $0 \leq k < 2^l$, the number $r = k2^{-l}$ can be written in a unique way as $\veps(r) := \veps_1(r) \ldots \veps_l(r)$, its finite expression of length $l$ in base $1/2$ (it can end with one or more $0$'s). That is, $\veps_i \in \{0, 1\}$ and \[k2^{-l} = \sum_{i=1}^l \veps_i(r)2^{-i}.\] Let $\mathcal{E} := \bigcup_{l \geq 0}\{0,1\}^l\cup \{\emptyset\}$ be the set of finite binary sequences. We write $|\veps| = l$ if $\veps \in  \{0,1\}^l$ and $|\emptyset| = 0$. For $\veps=\veps_1\veps_2\ldots\veps_{l-1}\veps_l$, we also use the notation $\veps'=\veps_1\veps_2\ldots\veps_{l-1}(1-\veps_l)$.

Let us introduce a sequence of partitions $\mathcal{I} = \{ (I_\veps)_{|\veps|=l}, l \geq 0\}$ of the unit interval. Set $I_\emptyset = (0, 1]$ and, for any $\veps \in \mathcal{E}$ such that $\veps= \veps(l;k)$  is the expression in base $1/2$ of $k2^{-l}$, set 
\[I_\veps := \left(\frac{k}{2^l},\frac{k+1}{2^l}\right]:=I_k^l. \]	
For any $l \geq 0$, the collection of all such dyadic intervals is a partition of $(0, 1]$.

Suppose we are given a collection of random variables $(Y_{\veps},\,\veps\in\cE)$ with values in $[0,1]$ such that 
\begin{align}
Y_{\veps1} & = 1 - Y_{\veps0},\qquad \forall\, \veps\in\cE, \label{conservative} \\
E[Y_\veps Y_{\veps0} Y_{\veps00}\cdots]&=0
\qquad\qquad\quad \forall\, \veps\in\cE,\ 
\end{align}
Let us then define a random probability measure on dyadic intervals by 
\begin{equation}\label{treepr}
P(I_\veps) = \prod_{j=1}^l Y_{\veps_1\ldots\veps_j}.
\end{equation}
By (a slight adaptation, as we work on $[0,1]$ here, of) Theorem 3.9 in \cite{gvbook}, the measure $P$ defined above extends to a random probability measure on Borel sets of $[0,1]$ almost surely, that we call {\em tree--type prior}. 

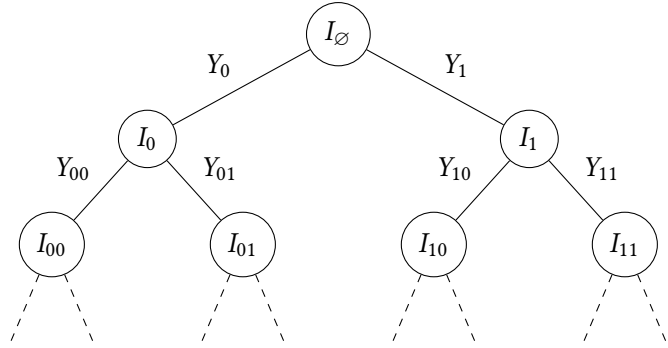
\begin{figure}
\begin{center}
\begin{tikzpicture}[
   level distance=1.4cm,sibling distance=1cm, 
   edge from parent path={(\tikzparentnode) -- (\tikzchildnode)}]
\Tree [.\node[draw,circle] {$I_\varnothing$}; 
    \edge node[auto=right] {$Y_{0}$}; 
    [.\node[draw,circle]{$I_0$};  
      \edge node[auto=right] {$Y_{00}$};  
      [.\node[draw,circle]{$I_{00}$}; 
      \edge[dashed]; {}
      \edge[dashed]; {}
      ]  
      \edge node[auto=left] {$Y_{01}$}; [.\node[draw,circle]{$I_{01}$}; 
      \edge[dashed]; {} 
      \edge[dashed]; {}
      ] 
          ]
     \edge node[auto=left] {$Y_{1}$};      
    [.\node[draw,circle]{$I_1$};
    \edge node[auto=right] {$Y_{10}$};  
      [.\node[draw,circle]{$I_{10}$}; 
       \edge[dashed]; {}
      \edge[dashed]; {}
      ] \edge node[auto=left] {$Y_{11}$}; 
    [.\node[draw,circle]{$I_{11}$};
     \edge[dashed]; {}
      \edge[dashed]; {}
     ] 
    ] ]
\end{tikzpicture}
\caption{Indexed binary tree with levels  $l\le 2$ represented. The nodes index the intervals $I_\veps$. Edges are labelled with random variables $Y_\veps$.} \label{figtree}
\end{center}
\end{figure}

\textit{Paths along the tree.} The distribution of mass in the construction \eqref{treepr}  can be visualised using a tree representation: to compute the random mass that $P$ assigns to the subset $I_\veps$ of $[0, 1]$, one follows a binary tree along the expression of $\veps$ : $\veps_1; \veps_1\veps_2,\ldots,\veps_1\veps_2\ldots\veps_l = \veps$. The mass $P(I_\veps)$ is the product of variables $Y_{\veps0}$ or $Y_{\veps1}$ depending on whether one goes `left' ($\veps_j = 0$) or `right' ($\veps_j = 1$) along the tree :\begin{equation}\label{defp}P(I_\veps) = \prod^l_{j=1, \veps_j=0} Y_{\veps_1,\ldots,\veps_{j-1}0} \times \prod^l_{j=1, \veps_j=1}(1 - Y_{\veps_1,\ldots,\veps_{j-1}0}).
\end{equation}
see Figure \ref{figtree}.  A given $\veps = \veps_1, \ldots, \veps_l \in \mathcal{E}$ gives rise to a path $\veps_1\rightarrow \veps_1\veps_2 \rightarrow \veps_1\veps_2 \ldots \veps_l$. We denote $I^{[i]}_\veps := I_{\veps_1\ldots \veps_i}$, for any $i$ in $\{1,\ldots, l\}$. Similarly, denote $$Y^{[i]}_\veps =  Y_{\veps_1\ldots \veps_i}.$$ 
  
One can continue this construction for $\veps$ of arbitrary length, obtaining an infinite binary tree. One can also truncate at a given level  $|\veps|=L$.

{\em Link with the Haar basis.} Denoting by $(\psi_{lk})$ the standard Haar basis, a simple calculation shows 
\begin{equation} \label{polyac}
\int_0^1 \psi_{lk} dP = 
 2^{l/2}P(I_\veps)(1-2Y_{\veps0}),
\end{equation}
which we may interpret as a coefficient  on the Haar basis.  
  
\begin{definition} \label{pt}
A random probability measure $P$ follows a \sbl{P\'olya tree} distribution PT$(\mathcal{A})$ with parameters $\mathcal{A} =
\{\alpha_\veps ; \veps \in \mathcal{E}\}$ on the sequence of partitions $\mathcal{I}$ if it is a tree prior distribution as in \eqref{treepr} with variables $Y_{\veps}$ that, for  $\veps \in \mathcal{E}$, are mutually independent and follow a Beta distribution
\begin{equation}\label{priorbeta}
Y_{\veps0}\sim \text{Beta}(\alpha_{\veps0}, \alpha_{\veps1}).
\end{equation}
\end{definition}
A standard assumption is that the parameters $\alpha_\veps$ only depend on the depth $|\veps|$, so that $\alpha_\veps = a_l$ for all $\veps$ with $|\veps|=l$,  any $l \geq 1$, and a sequence $(a_l)_{l\geq 1}$ of positive numbers. The class of P\'olya tree distributions is quite flexible: different behaviours of the sequence of parameters $(a_l)$ give P\'olya trees with remarkably different properties. For instance (e.g. \cite{gvbook}, Chapter 3)
\begin{itemize}
\item if $\sum_{l} a_l^{-1}$ converges, $P$ is a.s. absolutely continuous with respect to Lebesgue measure, with density $f$. This happens if $a_l$ increases fast enough, e.g. for the choice $a_l=2^{2\al l}$ with $\al>0$; 
\item the special choice $a_l=2^{-l}$ gives the Dirichlet process DP \cite{ferguson73} with uniform base measure, which verifies 
$P(A_1, \ldots, A_p) \sim \text{Dir}(|A_1|,\ldots,|A_p|)$ for any measurable partition $A_1,\ldots,A_p$ of the unit interval if $P$ is a draw from the DP, with Dir$(\cdot)$ denoting the discrete Dirichlet distribution;
\item the case $a_l=1$  gives a.s. a fractal-type measure $P$ that has a continuous distribution function but is {\em not} absolutely continuous: one obtains a type of Mandelbrot's multiplicative cascade.
\end{itemize}

We will mostly discuss the first of the three cases above, but let us mention that the Dirichlet process is a central object in the study of discrete random structures and of Bayesian nonparametrics in particular. A draw $P$ from a DP is a discrete measure with atoms at random iid locations and a special distribution for the atom's probabilities. It is a canonical prior on distributions, although it cannot be used directly in the dominated framework we consider (since the model of all distributions is not dominated). Nevertheless, it can still often be deployed for instance as a mixing random distribution (see Chapter \ref{chap:ada1}).

As it turns out, the choice  of parameters $a_l=2^{2\al l}$ with $\al>0$ is a `right one' \cite{c17} in order to model $\al$--smooth functions: this can be seen from the fact that $P$ has in this case a density $f$ so that \eqref{polyac} gives the Haar wavelet coefficients of $f$ as
\[ f_{lk} = 2^{l/2}P(I_\veps)(1-2Y_{\veps0}),\]
where $P(I_\veps)$ is a product (`cascade') of Beta variables. In particular, one can show \cite{c17} that in the density estimation model, taking as prior the one this `$\al$--regular' PT induces on densities (recall that by the first point above  PT draws have a density), the corresponding posterior distribution contracts at rate in the $L^2$ sense (up to logarithmic factors)
\[ \veps_n \leqa n^{-\frac{\al\wedge \be}{2\al+1}}, \] 
if the true density is  $\be$--H\"older and bounded away from $0$. 
This can be shown by using the following remarkable conjugacy property of PTs (stated here in a general non-dominated framework \cite{gvbook})

\begin{prop}[conjugacy of PTs in iid sampling model]$\ $ Suppose $X_1,\ldots,X_n\given P$ are iid of law $P$, and let us endow $P$ with a PT$(\cA)$ prior. Then
\[ P\given X_1,\ldots, X_n \sim \text{PT}(\cA_X),\]
where the updated parameters of the Beta variables are $\al_\veps^X:=\al_\veps + N_X(I_\veps)$, where $N_X(I_\veps)$ is the number of points in the sample that fall in $I_\veps$.
\end{prop}

\ti{Gaussian processes and P\'olya trees: an analogy} In view of the results of Section \ref{sec:GPs}, where similar contraction rates are obtained, and of similar conjugacy properties of GPs in Gaussian regression, it is natural to view $\al$--regular PTs as above as `density-estimation-analogues' of $\al$--regular GPs in regression. Pushing this analogy a bit further, the Dirichlet process can be interpreted as the analogue in the iid sampling model of a Gaussian white noise in Gaussian regression. Both objects have ``regularity $-1/2$" (recall the DP corresponds to $a_l=2^{2\cdot(-1/2)\cdot l}=2^{-l}$ as noted above), something that for white noise will be relevant in Chapter \ref{chap:bvm2}.

\vm 

\section*{Exercises}

\begin{enumerate}
\item {\em Lower bounds in parametric models.} Consider a model $\cP=\{P_{\te}^{\otimes n},\ \te\in\Theta\}$ with $\Theta=\RR$. 
\begin{enumerate}
\item Consider the fundamental model $\cP_G:=\{\cN(\te,1)^{\otimes n},\ \te\in\RR\}$ with a Gaussian $\cN(0,1)$ prior. By using the explicit expression of the posterior, show that for any $m_n\to 0$, the rate $\zeta_n=m_n/\sqrt{n}$ is a lower bound for the posterior rate.
\item Still in model $\cP_G$, verify that the following property $(P)$ holds: there exists a constant $c>0$ such that for any $\veps>0$ and $\theta_0\in\Theta$,
\[ B_{K}(\te_0,\veps) \supset \{\te:\ |\te-\te_0|\le c\veps\}. \]
That is, the model is {\em ``regular"} in that $KL$--neighborhoods are `comparable' to intervals.
\item Turning now to the general case, suppose that the model verifies property (P) defined in (b) and that the prior has a continuous and positive density with respect to Lebesgue measure on $\RR$. Prove that for any vanishing sequence $(m_n)$
\[ E_{\te_0}\Pi[|\te-\te_0|\le \frac{m_n}{\rn}\given X] \to 0, \]
that is, $m_n/\rn$ is a lower bound for the posterior contraction rate.
\end{enumerate}
\end{enumerate}

\addcontentsline{toc}{section}{{\em Exercises}}

\chapter{Adaptation I:  smoothness} \label{chap:ada1}

As we have seen, one limitation of Gaussian processes (GP) for statistical inference is that the optimal statistical rate, for instance in a regression setting, is attained only if the GP's parameter is well-chosen in view of  the smoothness of the function to be estimated. However, in practice the latter is typically unknown, leading to an {\em adaptation} problem. Below we see that, at least for the canonical distances used in the previous generic results, the adaptation question can be addressed in conceptually simple ways, both in terms of construction and proofs.

\section{General principles}

To fix ideas suppose the model is $\cP=\{P_f, f\in \cF\}$, where $f$ is a function to be estimated, and that we have a family of prior distributions $\{\Pi_\al\}$ 
 indexed by a parameter $\al\in\cA$: for instance $\al=K$ with $K$ the number of bins for regular random histograms, or $\al$ indexing the decrease of variances in the Gaussian series prior 
\begin{equation} \label{gpser}
W(\cdot) = \sum_{j\ge 1} j^{-\frac12-\alpha} \zeta_j e_j(\cdot).
\end{equation}

In this section we present two main possibilities: {\em hierarchical Bayes}, where $\al$ is itself given a prior distribution and {\em empirical Bayes}, where $\al$ is `estimated' by a data-driven quantity $\hat{\al}$ to be chosen. The former is probably the most Bayesian in spirit, and has typically the most flexibility, although the latter can be sometimes easier to compute.\\

{\em Hierarchical Bayes.} The prior $\Pi$ on $f$ takes the form
\begin{align*}
\al & \sim \pi \\
f\given \al & \sim \Pi_\al,
\end{align*}
where $\pi$ is a distribution on $\cA$. The prior is then a {\em mixture} $\Pi[f\in \cdot] = \int \Pi_\al[f\in \cdot] d\pi(\al)$. Of course, it is a special case of the usual Bayesian setting, with the prior taking this specific mixture form.\\

{\em Empirical Bayes.} One sets $\Pi=\Pi_{\hat\al}$, where $\hat\al=\hat\al(X)$ is an `estimator' of $\alpha$ to be chosen. In principle $\hat\al$ can be any measurable function of the data, although we present here a general principle that is often employed in practice, namely  empirical Bayes {\em marginal maximum likelihood} (MMLE). The idea is that the {\em marginal distribution} of $X$ given $\al$ in the Bayesian framework, whose density is the denominator in Bayes' formula written for the prior $\Pi_\al$ for fixed $\al$ i.e. 
\[ \Pi_\al[B\given X] = \frac{\int_B p_f(X) d\Pi_\al(f)}{\int p_f(X) d\Pi_\al(f)}  \]
can serve as a likelihood for $\al$. One then maximises it, setting
\[ \hat\al(X) = \underset{\al\in\cA}{\text{argmax}}\, \int p_f(X) d\Pi_\al(f). \]
Sometimes the set of maximisation is made slightly smaller to avoid `boundary' problems. Some examples will be given below.

In terms of proofs, for hierarchical Bayes priors one can use the generic approach to rates presented in Chapter \ref{chap:intro}. For empirical Bayes (EB), the situation is somewhat more complicated, as the prior depends on the data, so the arguments do not go through as such. Rousseau and Szab\'o \cite{rs17} provide a set of sufficient conditions in the spirit of the generic theorems as before to deal with EB marginal maximum likelihood. In case of simple models and when a closed-form expression of the marginal likelihood is available, it is also possible to use direct arguments.

\section{Random histograms}

In the setting of the Gaussian white noise model, in order to make the random histogram with deterministic number $K=K_n$ of bins we considered earlier {\em adaptive} to smoothness, we can simply take $K$ itself random by setting $\Pi=\Pi_H$ the hierarchical prior, with $I_{k,K}=((k-1)/K,k/K]$,
\begin{align*}
K & \sim \pi_K(\cdot),\qquad \text{with } \pi_K(k)\propto e^{-k\log{k}},\\
f\given K & \sim \cL\left( f = \sum_{k=1}^K h_{k,K} \1_{I_{k,K}},\quad (h_{1,K},\ldots,h_{k,K})\sim P_\psi^{\otimes K} \right).
\end{align*}

\begin{thm} \label{thm:randomkhist}
In the Gaussian white noise model, suppose the true $f_0$ belongs to  $\cF(\be,L,M)$ as in \eqref{classhisto} for some $\be\in(0,1]$ and $L,M>0$. Then for $\Pi=\Pi_H$ the prior with random $K$ as above 
\[ E_{f_0}\Pi[\|f-f_0\|_2\le m\veps_n\given X]\to 1,\qquad 
\veps_n\asymp \left(\frac{\log{n}}{n} \right)^{\frac{\be}{2\be+1}},
\]
as $n\to\infty$, where $m>0$ is a large enough constant.
\end{thm}
Other choices of the prior on $K$ are possible. For instance, one could take $\pi_K(k)\propto e^{-k}$. This would lead to a similar result, but with a slightly different log--factor in the rate. \\

\begin{proof}
One defines a sieve as, with $K_n=Dk_n$ for $D$ large enough to be chosen, and $k_n=(n/\log{n})^{1/(2\be+1)}$, 
\[ \cF_n = \bigcup_{k=1}^{K_n} \left\{f=\sum_{j=1}^k h_{j,k} \1_{I_{j,K}},\ 
\max_{j} |h_{j,k}| \le n \right\}=\bigcup_{k=1}^{K_n} \cF_{n,k}.     \]
The entropy condition is easily verified using $\|u\|_2\le \sqrt{k}\max_j |u_j|$ and, arguing as in the proof of Theorem \ref{thm:hist},  the estimate
$N(\veps,\cF_{n,k},\|\cdot\|_2) \le (3n\sqrt{k}/\veps)^k$, so that
\[ N(\veps,\cF_{n},\|\cdot\|_2)\le \sum_{k=1}^{K_n} (3nK_n/\veps)^k \leqa (nK_n/\veps)^{k_n+1}\]
which gives $\log N(\oli\veps_n,\cF_{n},\|\cdot\|_2)\leqa K_n\log(n/\oli\veps_n)+K_n\log K_n$. 

Also, the complement of the sieve has small prior mass as
\begin{align*}
\Pi[\cF_n^c] & \le \Pi(K>K_n) 
+ \sum_{k=1}^{K_n} \Pi[\cF_n^c\given K=k]\Pi[K=k] \\
& \leqa e^{-K_n\log{K_n}} + \sum_{k=1}^{K_n} ke^{-n}\Pi[K=k]
\leqa e^{-K_n\log{K_n}} +  e^{-n}E[K]\leqa e^{-K_n\log{K_n}} +  e^{-n},
\end{align*}
where we use that $K$ has finite expectation. Finally, for the prior mass condition,
\begin{align*}
 \Pi[\|f-f_0\|_2\le \uli\veps_n] & \ge \Pi[\{\|f-f_0\|_2\le \uli\veps_n\}\cap\{K=k_n\}] \\
& \ \ = \Pi[\|f-f_0\|_2\le \uli\veps_n\given K=k_n] \Pi[K=k_n]
\end{align*}
and we can now used the bound for fixed $K=k_n$ used in the previous chapter, which gives, provided 
 $Lk_n^{-\be}\le \uli\veps_n/2$, that 
$\Pi[\|f-f_0\|_2<\uli\veps_n\given K=k_n] 
\ge  (\uli\veps_n/2)^{k_n} \exp\{-2Mk_n\}$. \\

Putting everything together, we see that we need: $K_n\log(nK_n/\oli\veps_n)\leqa n\oli\veps_n^2$, and $Lk_n^{-\be}\le \uli\veps_n/2$ as well as $k_n\log(2/\uli\veps_n)+2Mk_n\leqa n\uli\veps_n^2$. This is satisfied for $\oli\veps_n\asymp\uli\veps_n\asymp\veps_n$ as in the statement of the result (choose first $\uli\veps_n$ with a large enough constant to verify prior mass, then $D$ large enough to verify the second condition and finally the constant in front of $\uli\veps_n$ to be large enough).
\end{proof}

\section{Adaptation for Gaussian priors}

In the next two Sections, we give constructions based on Gaussian process priors that lead to adaptation to smoothness in a conceptually simple way. We mostly refer to original papers for the proofs: these can be based on the tools for GPs using the concentration function introduced in Chapter \ref{chap:rate1}, at least for fixed hyperparameter, and making the dependence of the concentration function explicit in this hyperparameter.

\subsubsection*{Series priors with adaptive choice of $\al$}

Consider the hierarchical prior $\Pi$, for $\zeta_i$ iid standard normal and $\expo$ a standard exponential variable
\begin{equation} \label{prior-hgp}
\begin{aligned}
\al & \sim \expo(1) \\
f\given \al & \sim \Pi_\al \qquad \text{law of } \sum_{j\ge 1} j^{-\frac12-\al}\zeta_j e_j(\cdot)
\end{aligned}
\end{equation}
The exponential prior for $\alpha$ is just one possibility, Gamma or heavier tailed densities are also possible. 

For the empirical Bayes approach, one can follow a marginal maximum likelihood approach. 
Projecting the white noise model onto the basis $(e_j)$, setting $Y_j=\int e_j(u)dX^{(n)}(u)$ and $Y=(Y_j)$, the marginal distribution of $Y\given \al$ can be shown to be \cite{ksvv16}
\[ Y\given \al \sim  \bigotimes_{j=1}^\infty \cN\left(0,j^{-1-2\al}+\frac1n\right), \]
and the relative log-likelihood (with respect to an infinite product of $\cN(0,1/n)$ variables) is 
\[ \ell_n(\al) = -\frac12 \sum_{j=1}^\infty 
\left( \log\left(1+\frac{n}{j^{1+2\al}} \right)-\frac{n^2}{j^{1+2\al}+n}Y_j^2 \right).
\]
One may then set, choosing $\cA_n=[0,\log{n}]$, 
\begin{equation} \label{aleb}
\hat\al = \underset{\al\in\cA_n}{\text{argmax}}\, \ell_n(\al).
\end{equation}
Let $\cS(\be,R)$ be the Sobolev ball $\{f:\ \sum_j j^{2\be} f_j^2 \le R\}$ with $f_j=\int_0^1 f(u) e_j(u)du$.

\begin{thm}{\cite{ksvv16}} \label{thm-gpa} Suppose $f_0$ belongs to $S(\be,R)$ for some $\be,R>0$.\\

{\em Hierarchical Bayes.} Consider the prior $\Pi$ in  \eqref{prior-hgp} on $f$ in the Gaussian white noise regression model.  Then there exists a positive constant $l$ such that
\[ E_{f_0} \Pi[ \|f-f_0\|_2 > (\log{n})^l n^{-\frac{\be}{2\be+1}} \given Y] = o(1). \]

{\em Empirical Bayes.} Let $\hat\al$ be defined by \eqref{aleb}. Then the plug-in posterior $\Pi_{\al}[\cdot\given Y]$ verifies, for $l$ large enough,
\[ E_{f_0} \Pi_{\hat{\al}}[ \|f-f_0\|_2 > (\log{n})^l n^{-\frac{\be}{2\be+1}} \given Y] = o(1).  \]
\end{thm}
The original proof in \cite{ksvv16} uses a direct argument based on the explicit form of $\ell_n(\al)$;   \cite{as23} for $p$-exponential  $\zeta_j$'s uses an approach based on generic arguments as in \cite{rs17}.

\subsubsection*{Abstract GPs with random scaling}

Suppose we start with a smooth Gaussian process on $[0,1]$. It turns out that smoothness of the GP is related to smoothness of its covariance: a popular choice is the {\em squared-exponential} GP, denote $\sqe(a)$ in the sequel $(a>0)$ which is the mean-zero GP $Z$ with covariance 
\begin{equation}\label{sqk}
 K(s,t) = E[Z(s)Z(t)] = e^{-a^2(s-t)^2}, \qquad s,t\in[0,1], 
\end{equation} 
where $a^2$ is called the inverse length scale parameter. This process has same distribution as $t\to W(at)$ where $W$ is a $\sqe(1)$ process, and can be seen to have analytic sample paths. Its small ball behaviour is obtained in \cite{vvvz09}
\begin{equation}\label{sqe-sb}
 \vphi_0(\veps) = - \log P\Big( \sup_{t\in[0,1]} |Z(t)| \le \veps\Big) 
\le Ca \left(\log\frac{a}{\veps}\right)^2. 
\end{equation}
The small-ball probability decreases much slower than for e.g. Brownian motion, reflecting that the process varies much less, which does not sound very attractive for adaptation: in fact, by using a lower bound argument as in Section \ref{sec:lb} it can be shown that the convergence rate of the posterior associated to $\sqe(a)$ cannot be better than $1/(\log{n})^{-l}$ for some constant $l$ \cite{vvvz11}! However, one may counter this by making $a$ random. Observe indeed in Figure \ref{plotsqe} that taking $a$ large makes the paths more wiggly by `accelerating time'. On the contrary, making $a$ smaller would tend to `freeze' the paths, an effect we will use later in Chapter \ref{chap:ada2}.
\begin{center} \label{plotsqe}
\includegraphics[height=4cm, width=8cm]{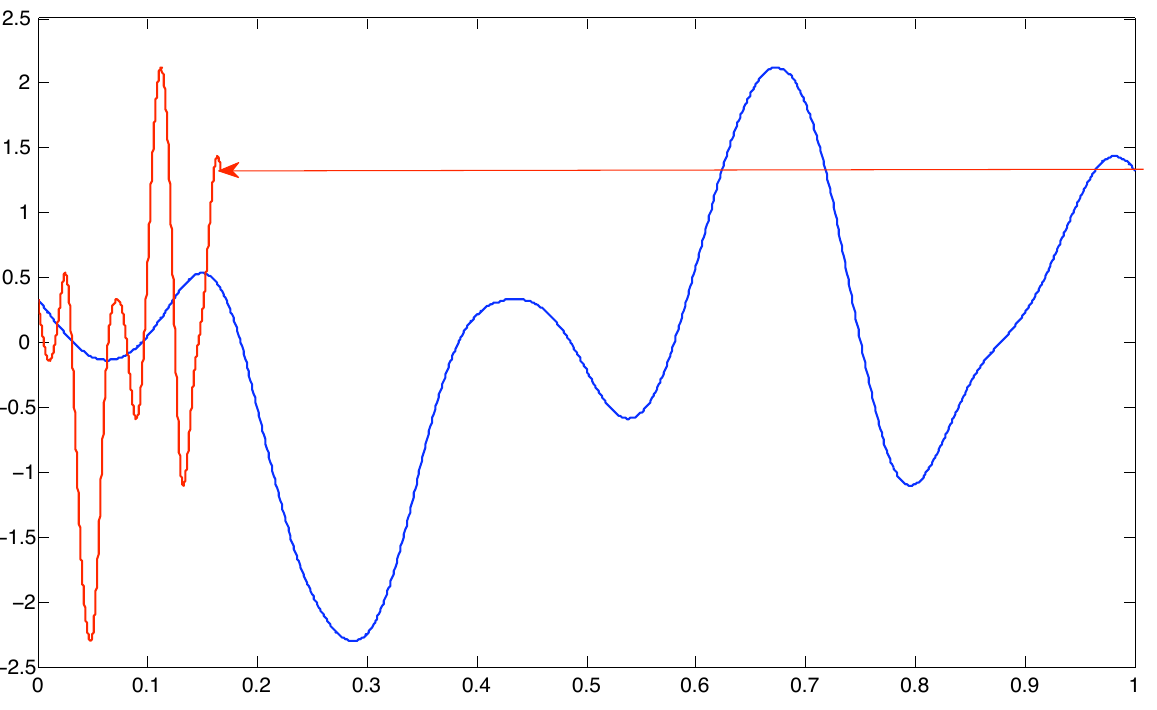}
\end{center}
Consider the following hierarchical prior $\Pi$ on smooth functions
\begin{equation} \label{prior-sqa}
\begin{aligned}
A & \sim \expo(1) \\
Z\given A & \sim \sqe(A),
\end{aligned} 
\end{equation}
for $\expo(1)$ a standard exponential distribution (other Gamma laws work, too).

\begin{thm}{\cite{vvvz09}} \label{thm-sqa}
Consider the prior $\Pi$ in \eqref{prior-sqa} on $f$ in the Gaussian white noise regression model. Suppose $f_0$ belongs to $\cH(\be,L)$ for some $\be,L$. Then there exist a positive constant $l$ such that
\[ E_{f_0} \Pi[ \|f-f_0\|_2 > (\log{n})^l n^{-\frac{\be}{2\be+1}} \given X] = o(1). \]
\end{thm}

The proof of this result in \cite{vvvz09} is based on obtaining a precise dependence in $a$ for the concentration function of the GP, in order to apply the theory from Chapter \ref{chap:rate1} when verifying the conditions of the generic Theorem \ref{thm:ggve}. First, one bounds from above the entropy of the GP's RKHS unit ball $\mh_1$ in terms of $a$. This enables one to derive the dependence in $a$ for the small ball probability \eqref{sqe-sb}, using general links between the entropy of $\mh$ and the small ball probability \cite{KuelbsLi93}. The approximation term of the concentration function is then handled separately. Finally, one checks that the choice of the prior on $a$ is compatible with the conditions of Theorem \ref{thm:ggve}.

\section{Geometric priors}
 
\ti{Geometric spaces} Consider a geometric framework such as density estimation \eqref{gdens} on a compact metric space $\M$ with metric $\rho$ and equipped with a Borel measure $\mu$,  or a corresponding white noise model 
\begin{equation} \label{ggwn}
dX^{(n)}(x) = f(x)dx + \frac{1}{\rn} dZ(x),\quad x\in \M, 
\end{equation}
where $f$ is square-integrable on $\M$ and $Z$ is a white noise on $\M$.

There is a simple reason why the squared-exponential kernel cannot be used in such a context. Although \eqref{sqk} admits the immediate generalisation, for $\rho$ the metric on $\M$,
\begin{equation} \label{sqexp-rho} 
\kappa_{\rho}(s,t) = e^{-\rho(s,t)^2},\qquad (s,t)\in\M^2, 
\end{equation} 
it can be shown that this function is {\it not} positive definite in general 
  already for the simplest examples such as $\M$ taken to be the sphere in $\R^k$, $k\ge 2$. Yet, we shall see below that \eqref{sqk} admits a natural generalisation  to this context, but it is not as simple as \eqref{sqexp-rho}.\\

For simplicity we present in an overview of the construction in \cite{ckp13}, giving pointers to the paper for details when appropriate. We take the case of the sphere $\M=\mathbb{S}^2$ as recurrent illustration. Let $B(x,r)$ denote the ball of center $x$ and radius $r$ for the metric $\rho$ on $\M$. 
Suppose that $\M$ verifies the so-called {\it Ahlfors property}: there exist positive $  c_1, c_2,  d$ such that 
\begin{equation}\label{poly-case} \hbox{ for all } x \in \M, \;  \hbox{for all } \;   0 <r  \leq 1, \;\ \  c_1 r^d \leq | B(x,r)| \leq c_2 r^d.
\end{equation} 
In the case of the sphere $\mathbb{S}^2$, $d=2$. More generally $d$ in the sequel can be thought of as the `dimension' of $\M$ although in general $d$ could be non-integer. \\

\ti{Operator $L$, Laplacian and heat kernel}
The starting point is a self-adjoint positive operator $L$ on functions  on $\M$ 
(more precisely on a domain $D$ dense in $L^2(\M)$, the space of square integrable functions with respect to the measure $\mu$). 
When defined, {\em minus} the {\em Laplacian} on $\M$, that is $L=-\Delta_\cM$ is typically appropriate.
Suppose $L$ admits a discrete spectrum with finite dimension spectral spaces 
$\cH_k=\text{Vect}\{(e_k^l),\ 1\le l\le \text{dim}(\cH_k)\}$ and that its eigenfunctions $e_k^l$ are continuous functions on $\M$. The numbering is chosen so that the eigenvalues are ordered in an increasing order. Also, in all this section, sums over $k$ and $l$ range over $1\le k\le\text{dim}(\cH_k)$ and $1\le l < \infty$. Under some conditions, the following series converges to a continuous function
\begin{equation} \label{heat-kernel}
P_t(x,y) := \sum_{k} e^{-t\la_k} \sum_{l} e_k^l(x) e_k^l(y),
\end{equation}
on $\cM\times \cM$, called the {\em heat kernel}. Let us justify this terminology in an informal way when $L=-\Delta_\cM$. By informally differentiating under the series sign, we see that $P_t(\cdot,y)$ for any fixed $y$ is a solution in $g$ of the heat equation 
\begin{equation}\label{heat}
\frac{\partial g}{\partial t}= (-L)g = \Delta_\cM g.
\end{equation}
For a more formal characterisation of $P_t$, in particular the connection to semi-groups, see \cite{ckp13} Section 2.4 and references therein. 

The orthonormal basis of $L^2(\M)$ generated by $\{e_k^l\}$ can be interpreted as a {\em harmonic analysis} over $\M$. In the case of the sphere
 $\M=\mathbb{S}^2$, the eigenvectors of the Laplacian $\Delta_{\mathbb{S}^2}$ are well-known: these are the {\em spherical harmonics}, which have explicit expressions in terms of homogeneous polynomials of three variables and in this case $\la_k=k(k+1)$ (see \cite{ckp13}, Section 3). \\

\ti{Decoupling time and space} Let us  note the presence of the indexing variable $t$, the `time', in \eqref{heat-kernel}. For the squared-exponential kernel on the real-line, the prior can be made more flexible by stretching the path along the `$x$'-axis, that is the {\em space} domain. Since in general there is no natural analogue of stretching on a geometric object, a natural idea is to stretch {\em time} instead. Indeed, our procedure puts a prior on time as we describe below. Let us now discuss a further property of $P_t$.\\

\ti{Estimates for the heat kernel} 
The following Gaussian-like estimates of the heat kernel $P_t$ are satisfied 
in a surprisingly large variety of situations, in particular on all compact manifolds without boundary, see e.g. Grigor'yan \cite{Grigoryan}, for instance on the sphere. We assume them to hold:  suppose that  there exist $C_1, C_2>0, c_1, c_2 >0$, {such that}, $ \hbox{for all } t \in ]0,1[, $ and any $x,y\in \M$,
\begin{equation}\label{borngauss}
 \frac{C_2e^{-\frac{c_2 \rho^2(x,y)}t}}{|B(x, \sqrt t)|^{1/2} |B(y, \sqrt t)|^{1/2}} 
\leq P_t(x,y) \leq \frac{C_1e^{-\frac{c_1 \rho^2(x,y)}t}}{|B(x, \sqrt t)|^{1/2} |B(y, \sqrt t)|^{1/2}} 
,
\end{equation}
where $|B(x,r)|$ denotes the volume of the ball $B(x,r)$. \\

\ti{Geometric prior} 
A prior on functions from $\M$ to $\mathbb{R}$ is constructed hierarchically as follows. 
 
First, generate a collection of independent standard normal variables 
$\{X_k^l \}$ with indexes $k\ge 0$ and $1 \leq l \leq dim(\cH_{\lambda_k})$. Set, for $x\in\M$ and any $t\in(0,1]$, 
\begin{equation} \label{priorwt}
  W^t(x) = \sum_{k} \sum_{l} e^{-\lambda_kt/2}  X_k^l e^l_k(x).  
\end{equation}
This process is centered and has covariance kernel precisely $P_t$, as follows by direct computation,
\[ \mathbb{E}(W^t(x)W^t(y)) = P_t(x,y). \]

Second, draw a positive random variable $T$ according to a density $g$ on $(0,1]$. This variable can be interpreted as a random scaling, or random `time'. It turns out that convenient choices of $g$ are deeply connected to the geometry of $\M$. We choose the density $g$ of $T$ such that, for a positive constant $q$, with $d$   defined in \eqref{poly-case}, 
\begin{equation} \label{priorT}
  g(t) \propto e^{-t^{-d/2}\log^q(1/t)}, \quad t\in (0,1].
\end{equation}
We show below that the choice $q=1+d/2$ leads to sharp rates.

 The full (non-Gaussian) prior we consider is $W^T$, where $T$ is random with density $g$. That is,
\begin{equation} \label{priorw}
 W^T(x) =  \sum_k \sum_{l} e^{-\lambda_kT/2}  X_k^l e^l_k(x),
  \end{equation} 
and we define $\Pi$ as the  prior on functions on $\cM$ induced by $W^T$.\\

\noindent {\sc Does the prior \eqref{priorT} relate to the square-exponential GP ?}
So far there does not seem to be a direct connection between our construction and that of  \cite{vvvz09}. However, such a connection becomes apparent when taking another look at equation \eqref{borngauss}. We see that the covariance kernel of $W^t$ for a given $t$ very closely relates to $e^{-c\rho^2(x,y)/t}$, which however is not itself in general a covariance kernel as noted above. In this sense, the heat kernel {\em is} the natural generalisation of
the squared-exponential kernel $e^{-C(x-y)^2}$ to geometric spaces.\\
  
\ti{Sketch of required arguments}
To obtain convergence rates corresponding to the prior $\Pi$ and derive Theorem 
\ref{thm-sph} below, we use the general rate Theorem \ref{thm:ggve}. As seen in Chapter \ref{chap:rate1} for Gaussian processes a rate is obtained by solving in $\veps_n$ the equation $\vphi_{f_0}(\veps_n)\le n\veps_n^2$, with $\vphi$ the concentration function of the process. Here $\Pi$ is not Gaussian, but conditionally on a given value of $T$, say $T=t$, the prior induced by $W^t$ is Gaussian by construction. So, an important step in the proof is the study of the concentration function $\vphi_{f_0}$ of $W^t$ at the true function $f_0$, which involves an approximation term as well as the small ball probability of $W^t$.

The approximation part of $\vphi$ requires some regularity condition on $f_0$; it turns out that it is particularly natural to work with a scale of Besov spaces, which  may precisely be defined in terms of quality of approximation. Define first the `low frequency' functions from the eigenspaces $\cH_\la$ as
\[\Sigma_t=\bigoplus_{\la\le\sqrt{t}}\cH_\la.\]
Next, let $\cE_t(f)_p:=\inf_{g\in\Sigma_t}\|f-g\|_p$ denote the best approximation of
$f \in L^p=L^p(\cM)$ from $\Sigma_t$. Then the Besov space $B_{pq}^s(\cM)$ 
is defined as
\begin{equation} \label{geo-besov}
 B_{pq}^s(\cM):=\{ f\in L^p,\ \ \|f\|_{A_{pq}^s}:= \|f\|_p +
\Big(\sum_{j\ge 0}\big(2^{s j}\cE_{2^j}(f)_p \big)^q\Big)^{1/q}< \infty \}. 
\end{equation}
Assuming a  $B_{2,\infty}^s(\cM)$-regularity in the white noise case and a $B_{\infty,\infty}^s(\cM)$-regularity in the density estimation case enables a control of the approximation part.
 
The study of the small ball probability of the process $W^t$ is more delicate, especially since we look for sharp rates. We achieve this by using the general very precise link existing for Gaussian processes between small ball probability and entropy of the RKHS, as established in \cite{KuelbsLi93}. For this, we need first the expression of the RKHS say $\mh^t$ of $W_t$.\\
 
\ti{The prior $W^t$ and its RKHS $\mh^t$} For any $t>0$, it follows from the expression of $W^t$ that
\begin{equation} \label{rkhs-ht}
 \bH^t = \Big\{\, h = \sum_k \sum_l a_k^l  e^{-\lambda_kt/2}e_k^l, \qquad \sum_{k,l}|a_k^l|^2 <\infty\, \Big\},
\end{equation}
equipped with the inner product 
\[ \langle\ \sum_k \sum_{l} a_k^l  e^{-\lambda_kt/2}e_k^l \ ,\  
 \sum_k \sum_{l} b_k^l  e^{-\lambda_kt/2}e_k^l \ \rangle_{\bH^t} 
 = \sum_k \sum_{l} a_k^l  b_k^l.\]
Let us further denote $\bH^t_1$ the unit ball of $\bH^t$.\\

\ti{Key estimates} The next result is a sharp entropy estimate of the RKHS unit ball $\mh^t_1$, uniform in a range of time parameters $t$. The statement brings together {\em geometry} via the
covering number $N(\epsilon, \M,\rho) $ of the space $\M$,  {\em probability} via 
the RKHS of the process $W^t$ and {\em approximation}, via the entropy of $\bH^t_1$, denoted $H(\cdot,\bH^t_1,D)=\log N(\cdot,\bH^t_1,D)$, for a given distance $D$ on $\bH$.

\begin{thm} \label{entropyconnection} Suppose the space $\M$, the operator $L$  and its eigenfunctions $e_k^l$ verify the properties listed above. For $t>0$, let $\mh^t$ be defined by \eqref{rkhs-ht}. Let  us fix $a>0, \nu>0$. There exists $\epsilon_0>0$ such that for $\epsilon, t$ with $ \epsilon^\nu \leq at$
and  $ 0<\epsilon\leq \epsilon_0,$ 
\[ H(\epsilon, \bH^t_1, \|\cdot\|_2) \asymp   H(\epsilon, \bH^t_1, \|\cdot\|_\infty) \asymp  N(\delta(t,\epsilon), \M, \rho)  \cdot \log \frac 1\epsilon,
\quad \hbox{with}\quad \frac 1{\delta(t,\epsilon)}:=
  \sqrt{\frac 1t  \log \frac 1\epsilon}. \] 
\end{thm}
Under the assumption \eqref{poly-case} that balls have a polynomially increasing volume in terms of their radius, the covering number $N(\eta,\M,\rho)$ of $\M$ is shown to be $N(\eta,\M,\rho) \asymp \eta^{-d}$, which yields the estimate $\delta(t,\epsilon)^{-d}\log(1/\epsilon)$ for the entropy in Theorem \ref{entropyconnection}.  From this one can deduce an estimate of the same order  $-\log\mathbb{P}(\|W^t\|_2<\epsilon)\asymp-\log\mathbb{P}(\|W^t\|_\infty<\epsilon)\asymp t^{-d/2} \log^{1+d/2}(1/\epsilon)$ for the small ball probabilities, both in terms of the $L^2$- and $L^\infty$-norms. \\ 
    
\ti{Convergence rate for the geometric prior} 
The following theorem states a result for the white noise and density estimation problems on $\M$. In the first case, the prior is directly the law on $L^2(\cM)$  induced by $W^T$ in \eqref{priorw}. In density estimation, the prior is the image measure of the law of $W^T$ viewed as a random element of $\cC^0(\cM)$ under the exponential transformation $w\to p_w^{[\cM]}=e^w/\int_\cM e^w$ on $\cM$. Recall  the definition of the Besov spaces from \eqref{geo-besov}.

\begin{thm} \label{thm-sph}
Let the set $\M$ and the operator $L$ satisfy the properties listed above. 
Consider the white noise model \eqref{ggwn} on $\cM$.  Suppose that $f_0$ is in the Besov space $B_{2,\infty}^\beta(\M)$ with $\beta>0$ and that the prior $\Pi$ 
on $f$ is $W^T$ given by \eqref{priorw}. Let $q=1+d/2$ in \eqref{priorT}. Set $\veps_n = (\log{n}/n)^{2\be/(2\be+d)}$.
 For $M$ large enough, as $n\to\infty$,
\[ \Pi( \|f - f_0\|_2 \ge M\veps_n\ |\ X) \to^{P_0} 0. \]
Consider the density model \eqref{gdens} on $\cM$. Suppose that $\log f_0$ is in the Besov space $B_{\infty,\infty}^\beta(\M)$ with $\beta>0$ and that the prior $\Pi$ 
on $f$ is the law induced by $p_{W^T}^{[\cM]}$ with $W^T$ as in \eqref{priorw}. With $q, \veps_n$ as before and $h$ the Hellinger distance between densities on $\cM$, for $M$ large enough, as $n\to\infty$,
\[ \Pi( h(f,f_0) \ge M\veps_n\ |\ X) \to^{P_0} 0. \]
\end{thm}  
Uniformity in the results can be obtained on balls of the considered Besov spaces.

\section{Heavy tailed series priors}

Let us come back in this Section to series priors of the type, for $(e_j)$ an orthonormal basis of $L^2[0,1]$,
\begin{equation*} 
 W_t = \sum_{j=1}^\infty \sigma_j \zeta_j e_j(t),
\end{equation*} 
where now $\zeta_j$ are iid  variables with some distribution to be specified and $\sigma_j$ some {\em deterministic} decreasing sequence to be chosen. 

Here we present a recent observation made in \cite{ac23}: heuristically taking $p\to 0$ in the posterior contraction rate for $p$--exponential priors \eqref{ratep} (note that the  result  in \cite{agapiouetal21} relies on log-concavity and thus established only for $1\le p\le 2$) suggests that adaptation to the regularity may be obtained ``for free" with series priors (at least in the oversmoothing regime $\al>\be$) if one takes a heavy-tailed density for the common density $h$ of the $\zeta_j$'s. We show in \cite{ac23} that this is actually the case in some generality, and we prove here a special sub-case of the results: take to fix ideas a symmetric continuous density $h$ for the $\zeta_j$'s that is decreasing on $[0,+\infty)$ and has heavy tails in the sense that,  for $x\ge 1$ and some $a\ge 4$ and $c, C>0$, 
\begin{equation} \label{ht:conddens}
h(x)\le Cx^{-4},\qquad \text{and}\qquad h(x)\ge cx^{-a}.
\end{equation}
For instance, many Student densities verify this. 
Further take the sequence $(\sigma_k)$ to be, for $k\ge 1$,
\begin{equation} \label{defsigr} 
\sigma_k = e^{-(\log k)^2}.  
\end{equation} 
Define, for $\be, L>0$, the Sobolev ball
 \[ S_L(\be)=\Big\{f=(f_k),\quad \sum_{k\ge 1} k^{2\be} f_k^2\le L^2 \Big\}.\]  
 
\begin{thm} \label{thm-ht}
In Gaussian white noise regression, let $\Pi$ be a series prior with $\zeta_j$ having a symmetric density satisfying \eqref{ht:conddens} and $(\sigma_k)$ as in \eqref{defsigr}. Suppose $f_0\in S_L(\be)$ for some $\be, L>0$. Then
\[ E_{f_0}\Pi[ \{f:\ \|f-f_0\|_2 > \cL_n  n^{-\frac{\be}{2\be+1}} \} \given X] \to 0, \]
as $n\to\infty$, where $\cL_n= (\log{n})^{d}$ for some $d>0$. 
\end{thm} 

Hence heavy-tailed series priors yield automatic adaptation to smoothness! The choice of $(\sigma_k)$ as in \eqref{defsigr} is important: it `forces' the prior to be in the `oversmoothing' regime (for which one suspected using the heuristics above that adaptation could happen). The idea is then that the heavy tails enable the posterior to pick the presence of signal in the data, even though the baseline prior is very `smooth' (through $\sigma_k$ decreasing quite fast). \\  

Under the setting of Theorem \ref{thm-ht}, it can also be shown, to make the link with \eqref{ratep} more precise, that if $h$ is chosen as above and with now $\sigma_k=k^{-1/2-\al}$ for some $\al>0$ then adaptation occurs ``for free" if $\al\ge \be$ (oversmoothing case), i.e. the rate in this case is $n^{-\be/(2\be+1)}$ while for $\be>\al$ the rate is $n^{-\al/(2\al+1)}$, both up to log terms: this is indeed exactly \eqref{ratep} (up to logs) where one has set $p=0$.   \\
 
\begin{proof}[Proof of Theorem \ref{thm-ht}]
We just show here the prior mass property: thanks to Theorem \ref{alpost} and the comments below it for white noise regression, this already immediately implies a rate for the tempered $\rho$-posterior, $0<\rho<1$. A proof for the classical posterior can be found in \cite{ac23}.

Let $K\ge 2$ be an integer, and for a function $f$ in $L^2$, let $f^{[K]}$ denote its projection onto the linear span of $e_1,\ldots,e_K$ and $f^{[K^c]}=f-f^{[K]}$. Then 
\begin{align*}
\Pi&[\|f-f_0\|_2 < \veps] \ge 
\Pi\left[ \|f^{[K]}-f_0^{[K]}\|_2 < \veps/2 \,,\, \|f^{[K^c]}-f_0^{[K^c]}\|_2 < \veps/2\right] \\
& \ge \Pi\left[\forall\, k\le K,\ \  |f_k - f_{0,k} | \le \frac{\veps}{2\sqrt{K}}\ ;\ \forall\, k>K,\ \  |f_k| \le \frac{\veps}{D\sqrt{k}\log{k}} \right] \1_{\|f_0^{[K^c]}\|_2<\veps/4}\\
& = \prod_{k=1}^{K} \Pi\left[ |f_k - f_{0,k} | \le \frac{\veps}{2\sqrt{K}} \right] \cdot \Pi\left[\forall\, k>K,\ \  |f_k| \le \frac{\veps}{D\sqrt{k}\log{k}} \right] \1_{\|f_0^{[K^c]}\|_2<\veps/4},
\end{align*}
using independence under the prior, the fact that $k^{-1/2}/\log(k)$ is a square-summable sequence and with $D$  a large constant. Denote by $\cP_2$ the prior probability before the indicator in the last display.

Suppose the indicator in the last display equals one, which imposes $\|f_0^{[K^c]}\|_2<\veps/4$, for which a sufficient condition is
\begin{equation}
K^{-2\be} L^2 <\veps^2/16
\end{equation}
  if $f_0$ is in $\cS(\be,L)$. Let us now bound each individual term $p_k:=\Pi[ |f_k-f_{0,k}|\le \veps/(2\sqrt{K})]$. By symmetry, for any $k\le K$, one can assume $f_{0,k}\ge 0$ and
\begin{align*}
 p_k & \ge \int_{f_{0,k}}^{f_{0,k}+\veps/(2\sqrt{K})} \sigma_k^{-1}h(x/\sigma_k) dx  \ge \frac{\veps}{2\sqrt{K}} h(C/\sigma_K) \\
 & 
        \ge c\veps e^{-C_1\log(C/\sigma_K)},
\end{align*} 
using that $(\sigma_k)$ is decreasing as well as $x\to h(x)$ on $[0,\infty)$ by assumption, that $\log(1/\{2\sqrt{K}\})\ge \log(\sigma_K/C)$, and $f_{0,k}+\veps/(2\sqrt{K})\le C$ since $|f_{0,k}|$ are bounded by $L$ for $f_0\in \cS(\be,L)$.   So 
\begin{align*}
 \prod_{k=1}^{K} p_k & \ge \veps^K \exp\left\{ -C_1K\log(C/\sigma_K) \right\}  \ge \veps^K\exp\{ -C_2K\log^{2}{K}\}.
\end{align*} 
On the other hand, we also have, for $\overline{H}(x)=\int_{x}^{+\infty}h(u)du$ and $\cP_2$ as defined above,
\begin{align*}
\cP_2 &=\prod_{k>K} (1-2 \overline{H}(\veps/\{D\sigma_k\sqrt{k}\log{k}\}))\\
& = \prod_{k>K} (1-2 \overline{H}(\veps e^{\log^2{k}}/\{D\sqrt{k}\log{k}\})).
\end{align*}
Set $\veps=DK^{-\be}$, then for large enough $K$, we have 
$\veps e^{\log^2{k}}/\{D\sqrt{k}\log{k}\}\ge 1$ for $k>K$ and 
\[\overline{H}(\veps e^{\log^2{k}}/\{D\sqrt{k}\log{k}\})
\le c_2(\veps e^{\log^2{k}}/\{D\sqrt{k}\log{k}\})^{-2}
\le C_3 e^{-\log^2{k}},
 \]
so that $\cP_2\ge \exp\{ -C\sum_{k>K} e^{- .5\log^2{k}} \}\ge \exp\{-C' e^{-.5\log^2{K}}\}$ which is bounded from below by a constant, so the final bound obtained for the probability at stake is $\exp\{-C'K\log^{2}{K}\}$. By identifying the latter with $\exp(-Cn\veps_n^2)$, one obtains $K=n^{1/(1+2\be)}(\log{n})^{-2/(1+2\be)}$ and $\veps_n^2=n^{-2\be/(2\be+1)}(\log{n})^{4\be/(1+2\be)}$. 
\end{proof} 
 
\section{Adaptive priors: further options} 

We now briefly discuss a few other possibilities to derive adaptation to smoothness: the first concerns so-called mixture priors that are very popular in density estimation (and can be seen as Bayesian versions of kernel density estimators). The two other concern priors that model (directly or indirectly) wavelet coefficients, so can be viewed as building up on series priors as presented before.

\subsubsection*{Mixtures for density estimation}

To fix ideas, we consider a very specific class of prior distributions on densities on $\RR$ called {\em location mixtures}, and restrict for simplicity to one specific kernel, namely the Gaussian kernel. For $\sigma>0$,  denote by $\phi_\sigma$ the density of a $\cN(0,\sigma^2)$ variable.

For a given $\sigma>0$ and a given distribution $F$, let 
\begin{equation} \label{mixtg}
p_{F,\sigma}(x) = \int_{-\infty}^{\infty} \phi_\sigma(x-z)dF(z).
\end{equation}
Note that $p_{F,\sigma}$ is itself a density on the real line. 
 
A natural way to build a prior distribution on densities is to draw independently $\sigma$ and $F$ at random. For instance, one may take a Gamma distribution on $\sigma$, and a Dirichlet process prior (see Chapter \ref{chap:rate1}) for $F$. In the model of density estimation on $\RR$, under some mild conditions on the tails of the true density $f_0$, the posterior distribution can be shown to contract \cite{krv10} (see also \cite{gvbook}, Section 9.4) around $\be$--smooth $f_0$'s at rate 
\[ \veps_n \asymp (\log n)^q n^{-\frac{\be}{2\be+1}}, \]
for some $q>0$, and this {\em for any given} $\be>0$. On top of being adaptive to the smoothness of the density, a  remarkable property of the posterior here is that, even if the kernel is Gaussian (and thus of `order 2', i.e. $\int x\phi_\sigma(x)dx=0$ but $\int x^2\phi_\sigma(x)dx\neq 0$), the optimal rate is obtained for any $\be>0$, and not only for $\be\le 2$ as in the case of the standard kernel density estimator with Gaussian kernel. This phenomenon was first observed by Rousseau \cite{r10} for mixtures of Beta densities, and relies on the fact that approximation properties through posteriors are richer than with the simple classical kernel estimator, that must approximate $f_0$ through its expectation, i.e. through the convolution $f_0*\phi$.

\subsubsection*{Spike-and-slab priors}

Let us turn back (once again!) to series priors defined on, say, a wavelet basis $(\psi_{lk})$. Instead of putting a continuous distribution on coefficients as considered before, let us allow the prior to set some coefficients to $0$, which leads to 
\begin{equation} \label{sasnp}
 f_{lk} \sim (1-w) \delta_0 + w \Gamma,
\end{equation} 
independently for all $l,k$, for some weight $w\in(0,1)$ and $\Gamma$ a distribution with a density on $\RR$ (one generally stops at $l=n$ and set all coefficients for larger $l$'s to $0$). 
The Dirac mass at $0$ is called the {\em spike}, while $\Ga$ is the {\em slab} part of the prior. This prior was first introduced in a quite different setting (e.g. \cite{mb88} in the linear regression model) with the goal of model selection in mind, and indeed it plays a central role in high-dimensional models when one wishes to induce sparsity, see Chapter \ref{chap:ada2}. Let us just mention that this prior is connected to thresholding methods (not very surprisingly, as it sets some coefficients to $0$).
In the Gaussian white noise model, the choices of prior parameters
\[ w=w_n=1/n,\qquad \Gamma=\text{Lap},\]
for $\text{Lap}$ the standard Laplace distribution lead to a posterior that converges to $\beta$--H\"older $f_0$'s  in the $\|\cdot\|_2$--sense at the near-minimax rate \cite{hrs15} 
\[ \veps_n = \left( \frac{\log{n}}{n} \right)^{\frac{\be}{2\be+1}}.\]
This can be proved using the relatively explicit characterisation of the posterior in white noise: since coordinates are independent, the posterior is a product and also of spike-and-slab form. In fact, it can be shown that convergence in the $\|\cdot\|_\infty$--sense at the same rate also holds. Similar results hold in  density estimation for exponentiated-and-renormalised spike-and-slab priors \cite{zn22}. \\

In density estimation, we have seen in Chapter \ref{chap:rate1} that a natural way to build a prior on densities is via P\'olya trees. It is possible to define a {\em spike-and-slab P\'olya} tree distribution. To do so, the idea proposed in \cite{cm21} is to replace the Dirac mass at $0$ in \eqref{sasnp} by the Dirac mass at $1/2$, leading to a new distribution on the variables $Y_{\veps0}$ along the tree in \eqref{priorbeta} given by
\[ Y_{\veps0} \sim (1- w_\veps)\delta_{1/2}+w_\veps \text{Beta}(a_{\veps0}, a_{\veps1}). \]
independently along the tree. It is shown in \cite{cm21} that, truncating the tree at depth $n/\log^2(n)$, and with choice of weights $w_\veps=w_{|\veps|}\propto e^{-\kappa |\veps|}$, for $\ka$ a large enough constant, the posterior converges at the minimax rate in terms of the supremum norm $(\log{n}/n)^{\be/(2\be+1)}$ for any $\be\in(0,1]$.

\subsubsection*{Tree priors}

Starting from a binary tree as in Figure \ref{figtree}, and noting that the idea of spike-and-slab priors is to `activate' (i.e. to set to a non-zero value) a certain (arbitrary) set of wavelet coefficients, one can instead opt for a more `structured' prior that activates coefficients only if they belong to a certain {\em finite} binary tree, e.g. the one with colored nodes in Figure \ref{treecart}. The tree can itself be chosen at random, e.g. according to a Galton--Watson process $\Pi_{\mT}$ with a.s. extinction. This gives the scheme
\begin{align*}
\cT & \sim \Pi_\mT \\
f\equiv(f_{lk}) \given \cT & \sim \Pi_{f\given \mT},
\end{align*}
where one possible choice for $\Pi_{f\given \mT}$ is simply one that makes coordinates independent
\[ \Pi_{f\given \mT}=\bigotimes_{l,k} \pi_{lk},\qquad \pi_{lk}=\ind{(l,k)\in\cT}\Gamma + \ind{(l,k)\notin\cT}\delta_0. \]
It can also be interesting to have dependencies between the coordinates, as we mention next.

One interest of this construction is its link with random tree methods such as CART or Bayesian CART. For instance, if one restricts to binary splits, Bayesian CART \cite{cart1, cart2} induces a prior on functions that corresponds to the above prior with $(\psi_{lk})$ the Haar basis, with a certain (dependent) prior on Haar-wavelet coefficients given $\cT$, see \cite{cr21} for a detailed construction and proofs. It is shown in \cite{cr21} that this construction leads, in white noise regression, to a posterior that converges to $\beta$--H\"older $f_0$'s  in the $\|\cdot\|_2$--sense at the near-minimax rate \cite{cr21} 
\[ \veps_n = \left( \frac{\log^2{n}}{n} \right)^{\frac{\be}{2\be+1}},\]
with the same rate being also achieved for the $\|\cdot\|_\infty$--norm (this holds for $\be\in(0,1]$ if the Haar basis is chosen, and up to a given arbitrary smoothness $S>0$ if an  $S$--smooth wavelet basis is chosen). 

\begin{figure}
\begin{center}
\tikzset{every node/.style={rectangle,  fill=gray, text=black}
  }
\begin{tikzpicture}[
   level distance=1.4cm,sibling distance=1cm, 
   edge from parent path={(\tikzparentnode) -- (\tikzchildnode)}]
\Tree [.\node[draw,circle,label={(0,0)},color=pennblue] {}; 
    \edge ;
    [.\node[draw,rectangle,label={(1,0)},color=pennred]{};  
          ]
     \edge;
    [.\node[draw,circle,label={(1,1)},color=pennblue]{};
    \edge ;
      [.\node[draw,circle,label={(2,2)},color=pennblue]{}; 
       \edge ;
      [.\node[draw,rectangle,label={(3,4)},color=pennred]{}; 
            ]  
      \edge ;
      [.\node[draw,rectangle,label={(3,5)},color=pennred]{};     
           ] 
] 
\edge ;
    [.\node[draw,rectangle,label={(2,3)},color=pennred]{};
         ] 
    ] ]
\end{tikzpicture}
\caption{Finite tree of activated coefficients} \label{treecart}
\end{center}
\end{figure}
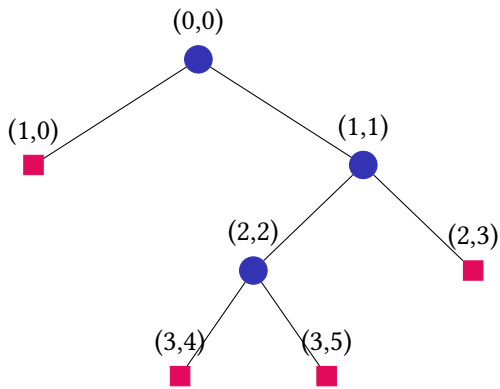
 
\section{A glimpse at adaptive confidence sets}

So far we have discussed mainly convergence rates of the posterior distribution, without yet investigating whether one could use that $\Pi[\cdot \given X]$ is a distribution with a {\em spread} that could possibly help the purpose of uncertainty quantification. We will consider this question in more details for finite-dimensional functionals (more generally $\sqrt{n}$--estimable functionals, possibly function--valued) in Chapters \ref{chap:bvm1} and \ref{chap:bvm2}. Let us discuss briefly slightly informally now the case where one wishes to obtain a confidence set for, say, a function $f$ (either in a regression or in a density estimation context).
 
There are two main desiderata: for a given distance $d$, a collection of regularity classes $\cC(s), s>0$ (think for instance of balls of $s$--H\"older functions), a distance $d$ (e.g. $\|\cdot\|_2, \|\cdot\|_\infty$,...) over functions, one wishes to find a (data-dependent) set $\cC_n(X)$ such that, for given level $1-\al>0$, for large enough $n$,
\begin{equation} \label{cs_cov}
\inf_{f\in\cC=\cup_s \cC(s)} P_f[f\in\cC_n(X)] \ge 1-\al. 
\end{equation} 
as well as, for any possible $s$, for $|\cC_n|_d$ the diameter of $\cC_n$ in terms of $d$,
\begin{equation} \label{cs_diam}
 \sup_{f\in\cC(s)} E_f[|\cC_n|_d] \asymp r_n(s),
\end{equation}
for $r_n(s)$ the minimax rate (possibly up to log terms) of estimation over $\cC(s)$ in terms of $d$.

A potential Bayesian solution to this problem is to pick a ball $B=B(X)$ for $d$ that has large posterior probability (it can be centered e.g. at a certain aspect of the posterior, for instance the posterior mean), say larger than $1-\al$. That is, its credibility is at least $1-\al$. Can such a ball verify \eqref{cs_cov}--\eqref{cs_diam}? 

The short answer is {\em no} in general, for a reason independent of Bayesian procedures: simply, constructing adaptive confidence sets is typically a difficult problem, which may or may not have a solution depending on the classes $\cC(s)$, the distance $d$, and the model. For instance, for regression or density models and the supremum norm $\|\cdot\|_\infty$, there are impossibility results showing that \eqref{cs_cov}--\eqref{cs_diam} cannot hold simultaneously, even if one restricts the adaptation problem to only two possible regularities. For the $\|\cdot\|_2$--norm, the problem admits a solution only on certain `windows' of regularities $[s_0,2s_0]$ for given $s_0>0$. The existence of adaptive confidence sets in general is linked to a delicate interplay between certain rates of estimation and rates of testing, and possibly also the dimension of the ambient space (e.g. as for Wasserstein-type losses as in \cite{dr23}). We refer to \cite{ginenicklbook}, Chapter 8, for an in-depth account of these phenomena. 
  
If one is willing to take smaller classes $\cC(s)$ by imposing extra constraints on the functions, then the problem may become possible again. A natural assumption in this context is that of {\em self-similarity}, which roughly means that the function has a similar behaviour across frequencies in its spectral analysis. Under such a structural assumption, the construction of adaptive confidence sets often becomes possible again. We refer to the discussion paper \cite{svv15} for an overview and discussion. In particular, the authors prove that in the setting of series priors as in Theorem \ref{thm-gpa} earlier in this Chapter (both in the EB and hierarchical Bayes case) that slightly enlarged (by a logarithmic factor) credible balls centered at the posterior mean and with $1-\al$ credibility are frequentist confidence balls for the $\|\cdot\|_2$--norm under self-similarity conditions. Similar results for {\em confidence bands} (i.e. for the $\|\cdot\|_\infty$--loss) are derived in regression in \cite{ray17} (spike--and--slab priors) and \cite{cr21} (tree priors), and in density estimation in \cite{cm21} (spike--and--slab) and \cite{cr22} (tree priors). Generic results for sieve priors and derived in \cite{rs20}. Negative results for the marginal maximum likelihood EB methods for the $L^2$ norm on `windows' of regularities (without self-similarity) are derived in \cite{svvl2}. This can be remedied by replacing the MMLE by an ad hoc estimate estimating the risk (\cite{svvl2}). There are many interesting further questions in this direction.

\section*{{\em Exercises}}
\addcontentsline{toc}{section}{{\em Exercises}} 
 
\begin{enumerate} 
\item Taking \eqref{sqe-sb} as granted, as well as the following approximation result, valid for $a$ large enough,
\[ \inf \{ \|h\|_{\mh^a}^2:\ \|h-f_0\|_\infty\le Ca \} \le Da,\]
for some universal constants $C,D$, where $\mh^a$ is the RKHS of the $\sqe(a)$ process,  
 prove an analog of Theorem \ref{thm-sqa} for $\rho$-posteriors, $\rho<1$. 
\item Find a formula for the posterior distribution in Gaussian white noise for a spike--and--slab prior as in \eqref{sasnp} in terms of an updates spike--and--slab distribution, with updated parameters depending on a certain convolution between $\Gamma$ and the noise density. 
\end{enumerate}

\chapter{Adaptation II: high-dimensions and deep neural networks} \label{chap:ada2}

In the first three sections of this chapter, we study sparse high-dimensional models and adaptation to the corresponding {\em sparsity} parameter. Our purpose is to introduce broadly used priors in this setting and explain a few key results with relatively simple proofs. We focus mostly on spike and slab-type priors and the sequence or linear regression models. We refer to \cite{bcg21} for an overview of results in the field of Bayesian high-dimensional models. In the second part of the chapter, we focus on priors on `deep' structures such as deep neural networks and deep Gaussian processes and discuss {\em adaptation to structure} for compositional classes.

\section{Priors in high dimensions} 

Since the 2000's practical applications where the number of unknown  parameters is `large', even possibly much larger than the number of observations, have become commonplace. Although it may seem paradoxical at first to be able to solve or even say something in such `difficult' settings, a key pattern that has emerged in the study of these models is that of {\em sparsity}. Namely, although the number of parameters is very large, possibly only a few are really significant. \\

{\em Sparsity.} A common sparsity assumption is the following: the true $\theta_0$ belongs to 
 the {\em nearly-black} class 
 \begin{equation} \label{ellclass}
 \ell_0[s] = \left\{\theta\in\mathbb{R}^n: \#\{i: \theta_i\neq 0\}\le s\right\}
 \end{equation}
 for $0\le s\le n$,  where $\#$ stands for the cardinality of a finite set. This means that only $s$ out of $n$ coordinates of $\te$ are nonzero (but we do not know which ones), and typically one assumes that  only a very small number of coordinates of $\te$ have 'signal', that is are nonzero. 
 In the sequel we assume $s=s_n\to\infty$ and $s=o(n)$ as $n\to\infty$.\\

{\em Some high-dimensional sparse models.} The simplest high-dimensional model is given by the normal sequence  model
\begin{equation} \label{seqmod}
 X_i=\theta_i+\epsilon_i,\quad i=1,\ldots,n,
 \end{equation}
 where $\varepsilon_i$ are i.i.d. $\cN(0,1)$, the parameter set $\Theta$ for $\theta=(\theta_1,\ldots,\theta_n)$ is $\mathbb{R}^n$ but $\theta$ is assumed to be sparse in the sense that it belongs to one of the sets $\ell_0[s]$ for some $0\le s\le n$. 
The optimal minimax rate in model \eqref{seqmod} over $\ell_0[s]$, in terms of  squared error loss $\|\te\|^2=\sum_{i=1}^n\te_i^2$  is
 \[ \inf_T \sup_{\te_0\in\ell_0[s]} E_{\te_0}[\|T(X)-\te\|^2]=2s\log(n/s)\cdot (1+o(1)),\]
 where $T=T(X)$ is an estimator of $\te$ based on the observation of $X=(X_1,\ldots,X_n)$. \\

 This model is a special case (set $p=n$) of the high-dimensional Gaussian linear regression model
 \begin{equation} \label{hdreg}
  Y = X\te + \veps, 
 \end{equation}
where $\te\in \RR^p$, the noise vector $\veps$ follows a $\cN(0,\sigma^2 I_{n})$ distribution and $X$ is a $n\times p$ matrix with real coefficients. The `high-dimensional case' corresponds to $n\le p$, possibly $n=o(p)$. In that case, one generally assumes  $\te\in\ell_0[s]$ (with $\RR^n$ replaced by $\RR^p$ in the definition), for some $s=o(n)$.\\

{\em The need for prior modelling.}  In the case where  $\Theta$ is a subset of $\RR^n$, the simplest prior that comes to mind is $\Pi=\otimes_{i=1}^n G$, making the coordinates of $\te$ independent of distribution $G$ on $\RR$. However, from the point of view of the posterior distribution, this unstructured prior is often not suitable. Consider for instance model \eqref{seqmod} and let us endow $\te$ with the a product of Laplace (double-exponential) priors 
\[ \Pi_{\la}=\bigotimes_{i=1}^n \text{Lap}(\la/2), \quad \la>0. \]  
For this choice, the posterior mode (that is, the mode of the posterior density) is [exercise: check it]
\begin{equation*}
\hat{\te}^{L}_\la= \argmin{\te\in\RR^n} \bigl[ \|X-\te\|_2^2 + \la \|\te\|_1 \bigr].
\end{equation*}
This is nothing but the classical LASSO estimator. 
In the special case of model \eqref{seqmod}, for the choice $\la=\la^*\asymp \sqrt{\log{n}}$, the LASSO achieves the minimax rate 
\[ \sup_{\te\in\ell_0[s]}  E_\te[\| \hat{\te}^{L}_{\la^*} - \te\|^2]\leqa s\log{n}, \] up to the form of the log factor.  However, if the true $\te_0=0$, for small $\delta>0$, one can show that 
\begin{equation} \label{lblasso}
 E_0 \Pi_{\la^*}\Bigl[\|\te\|^2\le \delta \frac{n}{\log{n}} \given Y\Bigr] \rightarrow 0.
\end{equation} 
This means that the ``LASSO--posterior distribution" $\Pi_{\la^*}[\cdot \given X]$  is suboptimal over sparse classes $\ell_0[s]$ for $s\ll n/\log^2{n}$. The intuition behind this result is that, although its mode is the LASSO and is thus sparse,  the LASSO--posterior as a probability distribution is not sparse. A sample from $\Pi_{\la^*}[\cdot \given X]$ almost surely sets no coordinate of $\te$ to $0$. From the Bayesian perspective, this means that one needs to take structural assumptions such as sparsity into account when proposing a prior distribution. \\

{\em Notation and setting.}  For a vector $\te\in\RR^p$ (or $\RR^n$ in the sequence model) and  a set $S\subset\{1,2,\ldots,p\}$, let
$\te_S$ be the vector $(\te_i)_{i\in S}\in\RR^S$, and $|S|$  the cardinality of $S$.
The \emph{support} of the parameter $\te$ is  the set $S_\te=\{i: \te_i\not=0\}$.
The support of the true  $\te_0$ is denoted $S_0$,
with cardinality $s_0:=|S_0|$.  
Moreover, we write $s=|S|$ if there is no ambiguity to which set $S$ is referred to. 

\subsubsection*{Spike--and--slab-type priors}

{\em Spike--and--slab priors}. For $\al\in[0,1]$ and $\Gamma$ a distribution on $\RR$, the prior
\begin{equation} \label{sasprior}
 \Pi_\al = \Pi_{\al,\Gamma} = \bigotimes_{i=1}^n\, (1-\al)\delta_0 + \al \Gamma,
\end{equation}  
where $\delta_0$ is the Dirac mass at $0$, is called \sbl{spike and slab (SAS) prior} with parameter $\al\in[0,1]$ and slab distribution $\Gamma$. To inforce sparsity, one may choose a deterministic $\al$: the choice $\al=1/n$ is possible (but slightly `conservative') and implies that under the prior, the expected number of nonzero coefficients is of the order of a constant. To obtain an improved data fit, options performing better in practice include an empirical Bayes choice $\hat \al$ of $\al$, or hierarchical Bayes, where $\al$ is itself given a prior, for instance a Beta distribution, e.g. $\text{Beta}(1,n+1)$. We  discuss this in more details below.\\   
  
{\em Subset--selection priors}. For $\pi_n$ a prior on the set $\{0,1,2,\ldots, n\}$ and $\cS_k$ the collection of all subsets of $\{1,\ldots,n\}$ of size $k$, let $\Pi$ be constructed as
\begin{equation*}
k \sim \pi_n, \qquad  S\given k  \sim \text{Unif}(\cS_k), \qquad 
\te\given S \sim \bigotimes_{i\in S} \Gamma \,\otimes\, \bigotimes_{i\notin S} \delta_0.
\end{equation*}
The spike--and--slab prior \eqref{sasprior} is a particular case where $\pi_n$ is the binomial $\text{Bin}(n,\al)$ distribution. Through the prior $\pi_n$, it is possible to chose dimensional priors that `penalise' more large dimensions than the binomial, for instance the complexity prior $\pi(k)\propto \exp(-ak\log(bn/k))$. 

\subsubsection{Continuous shrinkage priors} 

While subset selection priors are particularly appealing in view of their naturally built-in model selection, one may instead use prior distributions that do not  put any coefficient exactly to $0$ but instead draw either very small values or intermediate/strong ones. A way to do so is to replace the Dirac mass in \eqref{sasprior} by an absolutely continuous distribution with density having a high or infinite density at zero. \\

{\em Spike and slab LASSO}. This prior replaces $\delta_0, \Gamma$ by two Laplace distributions $\text{Lap}(\la_0), \text{Lap}(\la_1)$ with $\la_0$ large, typically going to $\infty$ with $n$ to enforce (near--)sparsity and $\la_1$ a constant, that is 
\[  \Pi_\al = \Pi_{\al,\Gamma} = \bigotimes_{i=1}^n\, (1-\al)\text{Lap}(\la_0) + \al \text{Lap}(\la_1),\]
where $\al$, as above for the spike and slab prior, to be chosen. \\

{\em Horseshoe prior.} Leaving finite mixtures, one may also consider continuous mixtures: a popular choice is the {\em horseshoe} prior of Carvalho, Polson and Scott \cite{carvalhopolsonscott}, which is  a continuous scale--mixture of Gaussians.

\begin{definition} \sbl{[Horseshoe prior].}  \label{def:hs}
The horseshoe prior  with parameter $\ta>0$ is the distribution on $\RR$ of the variable $X_\ta$ defined as 
 \begin{align*}
 \lambda &\sim C(\tau), \\ 
 \te \given \la &\sim \mathcal{N}(0, \lambda^2)
 \end{align*}
where $C(\tau)$ is a centered Cauchy distribution with scale parameter $\tau$.
\end{definition}

It can be checked that the density $\pi_\tau$ of the horseshoe prior verifies for any nonzero real $t$
 \begin{equation*}
 \frac{1}{(2\pi)^{3/2}\tau}\log\left(1+\frac{4\tau^2}{t^2}\right)<\pi_\tau(t)<\frac{1}{\sqrt{2\pi^3}\tau}\log\left(1+\frac{\tau^2}{t^2}\right).
 \end{equation*}
In particular, the horseshoe density has a pole at zero and Cauchy tails (see also Figure \ref{pic:hs} below). One may also consider different priors on the scales $\la_i$, as in \cite{vss16}.

\section{Posterior convergence: sparse sequence model}

\subsubsection*{Spike and slab posterior distribution with fixed $\al$}

We consider the following choices 
\[ \Gamma = 
\begin{cases}
& \text{Lap}(1) \\
or\\
 & \text{Cauchy}(1)
\end{cases} 
  \]
where Lap$(\la)$ denotes the Laplace (double exponential) distribution with parameter $\la$ and Cauchy$(1)$ the standard Cauchy distribution.
Different choices of parameters and prior distributions are possible but for clarity of exposition we stick to these common distributions. In the sequel $\ga$ denotes the density of $\Gamma$ with respect to the Lebesgue measure. It is necessary to have slab tails at least as heavy as Laplace: a Gaussian slab for instance would shrink too much towards $0$ (check for instance that a standard normal slab gives a posterior mean equal to $X_i/2$ on coordinate $i$, which is grossly suboptimal if $\te_{0,i}$ is large).
\\

By Bayes' formula the posterior distribution under \eqref{seqmod}  with fixed $\al\in[0,1]$ is 
\begin{equation} \label{post}
\Pi_\alpha[\cdot\given X] 
\sim \bigotimes_{i=1}^n\, (1-a(X_i))\delta_0 + a(X_i) G_{X_i}(\cdot),
\end{equation}
where, denoting by $\phi$ the standard normal density and 
$g(x)=\phi*\Gamma(x)=\int \phi(x-u)d\Gamma(u)$ the convolution of $\phi$ and $\Gamma$ at point $x\in\RR$,  the posterior weight $a(X_i)$ is given by, for any $i$,
\begin{equation} \label{postwei}
 a(X_i) = a_\al(X_i)=\frac{\al g(X_i)}{(1-\alpha)\phi(X_i) + \alpha g(X_i)}. 
\end{equation} 
The distribution $G_{X_i}$ has density, with respect to Lebesgue measure on $\RR$,
\begin{equation} \label{postdc}
\ga_{X_i}(\cdot) := \frac{\phi(X_i-\cdot) \ga(\cdot)}{g(X_i)}
\end{equation}
The behaviour of the posterior distribution $\Pi_\al[\cdot\given X]$ heavily depends on the choice of the sparsity parameter $\al$, and also (somewhat) on the chosen $\ga$.  \\

{\em Posterior median and threshold $t(\al)$.} The posterior median $\hat\te^{med}_\al(X_i)$ of the $i$th coordinate has a thresholding property \cite{js04}:  there exists $t(\al)>0$ such that $\hat\te^{med}_\al(X_i)=0$ if and only if $|X_i|\le t(\al)$. One can check that $t(\al)$ is {\em roughly} of order $\sqrt{2\log(1/\al)}$ for small $\al$.  
As before a default choice is $\al=1/n$; one can check that this leads to a posterior median behaving similarly as a hard thresholding estimator with threshold $\sqrt{2\log n}$. \\

{\em Posterior convergence for fixed $\al$.} The following Lemma from \cite{cm18} helps understanding the role of $\al$ for spike-and-slab priors.

\begin{lem} \label{thm:fixedal}
In the sparse sequence model, let us consider a spike--and--slab (SAS) prior with fixed $\al\in(0,1)$. Let $\Ga$ be either the standard Laplace or Cauchy distribution. Then there exist $\al_0>0$, $N_0\ge 2$ and $C>0$ such that for any 
$\al\le \al_0$ and $n\ge N_0$, 
\[ \sup_{\te_0\in\ell_0[s_n]} E_{\te_0} \int \|\te-\te_0\|^2 d\Pi_\al(X) 
\le Cn\al\sqrt{\log(1/\al)} + Cs_n(1+\log(1/\al)).
\]
As a particular case, one obtains, for $n$ large enough,
\[
\sup_{\te_0\in\ell_0[s_n]} E_{\te_0} \int \|\te-\te_0\|^2 d\Pi_\al(X) 
\le 
\begin{cases}
Cs_n\log{n} & \qquad \text{if } \al=1/n,\\
Cs_n\log(n/s_n) & \qquad \text{if } \al=s_n/n.
\end{cases}
\]
\end{lem}

For $\al=1/n$, or more generally $\al=n^{-b}$ with $b\ge 1$, the rate is near-minimax: it differs from  $Cs_n\log(n/s_n)$ only when $\log(n/s_n)=o(\log{n})$ which happens in the almost dense case where $s_n$ is $o(n)$ but $n/s_n$ grows very slowly, e.g. logarithmically.\\

If one takes the `oracle' choice $\al=s_n/n$, which means that one knows beforehand $s_n$, the obtained rate is the optimal one $Cs_n\log(n/s_n)$ up to a constant. \\

If $\al$ goes significantly above $s_n/n$ in that $\al\gg (s_n/n)\sqrt{\log{n}}$, the obtained rate is of larger order than $s_n\log(n/s_n)$ (the above give only an upper-bound; in this case it can be checked that the actual rate is indeed suboptimal). \\

Lemma \ref{thm:fixedal} can be proved by a direct analysis of the posterior distribution via the explicit expressions \eqref{post}--\eqref{postwei}. Yet we see that unless one known $s_n$, the obtained rate can be suboptimal. Although one only looses in the constant in most cases much by taking $\al=1/n$, in practice this choice is often too conservative in that signals below $\sqrt{2\log{n}}$ often get thresholded. This is a problem of adaptation to $\al$ and motivates the next discussion.

\subsubsection{Spike--and--slab: data-driven choice of $\al$}

As we have seen in Chapter \ref{chap:ada1}, there are two natural options to choose a prior hyperparameters: empirical Bayes and hierarchical Bayes. \\

{\em Empirical Bayes.} 
Recall that the marginal maximum likelihood empirical Bayes approach (MMLE) consists in forming a likelihood in terms of the parameter of interest (here $\al$) by integrating out the parameter $\te$. In model \eqref{seqmod}, for a spike--and--slab prior and fixing the distribution $\Gamma$ with density $\ga$, we have $\te\given \al\sim\Pi_\al$ and $X_i\given\te\sim P_\te$ independent normals. Then the Bayesian distribution of $X$ given $\al$ has density  (check it as an exercise)
\[ \int \prod_{i=1}^n p_\te(X_i) d\Pi_\al(\te).\]
 The maximisation of the corresponding `likelihood' leads to, with $g=\ga * \phi$, 
\[ \hat{\al} = \underset{\alpha}{\text{argmax}}\, \prod_{i=1}^n \left( (1-\al)\phi(X_i) + \al g(X_i) \right). \]
The plug-in posterior $\Pi_{\hat \al}[\cdot\given X]$ has been advocated  in  George and Foster \cite{georgefoster} and Johnstone and Silverman \cite{js04}. In \cite{js04}, the authors prove that the posterior coordinate-wise median of $\Pi_{\hat \al}[\cdot\given X]$  converges at the optimal rate $Cs_n\log(n/s_n)$ when $\ga$ is a Laplace or Cauchy slab. Yet, this is not necessarily the same as convergence for the complete plug-in  posterior $\Pi_{\hat\al}[\cdot\given X]$.\\

{\em Surprises with empirical Bayes.} The behaviour of the complete plug-in posterior is investigated in \cite{cm18}. Therein the following surprising result is obtained: the EB posterior $\Pi_{\hat\al}[\cdot\given X]$ converges at the optimal minimax rate if the slab distribution $\Ga$ is Cauchy, but is suboptimal for $\Ga$ Laplace. It turns out that heavy tails help here for the empirical Bayes approach; a Laplace slab can be accommodated, but this needs to `penalise' more: this can be achieved by taking rather a hierarchical Bayes approach with an appropriate prior on $\al$.\\

{\em Spike and slab and hierarchical Bayes.} In \cite{cv12}, the hierarchical prior
\begin{align*}
\te\given \al & \sim \Pi_\al \\
\al & \sim \text{Beta}(1,n+1)
\end{align*}
is considered, which leads to a beta-binomial prior on the dimension $|S|$ of a draw from the prior distribution. It is shown in \cite{cv12} that the corresponding posterior converges towards $\te_0$ at optimal rate $Cs\log(n/s)$ uniformly over $\ell_0[s]$. We prove a slightly weaker version of this result below in the case of a Laplace slab.

\subsubsection{A generic posterior convergence result} \label{sec:gene}

Let us work with the subset-selection prior \eqref{subsprior}. 
\begin{equation} \label{subsprior}
k \sim \pi_n, \qquad  S\given k  \sim \text{Unif}(\cS_k), \qquad 
\te\given S \sim \bigotimes_{i\in S} \Gamma \,\otimes\, \bigotimes_{i\notin S} \delta_0.
\end{equation}

Let us denote, for $\te\in \RR^n$, by $S_\te$ its support
\[ 
S_\te=\{i: \te_i\neq 0\}, \] 
that is the indices of its nonzero coordinates. We denote $S_0=S_{\te_0}$.

\begin{definition} \label{def:dim}
We say that a prior on dimension $\pi_n$ as in \eqref{subsprior} has {\em exponential decrease} if there exists a constant $D<1$ such that, for any $k\ge 1$,
\[ \pi_n(k)\le D\pi_n(k-1).\]
\end{definition}

{\em Examples.} The prior $\pi_n(k)\propto e^{-k}$ and the prior $\pi_n(k)\propto e^{-ak\log(bn/k)}$ for $a>0, b>1+e$, both have exponential decrease [Exercise]. For binomial priors on dimension, $\pi_n=\text{Bin}(n,\al)$, it can be checked that they also verify the exponential decrease if $n\al\leqa s_n$, which is verified for the choices $\al=1/n$ and $\al=s_n/n$. It can also be verified, see \cite{cv12}, that the beta-binomial prior 
\begin{align*} 
k\given \al & \sim \text{Bin}(n,\al)\\ 
\al & \sim \text{Beta}(1,n+1) 
\end{align*} 
verifies the exponential decrease property. 
 
\begin{thm} \label{thm:generic}
Take a prior $\Pi$ as in \eqref{subsprior} with $\Ga=\text{Lap}(1)$ and suppose, for $S_0=S_{\te_0}$, for some constant $d>0$, 
\[ \Pi(S_0)\ge e^{-ds_n\log{n}}. \]
Assume that the prior on dimension $\pi_n$ verifies the exponential decrease as in Definition \ref{def:dim}. Then for $M$ large enough, as $n\to\infty$,
\[ \sup_{\te_0\in\ell_0[s_n]} E_{\te_0} \Pi[\|\te-\te_0\|^2>Ms_n\log{n}\given X]=o(1).\]
\end{thm} 

{\em Corollary.} It is not hard to check that all the examples of priors mentioned just above the statement of Theorem \ref{thm:generic} verify, combined with the uniform prior on subsets as in \eqref{subsprior}, the condition on $\Pi(S_0)$ from the Theorem. So they all lead to a posterior convergence rate at least of the order $s_n\log{n}$. By a more precise argument, one can in fact prove (see \cite{cv12}) that both  priors on dimension $\pi_n(k)\propto e^{-k}$ and the Beta-binomial prior as above both lead to a posterior convergence rate of $Ms_n\log(n/s_n)$ for large $M$ (so, with the precise logarithmic) factor. This shows that both priors achieve the optimal rate in an adaptive way. Note that we also `recover' (although the results are not exactly equivalent as noted above), the posterior contraction rate obtained in Theorem \ref{thm:fixedal} for the binomial priors on dimension -- that lead to the SAS prior construction and vice-versa -- with $\al=1/n$ or $\al=s_n/n$ (up to the form of the log factor for the latter).  \\

\begin{proof}[Proof of Theorem \ref{thm:generic}] 
We follow ideas similar to that of the proof of the GGV theorem, but do not do the testing/entropy part explicitly. For any $C\subset\RR^n$ measurable, Bayes' formula can be written
\[ \Pi[C\given X]
=\frac{\int_C \exp\{ -\|X-\te\|^2/2\}d\Pi(\te)}{\int \exp\{ -\|X-\te\|^2/2\}d\Pi(\te)}
=\frac{\int_C \exp\{ \Delta(\te,X) \}d\Pi(\te)}{\int \exp\{\Delta(\te,X) \}d\Pi(\te)},
\]
where we have set $\Delta(\te,X)=-\|\te-\te_0\|^2/2+\psg\te-\te_0,X-\te_0\psd$.
[One may remark that the parameter set of sparse vectors is unbounded, so one cannot hope for a uniform bound from below of the denominator independent of how `large' the coordinates of $\te_0$ are. In fact, the bound below features an $L^1$ norm $\|\te_0\|_1$; fortunately, this can be compensated from a similar term appearing in the bound for the numerator as seen below]\\

Now let $D:=\int \exp\{\Delta(\te,X) \}d\Pi(\te)$ be the denominator on the last term of the display on Bayes' formula above, and $N=\int_C \exp\{\Delta(\te,X) \}d\Pi(\te)$ the numerator, with $C$ to be chosen below. 

{\em Bound on the denominator $D$.} Let us set, for $r_n\to\infty$ to be chosen, 
\[ B=\{\te:\ \|\te-\te_0\|\le r_n\}. \]
By restricting the integral on the denominator $D$ to the set $B$, 
\[ D\ge \Pi(B) \int e^{\Delta(\te,X)}d\bar{\Pi}(\te),\] 
where $\bar{\Pi}$ is the probability distribution $\bar{\Pi}(\cdot)=\Pi(\cdot\cap B)/\Pi(B)$. Let us now apply Jensen's inequality with the exponential map to obtain
\[ \int_B e^{\Delta(\te,X)}d\bar{\Pi}(\te)
\ge \exp\left\{ \int_B \Delta(\te,X) d\Pi(\te) \right\}.
\]
Noting that, on $B$, we have $-\|\te-\te_0\|^2/2\ge r_n^2/2$, and setting 
 \[ Z:=\psg \int_B(\te-\te_0)d\bar\Pi(\te),X-\te_0\psd=
 \int_B\psg \te-\te_0,X-\te_0\psd d\bar\Pi(\te)
  \] by linearity, one gets 
\[ D \ge \Pi(B) e^{-\frac{r_n^2}{2}+Z}.\]
Now under $P_{\te_0}$, we have $X-\te_0=\veps\sim\cN(0,I_n)$, with $I_n$ the identity matrix in dimension $n$. In particular, $ \psg \te-\te_0,X-\te_0\psd  \sim \cN(0,\|\te-\te_0\|^2)$. 
So,  $E_{\te_0}Z=0$ (Fubini), and by Jensen's inequality again this time applied with $x\to x^2$, and Fubini,
\begin{align*}
 E_{\te_0} Z^2 &\le \int_B E_{\te_0} \psg \te-\te_0,X-\te_0\psd^2 d\bar\Pi(\te)    = \int_B \|\te-\te_0\|^2 d\bar\Pi(\te)\le r_n^2,
\end{align*} 
using the definition of $B$. By Markov's inequality, 
$P_{\te_0}[Z>r_n^2]\le r_n^{-4} E_{\te_0}Z^2\le r_n^{-2}$. One deduces that on an event $\cA$ of $P_{\te_0}$--probability at least 
 $1-(1/r_n^2)$, we have
\[ D\ge \Pi(B) e^{-3r_n^2/2}.\]
Let us now focus on $\Pi(B)$. Note that an equivalent way of writing the prior $\Pi$ is as follows
\[ \Pi = \sum_{S} Q(S) \Pi_S, \quad
\Pi_S:=\bigotimes_{i=1}^n \Pi_{S,i}, \]
where $\Pi_{S,i}=\text{Lap}(1)$ if $i\in S$ and $\Pi_{S,i}=\delta_0$
 otherwise. Denoting  by $S_0:=S_{\te_0}$ the support of $\te_0$, and $\te_S=(\te_i, i\in S)$ for $S\subset \{1,\ldots,n\}$, and for $B$ as above, with $\|u\|_1=\sum_{i=1}^n |u_i|$,
\begin{align*}
 \Pi(B) & \ge Q(S_0) \Pi_{S_0}[\|\te-\te_0\|\le r_n]
  \ge Q(S_0)\Pi_{S_0}[\|\te-\te_0\|_1\le r_n] \\
& \ge Q(S_0)\int_{\|\te-\te_0\|_1\le r_n} \prod_{i\in S_{0}} \frac{1}{2} e^{-|\te_i|} d\te_i  \ge Q(S_0) 2^{-s_n}\int_{\|\te_{S_0}-\te_0\|_1\le r_n} e^{-\|\te_{S_0}-\te_0\|_1-\|\te_0\|_1} d\te_{S_0} \\
& \ge Q(S_0) 2^{-s_n}e^{-\|\te_0\|_1} \int_{\|\te_{S_0}\|_1\le r_n} e^{-\|\te_{S_0}\|_1} d\te_{S_0}, 
\end{align*}
where we have used that $\|u\|_2\le \|u\|_1$ for $u\in\RR^d, d\ge 1$ and
 invariance by translation of Lebesgue's measure [note: one can also use a lower bound involving volumes of balls]. Next, as 
$\{|\te_i|\le r_n/s_n,\ i\in S_0\}\subset \{\|\te_{S_0}\|_1\le r_n\}$, with $s_0=|S_0|\le s_n$,
\[  \int_{\|\te_{S_0}\|_1\le r_n} e^{-\|\te_{S_0}\|_1}d\te_{S_0}\ge 
\left(\int_{-r_n/s_n}^{r_n/s_n} e^{-|u|}du\right)^{s_0}\geqa (Ce^{-r_n/s_n})^{s_0}\geqa e^{-r_n}.
\]
From the previous bounds one deduces that, on the event $\cA$,
\[ D \geqa Q(S_0)e^{-3r_n^2/2 - r_n-Cs_n} e^{-\|\te_0\|_1}. \]

{\em Bound on the numerator $N$.} 
Recall that $N:=\int_C \exp\{\Delta(\te,X) \}d\Pi(\te)$, and we define $C$ now as  $C:=C_1\cap C_2$, with, for $K,M>0$, with $|A|$ the cardinality of $A$,
\[ C_1:= \{\te:\ |S_\te|\le Ks_n\},\quad C_2:=\{\te:\ \|\te-\te_0\|>Mr_n\}.\]
Let us also define the event 
\[ \cA_N=\left\{ \max_{1\le i\le n}|\veps_i|\le \sqrt{2\log{n}} \right\}. \]
A union bound gives, using the standard bound $\bar\Phi(x)\le \phi(x)/x$, for $x>0$,
 \[ P(\cA_N^c)\le 2n\bar\Phi(\sqrt{2\log{n}})=o(1)\] 
Also, noting that if $\te\in C_1$ we have $|S_{\te-\te_0}|\le (K+1)s_n$, for any such $\te$ on the event $\cA_N$,
\begin{align*}
 |\psg\te-\te_0,X-\te_0\psd| & \le \|\te-\te_0\|_1\max_{1\le i\le n}|X_i-\te_{0,i}|\\\
 & \le |S_{\te-\te_0}|^{1/2} \|\te-\te_0\| \max_{1\le i\le n}|\veps_i| 
  \le \sqrt{2(K+1)s_n\log{n}}\|\te-\te_0\|,
\end{align*}
where the second line uses Cauchy-Schwarz inequality.  
Now using the inequality $2ab\le \delta^{-1}a^2+\delta b^2$, one finds that, on $\cA_N$ and for large enough $c_2>0$, 
\[ |\psg\te-\te_0,X-\te_0\psd|  \le c_2 s_n\log{n}+ \frac{\|\te-\te_0\|^2}{4}. \]
Inserting this into the definition of $N$ leads to, on the event $\cA_N$,
\begin{align}
 N & \le \int_C e^{-\|\te-\te_0\|^2/4+c_2s_n\log{n}} d\Pi(\te)  \le e^{c_2s_n\log{n}}\sum_{S} Q(S) \int_{C_S}  e^{-\|\te_S-\te_0\|^2/4} 
 \frac{e^{-\|\te_S\|_1}}{2^{|S|}} d\te_S, \label{tr:N}
\end{align} 
where $C_S=\{\te:\ S_\te=S\}\cap C$, using that by definition the prior on the selected subset is product Laplace.  By the triangle inequality, 
\begin{equation}\label{trickl1}
 -\|\te_S\|_1\le -\|\te_{0,S}\|_1+\|\te_S-\te_{0,S}\|_1.
\end{equation} 
One bounds each term on the right hand side. Cauchy-Schwarz implies, on $C$, that
$\|\te_S-\te_{0,S}\|_1 \le \sqrt{Ks_n} \|\te_S-\te_0\|$. 
On the other hand, we have
\[ -\|\te_{0,S}\|_1 \le -\|\te_0\|_1 + \|\te_{0,S_0\cap S^c}\|_1\le -\|\te_0\|_1 + \sqrt{s_n} \|\te_S - \te_0\|, \]
as indeed, using again Cauchy-Schwarz,
\begin{align*}
\|\te_{0,S_0\cap S^c}\|_1
 & \le   |S_0\cap S^c|^{1/2}\|\te_{0,S_0\cap S^c}\|  \le  \sqrt{s_n} \|\te_S - \te_0\|.
\end{align*} 
Inserting back the obtained bounds in \eqref{trickl1} one gets
\[-\|\te_S\|_1\le -\|\te_0\|_1 + (\sqrt{K}+1)\sqrt{s_n}\|\te_S-\te_0\|. \]
The last term  is bounded by $K's_n+\|\te_S-\te_0\|^2/8$ for large enough $K'$. Now the integral in \eqref{tr:N} is bounded from above by
\[ e^{-\|\te_0\|_1-M^2r_n^2/16+K's_n} \int_{C_S} e^{- \|\te_S-\te_0\|^2/16}d\te_S. \]
By definition, $C_S$ contains only vectors of support at most $Ks_n$, so that
\[ \int_{C_S} e^{- \|\te_S-\te_0\|^2/16}d\te_S\le \int_{\RR^{|S|}} e^{- \|\te_S\|^2/16}d\te_S \le C^{Ks_n}.\]
This gives the following bound on the numerator, on the event $\cA_N$,
\[ N \le e^{c_2s_n\log{n}-M^2r_n^2/16+C's_n} e^{-\|\te_0\|_1}.\]
 
 {\em Putting the bounds together.} Gathering the previous bounds gives, on $\cA\cap\cA_N$,
 \[ \frac{N}{D} \le e^{3r_n^2/2 + r_n+c_3s_n\log{n}-M^2r_n^2/16}, \]
 where one uses the assumption on $Q(S_0)$. 
 By choosing $r_n^2=s_n\log{n}$ and taking $M$ large enough, one deduces, on  $\cA\cap\cA_N$, that 
 \[ N/D\le e^{-M^2r_n^2/32}=o(1),\]
  which implies that $E_{\te_0}\Pi[C_1\cap \{\te:\ \|\te-\te_0\|>Mr_n\} \given X]=o(1)$. Combining this with Lemma  \ref{lem:dim} for $K$ large enough, one obtains the desired contraction $E_{\te_0}\Pi[\|\te-\te_0\|>Mr_n \given X]=o(1)$ (a direct argument for a specific prior on dimension is also given in the Remark below).
\end{proof}
\vp

\noi {\em Remark.} If the prior on dimension $\pi_n$ in \eqref{subsprior} is chosen as $\pi_n(k)\propto e^{-Dk\log{n}}$, for some constant $D\ge 2$ (and so verifies slightly more than just exponential decrease), then one can use a direct argument without appealing to Lemma \ref{lem:dim} below. To do so, one shows $E_{\te_0}\Pi[C_1^c\cap \{\te:\ \|\te-\te_0\|>Mr_n\} \given X]=o(1)$ by following a similar argument as in the end of the proof of the above Theorem, but bounding  the prior mass $Q(S)$ in \eqref{tr:N} from above by $C\exp(-D|S|\log{n})$; details are left to the reader.

\vp

\begin{lem} \label{lem:dim}
Suppose the prior on dimension $\pi_n$ in \eqref{subsprior} verifies the exponential decrease property from Definition \ref{def:dim} and that $\Ga$ has a centered density with finite second moment. Then, for $S_\te=\{i:\, \te_i\neq 0\}$ 
the support of the vector $\te$, for $K$ large enough, as $n\to\infty$, 
\[  \sup_{\te_0\in\ell_0[s_n]} E_{\te_0} \Pi[\te:\, |S_\te|>Ks_n\given X] = o(1).\]
\end{lem}
\begin{proof}
See Proposition 4.1 and Lemma 4.1 in \cite{cv12}.  The proof is based on the fact that a prior with exponential decrease puts little mass on models of size larger that $Ks_n$, for $K$ a sufficiently large constant. 
\end{proof}

\subsection*{Sequence model: further results and applications}

\begin{enumerate}
\item {\em Confidence sets for $\te$ or for coordinates of $\te$.} One may try to use regions that get high posterior probability (so--called credible sets) to get confidence sets having specific coverage and smallest possible diameter. This is a problem of construction of adaptive confidence sets, and a solution here is possible only to a certain extent, and one has to either make assumptions on the sparsity parameter, or to allow for larger regions, to be able to solve the problem \cite{nv13}. Such questions are considered, for instance, in \cite{vsv17} for the horseshoe, and \cite{cs20} for spike and slab priors. 
\item {\em Variable selection and Multiple testing.} In applications such as genomics, one very important practical question is to select a subset of coordinates that contain `signal', with a certain control of the number of false positive (and possibly also false negative). In the sparse sequence model, this can be done through the famous Benjamini-Hochberg procedure (BH procedure), but also in a Bayesian way via, for instance, an empirical Bayes posterior, as suggested by Efron \cite{efron07}, and investigated from the frequentist perspective in \cite{cr20}. We discuss this is Chapter \ref{chap:mtc}.
\end{enumerate}

\section{Posterior convergence: high dimensional linear regression}

An important sparse model arising in applications and generalising the sequence model \eqref{seqmod} is 
\begin{equation}
Y = X\te + \veps, 
\end{equation}
where one observes the pair $(Y,X)$, with  $X$ a $n\times p$ real-valued matrix, $\te\in\RR^p$ an unknown vector to be estimated,  and $\veps$ a column noise vector of size $n$, for two integers $n,p$. For simplicity we take the components $\veps_i$ of $\veps$ to be iid $\cN(0,1)$.

The sparse high-dimensional regime corresponds to $n,p$ both going to infinity, where $p$ can be much larger than $n$. Without further assumptions on the matrix $X$, the model would be (heavily) non-identifiable (in the sense that $X\te=X\te'$ for many different $\te, \te'$ so there is no hope to recover the original $\te$). There has been much work in the 2000-2010 on the type of assumptions that one should use in this setting. We refer to the St-Flour notes \cite{saravdg-stf} for more background and discussion. The main idea is that, whereas it is not possible to assume that $X$ is invertible if $p>n$, it is conceivable that the matrix may act `nearly as an isometry' on subspaces that have only $s=o(n)$ coordinates that are non-zero. In particular when $X$ is a random matrix, this type of conditions is often satisfied with high probability. \\

Posterior convergence at the (near) minimax rate $s\log{p}$ was derived for spike and slab-type priors and Laplace slabs in \cite{spahd}. A general result holding in a variety of high-dimensional settings (albeit with a more complicated prior) with sharp dependence of constants in terms of the restricted eigenvalue-type conditions is obtained in \cite{gvz20}. Below we propose a relatively simple argument based on tempered posteriors, under a relatively mild boundedness condition. For a result without this condition for Laplace slabs in the spirit of the one of Theorem \ref{thm:generic} above, see \cite{spahd}. \\

{\em Notation.} We let $X_{.,i}$ be the $i$th column of the design matrix $X$, and 
\begin{align}
\label{DefNormX}
  \|X\| 
=\max_{i=1,\ldots,p}\|X_{.,i}\|_2 =\max_{i=1,\ldots,p} (X^tX)_{i,i}^{1/2}.
\end{align}
Asymptotics below are in $n,p\to\infty$. 

\begin{definition} \label{defsmallsv}
The smallest scaled singular value of dimension $s$ is defined as
\begin{align}
\omega(s) :=\inf \Big\{\frac{\|X\be\|_2}{\|X\|\, \|\be\|_2}:\quad  0\neq |S_\be|\leq s\Big\}.
  \label{smallsv}
\end{align}
\end{definition} 
 
A $\text{Cauchy}(\la)$ is the distribution of $\la Z$ with  $Z\sim \text{Cauchy}(1)$ standard Cauchy.
 
\begin{thm} \label{thm:hdreg}
Consider a spike--and--slab--type prior $\Pi$ as in \eqref{subsprior} with $\Ga=\text{Cauchy}(\la)$ with $\la=1/\|X\|$ and prior on dimension, for $k=0,1,\ldots,p$,
\begin{equation} \label{hdpriord}
\pi_p(k) \propto e^{- k \log{p}}.
\end{equation}
Suppose that there exist constants $A, B>0$ such that
 \begin{equation} \label{condsig}
 \max_{1\le i\le p} |\te_{0,i}| \le Ap^B/\|X\|^2.
 \end{equation}
Fix $\rho\in(0,1)$. Then for a large enough constant $M$, with $c=B+5$, large enough $s_n , p$,
\[ \sup_{\te_0\in\ell_0[s_n]} E_{\te_0} \Pi_\rho\left[\|\te-\te_0\|^2>\left\{\frac{M}{\|X\|^2 \omega(cs_n)^2}\right\} s_n\log{p}
\,\given\, X\right]\leqa e^{-s_n\log{p}}. \]
\end{thm} 
Again the choice of a Cauchy prior is to fix ideas, other densities with polynomial tails gives similar results.  A main part of the proof consists in proving Lemma \ref{lemhddim} below, which shows that the posterior does not overshoot the true dimension too much, as well as the auxiliary Lemma \ref{hdlemdenom} which bounds the denominator in the $\rho$--posterior's definition. \\

\begin{proof}
By definition of the $\rho$--posterior, for $B\subset \RR^p$ a measurable set, 
\begin{equation} \label{rhoposthd}
\Pi_\rho[B\given Y] = \frac{\int_B \exp\{-\rho \|Y-X\te\|^2/2 \} d\Pi(\te) }{\int \exp\{-\rho \|Y-X\te\|^2/2\} d\Pi(\te)}
= \frac{\int_B \La_{\te,\te_0}(Y) d\Pi(\te) }{\int \La_{\te,\te_0}(Y) d\Pi(\te)}, 
\end{equation}
where $\La_{\te,\te_0}(Y)=\La_{\te,\te_0}(Y,X,\rho)=\exp\{ -\rho \|X(\te-\te_0)\|^2+\rho (Y-X\te_0)'X(\te-\te_0) \}$ is the ratio of the likelihoods at points $\te$ and $\te_0$ respectively.  Denote $E_0:=E_{\te_0}$ and $s_0=|S_{\te_0}|$. 

Since $\La_{\te,\te_0}(Y)=(p_{\te}(Y)/p_{\te_0}(Y))^\rho$, we have  
\[ E_0 \La_{\te,\te_0}(Y) =\int p_\te(y)^\rho p_{\te_0}(y)^{1-\rho} d\mu(y)=e^{-(1-\rho)D_\rho(P_\te,P_{\te_0})}. \] 
One now bounds the denominator in \eqref{rhoposthd} using Lemma \ref{hdlemdenom}: denoting by $\Delta_n$ the deterministic lower bound from the Lemma and using Fubini's theorem, for any measurable $B$,
\begin{equation} \label{interhd}
E_0\Pi_\rho[B\given Y] \le \Delta_{n}^{-1}\int_B E_0 \La_{\te,\te_0}(Y) d\Pi(\te)
\le  \Delta_{n}^{-1}\int_B e^{-(1-\rho)D_\rho(P_\te,P_{\te_0})} d\Pi(\te).
\end{equation} 
Set $B:=\{\te:\ D_\rho(P_\te,P_{\te_0})>(M/2)\rho s_0\log{p} \}$ for a constant $M$ suitably large to be chosen. By Lemma \ref{lem:renyig}, we have $D_\rho(P_\te,P_{\te_0})=\rho\|X(\te-\te_0)\|^2/2$.  Deduce, with $C_1=M\rho/2$,
\[ E_0\Pi_\rho[B\given Y] \le e^{-C_1 s_0\log{p}} \Pi[B]/\Delta_n \leqa  e^{-C_1 s_0\log{p}}e^{(B+3)s_0\log{p}}, \]
 where $\Delta_n$ is bounded from below as in the proof of Lemma \ref{lemhddim} below. Deduce that 
 \[  E_0\Pi_\rho[\te:\ \|X(\te-\te_0)\|^2>Ms_0\log{p} \given Y] 
 \le e^{-(M/2)s_0\log{p}},
  \]
provided $M$ is chosen large enough.   

Now set $D:= \{\te: \|\te-\te_0\|^2>Ks_0\log{p}\}$ with $K=M\|X\|^{-2}\omega(Cs_0)^{-2}$, and $E:=\{\te:\ |S_\te|\le (B+4)s_0\}$. By Lemma \ref{lemhddim},
\[ E_0\Pi_\rho[D \given Y]\leqa e^{-s_0\log{p}} + E_0\Pi_\rho[\te\in D\cap E \given Y].   \]
Since for $\te\in E$, we have $|S_{\te-\te_0}|\le |S_{\te}|+|S_{\te_0}|\le (B+5)s_0$, for any $\te\in E$ it holds
\[  \| X(\te-\te_0)\|^2 \ge \omega((B+5)s_0) \|X\|^2\|\te-\te_0\|^2. \] 
Putting the previous bounds together leads, for any $\te\in D\cap E$, to $\|X(\te-\te_0)\|^2>Ms_0\log{p}$ which given the convergence rate for $X\te$ obtained above implies the result.

\end{proof} 
 
\begin{lem} \label{lemhddim}
Let $\Pi$ be the prior as in Theorem \ref{thm:hdreg} and fix $\rho\in(0,1)$. Then uniformly over $\te_0\in \ell_0[s_0]$, for $B$ the constant in \eqref{condsig},
\[ E_0 \Pi_\rho\left[\te:\ |S_\te|> (B+4) s_0 \given Y \right] \leqa e^{-s_0\log{p}}. \]
\end{lem}
\begin{proof}
Since the R\'enyi divergence is nonnegative, it follows from \eqref{interhd} that $E_0\Pi_\rho[B\given Y]\le \Pi[B]/\Delta_n$ for any measurable $B$. Applying this for $B:=\{\te:\ |S_\te|> (4+B) s_0\}$ as well as noting that
\[\Pi[B]=\sum_{k=(4+B)s_0}^p \pi_p(k)\le \sum_{k>(4+B)s_0} p^{-k}\le 2p^{-(4+B)s_0}, \]
using that the normalising constant for $\pi_p$ is at least $1$, leads to, combining with Lemma \ref{hdlemdenom} for which the term $M_n(\te_0)$ is bounded with \eqref{condsig},
\[ E_0 \Pi_\rho\left[B\given Y\right]\leqa e^{s_0\log(2+2Ap^B)+ 2s_0\log{p}+s_0\log(7e)}\Pi[B]. \]
Since $2+2Ap^B\le p^{B+1}$ for large enough $p$, the result follows.
\end{proof}

\begin{lem} \label{hdlemdenom}
Let $\Pi$ be the prior as in Theorem \ref{thm:hdreg} and fix $\rho\in(0,1)$. Set
\[ M_n(\te_0)=\sum_{j\in S_0} \log(2+2\la^2\te_{0,j}^2). \] 
Then the denominator in \eqref{rhoposthd} satisfies, for any (finite) data vector $Y$,
\[ D_\rho:=\int \La_{\te,\te_0}(Y) d\Pi(\te) \ge 
\exp\{-M_n(\te_0)-\rho/2 - \log{2} -2s_0\log{p} -s_0\log(7e)\}. \]
\end{lem}
\begin{proof} 
By restricting the denominator to the set of vectors of support $S_0$ and setting $v_{S_0}=\te_{S_0}-\te_{0,S_0}$,
\begin{align*}
 D_\rho & \ge \frac{\pi_p(s_0)}{\binom{p}{s_0}} \int \La_{\te,\te_0}(Y) \ga_{s_0}(\te_{S_0})d\te_{S_0} \\
& =  \frac{\pi_p(s_0)}{\binom{p}{s_0}} \int \exp\left\{-\rho\|X v_{S_0}\|^2/2+\rho(Y-X\te_0)'Xv_{S_0} \right\} \ga_{s_0}(v_{S_0}+\te_{0,S_0})dv_{S_0},
\end{align*} 
where $\ga_{S_0}$ denotes the product density of $|S_0|$ variables of law $\Ga$. 
A direct computation shows that for $\ga$  the Cauchy$(\la)$ density, for any reals $x,t$, the following inequality holds
\[ \frac{\ga(x)}{\ga(x+t)} \le 2(1+\la^2t^2). \]
Taking the inverse and applying it to the product of density ratios $\ga_{s_0}(v_{S_0}+\te_{0,S_0})/\ga_{s_0}(v_{S_0})$ leads to 
\begin{align*}  
 D_\rho & \ge \frac{\pi_p(s_0)}{\binom{p}{s_0}}\exp\{-M_n(\te_0)\} 
 \int \exp\left\{-\rho\|X v_{S_0}\|^2/2+\rho(Y-X\te_0)'Xv_{S_0} \right\} \ga_{s_0}(v_{S_0}) dv_{S_0}.
\end{align*} 
Jensen's inequality with the exponential function and the probability measure with density proportional to 
$\exp\{-\rho\|X v_{S_0}\|^2/2\}\ga_{s_0}(v_{S_0})$ on $\RR^{S_0}$ implies, noting that this distribution is symmetric (and thus mean--zero), that the integral in the last display is bounded below by 
\[ I_0 := \int \exp\left\{-\rho\|X v_{S_0}\|^2/2\right\}\ga_{s_0}(v_{S_0}) dv_{S_0}. \]
Since $\|Xv\|^2\le \|\sum_{j=1}^p v_jX_{\cdot,j}\|^2 \le \|v\|_1^2 \|X\|^2$, restricting the last integral to $\{\|v_{S_0}\|_1\|X\|\le 1\}$ gives
\[ I_0 \ge e^{-\rho/2} \int_{\|v_{S_0}\|_1\|X\|\le 1} \ga_{s_0}(v_{S_0}) dv_{S_0}.  \]
To bound the last quantity from below, using a bound of the $L^1$--norm in terms of the maximum,
\[ \Ga^{\otimes s_0}\Big[\sum_{i\in S_0} |\te_i|\le 1/\|X\|\Big]\ge \Ga^{\otimes s_0}\left[\max_{i\in S_0}|\te_i|\le (\|X\|s_0)^{-1}\right]=\left[ \Ga[\|X\||\te|\le 1/s_0]\right]^{s_0}. \]
For $s_0\ge 1$, since the standard Cauchy density is at least $1/(2\pi)\ge 1/7$ on $[-1,1]$,   the last display  is at least $(7s_0)^{-s_0}$. Using the standard bound $\binom{p}{s_0}\le (pe/s_0)^{s_0}$, and that the normalising constant from the prior \eqref{hdpriord} is at most $2$ (we assume $p\ge 2$ so that the prior is well defined), the result follows.
\end{proof} 

We conclude this section by mentioning that more precise results can be obtained for the posterior in this setting. For instance, if coefficients of $\te_0$ are all large enough, one can recover the true support $S_{\te_0}$; it it also possible for certain priors to derive results on the limiting distribution as mixtures of normals, see e.g. \cite{spahd}. 

\section{Deep neural networks}

 A neural network is a structure with an input layer, a number of hidden layers and an output layer. Each hidden layer has a number of units called {\em neurons}. An input is taken by the network in the form of a vector $x=(x_1,\ldots,x_d)'$ in $\RR^d$ for some $d\ge 1$. The input layer just contains $d$ units: each one passes the input coordinate $x_i$ unchanged to the neurons of the first hidden layer. A network with one hidden layer is depicted in Figure \ref{fig:NN}. 
Then $x$ is modified at the level of the first hidden layer in a way described below. 
Each layer is linked to the next one by arrows between neurons (or between units of input or output layer and a neuron). 

\begin{figure}[H]
\centering
\includegraphics{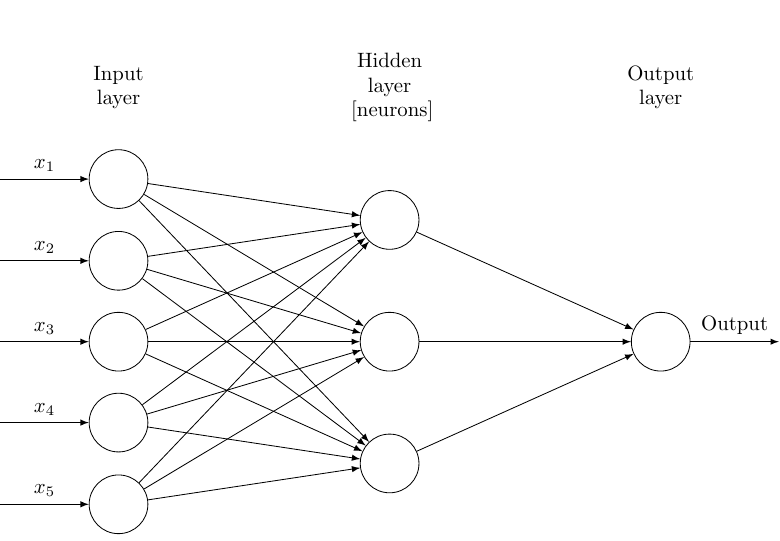}
\caption{Structure of neural network with one hidden layer}
\label{fig:NN}
\end{figure}

{\em Activation function.} The action of a given neuron is specified by an \sbl{activation function}, a function $\sigma:\RR\to\RR$. In the sequel, we consider the \sbl{ReLU activation} (ReLU stands for Rectified Linear Unit) given by
\begin{equation} \label{def:relu}
 \sigma(x) = x \vee 0 = x_+. 
\end{equation} 
To encode the action of all neurons in a given layer with input dimension $r\ge 1$,  we define the multidimensional shifted activation function as, given $v=(v_1, \ldots, v_r)'$ a vector of \sbl{shifts},
\begin{equation} \label{def:sv}
 \sigma_v y = \sigma_v(y) :=
\begin{bmatrix}
           \sigma(y_1-v_{1}) \\
           \sigma(y_2-v_{2})  \\
           \vdots \\
           \sigma(y_r-v_{r}) 
         \end{bmatrix}
\end{equation}

{\em Action of one layer.} Each arrow is given a {\em weight} that multiplies the input of the arrow. Let us consider a given layer with $q$ neurons and a vector of biases $v=(v_1, \ldots, v_q)'$. At the level of each neuron of the layer the incoming numbers from each arrow are summed, a bias relative to the neuron is applied and the result is finally passed through the activation function $\sigma$. If all the weights are aggregated in a matrix, say $W$, with dimension $p\times q$, where $p$ is the input dimension and $q$ the number of neurons of the considered layer, this means that the vector output of the considered layer is $\sigma_v Wx$. The action of the first neuron of the first layer is illustrated in Figure \ref{fig:neuron}. 

\begin{figure}[H]
\centering
\includegraphics{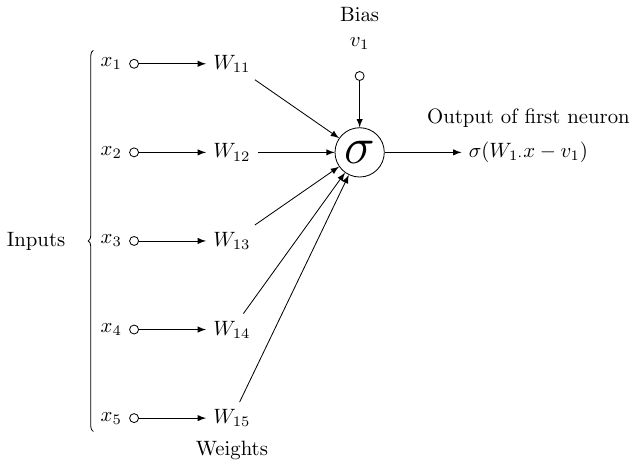}
\caption{Operations at the level of the first neuron and output}
\label{fig:neuron} 
\end{figure}

{\em Depth of the neural network.} The previous operations are repeated over successive layers, each new layer taking as input the output of the previous one (this of course requires that dimensions match). The total number of layers is denoted by $L$ and called network \sbl{depth}. A network with two layers is depicted in Figure \ref{fig:nn2}: the layers that are inbetween  input and output layers are called `hidden' layers.\\

{\em Width vector.} The number of neurons of a layer is called \sbl{width}. The width vector $\bp=(p_0,p_1,\ldots,p_L,p_{L+1})$ with $p_0=d$ (input dimension) and $p_{L+1}=1$ collects all widths. \\

{\em Network architecture.} The pair $(L,\bp)$ defines a network architecture. 
The \sbl{network parameters} are the entries of the matrices $(W_j)_{0\le j\le L}$ and shifts vectors $(v_j)_{1\le j\le L}$ of the successive layers. The total number of parameters is 
\begin{equation}\label{def:tot}
T= \sum_{l=0}^L p_l p_{l+1} + \sum_{l=1}^L p_l.
\end{equation}

\begin{figure}[H]
\centering
\includegraphics{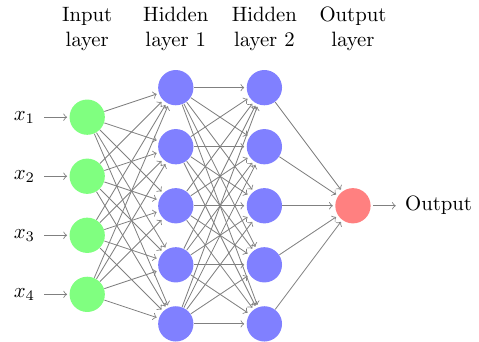}
\caption{Neural network with $2$ hidden layers}
\label{fig:nn2} 
\end{figure}

For a matrix $A=(a_{ij})$, let $\|A\|_\infty$ denote the (entrywise) maximum of the $|a_{ij}|$s. Similarly $\|v\|_\infty$ denotes, for a vector $v$, the maximum of $|v_i|$s. \\

\begin{definition}\sbl{[Global network]}$\ $
Denoting by $(W_j)_{0\le j\le L}$ and $(v_j)_{1\le j\le L}$ the successive weight matrices and bias vectors, the global network operates as a map $x\to f(x)$ from $\RR^d$ to $\RR$, where
\begin{equation} \tag{$\square$}\label{eq:dnn}
f(x)=W_L\sigma_{v_L}W_{L-1}\sigma_{v_{L-1}}\cdots\sigma_{v_1}W_0x,
\end{equation}
where $\sigma_v$ is defined in \eqref{def:sv} and the activation $\sigma$ is the ReLU function \eqref{def:relu}.
\end{definition}
For a vector or matrix $B$, let $\|B\|_0$ denote the  number of nonzero coefficients of $B$.  In the sequel, DNN stands for `Deep Neural Network', where {\em deep} means that the depth is not `small' (e.g. 1 or 2) but rather a number possibly allowed to go to infinity (slowly, perhaps) with the number of observations. A network with just a few layers (e.g. $1$ or $2$) is sometimes referred to as {\em shallow} network.\\

\begin{definition}\sbl{[DNN classes]}$\ $
Given a network architecture $(L,\bp)$, we denote, setting $v_0=0$,
\begin{align*} 
\cF(L,\bp) = \big\{f\ & \text{ as in } \eqref{eq:dnn},  
 \ \text{ for some } (W_j)_{0\le j\le L}, (v_j)_{1\le j\le L} , \quad \max_{0\le j\le L}\left( \|W_j\|_\infty \vee \|v_j\|_\infty\right) \le 1 \big\},
\end{align*}
and we also set, for $s>0$ a sparsity parameter,
 \begin{align*} 
\cF(L,\bp,s) = \cF(L,\bp,s,F) = \Big\{f\in \cF(L,\bp),\quad  
\sum_{j=0}^L (\|W_j\|_0 + \| v_j \|_0) \le s,\quad \| f \|_\infty\le F
\Big\}.
\end{align*}
\end{definition}
We assume here that the network parameters are bounded in absolute value by $1$. Another positive constant could be used. The rationale behind this choice is that in practice most of the time network parameters are initialized using bounded parameters. It is also typically observed that even after training the parameters of the network remain quite close in range to initial parameters. From the theoretical point of view, some approximation results are quite easily reachable if network parameters are allowed to be very large. But in order to be closer to practical applications where parameters typically remain bounded, we restrict ourselves to that case, and 
we will see below that this does not prevent us to obtain good inference properties.\\

The regularity class of functions we consider in these notes is the classical class of H\"older functions in dimension $d$ over, say, the unit cube: 
\[ \cC^\be_d([0,1]^d,K)=\Big\{f:[0,1]^d\to \RR:\ \ 
\sum_{\ba :\ \sum_{i=1}^r \al_i < \be}
\| \partial^{\ba} f \|_\infty + 
 \sum_{\ba:\ \|\ba\|_1=\lfloor \be\rfloor} \sup_{x,y\in D,\ x\neq y} 
\frac{|\partial^{\ba}f(x)-\partial^{\ba}f(y)|}{\|x-y\|_\infty^{\be-\lfloor \be\rfloor}} 
\le K\Big\}.
 \]
Classes of composite functions could be considered as well as, we comment on that again below.

\subsubsection*{Prior distributions on DNNs and posterior concentration}

Let us define an $n$--dependent prior distribution $\Pi$ as follows; the choices of parameters are motivated by the theoretical properties described in the next section. Set
\begin{equation} \label{def:ppar}
 L = \log^2{n},\qquad p_1=p_2=\cdots=p_L=n.
\end{equation}
The total number of parameters $T$ in a network of $\cF(L,\bp)$ is then, using \eqref{def:tot},  of order $Ln^2$ (taking $n$ large enough so that $d\le n$). Let 
$\cT=\RR^T=\{(\te_t)_{1\le t\le T}\}$ denote the parameter set, a parameter collecting elements of vectors $(v_j)_{1\le j\le L}$ and matrices $(W_j)_{0\le j\le L}$, written in some chosen (fixed throughout) given order; the chosen order does not matter. 
 
\begin{definition} \sbl{A sparse prior on DNNs}  \label{def:priordnn}
\begin{enumerate}
\item Draw a sparsity parameter $s$ in $\{0,1,\ldots,T\}$ according to a distribution $\pi_s$ defined by
\[ \pi_s(k) \propto e^{-k\log k}. \]
\item Given $s$, draw a subset $S\subset \{0,1,\ldots,T\}$ of cardinality $s$ uniformly at random among all possible such subsets.
\item Given $S$, set, for uniform variables independent across coefficients,
\[ 
\te_t \sim 
\begin{cases}
\delta_0 \quad \text{if } t\notin S\\
\text{Unif}[-1,1]\quad \text{if } t\in S
\end{cases}.
\]
\end{enumerate}
\end{definition}
By definition, the prior $\Pi$   puts mass on
 the  set, for $T$ the total number of possibly active parameters, and with no restriction on $F$ (so taking $F=+\infty$),
 \[  \Pi\left[\cF(L,\bp,T,+\infty)\right]=1. \]
 
\subsubsection*{Posterior contraction}

To make for the simplest statement, we consider the Gaussian white noise model setting, with a prior on $f$ defined as above. Analogous statements can easily be written in density estimation or nonparametric fixed design regression, similar to what we did in Chapter \ref{chap:rate1}. 

\begin{thm} \label{thm:dnn} \re{[Posterior convergence rate for H\"older functions]}$\ $
Let $f_0\in \cC_d^\be([0,1]^d,K)$ a function of regularity $\be>0$. Define the prior $\Pi$ on the function $f$ from the white noise model as in Definition \ref{def:priordnn}. Then there exists $M>0$ large enough such that
\[ E_{f_0}\Pi[ \|f-f_0\|_2 >M\veps_n \given X] =o(1),
 \]
as $n\to \infty$, where $\veps_n^2=(\log{n})^3 n^{-2\be/(2\be+d)}$. 
\end{thm}

Theorem \ref{thm:dnn} shows that DNNs with logarithmic depth and polynomial width (both in terms of $n$) are good models for approximating $\be$--smooth functions, for any $\be>0$. The output \eqref{eq:dnn} of a network with ReLU activation function is necessarily a piecewise affine function: 
 it is interesting to note that while such a network is not highly smooth (the `highest' regularity one can hope for is Lipschitz, but not $\cC^2$ for instance, except in the trivial case where the output is a linear function), it still enables one to recover optimal minimax rate, up to log factors, for any arbitrarily high smoothness level $\be>0$. \\
 
It can be shown that DNNs also adapt nearly optimally to other regularity structures, for instance in case the true $f_0$ is a composite function $f_0=g_k\circ g_{k-1}\circ\cdots\circ g_1\circ g_0$, DNNs also yield near--optimal rates: we refer to \cite{jsh20}, from which some of the key lemmas below are borrowed. This setting is discussed further in Section \ref{sec:dgp}.

\subsubsection*{Proof of posterior concentration}

\begin{proof}[Proof of Theorem \ref{thm:dnn}]
We check the three conditions of the GGV theorem successively, with here $d$ equal to the $\|\cdot\|_2$--distance on $[0,1]$, for which appropriate tests exists in the white noise model.\\ 

{\em Sieve.} For $\ols$ an integer to be chosen below, let $\cF_n := \cF(L,\bp,\ols)$. 
By definition of the prior,
\[ \Pi[\cF_n^c]\le \pi_s(s>\ols)\leqa e^{-\ols\log\ols}. \]
The sieve condition of the GGV Theorem is then fulfilled if $\ols \log\ols \geqa n \uli\veps_n^2$.\\

{\em Entropy.} Using the entropy Lemma \ref{lem:dnn_ent}, with $\|\cdot\|_2\le
\|\cdot\|_\infty$, for any $\delta>0$,
\[ \log N(\delta,\cF_n,\|\cdot\|_2) \le
\log N(\delta,\cF(L,\bp,\ols),\|\cdot\|_\infty) \le (\ols+1)\log\left(\frac{2(L+1)V^2}{\delta}\right),
\]
where $V\le (n+1)^{L+2}\le e^{C\log^3(n)}$ for a large enough constant $C$, so that for any vanishing sequence $(\oli\veps_n)$ with $\oli\veps_n \ge 1/\rn$,
\[\log N(\oli\veps_n,\cF_n,\|\cdot\|_\infty) \leqa \ols (\log{n})^3. \] 
For the entropy condition of the GGV Theorem to be fulfilled, we need $\ols (\log{n})^3\leqa n\oli\veps_n^2$. \\

{\em Prior mass condition.} For $\tilde{f}_0$ a suitable DNN--approximation of $f_0$ (to be defined below), 
\[ \Pi[\|f-f_0\|_2\le \uli\vepn] \ge  \Pi[\|f_0-\tilde{f}_0\|_2 + \|f-\tilde{f}_0\|_2 \le \uli\vepn]
 \ge  \Pi[\|f_0-\tilde{f}_0\|_\infty + \|f-\tilde{f}_0\|_\infty \le \uli\vepn].
\]
Let us apply Theorem \ref{thm:dnn_approx} to $f_0$ and the choices, for $A_0$ large enough constant (depending on $d, K, \be$),
\[ N=n^{\frac{d}{2\be+d}},\quad m=A_0\log{n}. \]
 There exists $\tilde{f}_0$ in $\cF(L',(d,\Lambda,\ldots,\Lambda,1),s_0)$ that approximates $f_0$ as in Theorem \ref{thm:dnn_approx}, with depth $L'\leqa m\leqa  \log{n}$ and sparsity $s_0'\leqa (\log{n})N$. Up to adding at the end of the network a subnetwork that equals the identity over $L/L'\approx \log{n}$ layers (note $x=(x\vee 0)-(-x)\vee 0$ which uses one layer, two units and $6$ parameters, out of which $4$ are non--zero; this is called `synchronisation'), one can suppose that $\tilde{f}_0\in \cF(L,(d,\Lambda,\ldots,\Lambda,1),s_0)\subset \cF(L,(d,n,\ldots,n,1),s_0)$ (this is called `enlarging'), where $s_0\leqa s_0'$ (we add of the order $\log{n}$ non--zero parameters). \\
 
 Let $\te_0=(\te_0(\tilde{f}_0))_{1\le t\le T}$ denote the collection of parameters of a  network as above encoding $f_0'$. By definition, only $s_0$ are non--zero. Let $S_0$ denote the subset of $\{1,\ldots,T\}$ corresponding to these non--zero parameters. Similarly, for a draw $\te=(\te_t)$ from the prior let $S(\te)$ denote the set of indices of its non--zero parameters. \\
 
By Theorem \ref{thm:dnn_approx}, we know that, for large enough $n$ (depending on $\be, K, d$), 
\[ \|f_0-\tilde{f}_0\|_\infty \leqa  \frac{N}{2^m} + N^{-\frac{\beta}{d}}
\leqa n^{-\Delta}+n^{-\frac{\be}{2\be+d}}, \]
 where $\Delta$ can be made arbitrarily large for large $A_0$, so the last display is smaller than $\uli\veps_n/2$ as long as $n^{-\frac{\be}{2\be+d}}\leqa \uli\veps_n$. If this holds we then have
 \[ \Pi[\|f-f_0\|_2\le \uli\vepn] \ge
 \Pi[\|f-\tilde{f}_0\|_\infty \le \uli\vepn/2] \ge\Pi[\|f-\tilde{f}_0\|_\infty \le \uli\vepn/2\given S(\te)=S_0]\Pi[S(\te)=S_0]
 \]
 On the event that $S(\te)=S_0$, the corresponding networks encoding $f$ and $\tilde{f}_0$ have same index sets for their non--zero coefficients, so we may now use the `error propagation' Lemma \ref{lem:dnn_propa} to obtain
 \begin{align*}
  \Pi[\|f-\tilde{f}_0\|_\infty \le \uli\vepn/2\given S(\te)=S_0]
 & \ge \Pi\left[\forall\, t\in S_0,\ \ |\te_t(f)-\te_t(\tilde{f}_0)|\le \frac{\uli\veps_n}{2V(L+1)} \right] \\
 & \ge \prod_{t\in S_0} \frac{\uli\veps_n}{V(L+1)} = \left(\frac{\uli\veps_n}{2V(L+1)}\right)^{s_0}\geq e^{-Cs_0\log^3{n}},
 \end{align*}
for a large enough  $C>0$, using that $\log{V}\leqa \log^3{n}$. On the other hand, again for large $C>0$,
\[\Pi[S(\te)=S_0] = \frac{1}{ \binom{T}{s_0} }e^{-s_0\log(s_0)}\ge
e^{-s_0\log(Te/s_0)-s_0\log{s_0}}\ge e^{-Cs_0\log{n}}. \]
Deduce that $\Pi[\|f-f_0\|_2\le \uli\vepn] \ge e^{-n\uli\veps_n^2}$ provided one chooses
\[ n\uli\veps_n^2\geqa s_0\log^3{n}.\]

{\em Conclusion.} Let us choose, for suitably large $A_2>0$,
\[\uli\vepn^2=A_2\max(n^{-\frac{2\be}{2\be+d}}, s_0\log^3{n}/n)=A_2(\log{n})^4N/n
= A_2(\log{n})^4n^{-\frac{\be}{2\be+d}}. \]
Then the conditions  on $\veps_n$ for the prior mass are satisfied and the condition for the sieve is $n\uli\veps_n^2\leqa \ols\log\ols$ which holds for the choice 
 $\ols=A_3s_0\log^2n$ for large enough $A_3>0$. Finally, from the sieve condition one gets the condition
 \[\oli\veps_n^2\geqa \frac{\ols \log^3{n}}{n}\geqa (\log{n})^6 n^{-\frac{2\be}{2\be+d}}. \]
The proof of the Theorem is complete since $\veps_n=\oli\veps_n\vee \uli\veps_n=\oli\veps_n$ as desired.
\end{proof}

\section{Complement: generic properties of DNNs}

\subsubsection*{Error propagation in a neural network and entropy}

Considering the class of functions $\cF(L,\bp,s)$, let us denote
\begin{equation} \label{def:v}
V:=\prod_{l=0}^{L+1}(p_l+1).
\end{equation}

\begin{lem} \label{lem:dnn_propa}
Let $f,f^*$ be two functions in $\cF(L,\bp,s)$ with matrix parameters $W_k, W_k^*$ and shift vectors $v_k, v_k^*$ for $k=0,1,\ldots,L+1$. Suppose that every individual parameter of $f$ (i.e. elements of matrices $W_k$ or bias vectors $v_k$) is at most $\veps>0$ away from the corresponding parameter of $f^*$. Then for $V$ as in \eqref{def:v},
\[ \| f - f^* \|_\infty \le \veps V(L+1). \]
\end{lem}

\begin{lem} \label{lem:dnn_ent}
For $V$ as in \eqref{def:v} and any $\delta>0$,
\[ \log N(\delta,\cF(L,\bp,s),\|\cdot\|_\infty) \le (s+1)\log\left(\frac{2(L+1)V^2}{\delta}\right).\]
In particular if $L\leqa \log{n}$ and $p_l\le n$ for all $l$, we have 
$\log N(\delta,\cF(L,\bp,s),\|\cdot\|_\infty) \leqa s\log^2(n)\log(1/\delta)$.
\end{lem}
The proof of these lemmas can be found in \cite{jsh20}.

\subsubsection*{Approximation properties for H\"older functions}

The next Theorem and following lemmas are borrowed from \cite{jsh20}. We try to give the ideas of the key steps below following \cite{jsh20} (the proof of Lemma \ref{dnn:sq} is original).

\begin{thm} \label{thm:dnn_approx}
\re{[Approximation of smooth functions by DNNs]} $\ $ 
Let $f\in \cC_d^\be([0,1]^d,K)$ a function of regularity $\be>0$. Let $m, N\ge 1$ be two integers. There exists a network, with $\La:=6(d+\lceil \be\rceil)N$,
\[ \tilde{f}\in \cF(L,(d,\Lambda,\ldots,\Lambda,1),s,\infty)\]
 with depth and sparsity verifying, for $C_0=1+\log_2 \lceil (d\wedge \be)\rceil$,  $c_0=141(d+\be+1)^{3+d}$, 
\[ L = 8+C_0(m+5),\qquad s\le c(m+6)N,\]
such that, for $c_1=(2K+1)(1+d^2+\be^2)6^d$ and $c_2=K3^\be$, and $N\ge
 (\be+1)^d \vee (K+1)e^d$,
\[ \| \tilde{f} - f \|_\infty \le c_1 \frac{N}{2^m} + c_2N^{-\frac{\beta}{d}}. \]
\end{thm}

[Sketch of proof] The general idea is as follows: there are two main steps. The first is not specific to DNNs and is that any $\be$--H\"older function can be well--approximated locally, using Taylor expansions, by a polynomial of order $\lfloor \be\rfloor$: one can approximate $f_0$ by a piecewise polynomial function, with a quality of approximation that depends on $\be$. The second idea, where the choice of activation function $\sigma$ comes in, is that it is possible to approximate quickly, in one dimension, the monomial $x\to x^2$ using a ReLU network. From there one then shows that ReLU networks suitably approximate $x\to x^p$ for $p\ge 2$; one can also check that the argument extends to dimensions $d\ge 2$ for approximating general monomials. From monomials one can easily approximate polynomials by combining networks, and now one can connect to the first part of the argument, by constructing a network that approximates the piecewise polynomial function mentioned above, that itself approximates $f_0$.\\

\begin{lem} \label{dnn:sq}
\sbl{[Approximating $x(1-x)$ with piecewise affine functions]}\\
Let $T^1:[0,1]\to [0,1/4]$ and more generally $T^k:[0,2^{-2(k-1)}]\to [0,2^{-2k}], k\ge 1$, be the maps 
\[ T^1(x)=\frac{x}{2}\wedge\left(\frac12-\frac{x}{2}\right),\quad
T^k(x)=\frac{x}{2}\wedge\left(\frac{1}{2^{2k-1}}-\frac{x}{2}\right).\]
Let us set $R^k:=T^k\circ T^{k-1}\circ\cdots\circ T^1$, for $k\ge 1$. Then for any $m\ge 1$,
\[ \left| x(1-x) - \sum_{k=1}^m R^k(x)\right| \le 4^{-m}.\]
\end{lem}
\begin{proof} Let $C(x)=x(1-x)$. The key is to observe the `fractal'-like property
\[ C(x) = T^1(x) +\frac14 C(4T^1(x)). \]
which can be checked e.g. algebraically (or can be seen on Figure \ref{pic:approx}  by noting that the picture repeats itself at a $1/4$ scale if one takes the first orange cord as a new $x$-axis). Next note that by definition $T^2(y)=T^{1}(4y)/4$ and more generally $T^{k+1}(y)=T^{1}(4^k y)/4^k$.  
By recursion one immediately obtains
\[ C(x) = T^1(x)+ T^2\circ T^1(x) + \cdots + T^k\circ\cdots\circ T^1(x) +  \frac1{4^k} C(4^k T^k\circ\cdots\circ T^1(x)). \]
The result follows by applying this with $k=m$ and noting that $C(\cdot)$ is bounded by $1/4$ on $[0,1]$.
\end{proof}

\begin{figure}[h]  \label{pic:approx}
\centering
\includegraphics[scale=1]{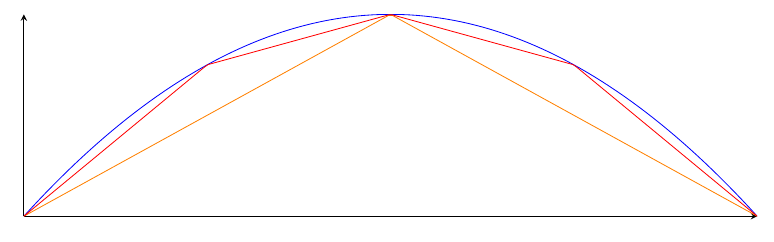}  
\caption{Approximating $x(1-x)$ via Lemma \ref{dnn:sq}: the error decreases exponentially with $m$}
\end{figure}

\begin{lem}
\sbl{[Approximating $(x,y)\to xy$ by a DNN]}
 Let $m\ge 1$. There exist a DNN that we denote $\text{Mult}_m(x,y)$ with
 \[ \text{Mult}_m \in \cF(m+4, (2,6,\cdots,6,2,2,2,1)),\] 
 such that for any $x,y\in[0,1]$ it holds $\text{Mult}_m(x,y)\in[0,1]$, $\text{Mult}_m(0,y)=\text{Mult}_m(x,0)=0$ and 
 \[ \left|\text{Mult}_m(x,y)-xy\right|\le 4^{-m}.\]
\end{lem}
\vspace{1cm}

In order to approximate a function $f\in \cC_d^\be([0,1]^d,K)$, we define a grid of $[0,1]^d$ as
\[ D(M)=\left\{ x_l = \left(\frac{l_j}{M}\right)_{j=1,\ldots,d},\quad l=(l_1,\ldots,l_d)\in\{0,1,\ldots,M\}^d
\right\}.
\] 
Around a given point $\pmb{a}\in[0,1]^d$, the function $f$ can be approximated by its Taylor polynomial: in dimension $d$ its expression is, for $\pmb{a}=(a_1,\ldots,a_d)$,
\[ P_{\pmb{a}}^\be(x) := \sum_{0\le |\al|< \be}  (\partial^{\al}f)(\pmb{a})\frac{(x-\pmb{a})^\al}{\al!}.\]
Taylor's expansion with Lagrange remainder gives, for any $f\in \cC_d^\be([0,1]^d,K)$,
\begin{equation} \label{dnn_taylor}
 |f(x) - P_{\pmb{a}}^\be(x)| \le K \|x-\pmb{a}\|_\infty^\be.
\end{equation} 
Define, again for any $f\in \cC_d^\be([0,1]^d,K)$ and $x=(x_1,\ldots,x_d)$,
\begin{equation} \label{def:localpol}
 P^\be f(x):= \sum_{x_l\in D(M)} (P_{x_l}^\be f)(x) 
\prod_{j=1}^d \displaystyle{(1-M|x_j-x_{l,j}|)_+}.
\end{equation}
Inside the hypercubes defined by consecutive gridpoints, $P^\be f(x)$ is a polynomial, so the overall function $P^\be f$ is piecewise--polynomial.\\
\begin{lem}
\sbl{[Approximation of $f$ by a piecewise--polynomial function]}
For any $f\in \cC_d^\be([0,1]^d,K)$, define $P^\be f$ as in \eqref{def:localpol}. Then
\[ \| f - P^\be f\|_\infty \le K M^{-\be}. \]
\end{lem}
{\em Proof.} One notes that the terms of the sum in the definition \eqref{def:localpol} are nonzero only at a given $x$ for $x_l$ such that $\|x-x_l\|_\infty\le 1/M$, otherwise the product in \eqref{def:localpol} is zero. Combine this with 
\[ \sum_{x_l=(l_1/M,\ldots,l_d/M)} \prod_{j=1}^d \displaystyle{(1-M|x_j-x_{l,j}|)_+} =
\prod_{j=1}^d \sum_{l=0}^M (1-M|x_j-l/M|)_+ = 1\]
(these functions form a `partition of unity') and Taylor's approximation \eqref{dnn_taylor} to obtain the result.

\section{Deep Gaussian process priors and adaptation to structure} \label{sec:dgp}

\subsubsection*{Motivation: compositional structures}

Here we state results in the so-called random design regression model (but we could state analog results in Gaussian white noise as in the previous section).

Consider observing i.i.d. pairs $Z_1=(X_1,Y_1),\ldots,Z_n=(X_n,Y_n)$ with
\begin{equation} \label{mod:reg}
Y_i=f_0(X_i)+\veps_i,\qquad 1\le i\le n,
\end{equation}
where $X_i$ are $[0,1]^d$--valued random variables (also called {\em design points}) and $\veps_i$ are independent standard normal $\cN(0,1)$ independent of the $X_i$'s, and $f_0:[0,1]^d\to \mathbb{R}$ an unknown function.  \\

Typical statistical goals in this setting are
\begin{itemize}
\item estimating the unknown regression function $f_0$ from the observations
\item finding estimates that behave (near--)``optimally" with respect to some criterion (e.g. minimax) over natural classes of parameters.
\end{itemize}

Let $\hat{f}(\cdot)=\hat{f}_n(Z_1,\ldots,Z_n)(\cdot)$ be an estimator of $f$.

The {\em prediction} risk in the setting of model \eqref{mod:reg} is defined as follows. Let $T$ be a `synthetic' data point, that is a variable independent of the $X_i$'s and generated from the distribution of $X_1$. Let
\begin{equation}\label{predrisk}
 R(\hat{f},f_0)= E\left[ \left(\hat{f}(T)-f_0(T)\right)^2 \right]=
E\left[ \left(\hat{f}(Z_1,\ldots,Z_n)(T)-f_0(T)\right)^2 \right].
\end{equation}

{\em Discovering a hidden `structure'.}  The `raw' regression data collected by the statistician takes the form, in the setting model \eqref{mod:reg}, of  $n$ vectors of size $d+1$: the $n$ pairs $(X_i^T,Y_i)$ with $X_i\in[0,1]^d$ and $Y_i$ a real, with the dimension $d$ possibly large (think for instance of e.g. $d=10$ or $20$). The unknown regression function $f_0(x_1,\ldots,x_d)$ depends on $d$ of variables, and we have seen that if $d$ is larger than a few units this may lead to a slow uniform convergence rate of the form $n^{-2\be/(2\be+d)}$ for the prediction risk. It is often the case though that the problem is effectively of smaller dimension than $d$. We give a number of frequently encountered examples
\begin{enumerate}
\item  $f_0$ in fact depends on just one variable (but we do not know it a priori), for instance
\[ f_0(x_1,\ldots,x_d)=g(x_1), \]
for some $g:[0,1]\to \RR$. In this case it seems reasonable to expect a rate $n^{-2\be/(2\be+1)}$, since the $f_0$ effectively depends on $1$ variable only.  
More generally, $f_0$ may depend on a small number $t\le d$ of variables, although we do not know a priori which ones, e.g.
\[ f_0(x_1,\ldots,x_d)=g(x_2,x_3,x_d), \]
in which case the effective dimension should be $3$, so we expect a rate $n^{-2\be/(2\be+3)}$.
\item In the preceding example, the function effectively depends on a small number of the original variables $x_i$, but it could depend on few variables only after transformation of the variables, for instance 
\[f_0(x_1,\ldots,x_d)=g(x_1+x_2+\cdots+x_d). \]
In this case $f_0(x_1,\ldots,x_d)=g(x')$ only depends on `one' variable $x'=x_1+\cdots+x_d$, so one expect a rate $n^{-2\be/(2\be+1)}$.
\item {\em Additive models.} It may be possible to write $f_0$ in an additive form
\[ f_0(x_1,\ldots,x_d) = \sum_{i=1}^d f_i(x_i), \]
for some functions $f_1, \ldots, f_d$ depending on one variable only. If all functions $f_i$ are at least $\be$--H\"older, one expects a rate $d \cdot n^{-2\be/(2\be+1)}$ that is $n^{-2\be/(2\be+1)}$ if $d$ is a fixed constant.
\item {\em Generalised additive models.} It may be possible to write $f_0$ in the  form
\[ f_0(x_1,\ldots,x_d) = h\left(\sum_{i=1}^d f_i(x_i)\right), \]
for some real-valued functions $f_1, \ldots, f_d$ (that are, as before, say all $\be$--H\"older) and an unknown real `link' function $h$ that is $\ga$--H\"older. One expects the rate to depend on $\be, \ga$, but not (too much) on the dimension $d$.
\end{enumerate}

{\em Class of compositions.} In all the settings of the previous paragraph, one may note that the original function $f_0$ can be written as a composition of functions
\[ f_0 = g_q\circ \cdots \circ g_1 \circ g_0,\]
for some integer $q\ge 1$. To fix ideas, for $q=1$ (composition of two functions) and if the input dimension is $d=10$, an example is
\[ f_0(x_1,\ldots,x_{10})= g_1(h_{01}(x_1,x_3),h_{02}(x_1,x_{10})),\]
with here $g_1$ a real function of $3$ variables, $h_{01}, h_{02}$ two real bivariate functions and $g_0$ the map $(x_1,\ldots,x_{10})\to (h_{01}(x_1,x_3),h_{02}(x_1,x_{10}))$. In this example, note that although the ambient dimension is $10$, all functions depend in fact of (at most) two variables, so the expected `effective' dimension of the problem is $2$, which should lead, if all mappings involved are $\beta$--smooth, to a rate of order $n^{-\be/(2\be+2)}$, much faster than the minimax rate $n^{-\be/(2\be+10)}$ in terms of the input dimension. 
 Similarly, for each of the examples in the list of the previous paragraph, {\em if one knew beforehand} that $f_0$ is in one class of the other, one could certainly develop a specific estimation method using the special structure at hand. In practice, however, it would be desirable to have a method that is able to automatically `learn the structure'. We are going to see that this is achieved by deep ReLU estimators.\\

Let us introduce the class, for $D=(d_0,\ldots,d_{q+1}), t=(t_0,\ldots,t_q), \beta=(\be_0,\ldots,\be_q)$, $d_0=d$,
\begin{align}
\mathcal{G}(q,D,t,\beta,K)=\Big\{\, f=g_q\circ\cdots& \circ g_0: \quad 
g_i= (g_{ij})_j:[a_i,b_i]^{d_i}\to [a_{i+1},b_{i+1}]^{d_{i+1}},\ \notag \\
& g_{ij}\in \mathcal{C}_{t_i}^{\be_i}([a_i,b_i]^{t_i},K),\quad  |a_i|,|b_i|\le K
\, \Big\}, \label{defcomp}
\end{align}
where we denoted $\mathcal{C}_{t_i}^{\be_i}$ for the H\"older ball over $t_i$ variables to insist on the fact that these functions depend on $t_i$ variables only (at most). 
The coefficients $t_i$ can be interpreted as the maximal number of variables each function $g_{ij}$ is allowed to depend on. In particular, this number is always at most $d_i$, but may be much smaller. Let us note that the decomposition of $f_0$ as a composition is typically not unique, but this is not of concern here because we are interested in estimation of $f_0$ itself only. \\

Compositional classes are quite rich and contain many interesting functions having a low dimensional ``effective dimensionality". There are quite popular for the analysis of deep learning algorithms. In particular, \cite{jsh20} shows that deep ReLU neural networks can get near optimal rates over such classes (one still assumes that some parameters of the classes are known). We see below that deep Gaussian processes possess analog properties (and are even fully adaptive to smoothness and structure). Let us first give an example and state what the optimal minimax rate over these classes is.\\

{\em Example.} In ambient dimension $5$, consider the function 
\[ f(x_1,x_2,x_3,x_4,x_5) = h_1(h_{01}(x_1,x_3,x_4),h_{02}(x_1,x_4,x_5),h_{03}(x_5)). \]
Then $h_0$ takes as input $5$ coordinates (so $d_0=5$) and takes its values in $\RR^3$ (hence $d_1=3$) so has three coordinate functions $h_{01}, h_{0,2}, h_{0,3}$, which themselves depend on only (at most) $3$ variables, so that here $d_0=3$. Since $h_1$ has three coordinates and (in general) depends on each of these, we have $d_1=t_1=3$. Finally, the final output of the regression is always a real number in this chapter so $d_2=1$. \\

Note that for $f_0=g_1\circ g_0$ with $d_1=d_0=t_1=t_0=1$ and $\be_0, \be_1\le 1$, it follows from the definition of the H\"older class that $f_0$ has regularity $\be_0\be_1$, so that one expects a convergence rate of order $n^{-\frac{\be_0\be_1}{1+2\be_0\be_1}}$. It turns out that the actual (or `effective') regularity depends on whether $\be_i\le 1$ or not. Let us define the following new `regularity' parameter
\begin{equation}\label{defbet}
\beta_i^* = \be_i \prod_{\ell=i+1}^q (\beta_{\ell}\wedge 1).
\end{equation}

{\em Convergence result for compositions.} Given $d, t, \beta$ as before, let us define the rate
\begin{equation}\label{vepset}
\veps_n^* = \max_{0\le i\le q}\, \left\{ n^{-\frac{\be_i^*}{2\be_i^*+t_i}} \right\}.
\end{equation}                                               
{\em Example.} For $d_0=d_1=t_0=t_1=q=1$ and $f=g_1\circ g_0$ with $\be_1,\be_0\le 1$, we have $\be_0^*=\be_0(\be_1\wedge 1)=\be_0\be_1$ and $\be_1^*=\be_1$, and the rate $\veps_n^*$ equals, since $\be_0\be_1\le \be_1$,                             
\[ \max \left ( n^{-\frac{\be_1}{2\be_1+1}} ,  n^{-\frac{\be_0\be_1}{2\be_0\be_1+1}}
\right) = n^{-\frac{\be_0\be_1}{2\be_0\be_1+1}}, \]                                                                 
which gives the rate announced above for this example. One may check that the formula \eqref{vepset} also gives the expected rate in the other examples above.

\begin{thm} \cite{jsh20} [Minimax optimality for compositions] \label{thm:lbcompo}
Consider the regression model \eqref{mod:reg}, where the $X_i$s are drawn from a distribution with density on $[0,1]^d$ which is bounded from above and below by positive constants. 
For arbitrary $\beta>0$, integer $q$ and vector of integers $D,t$, suppose $t_i\le \min(d_0,\ldots,d_{i-1})$ for all $i$. Then for large enough $K$,
   \[   \inf_{\hat{f}} \sup_{f_0\,\in\,  \mathcal{G}(q,D,t, \beta,K)} R(\hat{f},f_0) \ge c  {\veps_n^{*}}^2, \]
where the infimum is taken over all possible estimators $\hat f$ of $f$ in model \eqref{mod:reg}.
\end{thm}

\subsubsection*{Deep GPs: definition}

Deep Gaussian Processes (DGPs for short) were introduced in \cite{damianou13} and have witnessed increasing popularity in the machine learning community. Seminal results on posterior contraction have been obtained in \cite{fsh23} using a model selection (graph-)type prior to activate certain variables along the composition defining the DGP. Here we present recent results of \cite{cr23} for tempered posteriors using a `soft' selection of variables. 

\begin{definition} \sbl{[Deep Gaussian process].}  \label{def:dgp}
A \sbl{deep Gaussian process} (deep GP or  DGP) is a composition of Gaussian processes: for some integer $q\ge 1$, it is a stochastic process defined as 
\[ Z(t) = W_q \circ \cdots \circ W_0(t), \qquad t\in[0,1]^d,\]
where $W_i: \RR^{d_i}\to \RR^{d_{i+1}}$, with $(d_i)$ some integers and $d_0=d$, $d_{q+1}= 1$. 
\end{definition} 
{\em Remark.} Often, one restricts the range of the GPs in the composition defining a deep GP so that the successive GPs take values in a same compact subset, e.g. one sets $W_i' = (-M)\vee (W_i\wedge M)$ for some given $M>0$ and 
\[ Z' = W_q' \circ \cdots \circ W_0'. \]
The idea to take a deep GP as a prior is to make the prior more flexible (by adding `more randomness' compared to a single GP): it seems then likely that such a prior will approximate well compositions of functions -- which enable to approximate quite complex objects, as seen above -- (in the other direction, it can be shown that a single, non-deep GP, is {\em not} able to reach optimal rates relative to a compositional structure \cite{grs22}). 

As such a deep GP as in Definition \ref{def:dgp} is not yet flexible enough to adapt well to arbitrary compositional structure and smoothness. 
First, it seems natural to draw the `depth' $q$ randomly in the prior, but also, 
in order not to `overfit', to select randomly at each level which variables the process $W_i$ (mostly) depends on, in particular if one believes that there is a low dimensional compositional structure to which the true function $f_0$ we are trying to recover belongs. This motivates the following more general definition.

\begin{definition} \sbl{[Hierarchical DGP \cite{cr23}].}  \label{def:hdgp}
A hierarchical \sbl{deep Gaussian process} (HdGP) is defined as
 \begin{align*}
  q&&\sim\ &\Pi_q \\
 d_1,\dots,d_q\ &|\ q&\sim\ &\Pi_d[\cdot|q]\\
  \ma{(A_{ij})_k}\ &|\ q, d_1,\dots,d_q& \overset{\text{ind.}}\sim&\ma{\pi_{\tau}}^{\otimes d_i}\\
 \sbl{g_{ij}}\ &|\ q, d_1,\dots,d_q, \ma{A_{ij}} &
 \overset{\text{ind.}}{\sim}& W^{\ma{A_{ij}}} \\
 f\ &|\ q, d_1,\dots,d_q,\sbl{g_{ij}} &=\ &\Psi(\sbl{g_q})\circ \dots\circ \Psi(\sbl{g_0}),\\
 \end{align*}
 where the $(g_{ij})_j$ are the coordinate functions of $g_i$ (which takes values in $\RR^{d_{i+1}}$) and
\begin{itemize} 
\item $\Pi_q$ and $\Pi_d[\cdot\given q]$ are priors on integers,
\item $\ma{\pi_\ta}$ is a prior on scale parameters: for each function $\sbl{g_{ij}}$ in the composition, a vector $(A_{ij})$ of iid such scale parameters is drawn independently of the others, 
\item for a vector $\cA$ of dimension $d_{\cA}$, one denotes $W^{\cA}(u) = W(\cA_1u_1,\ldots,\cA_{d_{\cA}} u_{d_{\cA}})$, where given $d_\cA$, the law of $W$ is that of a given GP in dimension $d_\cA$,
 \item $\Psi(x)=(-M)\vee (x\wedge M)$ for some $M>0$.
\end{itemize}
\end{definition} 

\sbl{Deep horseshoe GP.} Let us consider the following prior choices: for the dimension $q$, one takes a prior with exponential decrease $\Pi_q(k)\propto e^{-q}$ and similarly for  $(d_i)$, one takes an exponentially decreasing prior for each $d_i$ independently. The coordinates functions $g_{ij}$ of the  function $g_i$ in the composition are given GP priors (given $A_{ij}$): they are taken to be centered GPs with squared--exponential covariance function, i.e. $E[W_xW_y]=\exp(-\|x-y\|^2)$, with $\|\cdot\|$ the euclidian norm on $\RR^{d_i}$. It now suffices to specify the prior on scale parameters. We take them independent with a {\em half-horseshoe distribution} with parameter $\ta>0$ fixed (e.g. $\ta=1$), see Definition \ref{def:hs} and Figure \ref{pic:hs}.\\

\begin{figure}[h]  \label{pic:hs}
\centering
\includegraphics[scale=0.6]{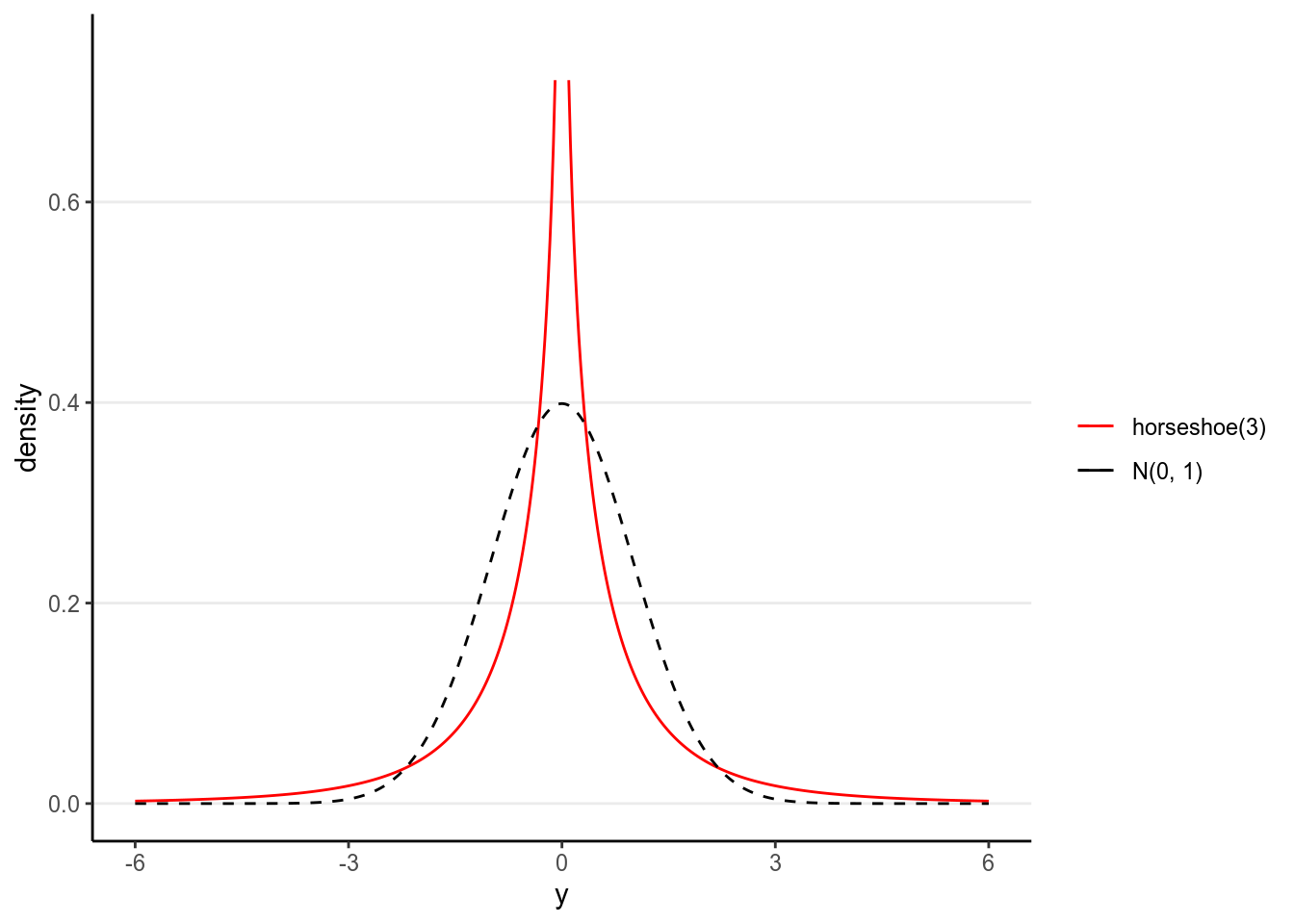}  
\caption{Horseshoe prior density with parameter $\tau=3$}
\end{figure} 

The idea of the above choice of Horseshoe deep GP prior is as follows: the density $\pi_\ta$ puts quite a lot of mass near zero and in the tails (i.e. the probability of drawing large values is quite high due to the Cauchy tails). Very small values of $A_{ij}$ allow to ``freeze" the corresponding coordinate: it is as if the prior is almost constant on this coordinate, which enables adaptation to the compositional `structure'. On the other hand, large values of $A_{ij}$ enable one to ``unsmooth" the very smooth paths of squared--exponential GPs and thus to adapt to smoothness as well (as already used in Chapter \ref{chap:ada1}).

\subsubsection*{Statement for deep GPs}

\begin{thm} \sbl{[Deep Horseshoe GP after \cite{cr23} (informal statement)]}
In the random design regression model, consider a deep horseshoe GP prior on $f$ (as defined in the previous section) and suppose $f_0$ belongs to $\cG(q,D,t,\be,K)$ as in \eqref{defcomp}. Then the corresponding $\alpha$--posterior distribution, for $\alpha\in(0,1)$, contracts at the optimal rate (up to logarithmic factors) given by \eqref{vepset}, adaptively both to smoothness and structure parameters.

Up to slowly varying factors, this continues to hold with same expression of the rates in a `growing dimension' setting where one allows the input dimension $d$ to grow polynomially in $n$, with actual true dimension $t_0\leqa (\log{n})^{1/2-\delta}, \delta>0$, all other dimensions $d_i,t_i$, $i\in\{1,\ldots,q\}$ being kept fixed, 
provided one chooses $\ta=(nd)^{-3/2}$. 
\end{thm}

We will not give a proof here, but just give the idea: thanks to Theorem \ref{alpost}, since one works with the $\alpha$--posterior, it is enough to verify the prior mass condition. The proof uses similar tools as for a simple GP, but one main new step consists in proving that coordinates with sufficiently small scale parameters can be ignored; it is also more involved due to the successive steps involved in the compositions; again, the concentration functions of the successive GPs play an important role. A key idea in the deep horseshoe prior is that the scale parameters serve both the purpose of {\em smoothness adaptation} if the variable should be included (case of large $A_{ij}$ which make the GP path `appropriately wiggly') and that of {\em structure adaptation} if the variable should be discarded (case of vanishing $A_{ij}$ which `freezes' the GP path along that direction). 
Finally, one relates the $\alpha$--Rényi divergence to the target quadratic distance for the regression model (which is quite easy for Gaussian noise as assumed here, using Lemma \ref{lem:renyig} in Appendix \ref{app-dist}).

\chapter{Bernstein--von Mises I: functionals} \label{chap:bvm1}

As a motivation for this chapter, let us mention two settings in the area of survival analysis in medical statistics (e.g. in the nonparametric survival model in Appendix \ref{app:models}) 
where 
credible sets are used in practice to quantify uncertainty: they are related to the survival function, which, for a (often censored) variable $X$ gives for each time $t$ the probability $P[X>t]=1-F(t)$ for $F$ the cumulative distribution function. The first is the problem of {\em inference on the median survival time}, in other words the quantile of level $1/2$ of $F$, the other that of {\em inference} on survival function itself.

In these situations one is not only interested in estimation --e.g.  producing an estimate of $F^{-1}(1/2)$ and $F(\cdot)$-- but also in uncertainty quantification. Figure \ref{fig:survmed} depicts empirical histograms from a sample of the posterior distribution; the picture shows that the (sample from the) posterior distribution of $F^{-1}(1/2)$  looks asymptotically normal. Taking the empirical quantiles on the picture then gives a {\em credible interval}. Can one prove that for large $n$ it is a {\em confidence interval}? Similarly, Figure \ref{fig:survb} shows a shaded area: it is the interior of the region formed by adding to the posterior mean survival $\bar{S}$ plus and minus the estimated posterior quantile of the supremum norm distance from a posterior sample to the posterior mean (plotted in solid blue line). Are those bands {\em confidence bands} for large $n$? 

We give  positive answers to these questions by establishing limiting shape results for posterior distributions in this Chapter and the next.

\begin{figure}[!h]
\label{fig:survmed}
\includegraphics[width=15cm]{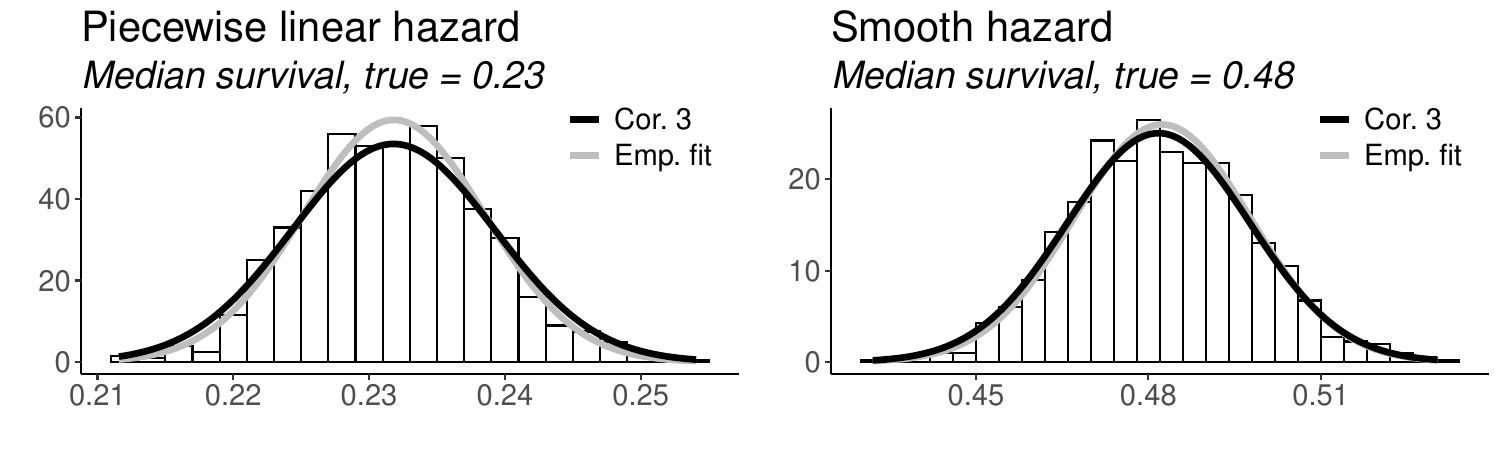}
\caption{Median survival in the nonparametric model with right-censoring \cite{cv21}}
\end{figure}
\begin{figure}[!h]
\label{fig:survb}
\includegraphics[width=15cm]{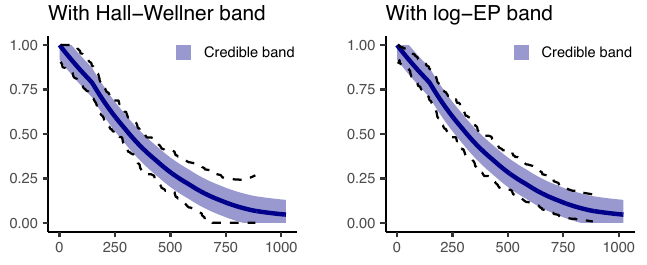}
\caption{\cite{cv21} Posterior mean of the survival (solid) with credible band (shaded area), compared to (dashed): the Hall-Wellner band, and the log-transformed equal precision (log-EP) band.}
\end{figure}

\section{Bernstein-von Mises: a limiting shape result}

In 1774, Laplace observed  and proved \cite{laplace1774} the following: consider data $X$ from, given $\te$, a binomial model $\{P_{\te}=\text{Bin}(n,\te),\, \te\in(0,1)\}$ and endow $\te$ with a uniform prior $\Pi$ on $(0,1)$. Then if the data was actually generated from a $P_{\te_0}$ distribution, with $\te_0$ fixed in $(0,1)$, the posterior distribution, i.e. the law of $\te$ given $X$, (which in this example is a Beta distribution) asymptotically looks like (what was later called a) Gaussian $\cN(X/n, \te_0(1-\te_0)/n)$ distribution. This can be seen as a Bayesian central limit theorem, where the object of study is not an empirical mean but the posterior distribution. 

In the past century, many contributions (including, but not limited to, those of Bernstein and von Mises) have much broadened the scope of this result: under quite mild `regularity' assumptions on the statistical model and prior, such a result indeed holds for the posterior, and is known under the name of {\em Bernstein--von Mises} (BvM) theorem. A result for parametric models which holds under fairly minimal conditions has been obtained by van der Vaart (see \cite{aad98}, Chapter 10). We give a version of it  now in a slightly simplified setting.

Consider a dominated statistical model $\cP=\{P_\te^{\otimes n},\ \te\in\Theta \}$ with $\Theta$ an open subset of $\RR^p$ with $p$ {\em fixed}. Suppose one observes $X=(X_1,\ldots,X_n)$ with law $P_{\te_0}^{\otimes n}$  for some `true' $\te_0\in\Theta$.  Set $\ell_\te=\log{p_\te}$ and suppose to fix ideas that $\te \to \ell_\te(x)$ is differentiable over $\Theta$ for any possible $x$ (this is stronger than what is actually needed, see \cite{aad98}, Chapter 7, for weaker conditions allowing for `differentiability in quadratic mean'), and assuming it exists, set $I_\te=E_\te[\dot{\ell_\te}\dot{\ell_\te}^T]$, called the {\em Fisher information} matrix, and where $\dot{\ell_\te}=\partial\ell_\te/\partial\te$ is called the {\em score function} and where we denoted by $T$ the transposition (instead of prime to avoid confusion with derivatives).  

The model $\cP$ is {\em locally asymptotically normal (LAN)} at $\te_0\in\Theta$ if, for any $h\in\RR$, as $n\to\infty$,
\[
\log \prod_{i=1}^n \frac{p_{\te_0+h/\sqrt{n}}}{p_{\te_0}}(X) 
= \frac{1}{\sqrt{n}} h^T\sum_{i=1}^n  \dot{\ell}_{\te_0}(X_i)- \frac12 h^TI_{\te_0}h+o_{P_0}(1), 
 \]
where $P_0=P_{\te_0}$ and $o_P(1)$ means a sequence going to $0$ in $P$--probability. This LAN property can be interpreted as a suitable expansion in terms of $\te$ of the $\log$--likelihood, so as a smoothness--type condition. Interestingly it holds under almost no further conditions (sufficient conditions in the previous setting are differentiability of $\te\to\sqrt{p_\te}$ and continuity of $\te\to I_\te$, see \cite{aad98}, Lemma 7.6). 

We also assume that a certain testing condition holds -- again, it holds very broadly, we refer to  \cite{aad98} for more discussion --, namely that for any $\veps>0$, there exist tests $\vphi_n$ such that, as $n\to\infty$,
\[ (T_P)\qquad  P_{\te_0}\vphi_n =o(1), \qquad  \sup_{\te:\, \|\te-\te_0\|>\veps} P_\te(1-\vphi_n) = o(1). \]
Set $\Delta_{n,\te_0}=\sum_{i=1}^n I_{\te_0}^{-1}\dot{\ell}_{\te_0}(X_i)$ (note this converges in distribution to a $\cN(0,I_{\te_0}^{-1})$ variable) and 
\[T_n = \te_0+ \Delta_{n,\te_0}/\sqrt{n}. \]
Given a prior $\Pi$ on $\Theta$, one forms the posterior distribution $\Pi[\cdot\given X]$ using Bayes' formula. 
Let $\ta_{T_n}$ be the map $\ta_{T_n}:x\to \sqrt{n}(x-T_n)$. The push-forward of the posterior through, $\Pi[\cdot\given X]\circ \ta_{T_n}^{-1}$, defines a `shifted and rescaled' posterior. The centering $T_n$ in the next result  can be replaced by another quantity $\hat\te$ such that $\hat\te=T_n+o_P(1/\rn)$. It turns out that such $\hat\te$ is then what is called a (linear) efficient estimator. 

\begin{thm}[Parametric Bernstein--von Mises theorem] \label{bvmpara}
Suppose the model is LAN at $\te_0\in\Theta$ with $I_{\te_0}>0$ and that the testing condition $(T_P)$ holds. Let the prior $\Pi$ on $\te$  be absolutely continuous with respect to the Lebesgue measure in a neighborhood of $\te_0$ with a continuous and positive density at $\te_0$. Then
\[ \| \Pi[\cdot\given X]\circ \ta_{T_n}^{-1} - \cN(0,I_{\te_0}^{-1})(\cdot) \|_{TV} = o_{P_0}(1). \]
\end{thm}

The BvM theorem can be interpreted as a limiting shape result: it states that the posterior distribution asymptotically ressembles a Gaussian distribution. It turns out that the variance $I_{\te_0}^{-1}$ as well as the centering, are `optimal' in the sense of the theory of efficiency for parametric models; we refer again to \cite{aad98} for more details.  An important consequence of the result is as follows.\\

{\em Application (credible/confidence) sets.} Suppose for simplicity that we are in dimension $1$ and that the posterior distribution has a continuous strictly increasing distribution function (so that quantiles are easiest to define): $\Theta\subset\RR$ and define $a_n(X), b_n(X)$ to be the quantiles at level $\al/2$ and $1-\al/2$ of $\Pi[\cdot\given X]$. By definition, the interval $[a_n(X),b_n(X)]$ is a credible set of level $1-\al$, that is 
\[ \Pi[\te\in [a_n(X),b_n(X)]\given X]=1-\al.\]
If BvM holds, we then automatically have, as $n\to\infty$, (see the exercises)
\begin{equation} \label{credconf}
P_{\te_0}[\te_0\in [a_n(X),b_n(X)] ] \to 1-\al.
\end{equation} 
Equation \eqref{credconf} states that it is also a confidence set asymptotically, of level $1-\al$. In parametric  models, this gives an automatic way of constructing confidence sets, which are also automatically of smallest possible length asymptotically. \\

It is natural to ask whether BvM--type results also hold in more general settings, in particular in semi- or non-parametric models. This Chapter considers the question for semiparametric models, while some nonparametric results are considered in Chapter \ref{chap:bvm2}.

\section{BvM: a result for semiparametric functionals}

Consider a generic dominated model $\cP=\{P_{\eta},\ \eta\in S\}$ with $dP_\eta=p_\eta d\mu$ for all $\eta$ in the parameter set $S$. As above, the log-likelihood is denoted by $\ell_n(\eta) = \log p_\eta(X)$. For example, $\cP$ can be  nonparametric (e.g. density estimation in which case $\eta=f$ a density) or it can be that the parameter $\eta$ has two parts $\eta=(\te,f)$ (these models are called {\em separated} semiparametric models).  It is often the case statisticians are interested in estimating a finite-dimensional parameter or aspect of the model. In density estimation one may want to  estimate a linear functional $\psi(f)=\int_0^1 a f$ of the unknown density $f$, where $a$ is a given square-integrable function (e.g. the indicator of an interval); in {\em  separated} semiparametric models a typical of interest is just $\psi(\eta)=\te$ itself. 

Consider a functional $\psi : S \rightarrow \R$. Suppose $\Pi$ is a prior distribution on $S$, with associated posterior $\Pi[\cdot\given X]$. We wish to study the properties of the marginal posterior distribution of $\psi(\eta)$, i.e the push-forward measure $\Pi[\cdot\given X]\circ \psi^{-1}$. We first consider a fairly general setting and introduce sufficient conditions for the posterior distribution to verify a BvM theorem. Afterwards, we apply this general result to the Gaussian white noise model and density estimation.
Let $P_0=P_{\eta_0}^{(n)}$.

We say that a distribution $Q_X$ on $\mathbb{R}$, depending on the data $X$, \textit{converges weakly in $P_0$-probability to a Gaussian distribution $\cN(0,V)$} if, as $n\to\infty$,
\begin{equation}\label{cvl}
 \be_\RR\left(Q_X, \cN(0,V) \right) \to^{P_0} 0, 
\end{equation} 
where $\be_\RR$ is the bounded Lipschitz distance between probability distributions on $\mathbb{R}$, 
see Appendix \ref{app-dist}.  Given a {\em rate}  $v_n$ and a {\em centering} $\mu=\mu(X)$, consider the map $\tau_\psi:\eta \to v_n(\psi(\eta)-\mu)$. We say that the posterior distribution of $v_n(\psi(\eta)-\mu)$ converges weakly in $P_0$--probability to a $\cN(0,V)$ distribution if \eqref{cvl} holds for 
\[ Q_X = \Pi[\cdot\given X] \circ \tau_\psi^{-1}.\]
In view of the parametric results above, it seems natural in the present more general setting to center the posterior again at an `efficient' estimator. Indeed a theory of efficiency paralleling the one for parametric models exists in this more general semiparametric setting; we refer to the Saint-Flour notes \cite{aadstflour} (or also \cite{aad98} Chapter 25) for more on this. 

When $v_n = \sqrt{n}$ and $\mu = \hat\psi$ is an efficient estimator of $\psi(\eta)$, writing $\mathcal{L}( \sqrt{n}(\psi(\eta)-\hat\psi) |X)$ for the marginal posterior distribution of $\sqrt{n}(\psi(\eta)-\hat\psi)$, the above says that, as $n\to\infty$,
\[ \mathcal{L}( \sqrt{n}(\psi(\eta)-\hat\psi) \given X) \approx \cN(0,V) \]
 Such a result, known as a \textit{semiparametric BvM theorem}, says that the above marginal  posterior distribution asymptotically converges to a Gaussian distribution. Alternatively, one can wrtite this distributional approximation as $\mathcal{L}( \psi(\eta) \given X) \approx \cN(\hat\psi, V/n)$. 
 \subsubsection{A generic LAN setting} 
The following setting formalises a generic semiparametric framework as in \cite{cr15} (see also \cite{c12} and \cite{gvbook}, where similar settings are considered in order to derive BvM theorems). For simplicity in the next condition we implicitly assume that $\eta-\eta_0$ can be embedded into the considered Hilbert space (otherwise the arguments can be adapted). 

\begin{assumption}[LAN framework]
\label{assum:lan} 
Let $(\mathcal{H}, \langle\cdot, \cdot\rangle_L)$ be a Hilbert space with associated norm $\|\cdot \|_L$. In the following, $R_n$ and $r$ are remainder terms which are controlled through the last part of the assumption.

{\em LAN expansion.} Suppose $\ell_n$ around $\eta_0$ can be written, for suitable $\eta$'s to be specified below, as
$$
\ell_n(\eta) - \ell_n(\eta_0) = -\frac{n}{2}\|\eta - \eta_0\|_L^2  + \sqrt{n}W_n(\eta - \eta_0) + R_n(\eta, \eta_0),
$$
where $W_n : h \mapsto W_n(h)$ is $P_0$--a.s. a linear map and $W_n(h)$ converges weakly to $\cN(0, \| h\|_L^2 )$ as $n \rightarrow \infty$.\\

{\em Functional expansion.} Suppose  the functional $\psi$ around $\eta_0$ can be written, for some $\psi_0\in\mathcal{H}$, as 
$$
\psi(\eta) - \psi(\eta_0) = \langle \psi_0, \eta - \eta_0 \rangle_L + r(\eta, \eta_0).
$$
Define, for any fixed (possibly small enough) $t\in\R$, a {\em path} through $\eta$ as, assuming $\eta_t\in S$ for small $t$,
\begin{equation}
\eta_t = \eta - \frac{t\psi_0}{\sqrt{n}}.\label{def:eta_perturbed}
\end{equation}

{\em Remainder terms control.}  
Suppose that there exists a sequence of measurable sets $A_n$ satisfying
\[ \Pi[A_n | X] = 1 + o_{P_0}(1), \]
	such that $\eta - \eta_0 \in \mathcal{H}$ for all $\eta \in A_n$ and $n$ sufficiently large, and for any fixed $t \in \mathbb{R}$,
$$  
\sup_{\eta \in A_n}|t\sqrt{n} r(\eta, \eta_0) + R_n(\eta, \eta_0) - R_n(\eta_t, \eta_0 )| = o_{P_0}(1).
$$
\end{assumption}
\vspace{.2cm}

For $\psi_0$ and $W_n$ as in Assumption \ref{assum:lan}, further define,
\begin{align}
\hat{\psi} &= \psi(\eta_0) + \frac{W_n(\psi_0)}{\sqrt{n}}, \hspace{5mm} V_{0} = \left|\left| \psi_0 \right| \right|^2_L.	\label{def:psi_hat}
\end{align}

The term $V_0$ is the {\em efficiency bound} for estimating $\psi(\eta_0)$; an estimator $\tilde{\psi}=\tilde{\psi}(X)$ is said to be {\em linear efficient} for estimating $\psi(\eta_0)$ if it can be expanded as $\tilde{\psi}=\psi(\eta_0) + W_n(\psi_0)/\sqrt{n}+o_P(1/\sqrt{n})$ or equivalently if $\sqrt{n}(\tilde{\psi}-\hat\psi)=o_P(1)$. For such an estimator,  $\sqrt{n}(\tilde{\psi}-\psi(\eta_0))$ converges in distribution to a $\cN(0,V_0)$ variable. Note that $\hat\psi$ is itself not an estimator as it depends on unknown quantities. But in all the following, this quantity can be replaced by any linear efficient estimator $\tilde\psi$ since $\tilde{\psi}=\hat\psi+o_P(1/\sqrt{n})$.\\

\noi {\em Remark.} In \eqref{def:eta_perturbed}, it is assumed that $\eta_t$ belongs to $S$. Some non-linear paths may be required in situations in which the parameter set is `constrained', such as in density estimation for which the density $f$ must satisfy the conditions $f\ge 0$ and $\int_0^1 f = 1$. In these situations the above often needs a slight adaptation, see below for the example of density estimation.

\subsubsection{A semiparametric BvM theorem}

\begin{thm}[Semiparametric BvM]\label{thm:genbvm}
Let $\Pi$ be a prior distribution on $\eta$ and suppose that the LAN framework in Assumption \ref{assum:lan} holds true with sets $A_n$. If for any $t\in\R$,
\begin{equation} \label{chvar}
\frac{\int_{A_n} e^{\ell_n(\eta_t)} d\Pi(\eta)}{\int e^{\ell_n(\eta)}d\Pi(\eta)} = 1+o_{P_0}(1),
\end{equation}
then the posterior distribution of $\sqrt{n}(\psi(\eta) - \hat{\psi})$ converges weakly in $P_0-$probability to a Gaussian distribution with mean 0 and variance $V_{0}$.
\end{thm} 

The last display of Theorem \ref{thm:genbvm} is a ``change-of-measure"--type condition. It is satisfied if a slight additive perturbation of the prior (replacing $\eta$ by $\eta_t$ or vice-versa) has little effect on computing the integrals on the display. It can often be checked by doing a change of measure in the prior.
 In the proof of Theorem \ref{thm:genbvm} below, one shows that under the assumptions, for any $t\in\R$,
\[ E\left(e^{t\sqrt{n}(\psi(\eta) - \hat{\psi})} 1_{A_n}\given X\right) = e^{o_P(1) + t^2V_0/2} \cdot \frac{\int_{A_n} e^{\ell_n(\eta_t)}d\Pi(\eta)}{\int e^{\ell_n(\eta)} d\Pi(\eta)}. \]
So \eqref{thm:genbvm} arises in order to check that the Laplace transforms converge as desired.

Similarly as for the parametric Theorem \ref{bvmpara}, it follows that quantile credible sets of the posterior for $\psi(f)$ are asymptotic confidence sets.\\ 

\begin{proof}[{\em Proof of Theorem \ref{thm:genbvm}}] 
To show that $\sqrt{n}(\psi(\eta)- \hat{\psi})$ converges in distribution (in $P_0$--probability) to a  $\cN(0,V_0)$ law, it suffices to do so for $\sqrt{n}(\psi(\eta)- \hat{\psi})1_{A_n}(\eta)$. Indeed, 
\[ \sqrt{n}(\psi(\eta)- \hat{\psi})=\sqrt{n}(\psi(\eta)- \hat{\psi})1_{A_n}(\eta)+\sqrt{n}(\psi(\eta)- \hat{\psi})1_{A_n^c}(\eta),\]
and since by assumption $\Pi[A_n^c\given X]=o_{P_0}(1)$, for $\eta\sim \Pi[\cdot\given X]$ the variable $1_{A_n^c}(\eta)$  goes to $0$ in probability, and so does $\sqrt{n}(\psi(\eta)- \hat{\psi})1_{A_n^c}(\eta)$ (the probability that it is non--zero is $\Pi[A_n^c\given X]$).  

Since convergence in distribution is implied by convergence of Laplace transforms (this is also true in--probability, see Lemma 1 of the supplement of \cite{cr15}), it is enough to show, for any real $t$, that $E[e^{t\sqrt{n}(\psi(\eta)- \hat{\psi})1_{A_n}}\given X]$ goes to $e^{t^2V_0/2}$ in $P_0$--probability. 
Since $e^{t\sqrt{n}(\psi(\eta)- \hat{\psi})1_{A_n}}=e^{t\sqrt{n}(\psi(\eta)- \hat{\psi})}1_{A_n}+1_{A_n^c}$, using again that $\Pi[A_n^c\given X]=o_{P_0}(1)$, it is enough to show that 
\begin{align*}
 E(e^{ t\sqrt{n} (\psi(\eta) - \hat{\psi})} | X, A_n) & := \frac{\int_{A_n} e^{ t\sqrt{n} (\psi(\eta) - \hat{\psi})} e^{\ell_n(\eta) -  \ell_n(\eta_t)} e^{\ell_n(\eta_t)} d\Pi(\eta)}{\int_{A_n} e^{\ell_n(\eta)}d\Pi(\eta)} \\
 & = \frac{\int_{A_n} e^{ t\sqrt{n} (\psi(\eta) - \hat{\psi})} e^{\ell_n(\eta) -  \ell_n(\eta_t)} e^{\ell_n(\eta_t)} d\Pi(\eta)}{\int  e^{\ell_n(\eta)}d\Pi(\eta)} \Pi(A_n\given X)^{-1} 
\end{align*} 
goes to $e^{t^2V_0/2}$ in $P_0$--probability, where $\eta_t = \eta - t\psi_0/\sqrt{n}$ the path as in \eqref{def:eta_perturbed}. 

 Using the LAN expansion in Assumption \ref{assum:lan} and the linearity of $W_n$,
\begin{align*}
\ell_n (\eta) - \ell_n(\eta_t) &= -\frac{n}{2}\|\eta - \eta_0\|_L^2 + \frac{n}{2} \|\eta_t-\eta_0\|_L^2 + \sqrt{n}W_n(\eta - \eta_t) + R_n(\eta,\eta_0) - R_n(\eta_t,\eta_0) \\
&= - t\sqrt{n} \langle \psi_0 , \eta - \eta_0 \rangle_L + \frac{t^2}{2} \|\psi_0\|_L^2 + t W_n(\psi_0) + R_n(\eta,\eta_0) - R_n(\eta_t,\eta_0),
\end{align*}
recalling that $\|\cdot\|_L$ is a norm induced by a Hilbert space. Using the definition \eqref{def:psi_hat} of $\hat{\psi}$ and the functional expansion in Assumption \ref{assum:lan},
\begin{align*}
t\sqrt{n} (\psi(\eta) - \hat{\psi}) = t \sqrt{n} \langle \psi_0 , \eta - \eta_0 \rangle_L - t W_n(\psi_0) + t \sqrt{n} r(\eta,\eta_0).
\end{align*}
Combining the last two displays thus gives
\begin{align*}
	t\sqrt{n} (\psi(\eta) - \hat{\psi}) &+ \ell_n(\eta) -  \ell_n(\eta_t) 
	=   \frac{t^2 \|\psi_0\|_L^2 }{2} + \underbrace{t\sqrt{n}r(\eta, \eta_0) + (R_n(\eta ,\eta_0) - R_n(\eta_t ,\eta_0))}_{\textrm{Rem}(\eta,\eta_0)},
\end{align*}
where $\sup_{\eta \in A_n}|\textrm{Rem}(\eta, \eta_0)| = o_{P_0}(1)$ by assumption.
Substituting this into the above display  gives
\[ E(e^{ t\sqrt{n} (\psi(\eta) - \hat{\psi})} \given X, A_n) = e^{o_{P_0}(1) + t^2\left|\left| \psi_0 \right| \right|^2_L/2} \cdot \frac{\int_{A_n} e^{\ell_n(\eta_t) }d\Pi(\eta)}{\int e^{\ell_n(\eta)}d\Pi(\eta)}.\]
Since the last ratio equals $1 + o_{P_0}(1)$ by assumption,
the last display goes to  $e^{t^2 V_0/2}$ in $P_0$--probability, which concludes the proof of Theorem \ref{thm:genbvm}. 
\end{proof}

\section{BvM: examples and applications}

We now apply the general result to  two prototypical nonparametric models (white noise regression and density estimation) and then briefly discuss the case of separated semiparametric models. 

\subsubsection{Gaussian white noise and sequence model}

Let us recall that projecting the Gaussian white noise model onto a given orthonormal basis $(\phi_k)$ of $L^2[0,1]$, one obtains a sequence model $X_k=f_k+\veps_k/\sqrt{n}$ with $f_k=\psg f,\phi_k\psd$. We will work directly in sequence space and write $X=(X_1,X_2,\ldots)$ and $f=(f_1,f_2,\ldots)$. Here $\cP=\{\otimes_{k\ge 1} \cN(f_k,1/n), \ (f_k)\in\ell^2 \}$ and  the parameter is just $\eta=f$. We have the exact LAN expansion 
\[ \ell_n(f) - \ell_n(f_0) = -\frac{n}{2} \|f - f_0\|_2^2 + \sqrt{n} W_n(f-f_0), \]	 
	where, for $g = \sum_{k=1}^\infty g_k \phi_k$, we set $W_n(g) = \sum_{k=1}^\infty g_k\varepsilon_k$. In particular, the LAN norm $\|\cdot\|_L = \|\cdot\|_2$, the remainder $R_n(f, f_0) = 0$. 
Now considering a functional $\psi(f)$ that verifies the expansion $\psi(f)=\psi(f_0)+\psg \psi_0,f-f_0\psd_2 + r(f,f_0)$ for some $\psi_0\in\ell^2$ for instance a linear functional $\psi(f)=\int_0^1 a(u)f(u)du$ for which $r(f,f_0)=0$, the efficient centering sequence is
	\[ \hat{\psi} = \psi(f_0) + \frac{W_n(\psi_0)}{\sqrt{n}} = \sum_{k=1}^\infty \psi_{0,k}X_k, \] 
with corresponding efficiency bound $V_0 = \|\psi_0\|_L^2 = \|\psi_0\|_2^2$. 
Theorem \ref{thm:genbvm} now implies that if the prior verifies the change--of--variable condition and concentrates on $A_n$ 
then the semiparametric BvM holds in the sense that
\[ \cL(\sqrt{n}(\psi(f)-\hat\psi)\given X) \to \cN(0,V_0),\]
 as $n\to\infty$, in probability under $P_0$. 

\subsubsection{Density estimation}

In density estimation $\cP=\{P_f^{\otimes n},\ f\in\cF\}$, where $P_f$ is the law of density $f$ on $[0,1]$ and $\cF$ a subset of positive densities on $[0,1]$. We assume that the true density $f_0$ is bounded away from $0$ and infinity on $[0,1]$. By definition,  the log-likelihood is then $\ell_n(f)=\sum_{i=1}^n \log{f}(X_i)$. Let us use the shorthand notation $\eta = \log f$, although below we write the LAN expansion in terms of $f$ (so our natural parameter is still $f$; it is also possible to use $\eta$ as main parameter with only minor differences in the statement, see \cite{cr15}, Theorem 4.1). 
 We have the LAN expansion:
\begin{align*}
\ell_n(f) - \ell_n(f_0) &=\sum_{i=1}^n\{\eta(X_i) - \eta_0(X_i)\}
= -\frac{n}{2}\|\eta - \eta_0\|_L^2 + \sqrt{n}W_n(\eta - \eta_0) + R_n(f, f_0),
\end{align*}
where, for $g \in L^2(f_0)=\{g: \int_0^1 g^2 f_0<\infty\}$ and  $F(g) := \int_0^1 gf$ for $f,g\in L^2[0,1]$, so
that $F_0 g=\int gf_0$,
\[ 
	\|g\|_L^2 = \int (g - F_0(g))^2f_0,\qquad W_n(g) = \frac{1}{\sqrt{n}}\sum_{i=1}^n [g(X_i) - F_0(g)],
\]
and
$
R_n(f, f_0) = \sqrt{n} P_{0}h + \frac{1}{2}\|h\|_L^2
$
for $h = \sqrt{n}(\eta - \eta_0)$.
For the functional expansion, we assume there exists a bounded measurable function $\effinf : [0,1]\to\mathbb{R}$ such that
\begin{equation}\label{expansion_psi}
\psi(f) - \psi(f_0) = \int_0^1 \effinf (f-f_0) + \tilde{r}(f,f_0)\qquad\text{and}\qquad \int_0^1 \effinf f_0 = 0.
\end{equation}
 In this case, as follows from the definition above $\psg f,g \psd_L=\int (f-F_0f)(g-F_0g)f_0$, and
\begin{align*}
\psi(f) - \psi(f_0) =  
&= \langle\,\frac{f-f_0}{f_0}\,,\,\effinf\, \rangle_L + \tilde{r}(f,f_0) =\langle \eta - \eta_0, \effinf \rangle_L + r(f,f_0),
\end{align*}
with $r(f,f_0)=\mathcal{B}(f,f_0) + \tilde{r}(f,f_0)$ and
$$
\mathcal{B}(f,f_0) = -\int \left[\eta - \eta_0 - \frac{f-f_0}{f_0}\right]\effinf f_0.
$$
The last steps  account for the fact that the functional expansion should hold in terms of $\eta = \log f$ rather than $f$ itself.
This gives  
\[ \hat{\psi} = \psi(f_0) + W_n(\effinf)/\sqrt{n} = \psi(f_0) + \sum_{i=1}^n \effinf(X_i)/n, \] and optimal (efficient) limiting variance $\|\effinf\|_L^2 = \int \effinf^2 f_0$. \\

{\em Paths along the model.} In density estimation, given the constraints on density functions, the path are slightly adapted. In terms of the parameter $f$, we set 
\[ f_t = fe^{-t\effinf/\sqrt{n}}/F(e^{-t\effinf/\sqrt{n}}), \]
which is a density by construction for any real $t$; equivalently, in terms of $\eta=\log{f}$ this becomes
\begin{equation} \label{path:density}
\eta_t = \eta - t\frac{\effinf}{\sqrt{n}} - \log F(e^{-t\effinf/\sqrt{n}}),
\end{equation}
which resembles \eqref{def:eta_perturbed} up to the logarithmic term. 
The latter is a constant that may depend on $f$ but not on the space variable defining the $\eta$'s. Therefore, it drops out from the expressions of the LAN norm and the term $W_n(h)$ above, as the various terms are always recentered by their expectations (in the $\eta$--parametrisation), so any additive constant cancels out.
The following adapts Theorem \ref{thm:genbvm} with slightly easier-to-verify conditions. Its proof follows the same lines and is given below.
\begin{thm}\sbl{[density estimation BvM]}\label{thm:bvmdens}$\ $
	Let $f\to \psi(f)$ be a functional on probability densities on $[0,1]$. 
Suppose that for some  $\eps_n =o(1)$ and sets $A_n \subset \{f: \|f-f_0\|_1 \leq \eps_n\}$, for  $\tilde{r}$ as in \eqref{expansion_psi},
\begin{align}
	& \Pi[A_n \given X] = 1 + o_{P_0}(1), \label{GTDE_1}\\
	&\sup_{f \in A_n}\tilde{r}(f, f_0) = o(1/\sqrt{n}). \label{GTDE_2} 
\end{align}
Denote
$f_t = fe^{-t\effinf/\sqrt{n}}/F(e^{-t\effinf/\sqrt{n}})$ and for $A_n$ as above,  assume that
\begin{equation} \label{GTDE_3}
\frac{\int_{A_n}e^{\ell_n(f_t)}d\Pi(f)}{\int e^{\ell_n(f)}d\Pi(f)} = 1 + o_{P_0}(1). 
\end{equation}
Then for $\hat{\psi} = \psi(f_0) + \sum_{i=1}^n \effinf(X_i)/n$, the posterior distribution of $\sqrt{n}(\psi(f) - \hat{\psi})$ converges weakly in $P_0-$probability to a Gaussian distribution with mean 0 and variance $\int  \effinf^2 f_0$.
\end{thm}

\begin{proof}[Proof of Theorem \ref{thm:bvmdens}]
Let us  verify that Assumption \ref{assum:lan} holds (recall that here we work with the parameter $f$ instead of the generic `$\eta$' in the previous section): 
$$
R_n(f, f_0) - R_n(f_t, f_0) = t\sqrt{n} \langle \eta - \eta_0, \effinf \rangle_L - \frac{t^2}{2}\|\effinf\|_L^2 + n\log F(e^{-t\effinf/\sqrt{n}}).
$$
Expanding the last term, we have for $f \in A_n \subset 
\{\|f-f_0\|_1 \leq \varepsilon_n \}$,
\begin{align*}
    n\log F (e^{-t\effinf/\sqrt{n}}) &=n\log\Big\{1 - \frac{t}{\sqrt{n}}\int_0^1 f \effinf + \frac{t^2}{2n}\int_0^1 f \effinf^2 + o\Big(\int_0^1 f\left(t^2 \effinf^2/n \right) \Big) \Big\} \\
    &= n\log\Big\{ 1 - \frac{t}{\sqrt{n}}\langle \eta - \eta_0, \effinf \rangle_L - \frac{t}{\sqrt{n}} \mathcal{B}(f, f_0) + \\
    & \quad + \frac{t^2}{2n}\|\effinf\|_L^2 +\frac{t^2}{2n}(F - F_0)(\effinf^2) + o(n^{-1}) \Big\} \\
    &= -t\sqrt{n} \langle \eta - \eta_0, \effinf \rangle_L - t\sqrt{n}\mathcal{B}(f, f_0) + \frac{t^2}{2}\|\effinf\|_L^2 + o(1),
\end{align*}
since $(F - F_0)(\effinf^2) \leq \|\effinf\|^2_\infty \|f - f_0\|_1 \lesssim \eps_n$ on $A_n$. Hence we have
$$
R_n(f, f_0) - R_n(f_t, f_0) = -t\sqrt{n}\mathcal{B}(f, f_0) + o(1),
$$
and the condition on remainder terms in Assumption \ref{assum:lan} reduces to, since 
$\mathcal{B}(f, f_0)=r(f,f_0)-\tilde{r}(f,f_0)$,
\begin{align*}
\sup_{f \in A_n}|\sqrt{n}r(f,f_0)| = o_P(1),
\end{align*}
which is satisfied by assumption. One can then follows the same arguments as in the proof of Theorem \ref{thm:genbvm} to conclude that the BvM theorem holds. 
\end{proof}

\subsubsection{Separated semiparametric models}

Here the model is indexed by $\eta=(\te,f)$ with (say) $(\te,f)\in \RR\times \cF$, for some set of functions $f$. Then $\psi(\eta)=\te$ is often the parameter of interest. The Hilbert space $\cH$ in the LAN expansion is a product
  $\cH=\mathbb{R}\times\cG_{\eta_0}$. Let $\overline{\cF}$ be the closure in $\cH$ of the linear span 
of all elements of the type $(0,f-f_0)$, where $f$ belongs to $\cF$.

Let us define the element $(0,\gamma(\cdot))\in\overline{\cF}$ as the orthogonal projection of the vector $(1,0)$ onto the closed subspace $\overline{\cF}$. 
The element $\gamma$ is called {\em least favorable direction}.
For any $(s,g)\in\cH$, 
\begin{equation} \label{lan-split}
\|\,(s,g)\,\|_L^2  =  (\|\,(1,0)\,\|_L^2 - \|\,(0,\gamma)\,\|_L^2) s^2 + \|\,(0,g+s\gamma)\,\|_L^2,
\end{equation}
decomposing $(s,g)=s(1,-\ga)+(0,g+s\ga)$ and noting that the two vectors in the sum are orthogonal. 
One denotes $\tilde{I}_{\eta_0} := \|\,(1,0)\,\|_L^2 - \|(0,\gamma)\|_L^2$ called {\em efficient information}. The LAN inner product is 
\[ \psg\, (s_1,g_1)\,,\,(s_2,g_2)\, \psd_L = \tilde{I}_{\eta_0}s_1s_2+ 
\psg\, (0,g_1+s_1\ga)\,,\,(0,g_2+s_2\ga)\, \psd_L \]
and from this one sees that, with $\eta=(\te,f)$ and $\eta_0=(\te_0,f_0)$,
\[ 
\psg\, \eta-\eta_0\,,\,(1,-\ga)\, \psd_L/\tilde I_{\eta_0}
=\te-\te_0 + \psg\, (0,f-f_0+\ga)\,,\,(0,-\ga+\ga) \,\psd_L/\tilde{I}_{\eta_0} = \te-\te_0
.\]
Deduce that $\psi$ has a linear expansion around $\eta_0$ in terms of the LAN inner product, with  representer
\[ \psi_0 = (1, -\ga)/\tilde{I}_{\eta_0}.\]
If $\ga=0$, ones says there is {\em no loss of information} and $\tilde{I}_{\eta_0}$ 
is the information in the model where $f$ would be known (that is, the standard Fisher information). 
If $\tilde{I}_{\eta_0}$  itself is nonzero,
 let us also denote $$\Delta_{n,\eta_0} = \tilde{I}_{\eta_0}^{-1} W_n(1,-\gamma). $$ 
An estimator $\hat{\te}_n$ of $\te_0$ is said asymptotically linear and efficient if 
 $\rn(\hat{\te}_n-\te_0)=\Delta_{n,\eta_0} + o_{P_0}(1)$.
 
 If $\gamma = 0$, we have 
 $\eta_t = (\te + t\tilde I_{\eta_0}^{-1}/\sqrt{n}, f)$
 and the change of variable condition \eqref{chvar} is not difficult to verify: 
 it holds if $\pi = \pi_\te \otimes\pi_f$ with $\pi_\te$ positive and continuous at $\te_0$.\\
 
 If $\ga\neq 0$, the change of variable condition is discussed below in case the nuisance parameter $f$ is endowed with a Gaussian process prior.\\
 
{\em Further intuition about the efficient information.} When the LAN condition holds, the model asymptotically looks like a Gaussian shift experiment with inner-product $\psg\cdot,\cdot\psd_L$. How much {\em information} is available for estimating a given parameter is completely encoded in the inner-product. Observe that 
from \eqref{lan-split}, one deduces $\|s,g\|_L^2 \ge (\|1,0\|_L^2 - \|0,\gamma\|_L^2) s^2 = \infoe s^2$ with equality when $g=-s\ga$. The quantity $\infoe$ represents the `smallest  curvature' of paths approaching $\eta_0$.

\subsubsection{Application: Gaussian process priors}

We now proceed to apply these results to concrete priors. We focus on Gaussian process priors and functionals either in white noise or density estimation, setting $\eta=f_0$ and $\eta=\log{f_0}$ respectively (in white noise, one could consider obtaining  these results also by a direct approach, and also, estimators for linear functionals are easy to obtain by other means, but the goal here is to illustrate the systematic nature of the methods we introduce; in more complex models, where a direct method cannot be used, they can be particularly useful). 
Recall from Chapter \ref{chap:rate1} that for a Gaussian process prior with concentration function $\varphi_{\eta}$, a contraction rate $\veps_n$ (in terms of certain distances) is given by a solution to
\begin{equation}\label{eq:conc_eqn}
\varphi_{\eta_0}(\eps_n) \leq n \eps_n^2.
\end{equation}
For the next result, we focus for simplicity on the white noise model, but a result of the same flavour holds much more broadly. We further comment on that below.

\begin{thm}
\label{thm:spgp}
Consider the Gaussian white noise model and   let  $\Pi$ be a mean-zero Gaussian prior in $L^2[0,1]$ on $f$ with associated RKHS $\mathbb{H}$. Suppose that $\eps_n \to 0$ satisfies \eqref{eq:conc_eqn} with $\eta_0 = f_0 \in L^2[0,1]$, and that 
Assumption \ref{assum:lan} holds for $\psi(f) = \psi(f_0) + \langle \psi_0, f - f_0 \rangle_2 + r(f,f_0)$ and $A_n \subset \{f: \|f-f_0\|_2 \leq \eps_n\}$.
Suppose that there exist sequences $\psi_n \in \mathbb{H}$ and $\zeta_n\rightarrow0$ such that
\begin{equation}\label{eq:RKHS_conditions}
\| \psi_n - \psi_0 \|_2 \leqa \zeta_n, \hspace{5mm}
\|\psi_n\|_\mathbb{H} \leqa \sqrt{n}\zeta_n, \hspace{5mm}
\sqrt{n}\eps_n \zeta_n \rightarrow 0.
\end{equation}
Then the posterior distribution of $\sqrt{n}(\psi(f) - \hat{\psi})$ converges weakly in $P_0-$probability to a Gaussian distribution with mean 0 and variance $\|\psi_0\|^2_2.$
\end{thm}

The sequence $\psi_n$ allows one to approximate the Riesz representer, $\psi_0$, of the functional by elements of the RKHS $\mathbb{H}$. This is helpful since for elements of the RKHS, one can directly deal with the change of measure condition \eqref{chvar} using the Cameron-Martin theorem. Note that if $\psi_0 \in \mathbb{H}$, one may immediately take $\psi_n = \psi_0$ and $\zeta_n = 0$, so condition \eqref{eq:RKHS_conditions} is automatically verified.\\

\begin{proof}[Sketch of proof, see \cite{c12, ltcr23} for details]
It is enough to verify the change--of--measure condition \eqref{chvar}, where $\eta_t=\eta-t\psi_0/\sqrt{n}$. \\

In case $\psi_0\in\mh$, one can use directly Cameron--Martin's change of variable formula (Appendix) and set $g=\eta-t\psi_0/\sqrt{n}$: one is left with controlling the change of measure term $\exp(-tU\psi_0/\sqrt{n}-t^2\|\psi_0\|_\mh^2/(2n))$, which can be done relatively easily by controlling $U\psi_0$ with high probability.\\

In case $\psi_0\notin \mh$, one changes variables  setting $g_n=\eta-t\psi_n/\sqrt{n}$ instead. Before doing this, one  writes $\eta_t=g_n+r_n$, with $r_n=t(\psi_n-\psi_0)/\rn$, and expands
\begin{align*}
\lefteqn{-n\|\eta_t-\eta_0\|_L^2/2 + \sqrt{n} W_n(\eta_t-\eta_0)
 = -n\|g_n-\eta_0\|_L^2/2+ \sqrt{n} W_n(g_n-\eta_0) }  && \\
& \qquad \qquad \qquad\qquad\qquad  \qquad\qquad -n\|r_n\|_L^2/2 - n\psg g_n-\eta_0, r_n \psd_L + \sqrt{n} W_n(r_n).
\end{align*} 
The first two terms on the right hand-side of the last display enable to recover, up to the LAN-remainder term, a term $\ell_n(g_n)-\ell_n(g_0)$, which matches the denominator in \eqref{chvar} as needed (once the change of variables is made and one substracts $\ell_n(g_0)$ up and down within the exponentials). One now shows  that the last three terms in the last display are small.

First, $\sqrt{n} W_n(r_n)=tW_n(\psi_n-\psi_0)$. In the white noise model, this follows a $\cN(0,\| \psi_n-\psi_0\|^2)$ distribution hence is $O_{P_0}(\zeta_n)=o_{P_0}(1)$. Similarly $n\|r_n\|_L^2=t^2\|\psi_n-\psi_0\|_L^2=o(1)$. Also,
\[ n|\psg g_n-\eta_0, r_n \psd_L| \leqa n\|g_n-\eta_0\|_L \|r_n\|_L  \]
using Cauchy-Schwarz inequality. As $ \|r_n\|_L\leqa \zeta_n/\rn$, and if $\|\eta_t-\eta_0\|_L\leqa \veps_n$ (which here follows from using that $\|f-f_0\|_2\le \veps_n$ on $A_n$), the last display goes to $0$ using the last condition in \eqref{eq:RKHS_conditions}.

Finally, one changes variables and needs to control the change of measure term $\exp(-tU\psi_n/\rn-\|\psi_n\|_\mh^2/(2n))$. The deterministic term is bounded using $\|\psi_n\|_{\mh}=O(\rn\zeta_n)$ by \eqref{eq:RKHS_conditions}, while the stochastic term is bounded on a set of large probability using this condition again.\\
\end{proof}

As is apparent from the last sketch of proof, for a general model, the key ingredients for estimation of a functional with a GP are: approximation of the functional in terms of the LAN norm by RKHS elements, with the corresponding first two conditions (and rate $\zeta_n$) and a condition that makes appear both the estimation rate of $f_0$ ({\em with respect to the LAN norm} -- note that the latter is not necessarily a `testing' distance, which may require more work to derive and/or a slightly slower rate) and $\zeta_n$. The latter is sometimes called no-bias condition --a somewhat similar condition appears e.g. for frequentist semiparametric estimators -. Since it arises from applying Cauchy-Schwarz, it is not necessarily sharp; in fact, determining a sharp conditions can be delicate in general, as fine details of the prior may matter \cite{ic12b}. 

For density estimation for instance and exponentiated and normalised GP priors, an analogous statement to Theorem \ref{thm:spgp} can be found in \cite{cr15}. For survival models and  the Cox model see \cite{cv21, nc23}.

We now turn to specific examples of Gaussian priors. Given an orthonormal basis of $L^2$,  
 define the Sobolev scales in terms of the $(\phi_k)$ basis:
\begin{equation}\label{eq:sobolev}
\mathcal{H}^\beta(R) :=  \left\{ f \in L^2[0,1]: \sum_{k= 1}^\infty k^{2\beta} |\langle f,\phi_k\rangle_2|^2 \leq R^2 \right\}.
\end{equation}
If $(\phi_k)$ is the Fourier basis for instance, then $\mathcal{H}^\beta$ coincides with the usual notion of Sobolev smoothness of periodic functions on $(0,1]$. 

Recall that a `$\ga$-smooth' infinite series GP prior is defined by, for $\gamma>0$
\begin{equation}
    W(x) = \sum_{k=1}^\infty k^{-\gamma-1/2} Z_k \phi_k(x) , \qquad \qquad Z_k \sim^{iid} \mathcal{N}(0,1).\label{def:infinite_series_prior}
\end{equation}
The infinite series prior \eqref{def:infinite_series_prior} models an almost $\gamma$-smooth function in the sense that it assigns probability one to $\mathcal{H}^s$ for any $s<\gamma$. Let us consider also a {\em rescaled} squared-exponential process on $\R$ with parameter $\gamma>0$: it is the mean-zero stationary Gaussian process with covariance kernel
\begin{equation} \label{rescsq}
K(s,t) = K(s-t) = \exp\left(-\frac{1}{k_n^2}(x-y)^2\right),
\end{equation}
where $k_n = \left(n/\log^2{n}\right)^{-\frac{1}{1+2\gamma}}$ is the length scale. The sample paths of this process are analytic, and so are typically too smooth to effectively model a function of finite smoothness in the sense that they yield suboptimal contraction rates \cite{vvvz11}. Rescaling the covariance kernel using the decaying lengthscale $k_n$ as above allows one to overcome this and model a $\gamma$-smooth function  \cite{vvvz07}.

For simplicity, we state the following result for linear functionals, but it can be extended to certain non-linear functionals modulo appropriate control of the functional's expansion.

\begin{corollary}\label{cor:gp_examples}
Let $W$ be a mean-zero Gaussian process taken as prior on $f$ in Gaussian white noise. Let $\psi(f) = \int_0^1 f a$ be a linear functional and consider the two cases:
\begin{itemize}
\item[(i)] $W$ is an infinite Gaussian series  with parameter $\gamma$, $f_0 \in \mathcal{H}^\beta$ and $a \in \mathcal{H}^\mu$;
\item[(ii)] $W$ is a rescaled squared-exponential process with parameter $\gamma$, and $\eta_0 \in \cC^\beta$ and $a \in \cC^\mu$.
\end{itemize} 
Then the posterior distribution of $\sqrt{n}(\psi(\eta) - \hat{\psi})$ converges weakly in $P_0-$probability to a Gaussian distribution with mean 0 and variance  $\|a\|_2^2$ if
\[  \gamma \wedge \beta > \frac{1}{2} +(\gamma - \mu) \vee 0.\]
\end{corollary}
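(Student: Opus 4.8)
The plan is to deduce the corollary directly from the semiparametric BvM Theorem~\ref{thm:spgp}, so the whole argument reduces to verifying its hypotheses in the two cases. First, in the Gaussian white noise model the log-likelihood has the \emph{exact} LAN expansion with $\|\cdot\|_L=\|\cdot\|_2$, linear part $W_n(g)=\sum_k g_k\varepsilon_k$ and vanishing remainder $R_n\equiv 0$; for a linear functional $\psi(f)=\int_0^1 fa$ the functional expansion holds with Riesz representer $\psi_0=a$ and $r\equiv 0$. Hence Assumption~\ref{assum:lan} is automatic once the sets $A_n$ are supplied, the efficient centering is $\hat\psi=\sum_k a_k X_k$, and the efficiency bound is $V_0=\|\psi_0\|_2^2=\|a\|_2^2$, which already identifies the limiting variance claimed in the statement.

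Second, I would produce $A_n$ from the posterior contraction results of Chapter~\ref{chap:rate1}. In case (i) the concentration function of the $\gamma$--series prior \eqref{def:infinite_series_prior} at $f_0\in\mathcal H^\beta$ solves \eqref{eq:conc_eqn} with $\varepsilon_n\asymp n^{-(\gamma\wedge\beta)/(2\gamma+1)}$ (splitting the small--ball term $\asymp\varepsilon^{-1/\gamma}$ from the approximation term controlled by the $\mathcal H^\beta$--norm); in case (ii) the rescaled squared--exponential process \eqref{rescsq} gives the same rate up to a logarithmic factor, using the small--ball and approximation estimates of \cite{vvvz07,vvvz11}. Theorem~\ref{thmgwn} then yields $\Pi[\{f:\ \|f-f_0\|_2\le M\varepsilon_n\}\mid X]=1+o_{P_0}(1)$ for $M$ large, so I take $A_n=\{f:\ \|f-f_0\|_2\le M\varepsilon_n\}$; replacing $\varepsilon_n$ by $M\varepsilon_n$ throughout (harmless for \eqref{eq:RKHS_conditions}) puts us exactly in the framework of Theorem~\ref{thm:spgp}.

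The substantive step is the construction of $\psi_n\in\mathbb H$ satisfying \eqref{eq:RKHS_conditions}. In case (i) I would take $\psi_n$ to be the $L^2$--projection of $a$ onto $\mathrm{span}(\phi_1,\dots,\phi_{N_n})$: since $a\in\mathcal H^\mu$ one gets $\|\psi_n-a\|_2^2\lesssim N_n^{-2\mu}$, so set $\zeta_n=N_n^{-\mu}$, while $\|\psi_n\|_{\mathbb H}^2=\sum_{k\le N_n}k^{1+2\gamma}a_k^2\lesssim N_n^{(1+2\gamma-2\mu)\vee 0}$ (with $a_k=\langle a,\phi_k\rangle_2$). The three conditions in \eqref{eq:RKHS_conditions} then boil down to $N_n^{(1+2\gamma)\vee 2\mu}\lesssim n$ and $N_n^{\mu}\gg n^{1/2-(\gamma\wedge\beta)/(2\gamma+1)}$, i.e. to the existence of an $N_n$ in the window $n^{(1/2-(\gamma\wedge\beta)/(2\gamma+1))/\mu}\ll N_n\lesssim n^{1/((1+2\gamma)\vee 2\mu)}$, which is non-empty precisely under the hypothesis $\gamma\wedge\beta>\tfrac12+(\gamma-\mu)\vee 0$ of the corollary (when $\mu\ge\tfrac12+\gamma$ one has $a\in\mathbb H$ and may simply take $\psi_n=a$, $\zeta_n\asymp n^{-1/2}$). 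In case (ii) I would instead build $\psi_n$ from a suitable convolution of $a$ against the rescaled squared--exponential kernel, the bounds $\|\psi_n-a\|_2\lesssim\zeta_n$ and $\|\psi_n\|_{\mathbb H}\lesssim\sqrt n\,\zeta_n$ following from the RKHS approximation lemmas for rescaled squared--exponential processes in \cite{vvvz07,vvvz11}, and the same arithmetic constraint on $\beta,\gamma,\mu$ emerges.

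Finally, with $\psi_n\in\mathbb H$ in hand, the change--of--measure condition \eqref{chvar} is checked exactly as in the sketch of proof of Theorem~\ref{thm:spgp}: writing $\eta_t=(f-t\psi_n/\sqrt n)+r_n$ with $r_n=t(\psi_n-\psi_0)/\sqrt n$, one performs the shift $f\mapsto f-t\psi_n/\sqrt n$ via the Cameron--Martin theorem and controls the leftover cross terms $n\|r_n\|_2^2$, $n\langle f-f_0,r_n\rangle_2$ and $\sqrt n\,W_n(r_n)$ using \eqref{eq:RKHS_conditions} together with $\|f-f_0\|_2\le M\varepsilon_n$ on $A_n$; see \cite{c12,ltcr23,cr15}. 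Theorem~\ref{thm:spgp} then delivers the weak convergence in $P_0$--probability of the marginal posterior of $\sqrt n(\psi(f)-\hat\psi)$ to $\cN(0,\|a\|_2^2)$. The main obstacle is precisely the rate balancing of the third paragraph — the no-bias-type trade-off between the approximation error of the Riesz representer by RKHS elements, the induced blow-up of the RKHS norm, and the $L^2$ contraction rate — and, in case (ii), establishing the RKHS approximation estimate for the rescaled squared--exponential process.
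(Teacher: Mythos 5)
Your proposal is correct and follows exactly the route this corollary is designed for: reduce to Theorem \ref{thm:spgp} with $\psi_0=a$ and exact LAN/functional expansions, take $A_n$ from the $L^2$ contraction rate $\varepsilon_n\asymp n^{-(\gamma\wedge\beta)/(1+2\gamma)}$ (up to logs in case (ii)), and verify \eqref{eq:RKHS_conditions} by truncating $a$ at a level $N_n$ for the series prior and by kernel smoothing for the rescaled squared-exponential. One small overstatement: your window for $N_n$ is non-empty under the slightly weaker condition $\gamma\wedge\beta>\gamma+\tfrac12-\mu$ (and unconditionally once $\mu\ge\gamma+\tfrac12$, where $a\in\mathbb{H}$ and $\psi_n=a$ works), not \emph{precisely} under the corollary's hypothesis $\gamma\wedge\beta>\tfrac12+(\gamma-\mu)\vee 0$ — but since the stated hypothesis implies your condition, the argument is valid as a proof of the corollary.
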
 
The interpretation of the condition is as follows: it allows for a prior that undersmoothes (with respect to the regularity of $f_0$) i.e. $\ga\le \be$, as soon as $\mu>1/2$. An oversmoothing prior $\ga>\be$ can be fine too, but only if 
 the smoothness $\mu$ is large enough ($\mu>\ga-\be+1/2$ suffices). 
 
Other `$\ga$--smooth' GP priors, such as Riemann--Liouville processes (that can be seen as Brownian motion integrated $\ga-1/2$ times) or Mat\'ern processes (stationary processes of covariance $K(s,t) = K(s-t) = \int_\R e^{-i(s - t)\lambda} (1+|\lambda|^2)^{-\gamma -1/2} d\lambda$) can be used within the same conditions. Again, using the same techniques, similar conditions arise when using these priors in more complex models. \\

\subsubsection{Application: nonlinear functionals}

One can obtain results for appropriately smooth non-linear functionals by linearisation. Consider to fix ideas the density estimation model: given a functional $\psi(f)$, for an appropriate  $a(\cdot)$ with $\int a f_0=0$ and suitably small $\tilde{r}$, one may be able to write
\begin{equation*}
\psi(f)= \psi(f_0) + \psg \frac{f-f_0}{f_0} , a \psd_L + \tilde{r}(f,f_0).
\end{equation*}
For example, for the quadratic functional $\psi(f) = \int_0^1 f^2(u)du$, one can take $a(u)=2f_0(u)-2\int_0^1 f_0^2(u) du$ and $\tilde{r}(f,f_0)=\int_0^1 (f-f_0)^2$. Then, provided one can control this remainder term uniformly over a set of high posterior probability, Theorem \ref{thm:bvmdens} applies. 
If one is able to establish a posterior convergence rate $\delta_n$ for $\|f-f_0\|_2$ (in density estimation, this requires a little more work compared to the derivation of an $\|\cdot\|_1$--rate) then, Condition \eqref{GTDE_2} holds if $\rn\delta_n^2=o(1)$, which itself can be obtained if $\be>1/2$. We refer to \cite{cr15} for details and other examples. 

Another approach to estimating possibly nonlinear functionals is via the nonparametric Bernstein--von Mises results presented in the next Chapter. In the context of survival analysis (see Appendix \ref{app:models}), it is of interest to estimate the median survival time $F^{-1}(1/2)$. A BvM result for this functional can be deduced from a `nonparametric' BvM theorem for $F(\cdot)$.

\section{Further results}
 
\subsubsection*{Tempered posteriors}

Using the same techniques as for the standard posterior distribution, one can derive Bernstein--von Mises theorems for the tempered posteriors $\Pi_{\al_n}[\cdot\given X]$ introduced in Chapter \ref{chap:rate1}. We briefly discuss the conditions required:  suppose the LAN framework holds in the same way as in Assumption \ref{assum:lan}, except that the path is now $\eta_t=\eta-t\psi_0/\sqrt{n\al_n}$ and that the remainder terms condition is, with $A_n$ satisfying
 \[ \Pi_\an[A_n | X] = 1 + o_{P_0}(1), \]
	such that $\eta - \eta_0 \in \mathcal{H}$ for all $\eta \in A_n$ and $n$ sufficiently large, and for any fixed $t \in \mathbb{R}$,
\[
\sup_{\eta \in A_n}|t\sqrt{n\an} r(\eta, \eta_0) + \an(R_n(\eta, \eta_0) - R_n(\eta_t, \eta_0 ))| = o_{P_0}(1).
\] 
All the rest remaining the same, the change of variable condition becomes,
 for any $t\in\R$,
\[ \frac{\int_{A_n} e^{{\alpha_n}\ell_n(\eta_t)}d\Pi(\eta)}{\int e^{{\alpha_n}\ell_n(\eta)}d\Pi(\eta)} = 1+o_{P_0}(1).\]
Under these conditions, a BvM--type result holds for the $\al_n$--posterior: for $\tau_n : \eta \rightarrow \sqrt{n\alpha_n}(\psi(\eta) - \hat{\psi})$,
\begin{align}\label{conv_BvM_alpha}
		\Pi_{\alpha_n}[\cdot \given X] \circ \tau_n^{-1} \overset{\cL}{\to} \mathcal{N}(0,V_0),
	\end{align}
 where $V_0$ is the efficiency bound for estimating $\psi(\eta)$ and 	$\hat{\psi}$ is linear efficient in that  
	\begin{align}\label{conv_hat_psi}
		\sqrt{n}(\hat{\psi} - \psi(\eta_0)) \xrightarrow{\mathcal{L}} \mathcal{N}(0, V_0).
	\end{align}

Note that for $0<\alpha_n<1$, the scaling in the BvM result is $\sqrt{n\al_n}$ instead of $\sqrt{n}$: the length of the resulting credible sets will then overshoot the optimal length given by the semiparametric efficiency bound. We now investigate how this can be remedied. For simplicity, we focus on the case where $\psi(\eta)$ is one dimensional.  

For  $0<\delta <1$, let $a_{n, \delta}^X$ denote the $\delta$--quantile of the $\alpha_n$--posterior distribution of $\psi(\eta)$ and consider the quantile region
\[  \mathcal{I}_{\al_n}= \mathcal{I}(\delta,\al_n,X):=(a_{n, \frac{\delta}{2}}^X\, ,\, a_{n, 1-\frac{\delta}{2}}^X].\]
 By definition, $\Pi_{\al_n}\left[ \psi(\eta)\in  \mathcal{I}_{\al_n} \given X\right] =1-\delta$, 
that is,  $\mathcal{I}_{\al_n}$ is a $(1-\delta)$--credible set (assuming the $\al_n$--posterior CDF is continuous, otherwise one takes generalised quantiles).  
	In order to obtain an efficient confidence interval from the $\alpha_n$--posterior distribution of $\psi(\eta)$, we consider a modification of the quantile region. Let $\bar{\psi}$ be an estimator of $\psi(\eta_0)$ built from the $\alpha_n$--posterior distribution of $\psi(\eta)$ (e.g. the posterior median or mean) and set 
	\begin{align}\label{def::eff_region}
		\mathcal{J}_{\al_n}:=\left(\sqrt\alpha_n(a_{n, \frac{\delta}{2}}^X - \bar{\psi}) +\bar{\psi} \,,\, \sqrt\alpha_n(a_{n, 1-\frac{\delta}{2}}^X - \bar{\psi}) +\bar{\psi}\right].
	\end{align}
We call this a {\em shift--and--rescale} version of the quantile set (or sometimes corrected set): this new interval is obtained by recentering $\mathcal{I}_{\al_n}$ at $\bar\psi$ and applying a {\em shrinking} factor $\sqrt{\al_n}$.

	\begin{thm}\label{prop_corrected_region}
		Suppose \eqref{conv_BvM_alpha}--\eqref{conv_hat_psi} hold for some $0<\al_n<1$, and that the estimator $\bar{\psi}$ satisfies
		\begin{align}\label{assum::estimator_bar_psi}
		\bar{\psi} = \hat{\psi} +o_{P_0}(1/\sqrt{n}).
		\end{align}
	Then $\mathcal{J}_{\al_n}$ in \eqref{def::eff_region}  is an asymptotically efficient confidence interval of level $1-\delta$ for $\psi(\eta_0)$, i.e.
	\begin{equation} \label{coveralpost}
	 P_0\left[\psi(\eta_0) \in \mathcal{J}_{\al_n}\right] \to 1-\delta 
	\end{equation} 
as $n\to\infty$.	
In the case that $\alpha_n = \alpha \in (0,1]$ is {\em fixed}, if one takes $\bar{\psi}$ to be the $\alpha$--posterior median, then \eqref{assum::estimator_bar_psi} holds. In particular, in that case the region \eqref{def::eff_region} is an asymptotically efficient confidence interval of level $1-\delta$ for estimating $\psi(\eta_0)$.     
\end{thm}

Theorem \ref{prop_corrected_region} states that if the re--centering is close enough to the efficient estimator $\hat\psi$, then the shift--and--rescale modification leads to a confidence set of optimal size (in terms of efficiency) from an information-theoretic perspective, and this is always possible for fixed $\al$ if one centers at the posterior median. When $\al_n$ can possibly go to zero, the situation is more delicate. Indeed, although by definition \eqref{conv_BvM_alpha} is centered around an efficient estimator at the scale $1/\sqrt{n\al_n}$,  it is not clear in general how to deduce from this a similar result at the smaller scale $1/\sqrt{n}$, and the coverage property \eqref{coveralpost} may fail if $\al_n$ goes to zero too quickly.\\

\textit{Modified credible sets in Gaussian white noise.}
 In the Gaussian white noise model interpreted as sequence model $X_k=f_k+\veps_k/\sqrt{n}$, let $f_0=(f_{0,k})$ and place a prior $f_k\sim \cN(0,\la_k)$ independently on each coordinate. 
 
We consider the problem of estimating the linear functional $\psi(f) = \int_0^1 a(t) f(t) dt = \sum_{k=1}^\infty a_k f_{k}$. 
By conjugacy arguments, the \ap distribution of $\psi(f)$ is  $\cN(a_{n,1/2}^X,\bar\sigma^2)$, with 
\[ a_{n,1/2}^X=\sum_{k=1}^\infty \frac{n\an \lambda_k}{1+n\an\lambda_k}a_k X_k,\qquad \bar\sigma^2=\sum_{k=1}^\infty \frac{\lambda_k}{1+n\an\lambda_k}a_k^2.\] Suppose the smoothness of the true function $f_0$, the representer $a$ and the prior are specified through the magnitude of their basis coefficients as follows, for $\beta, \mu, \gamma > 0$,
\begin{align} \label{params}
	f_{0,k} = k^{-\frac{1}{2} - \beta}, \hspace{5mm} a_k = k^{-\frac{1}{2} - \mu}, \hspace{5mm}
	\lambda_k = k^{-1-2\gamma}.
\end{align}
Setting $\bar{\psi}=a_{n,1/2}^X$ the posterior mean/median,  the shift--and--rescale set is, with $z_\delta$ the standard Gaussian quantiles,
$$
\mathcal{J}_{\al_n}=\left(\bar{\psi} +\sqrt{\an} z_{\delta/2}\bar{\sigma} \,,\, \bar{\psi} +\sqrt{\an} z_{1-\delta/2}\bar{\sigma}\right].
$$
The following result describes the behaviour of the shift--and--rescale sets. 

\begin{prop}\label{prop:corrected_gwn}
In the Gaussian white noise model with Gaussian prior as above, suppose that \eqref{params} holds. Let $(\al_n)$ be a sequence in $(0,1]$. The sets $\mathcal{J}_{\al_n}$ verify the converage property \eqref{coveralpost}
	\begin{enumerate}
		\item when $\beta + \mu > 1+ 2\gamma$:  if and only if $\sqrt{n} \an \rightarrow \infty$.
		\item when $\beta + \mu = 1+ 2\gamma$:  if and only if  $\sqrt{n}\an/\log{n}  \rightarrow \infty$.
		\item when $1/2 + \gamma < \beta + \mu < 1+ 2\gamma$:  if and only if  $n^{1-\frac{1+2\gamma}{2(\beta + \mu)}}\an \rightarrow \infty.$
	\end{enumerate}
\end{prop}  

One assumes $\gamma + 1/2 < \beta + \mu$, which corresponds to the case where  BvM holds for the standard posterior ($\an \equiv 1$)  of $\psi(\eta)$ holds, see  Theorem 5.4 in \cite{kvv11}.
  In agreement with this, we see by setting $\al_n=1$ in Proposition \ref{prop:corrected_gwn} that in all cases standard credible sets $\mathcal{J}_1$ are efficient confidence sets. 
 The point here is to investigate to what extent shift--and--rescale sets $\mathcal{J}_{\al_n}$ centered at the posterior median remain efficient confidence sets when $\al_n$ goes to $0$.  In Cases 1 and 2 of the result, the condition is very mild and any $(\al_n)$ essentially slower than $1/\sqrt{n}$ works (as in a simple Gaussian location model). When $\beta+\mu$ approaches $1/2+\gamma$ (Case 3),  $\alpha_n$ is only allowed to decrease quite slowly to $0$ to preserve efficiency. An interpretation is that the problem becomes more `nonparametric' and the $\al_n$--posterior median does not necessarily concentrate fast enough in order for  \eqref{assum::estimator_bar_psi} to be satisfied.

\subsubsection*{Further examples and discussion}

The methods of this chapter extend virtually to any model verifying a LAN-type expansion as above. Among the examples that have been considered so far using this approach let us mention
\begin{itemize} 
\item separated semiparametric models: the estimation of the translation parameter of a symmetric signal \cite{c12}, or of the error standard deviation in Gaussian regression \cite{djvz13}, are examples of the case with no loss of information; the problem of alignment of curves \cite{c12} and the semiparametric Cox model  \cite{c12, nc23} are examples of settings with loss of information;
\item Survival analysis and the Cox model: the nonparametric right-censoring model is considered in \cite{cv21}, where BvM for linear functionals of the hazard rate are derived, as well as for other regular functionals of the hazard, such as the median survival time and the full survival function (to be discussed in more details in the next Chapter) both often considered in medical applications; in \cite{nc23}, inference on the proportional hazards' parameter is considered as well as linear functionals of the hazard, both in the multivariate case;
\item Diffusions models: BvM results for functionals of drift vector fields of
              multi-dimensional diffusions are obtained in \cite{nr20};
\item Inverse problems: for linear inverse problems, BvM theorems are derived for Gaussian process priors in \cite{kvv11}; linear and nonlinear inverse problems in nonconjugate settings are considered in e.g. \cite{nicklschroedinger, nicklsoehl19, an19, mnp21}; we refer to \cite{nickl23} for an overview.
\end{itemize}
While many relevant results can already be obtained using the above approach, one possible difficulty for some applications is that bias may arise, depending on the functional and chosen prior (which can be the  case for instance if the change of variables condition above does not hold true). Also, another setting that is less explored so far is that of high-dimensional models. We mention only a few references here for illustration 
\begin{itemize}
\item in a causality setting, a central quantity is the {\em average treatment effect}: combining the previous approach with an appropriate propensity score-dependent prior,  Ray and van der Vaart \cite{rv20} show that efficient inference can be obtained under strictly weaker conditions than for Gaussian process priors (the regularity conditions under which bias does not arise are weaker);
\item in a high-dimensional model where one observes a high-dimensional Gaussian vector with invertible  covariance matrix $\Sigma$, \cite{gz16} derive BvM theorems for functionals of $\Sigma$ and of its inverse.
\end{itemize}

\section*{{\em Exercises}}
\addcontentsline{toc}{section}{{\em Exercises}} 
 
\begin{enumerate} 
\item Consider the model $\{\cN(\te,1)^{\otimes n},\ \te\in\RR\}$ with a $\cN(0,1)$ prior on $\te$. 
Prove that BvM holds in total variation with limiting distribution $\cN(\bar{X},1/n)$. You may, for instance, use the inequality $\|P-Q\|_1^2\le K(P,Q)$ and then use the explicit expression of the KL divergence between Gaussian distributions.
\item Show that if the Bernstein--von Mises theorem holds, then quantile credible sets of level $1-\al$ are asymptotically confidence sets of level $1-\al$. 
\item {\em Tempered posteriors}
\begin{enumerate}
\item In the conjugate normal location model with a Gaussian prior, check that \eqref{assum::estimator_bar_psi} holds if and only if $\sqrt{n}\al_n\to\infty$.
\item By using the explicit expression of the posterior distribution, prove Proposition \ref{prop:corrected_gwn}.
\end{enumerate}
\end{enumerate}

\chapter{Bernstein--von Mises II: multiscale and applications} \label{chap:bvm2}

\section{Towards a nonparametric BvM}

In view of the  semiparametric BvM results of Chapter \ref{chap:bvm1}, it is natural to ask whether a {\em nonparametric} BvM theorem can be formulated. If so is it still possible to deduce applications to confidence sets? \\ 
  
A natural place to start is the Gaussian white noise model \eqref{mod.gwn}. Even for such a simple model, Cox (1993) \cite{C93} and Freedman (1999) \cite{Free99} have shown the impossibility of a nonparametric BvM result in a strict $L^2$-setting. Leahu  \cite{L11} shows that a BvM-result in white noise, in the total variation sense and for Gaussian conjugate priors, can only hold for priors inducing a heavy undersmoothing (such priors do not in fact induce $L^2$ random functions). Though these results are nice mathematically, as a consequence of the roughness the induced credible sets are typically too large for most  nonparametric applications.\\
 
\ti{An enlarged space} The idea proposed in \cite{bvmnp, bvmnp2} consists of two parts 1) enlarge the space in which results are formulated and 2) change the notion of convergence in the result. In view of the negative results mentioned above, that rule out the existence of BvM in a pure $L^2$ setting, part 1) is quite natural: one defines a space common to all components of the white noise model. But this space enlargement has another effect: since the space is larger, the norm becomes weaker. Hence tightness at a `fast', parametric, $1/\rn$ rate becomes possible and enables part 2), that is weak convergence of probability measures in the enlarged space for broad classes of prior distributions. Finally, to get back to more `usual' spaces, one `projects back' from the enlarged space to a space such as $L^2$ or $L^\infty$. Depending on the applications one has in mind, the choice of enlarged space may vary: we present two examples in the next sections.
\\

\section{Nonparametric BvMs in weighted Sobolev spaces}

\ti{An enlarged $L^2$--type space} To stay in a Hilbert space setting while enlarging $L^2$, a natural class comes to mind, that of {\em negative}-order Sobolev spaces $\{H^r_2\}_{r<0}$ on $[0,1]$, defined similarly as usual Sobolev spaces, but with negative orders.  
 To obtain sharp results we  need `logarithmic' Sobolev spaces, for a real $s$ and $\delta>0$,  writing 
 $f_{lk}=\langle \psi_{lk}, f \rangle$,
\begin{equation} \label{def-nsob}
H^{s,\delta}_2 := \Bigg\{f :\quad \|f\|^2_{s,2,\delta} := \sum_{l\ge 0} l^{-2\delta} 2^{2ls} \sum_{k=0}^{2^l-1} 
  f_{lk}^2 < \infty \Bigg\}, 
\end{equation}  
  where $\{\psi_{lk}\}$ is a wavelet basis on $[0,1]$ (a Fourier-type basis is also possible here, but wavelets will be crucially needed in the next Section, so for easy reference we write already in terms of wavelet notation). 
  The space should be large enough so that the Gaussian experiment in \eqref{mod.gwn}  can be realised as a tight random element in that space. The critical value for this to be the case turns out to be $s=-1/2$. So, define the collection of Hilbert spaces
\begin{equation} \label{spaces}
H :=  H^{-1/2,\delta}_2,\quad \|\cdot\|_H := \|\cdot\|_{-1/2,2,\delta},~~\delta>1/2.
\end{equation}  
If we denote by $\dob$ the centered Gaussian Borel random variable on $H$ with identity covariance, then the Gaussian white noise model \eqref{mod.gwn} can be written as
\begin{equation} \label{shift}
\ixn = f + \dob/\sqrt{n}, 
\end{equation} 
a natural Gaussian shift experiment in the Hilbert space $H$. 
For any $\delta>1/2$, a simple calculation reveals that the $\|\dob\|_{-1/2,2,\delta}$-norm converges almost surely (by computing the expectation) and $\dob$ is then seen to be tight in $H^{-1/2,\delta}_2$. 
   
We denote by $\mathcal N$ the law of $\dob$ a standard, or canonical, Gaussian probability measure on the Hilbert space $H$. Now we are ready to define the notion of convergence we consider: it is simply weak convergence, but in the space $H$. \\

\ti{Notion of weak BvM in the space $H$} Let $\Pi$ be a prior on $L^2$ and $\Pi[\cdot\given X]$ the corresponding posterior distribution on $L^2$ in the Gaussian white noise model (viewed as sequence models). The latter naturally induces a posterior on $H$ by the injection $L^2\to H$. Let 
\[ \Pi_n=\Pi(\cdot \given X)=\Pi(\cdot \given \ixn)\] denote the corresponding posterior distribution on $H$ given the data from the white noise model \eqref{mod.gwn}, or equivalently, from (\ref{shift}). 
On $H$ and for $z \in H$, define the  transformation 
\begin{equation} \label{def-tau}
\tau_z: f \mapsto \sqrt n (f-z).
\end{equation}
 Let $\Pi_n \circ \tau_{\ixn}^{-1}$ be the image of the posterior law under $\tau_{\ixn}$. The shape of $\Pi_n \circ \tau_{\ixn}^{-1}$ reveals how the posterior concentrates on $1/\sqrt n$-$H$-neighborhoods of the efficient estimator $\ixn$. 
\begin{definition} \label{bvmp}
Consider the white noise model \eqref{mod.gwn} viewed in $H$ as \eqref{shift}. Under a fixed function $f_0$, denote by $P_0=P_{f_0}^{(n)}$ the distribution of $\ixn$. Let $\beta$ be the bounded Lipschitz metric for weak convergence of probability measures on $H$. We say that a prior $\Pi$ satisfies the weak Bernstein - von Mises phenomenon in $H$ if, as $n \to \infty$, \[ \beta(\Pi_n \circ \tau_{\ixn}^{-1},\mathcal N) \to^{P_0} 0.\]
\end{definition}
Thus when the weak Bernstein-von Mises phenomenon holds the posterior necessarily has the approximate shape of an infinite-dimensional Gaussian distribution. Moreover, we require this  Gaussian distribution to equal $\cN$ -- the canonical choice in view of efficiency considerations. The covariance of $\cN$ is the Cram\'er-Rao bound for estimating  $f$ in the Gaussian shift experiment \eqref{shift} in $H$-loss, and this can be seen to carry over to sufficiently regular real-valued functionals.

At this point one may wonder whether the notion of  weak convergence in $H$ is strong enough to include interesting applications. \\
 
\ti{Uniformity classes for weak convergence}  
Since we have a statement in terms of weak convergence (as opposed e.g. to  total variation) we cannot infer that $\Pi_n \circ \tau_{\ixn}^{-1}$ and $\mathcal N$ are approximately the same for every Borel set in $H$, but only for sets $B$ that are continuity sets for the probability measure $\mathcal N$. For statistical applications of the BvM phenomenon one typically needs some uniformity in $B$. Weak convergence in $H$ implies that $\Pi_n \circ \tau_{\ixn}^{-1}$ is close to $\mathcal N$ uniformly in classes of subsets of $H$ whose boundaries are sufficiently regular relative to the measure $\mathcal N$. By a result on such `uniformity classes' recalled in Appendix \ref{chap:app}, it is enough that $\mathcal N$ does not charge the `boundary' of the elements of the class. By general properties of Gaussian measures on separable Banach spaces, the collection of all centered balls (or rather, in a $L^2$-perspective, ellipsoids) for the $\|\cdot\|_H$-norm verifies \eqref{nunif} and thus forms a $\cN$-uniformity class.
 \\
 
\ti{Application: Weighted $L^2$-credible ellipsoids} Recall that $H$ stands for the space $H(\delta)$ from (\ref{spaces}) for some arbitrary choice of $\delta>1/2$.
 Denote by $B(g,r)= \{f \in H: \|f-g\|_H \le r\}$. In terms of the wavelet basis $\{\psi_{lk}\}$ of $L^2$, this corresponds to $L^2$-ellipsoids  of radius $r$
 \[ \Big\{\{c_{lk}\}: \quad \sum_{l,k} l^{-2\delta} 2^{-l}   |c_{lk}-\langle g, \psi_{lk} \rangle|^2 \le r^2 \Big\}. \] 
To find the appropriate radius, one may use the quantiles of the posterior distribution. Given $\alpha>0$, one solves for $R_n\equiv R ( \ixn, \alpha)$ such that 
\begin{equation} \label{qut}
\Pi(f: \| f - T_n \|_{H} \le R_n/\sqrt n \given \ixn) = 1-\alpha,
\end{equation}
where $T_n=\ixn$.  
 By its mere definition, a  $\|\cdot\|_{H}$-ball centred at $T_n$ of radius $R_n$ constitutes a level $(1-\alpha)$-{\em credible} set for the posterior distribution. The weak Bernstein-von Mises phenomenon in $H$ implies that this credible ball asymptotically coincides with the exact $(1-\al)$-confidence set built using the efficient estimator $\ixn$ for $f$, by the following result.
 
\begin{thm} \label{sct}
Suppose the weak Bernstein-von Mises phenomenon in the sense of Definition \ref{bvmp} holds. Given $0<\alpha<1$ consider the credible set
\begin{equation} \label{cred}
C_n=\left\{f :\ \|f-\ixn \|_{H} \le R_n/\sqrt n \right\}
\end{equation}
where $R_n\equiv R ( \ixn, \alpha)$ is such that $\Pi(C_n| \ixn) = 1-\alpha$. Then, as $n \to \infty$, 
\[ P_0 (f_0 \in C_n) \to 1-\alpha\quad\text{and}\quad R_n=O_P(1). \]
\end{thm}

\begin{proof}[Proof of Theorem \ref{sct}]
By the result in Appendix \ref{chap:app}, the balls $\{B(0, t)\}_{0 \le t <\infty}$ form a $\mathcal N$-uniformity class, and we can thus conclude from Definition \ref{bvmp} that $$\sup_{0\le  t < \infty}\left|\Pi(f: \|f-\ixn\|_{H(\delta)} \le t/\sqrt n |\ixn) - \mathcal N(B(0,t))\right| \to 0$$ in $P_0$-probability, as $n \to \infty$. This combined with (\ref{qut}) gives $$\mathcal N(B(0,R_n)) = \mathcal N(B(0,R_n)) - \Pi(f: \|f-\ixn\|_{H(\delta)} \le R_n/\sqrt n |\ixn) + 1-\alpha,$$ which goes to $1-\alpha$ as $n \to \infty$ in $P_0$-probability, and thus, by the continuous mapping theorem, $R_n$ converges to $\Phi^{-1}(1-\alpha)$ in probability. This implies
\begin{align*}
P_0(f_0 \in C_n) &= P_0 (f_0 \in B(\ixn, R_n/\sqrt n)) = P_0 (0 \in B(\dob, R_n)) \\
&= P_0 (0 \in B(\dob,  \Phi^{-1}(1-\alpha))) +o(1)  = \mathcal N(B(0, \Phi^{-1}(1-\alpha)) +o(1) \\
&= \Phi(\Phi^{-1}(1-\alpha))+o(1) = 1-\alpha+o(1).
\end{align*}
\end{proof}

Although the confidence set $C_n$ in \eqref{cred} has small radius with respect to the $H$--norm, ideally one would like to be back in $L^2$. We can do so by using a  helpful `interpolation' idea.\\
  
\ti{Interpolation idea} 
Let $B_{\al,\|\cdot\|_\infty}(g,R)$ denote a ball in the H\"older space of functions of order $\al>0$ on $[0,1]$, and  $B_{\|\cdot\|_2}(g,R)$ a ball for the standard $L^2$-norm on $[0,1]$. 

Let $c_1,c_2>0$ be given and let $g\in L^2$. Then there exists $c_3>0$ such that for $n\ge 1$,
\begin{equation} \label{interpolation}
 B_{\|\cdot\|_H}\left(g,\frac{c_1}{\rn} \right)\ \cap \ B_{\|\cdot\|_{\alpha,\infty}}\left(0,c_2\right)\ \subset\
B_{\|\cdot\|_2}\left(g, \frac{c_3 l_n}{n^{\frac{\al}{2\al+1}}}\right), 
\end{equation} 
where $l_n=\log^{2\delta}{n}$. So, provided it is possible to further intersect the previous credible set with a $\alpha$-H\"older ball of fixed radius, the resulting set is automatically included in a $L^2$-ball of radius precisely the standard minimax nonparametric rate for $\alpha$-regular functions, up to a logarithmic term (one may note that the log-term comes from the logarithmic correction to the space $H$). \\
 
\ti{Example of nonparametric confident credible set}
For the sake of simplicity, consider first a uniform wavelet prior $\Pi$ on $L^2$ arising from the law of the random wavelet series, for $\al>0$,
\begin{equation} \label{ugm}
 U_{\al,M} = \sum_{l} \sum_{k} 2^{-l(\al+1/2)}u_{lk} \psi_{lk}(\cdot), 
\end{equation}
where the $u_{lk}$ are i.i.d.~uniform on $[-M,M]$ for some $M>0$ and indexes $l,k$ vary as usual. Such priors model functions that lie in a fixed H\"older ball of $\|\cdot\|_{\al, \infty}$-radius $M$, with posteriors $\Pi(\cdot\given \ixn)$ contracting about $f_0$ at the $L^2$-minimax rate within logarithmic factors if $\|f_0\|_{\al, \infty} \le M$, see \cite{gn11}. Of course in practice using such a prior means that an upper-bound on the $\alpha$-H\"older norm is known, which may not always be the case (but the method may then be adapted).
 
In this situation it is natural to intersect the credible set $C_n$ with a H\"older ball 
\begin{equation} \label{credunif}
\cC_n =\left\{f: \|f\|_{\al, \infty} \le M,\quad \|f-\bar f_n\|_{H} \le R_n/\sqrt n \right\},
\end{equation}
where $R_n$ is as in (\ref{qut}) with $T_n= \bar f_n$.  By definition of the prior $\Pi$ induced by $U_{\al,M}$ above, we have $\|f\|_{\al, \infty} \le M$, $\Pi$-almost surely, so also $\Pi_n$-almost surely.  In particular $\Pi(\cC_n|\ixn)=1-\alpha$, so $\cC_n$ is a credible set of level $1-\alpha$. Theorem \ref{sct} implies the following result.

\begin{corollary} \label{unifset}
Consider observations  from  \eqref{shift} under a function $f_0 \in C^\al$ with $\|f_0\|_{\al, \infty}<M$. Let $\Pi$ be the law of $U_{\al, M}$, let $\Pi(\cdot|\ixn)$ the associated posterior and let $\cC_n$ be as in (\ref{credunif}). Then 
\[P^n_{f_0} (f_0 \in  \cC_n) \to 1-\alpha\] as $n \to \infty$ and the $L^2$-diameter $|\cC_n|_2$ of $\cC_n$ satisfies, for some $\kappa>0$, 
\[ |\cC_n|_2 = O_P(n^{-\al/(2\al +1)} (\log n) ^\kappa).\]
\end{corollary}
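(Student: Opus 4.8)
The plan is to combine the abstract coverage result of Theorem \ref{sct} with the interpolation inclusion \eqref{interpolation}, after first checking that the uniform wavelet prior $\Pi$ induced by $U_{\al,M}$ satisfies the weak Bernstein--von Mises phenomenon of Definition \ref{bvmp}. The preliminary observation I would record is that $\Pi$, and hence the posterior $\Pi_n$, is supported $\Pi_n$--almost surely on the fixed H\"older ball $\{f:\ \|f\|_{\al,\infty}\le M\}$, since the coefficients $u_{lk}$ are a.s. bounded by $M$. To establish the weak BvM for this prior I would verify the general sufficient conditions of \cite{bvmnp, bvmnp2}: a prior--mass condition at $f_0$, which holds because $\|f_0\|_{\al,\infty}<M$ places $f_0$ in the interior of the support of $\Pi$, together with a change--of--variables/tightness condition in $H=H^{-1/2,\delta}_2$, which uses that $\dob$ is tight in $H$. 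Since Theorem \ref{sct} is phrased with centering $\ixn$ while \eqref{credunif} centers at the posterior mean $\bar f_n$, I would also note that $\sqrt n(\bar f_n-\ixn)$ equals the $H$--mean of $\Pi_n\circ\tau_{\ixn}^{-1}$, which tends to the mean $0$ of $\cN$ by the weak BvM together with a uniform integrability argument (coming from posterior contraction in $H$ and the bounded support of $\Pi$); hence $\|\bar f_n-\ixn\|_H=o_{P_0}(1/\sqrt n)$, the weak BvM also holds with centering $\bar f_n$, and Theorem \ref{sct} applies verbatim with $T_n=\bar f_n$.

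For coverage, the decisive point is that intersecting the $H$--ball around $\bar f_n$ with the H\"older ball does not change its posterior mass, because $\Pi_n$ is carried by that H\"older ball; therefore the radius $R_n$ determined by $\Pi(\cC_n\given\ixn)=1-\al$ is exactly the radius for which $\{f:\ \|f-\bar f_n\|_H\le R_n/\sqrt n\}$ has posterior mass $1-\al$. Theorem \ref{sct} (with $T_n=\bar f_n$) then yields $P_0(\|f_0-\bar f_n\|_H\le R_n/\sqrt n)\to 1-\al$ and $R_n=O_P(1)$. Since $\|f_0\|_{\al,\infty}<M$ by hypothesis, the event $\{\|f_0\|_{\al,\infty}\le M\}$ has probability one, so $\{f_0\in\cC_n\}$ coincides $P_0$--a.s. with $\{\|f_0-\bar f_n\|_H\le R_n/\sqrt n\}$, giving $P_0(f_0\in\cC_n)\to 1-\al$.

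For the $L^2$--diameter, take $f,g\in\cC_n$. By the triangle inequality $\|f-g\|_{\al,\infty}\le 2M$ and $\|f-g\|_H\le 2R_n/\sqrt n$. On the event $\{R_n\le \Phi^{-1}(1-\al)+1\}$, which has $P_0$--probability tending to one, I would apply \eqref{interpolation} with reference point $0$, the fixed constants $c_1:=2(\Phi^{-1}(1-\al)+1)$ and $c_2:=2M$, concluding that $f-g\in B_{\|\cdot\|_H}(0,c_1/\sqrt n)\cap B_{\|\cdot\|_{\al,\infty}}(0,c_2)\subset B_{\|\cdot\|_2}(0,c_3 l_n n^{-\al/(2\al+1)})$ with $l_n=\log^{2\delta}n$ and $c_3=c_3(c_1,c_2)$ fixed. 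Taking the supremum over $f,g\in\cC_n$ gives $|\cC_n|_2\le c_3 l_n n^{-\al/(2\al+1)}$ on this event, i.e. $|\cC_n|_2=O_P(n^{-\al/(2\al+1)}(\log n)^\kappa)$ with $\kappa=2\delta$.

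The main obstacle is the first step: establishing the weak Bernstein--von Mises phenomenon for the compactly supported, non--Gaussian uniform wavelet prior (the change--of--measure estimate in $H$ is the delicate part), and the attendant passage from the centering $\ixn$ to the posterior mean $\bar f_n$. Everything downstream — the role of the almost sure H\"older bound in making the intersection harmless for the posterior mass, and the purely deterministic interpolation argument bounding the diameter given $R_n=O_P(1)$ — is routine.
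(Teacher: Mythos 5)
Your argument is correct and follows the paper's intended route: almost-sure support of $\Pi$ on the H\"older ball makes the intersection posterior-mass-neutral, Theorem \ref{sct} gives coverage and $R_n=O_P(1)$, and the interpolation inclusion \eqref{interpolation} applied to differences $f-g$ yields the $L^2$-diameter bound with $\kappa=2\delta$. The step you flag as the main obstacle is in fact already supplied by the paper: the uniform prior with $\|f_0\|_{\al,\infty}<M$ satisfies Condition \ref{tolk} (it is explicitly listed among the admissible base densities), so Theorem \ref{pfbvm} gives the weak BvM directly, and the passage to the centering $\bar f_n$ is handled exactly by the uniform-integrability argument you sketch, using the second-moment bound of Theorem \ref{thm-mins}.
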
 

More generally, to avoid the use of a fixed bound $M$ in \eqref{ugm}, one may use more general priors  that have infinite support on each coordinate. Then instead of intersecting with an $\alpha$-H\"older ball of given radius $M$, it is enough to intersect it with a ball whose radius is a type of `posterior quantile' for a $\alpha$-order norm. The results then parallel those for the uniform priors, but without the boundedness constraint. A precise statement is given in \cite{bvmnp}, Corollary 2. \\

\ti{Conclusion so far on credible sets} The meaning of the results we have presented so far is as follows: provided one can show the weak nonparametric BvM for standard nonparametric priors modelling $\alpha$--smooth functions - this will be the object of the next paragraphs - it is possible to deduce {\em confident credible sets} which have diameter equal to the nonparametric minimax rate of convergence for such problems, up to a slowly varying factor (this extra log-term can in fact be replaced by an {\em arbitrary} factor $M_n\to\infty$, up to a slightly different definition of the space $H$). It is important to note that for the argument to go through, $\alpha$--smooth priors should be allowed, as opposed to priors inducing a too severe undersmoothing.

 We also note that such confidence sets are for {\em fixed regularity} $\alpha$ (i.e.~one should know $\alpha$, or a lower bound on it, to at least `undersmooth'). Construction of {\em adaptive} confident credible sets is an interesting further problem, but is qualitatively somewhat different: in particular, rates will typically change, unless something more is assumed on the considered functions. 
 \\

\subsubsection*{A BvM-theorem in $H(\delta)$}

Let $\Pi_n=\Pi(\cdot|\ixn)$ be the posterior distribution on $L^2$. Under the following Conditions \ref{lolk}  (which depends on $\delta'>0$ to be specified in the sequel) and \ref{finidim},  we prove a weak BvM in $H(\delta)$ for any $\delta>1/2$. For the product priors as above we then verify the Conditions.

\begin{condition} \label{lolk}
Suppose for every $\veps>0$ there exists a constant $0<M \equiv M(\varepsilon)<\infty$ independent of $n$ such that, for any $n \ge 1$, some $\delta'>1/2$,
\begin{equation} \label{bdclt}
E_0 \Pi\left[ \left\{ f: \|f-f_0\|^2_{H(\delta')} > \frac{M}{ n} \right\} |\, \ixn\right]  \le \veps.
\end{equation}
\end{condition}
Condition \ref{lolk} is a tightness-type condition. We now also need convergence of finite-dimensional distributions.

Let $V$ be any  finite-dimensional projection subspace of $L^2$ defined as the linear span of a finite number of $\psi_{lk}$ basis elements, equipped with the $L^2$-norm, and denote by $\pi_V$ the orthogonal projection onto $V$. For $z \in H(\delta)$ define the transformation 
\[ T_{z,V}:  f \mapsto \sqrt n ~ \pi_{V}(f-z)\] from $H(\delta)$ to $V$, and consider the image measure $\Pi_n \circ T_{z,V}^{-1}$. The finite-dimensional space $V$ carries a natural Lebesgue product measure on it.

\begin{condition} \label{finidim}
Suppose that for any finite-dimensional projection subspace $V$, spanned by a finite arbitrary collection of basis elements $\psi_{lk}$, for $\cN(0, I)$ the dim$(V)$--dimensional standard Gaussian law,
\[ \beta_V(\Pi_n \circ T_{V,\ixn}^{-1},\cN(0, I)) \to^{P_0} 0, \]
where $\beta_V$ denotes the bounded--Lipschitz metric on $V$.
\end{condition} 

Condition \ref{finidim} asks that a parametric BvM result holds for the (rescaled) projected posterior onto finite-dimensional subspaces $V$. In the case of product priors in white noise, verifying this is not difficult, see below. 


On $H(\delta)$ and for $z \in H(\delta)$, define the measurable map $$\tau_z: f \mapsto \sqrt n (f-z).$$ 
Let $\mathcal N$ be the Gaussian measure on $H(\delta)$ as above.

\begin{thm} \label{fbvm}
Fix $\delta>\delta'>1/2$ and assume Condition \ref{lolk} for such $\delta'$, as well as Condition \ref{finidim}. If $\beta$ is the bounded Lipschitz metric for weak convergence of probability measures on $H(\delta)$ then, as $n \to \infty$, $\beta(\Pi_n \circ \tau_{\ixn}^{-1},\mathcal N) \to 0$ in $P_0$-probability.
\end{thm}
\begin{proof} 
It suffices to show that for any $\varepsilon>0$ there exists $N=N(\varepsilon)$ large enough such that for all $n \ge N$,
\begin{equation*} 
P_0 \left(\beta(\Pi_n \circ \tau_{\ixn}^{-1},\mathcal N)>4\varepsilon \right) < 4\varepsilon,
\end{equation*}
Fix $\veps>0$ and let $V_J$ be the finite-dimensional subspace of $L^2$ spanned by $\{\psi_{lk}: k \in \mathcal Z_l, l \in \mathcal L, |l|\le J\}$, for any integer $J\ge 1$.  Writing $\tilde \Pi_n$ for $\Pi_n \circ \tau_{\ixn}^{-1}$ we see from the triangle inequality
$$ \beta(\tilde \Pi_n, \mathcal N)  \le \beta (\tilde \Pi_n, \tilde \Pi_n \circ \pi_{V_J}^{-1}) + \beta(\tilde \Pi_n \circ \pi_{V_J}^{-1}, \mathcal N \circ \pi_{V_J}^{-1}) + \beta(\mathcal N \circ \pi_{V_J}^{-1}, \mathcal N).$$
The middle term converges to zero in $P_0$-probability for every $V_J$, by convergence of the finite-dimensional distributions (Condition \ref{finidim}). Next we handle the first term.
Set $Q=M = M(\veps^2/4)$ and define a random subset $D$ of $H(\delta')$  as
$$D = \{ g:\, \|g+\dob\|_{H(\delta')}^2 \le Q \}.$$
Under $P_0$we have $\tilde \Pi_n(D)=\Pi_n(D_n)$, where 
$$D_n=\{f:\ \|f-f_0\|_{H(\delta')}^2 \le Q/n\}$$ is the complement of the 
set appearing in \eqref{bdclt}. In particular, using Condition \ref{lolk} and Markov's inequality yields
$P_0( \tilde \Pi_n(D^c)>\veps/4)\le \veps^2/\veps=\veps$. 

If $Y_n\sim \tilde \Pi_n$ (conditional on $\ixn$), then $\pi_{V_J}(Y_n) \sim  \tilde \Pi_n \circ \pi_{V_J}^{-1}$. For $F$ any bounded function on $H(\delta)$ of Lipschitz-norm less than one
\begin{align*}
& \left|\int_{H(\delta)} F d\tilde \Pi_n - \int_{H(\delta)} F d(\tilde \Pi_n \circ \pi_{V_J}^{-1}) \right| = \left|E_{\tilde \Pi_n} \left[F(Y_n)-F(\pi_{V_J}(Y_n))\right] \right| \\
& \le E_{\tilde \Pi_n} \left[ \|Y_n-\pi_{V_J}(Y_n)\|_{H(\delta)} 1_{D}(Y_n)\right]  + 
 2{\tilde \Pi_n}(D^c),
\end{align*}
where $E_{\tilde \Pi_n}$ denotes expectation under $\tilde \Pi_n$ (given $\ixn$). With $y_{lk}=\langle Y_n, \psi_{lk} \rangle$,
\begin{align*}
E_{\tilde \Pi_n} &\left[ \|Y_n-\pi_{V_J}(Y_n)\|_{H(\delta)}^2 1_{D}(Y_n) \right]
 =E_{\tilde \Pi_n} \left[ \sum_{l > J} a_l^{-1} (\log a_l)^{-2\delta} \sum_k |y_{lk}|^2 1_{D}(Y_n) \right]\\
& = E_{\tilde \Pi_n} \left[ \sum_{l > J} a_l^{-1} (\log a_l)^{2\delta'-2\delta-2\delta'} \sum_k |y_{lk}|^2 1_{D}(Y_n)  \right]\\
& \le (\log a_J)^{2\delta'-2\delta}  E_{\tilde \Pi_n} \left[ \|Y_n\|_{H(\delta')}^2 1_{D}(Y_n) \right]  
 \le 2 (\log a_J)^{2\delta'-2\delta} \left[ Q + \|\dob \|_{H(\delta')}^2 \right].
\end{align*}
From the definition of $\be$ one deduces
\[ \be( \tilde \Pi_n, \tilde \Pi_n \circ \pi_{V_J}^{-1} ) \le 
 2{\tilde \Pi_n}(D^c) + \sqrt{2}(\log a_J)^{\delta'-\delta}\sqrt{Q+ \|\dob \|^2_{H(\delta')}}.
\]
Since $a_J \to \infty$ as $J \to \infty$, deduce $P_0(\be( \tilde \Pi_n, \tilde \Pi_n \circ \pi_{V_J}^{-1} ) >\veps)<2 \veps$ for $J$ large enough, combining the previous deviation bound for $\tilde \Pi_n(D^c)$ and that $\|\dob\|_{H(\delta')}$ is bounded in probability (since it has bounded expectation).  
 A similar (though simpler) argument leads to
$ P_0(\beta(\mathcal N \circ \pi_{V_J}^{-1}, \mathcal N) >\veps) <\veps,$ using again that any variable with law $\mathcal N$ has square integrable Hilbert-norm on $H(\delta')$. This concludes the proof.
\end{proof}

\subsubsection*{The BvM-theorem for Product Priors}

 Let us  consider priors of the form $\Pi=\otimes_{lk} \pi_{lk}$ defined on the coordinates of the orthonormal basis $\{\psi_{lk}\}$, where $\pi_{lk}$ are probability distributions with Lebesgue density $\vphi_{lk}$ on the real line, with the following assumptions.  For some fixed density $\vphi$ on the real line and admissible indexes $k,l$,  
\[ \vphi_{lk}(\cdot) = \frac{1}{\sil} \vphi\left(\frac{\cdot}{\sil}\right)~~\forall k, \qquad
\text{with}~ \sil>0. \] 
Suppose the coefficients $\psg f_0,\psi_{lk}\psd=: f_{0,lk}$ of the true function $f_0$ satisfy, for some $\al,R>0$ 
\begin{equation} \label{tolkf}
 \sup_{l\ge 0,\, 0 \leq k\leq 2^l-1} 2^{l(\frac12+\al)} |f_{0,lk}|  \le R, \qquad \al>0. 
\end{equation}

\begin{condition} \label{tolk}
Suppose that for a finite constant $M>0$, 
\[ \hspace{-2cm} {\bf  (P1) } \qquad\qquad \sup_{l,k} \frac{|f_{0,lk}|}{\sil} \le M.\]
Suppose also that for some $\ta>M$ and $0<c_\vphi\le C_\vphi<\infty$  \[
 \hspace{-1cm} {\bf (P2) } \qquad
\quad  \vphi(x)\le C_\vphi\ \ \forall x\in\RR,\quad
\vphi(x)\ge c_\vphi\ \ \forall x\in(-\ta,\ta),\quad \text{and}\quad \int_\mathbb R x^2\varphi(x)dx < \infty.\]
\end{condition}
This allows for a rich variety of base priors $\vphi$, such as Gaussian, sub-Gaussian, Laplace, most Student laws, or more generally any law with positive continuous density and finite second moment, but also uniform priors with large enough support.  The full prior on $f$ considered here is thus a sum of independent terms over the basis $\{\psi_{lk}\}$, including many, especially non-Gaussian, processes. 
One may also consider Gaussian processes such as Brownian motion, even if their
 Karhunen-Lo\`eve expansion is not a (localised) wavelet basis, as long as it is smooth enough, see \cite{bvmnp} for details. \\

\ti{Finite-dimensional distributions}  Since the posterior is a product prior, it is enough to verify this when $V$ is spanned by one coordinate only. One can view this as a semiparametric problem with no loss of information, and use the corresponding results from Chapter \ref{chap:bvm1}: by Theorem \ref{thm:genbvm} therein and the paragraphs on semiparametric models after it, it suffices to check that the posterior is  
consistent in $L^2$ and the prior density is continuous and positive around the true $\te_{0,lk}$ (one just assumes it e.g. on the whole real line).\\

\ti{Tightness at rate $1/\sqrt{n}$ for product priors}

\begin{thm} \label{thm-mins}
Consider data from the white noise model \eqref{shift} under a fixed function $f_0$ with coefficients $(f_{0,lk})$ over the basis $\{\psi_{lk}\}$. Then if the product prior $\Pi$ and $f_0$ satisfy Condition \ref{tolk}, we have, as $n\to\infty$,
\[ E^{(n)}_{f_0} \int \|f-f_0\|_{H}^2 d\Pi(f \given \ixn)  = O\left(\frac{1}{n}\right) .\]
\end{thm}
Interestingly, even if the result of Theorem \ref{thm-mins} is in terms of a `weak' norm (the rate is `fast' though!), it does not seem possible to derive this convergence rate using a testing approach as in the general rate Theorem \ref{thm:ggvt}. Instead, we use a type of {\em multiscale} approach, which we later formalise for more complex norms as in Section \ref{sec:sn}. \\

\begin{proof}[Proof of Theorem \ref{thm-mins}]
We decompose the indexing set $\mathcal L$ into $\cJ_n:=\{l\in\mathcal L,\ \rn\sigma_l \ge S_0 \}$ and its complement, where $S_0$ is a fixed positive constant. The quantity we wish to bound equals, by definition of the $H$-norm and Fubini's theorem
$$\sum_{l,k} a_l^{-1} (\log a_l)^{-2\delta} E_0 \int (\tlk-\tolk)^2 d\Pi(\tlk\given \ixn).$$
Define further
$B_{lk}(\ixn):= \int (\tlk-\tolk)^2 d\Pi(\tlk\given \ixn)$ whose $P_0$-expectation we now bound. We write $\ix=\ixn$ and $E=E_{0}$ throughout the proof to ease notation. 

Using the independence structure of the prior $\Pi(\tlk\given \ix)=\pi_{lk}(\tlk\given \ix_{lk})$, and under $P_0$,
\begin{align*}
  B_{lk}(\ix) & =    \frac{\int (\tlk - \tolk)^2 
  e^{-\frac{n}{2} (\tlk - \tolk)^2 +\rn\elk  (\tlk - \tolk)} \vphi_{lk}(\tlk)d\tlk}
  {\int e^{-\frac{n}{2} (\tlk - \tolk)^2 +\rn\elk  (\tlk - \tolk)} \vphi_{lk}(\tlk)d\tlk} \\
   & = \frac{1}{n}
   \frac{\int v^2 
  e^{-\frac{v^2}{2} +\elk v} \frac{1}{\rn\sil}\vphi\left(\frac{\tolk+\nm v}{\sil}\right)dv}
  {\int e^{-\frac{v^2}{2} +\elk v}\frac{1}{\rn\sil} \vphi\left(\frac{\tolk+\nm v}{\sil}\right)dv}
  =:\frac{1}{n}\frac{N_{lk}}{D_{lk}}(\elk). 
\end{align*}
Taking a smaller integrating set on the denominator makes the integral smaller
\[ D_{kl}(\veps_{kl}) \ge 
\int_{-\sqrt{n} \sil}^{\sqrt{n} \sil} e^{-\frac{v^2}{2} +\elk v}\frac{1}{\rn\sil} \vphi\left(\frac{\tolk+\nm v}{\sil}\right) dv. \]
To simplify the notation we suppose that $\ta>M+1$. If this is not the case, one multiplies the bounds of the integral in the last display by a small enough constant. 
For indices in $\cJ_n^c$, the argument of the function $\vphi$ in the previous display stays in  $[-M+1,M+1]$ under {\bf (P1)}. Under assumption {\bf (P2)} this implies  that the value of $\vphi$ in the last expression is bounded from below by  $c_\vphi$. 
Next applying Jensen's inequality with the logarithm function
\begin{eqnarray*}  \log D_{kl}(\veps_{kl}) &\ge& \log(2c_\vphi)-\int_{-\sqrt{n} \sil}^{\sqrt{n} \sil} \frac{v^2}{ 2} \frac{dv}{2\sqrt{n}\sil} + \elk\int_{-\sqrt{n} \sil}^{\sqrt{n} \sil}  v \frac{dv}{2\sqrt{n}\sil} \\
&=&  \log(2c_\vphi)- (\sqrt{n}\sil)^2/6.
\end{eqnarray*}
Thus, $D_{kl}(\veps_{kl}) \ge 2c_\vphi e^{- (\sqrt{n}\sil)^2/6}$, which is bounded away from zero for indices in $\cJ_n^c$. 
For the numerator, let us split the integral defining $N_{kl}$ into two parts 
$\{v:\ |v|\le \rn \sil\}$ and $\{v:\ |v| > \rn \sil\}$. That is $N_{kl}(\veps_{kl})=(I)+(II)$. Taking the expectation of the first term and using Fubini's theorem,
\begin{align*}
E (I) & =  \int_{-\sqrt{n} \sil}^{\sqrt{n} \sil} v^2 
  e^{-\frac{v^2}{2}} E[ e^{\elk v}] \frac{1}{\rn\sil}\vphi\left(\frac{\tolk+\nm v}{\sil}\right)dv \le 2 n\sil^2C_\vphi/3.
\end{align*}
The expectation of the second term is bounded by first applying Fubini's theorem as before and then changing variables back
\begin{align*}
E (II) & =   \int_{|v|> \sqrt{n} \sil} v^2 
  e^{-\frac{v^2}{2}} E[ e^{\elk v}] \frac{1}{\rn\sil}\vphi\left(\frac{\tolk+\nm v}{\sil}\right)dv \\
  & = \int_{\frac{\tolk}{\sil}+1}^{\pli} \left(\rn\sil u - \rn \sil\frac{\tolk}{\sil} \right)^2\vphi(u)du
   +  \int_{-\infty}^{\frac{\tolk}{\sil}-1} \left(\rn\sil u - \rn \sil\frac{\tolk}{\sil} \right)^2\vphi(u)du \\
 & \le 2 n\sil^2\left[ \frac{\tolk^2}{\sil^2} + \int_{-\infty}^{\pli} u^2\vphi(u)du \right].
\end{align*}
Thus, using {\bf (P1)} again, $E (I)+E(II)$ is bounded on ${\cJ^c_n}$ by a fixed constant times $n\sigma_l^2$. In particular, there exists a fixed constant independent of $n, k, l$ such that $E(nB_{lk}(X))$ is bounded from above by a constant on ${\cJ^c_n}$.

Now about the indices in $\cJ_n$. For such $l,k$, using {\bf (P1)-(P2)} one can find $L_0>0$ depending only on $S_0, M, \ta$ such that, for any $v$ in $(-L_0,L_0)$, $\vphi((\tolk+\nm v)/\sil) \ge c_\vphi.$ Thus the denominator $D_{lk}(\elk)$ can be bounded from below by
\[ D_{lk}(\elk) \ge c_\vphi
\int_{-L_0}^{L_0} e^{-\frac{v^2}{2} +\elk v}\frac{1}{\rn\sil} dv.\]
On the other hand, the numerator can be bounded above by
\begin{align*}
 N_{lk}(\elk) 
 & \le C_\vphi \int v^2 e^{-\frac{v^2}{2} +\elk v}\frac{1}{\rn\sil} dv,
\end{align*}
Putting these two bounds together leads to
\[ B_{lk}(\elk) \le \frac{1}{n} \frac{ C_\vphi }{ c_\vphi }
\frac{ \int v^2 e^{-\frac{v^2}{2} +\elk v}dv}{\int_{-L_0}^{L_0} 
e^{-\frac{v^2}{2} +\elk v} dv}. \]
The last quantity has a distribution independent of $l,k$. Let us thus show that
 \[ Q(L_0)= E\left[ \frac{ \int v^2 e^{-\frac{1}{2}(v-\veps)^2}dv}{\int_{-L_0}^{L_0} 
e^{-\frac{1}{2}(v-\veps)^2} dv} \right] \]
is finite for every $L_0>0$, where $\veps \sim N(0,1)$. 
In the numerator we substitute $u=v-\veps$. Using the inequality $(u+\elk)^2\le 2v^2+2\elk^2$, the second  moment of a standard normal variable appears, so
\[Q(L_0) \le C E\left[ \frac{1+\veps^2}{ \int_{-L_0}^{L_0} 
e^{-\frac{1}{2}(v-\veps)^2} dv} \right] \]
for some finite constant $C>0$. Denote by $g$ the density of a standard normal variable, by $\Phi$ its distribution function and $\bar{\Phi}=1-\Phi$. It is enough to prove that the following quantity is finite
\[ q(L_0) := \int_{-\infty}^{+\infty} \frac{(1+u^2)g(u)}{\bfi(u-L_0)-\bfi(u+L_0)}du
= 2 \int_{0}^{+\infty} \frac{(1+u^2)g(u)}{\bfi(u-L_0)-\bfi(u+L_0)}du,  \]
since the integrand is an even function. Using the standard inequalities
\[  \frac{1}{\sqrt{2\pi}} \frac{u^2}{1+u^2}\frac{1}{u} e^{-u^2/2}
\le \bfi(u) \le  \frac{1}{\sqrt{2\pi}} \frac{1}{u} e^{-u^2/2},\qquad u\ge 1,\]
it follows that for any $\delta>0$, one can find $M_\delta>0$ such that, for any $u\ge M_\delta$, 
\[ (1-\delta) \frac{1}{u} e^{-u^2/2} \le \sqrt{2\pi} \bfi(u) \le
\frac{1}{u} e^{-u^2/2},\qquad u\ge M_\delta.\]
Set $A_\delta=2L_0 \vee M_\delta$. Then for $\delta <1-e^{-2L_0}$ we deduce
\begin{align*}
q(L_0) & \le 2 \int_0^{A_\delta}\frac{(1+u^2)g(u)}{\bfi(A_\delta-L_0)-\bfi(A_\delta+L_0)}du
+2\sqrt{2\pi}\int_{A_\delta}^{\pli}(u-L_0)(1+u^2) 
\frac{e^{\frac{1}{2}(u-L_0)^2}g(u)}{1-\delta-e^{-2L_0}}du\\
& \le C(A_\delta,L_0) + \frac{2e^{-L_0^2/2}}{1-\delta-e^{-2L_0}}\int_{A_\delta}^{\pli} u(1+u^2)e^{-L_0 u}du <\infty.
\end{align*}
Conclude that $\sup_{l,k}E_0|B_{lk}(\ix)| = O(1/n)$. Since $\sum_{l,k}a_l^{-1} (\log a_l)^{-2\delta} <\infty$ the result follows. 
\end{proof}
\vp

\begin{thm}\label{thm-l2}
With the notation of Theorem \ref{thm-mins}, suppose the product prior $\Pi$ and $f_0$ satisfy Condition \ref{tolk}. 
Then for any real numbers $\gamma,\delta$,  \[ E^{n}_{f_0} \int \|f-f_0\|_{\gamma,2,\delta}^2 d\Pi(f \given \ixn) = O\left(\sum_{l, k}a_l^{2\gamma} (\log a_l)^{-2\delta} (\sigma_l^2 \wedge n^{-1} ) \right).\]
\end{thm}
For the above theorem note that $\gamma=\delta=0$ gives $\|\cdot\|_{0,2,0}=\|\cdot\|_2$.\\
 
\begin{proof}
We only prove $\gamma=\delta=0$, the general case is the same. Using Fubini's theorem,
\[  E_0 \int \|f-f_0\|_{2}^2d\Pi(f \given \ixn)
= \sum_{l,k} E_0 \int (\tlk-\tolk)^2 d\Pi(\tlk\given \ix) 
=  \sum_{l,k} E_0 B_{lk}(\ix).
\]
In the proof of Theorem \ref{thm-mins} we have shown, with the notation $ \cJ_n:=\{l\in\mathcal L,\ \rn\sigma_l \ge S_0 \}$,
$$\sup_{l\in \cJ_n,\, k} E_0 B_{lk}(\ix) =  O( n^{-1})\ , \ \sup_{l\notin \cJ_n, 1\le k\le 2^l} E_0 B_{lk}(\ix)  =  O( \sigma_l^2).$$
For any $l\in\cJ_n^c$, by definition of $\cJ_n$ it holds $\sil^2<S_0^2n^{-1}$,  thus $\sil^2\le(1\vee S_0^2)( \sigma_l^2 \wedge n^{-1})$. Similarly, if $l\in\cJ_n$ we have $n^{-1}\le (1\vee S_0^{-2})( \sigma_l^2 \wedge n^{-1})$. 
\end{proof}

\vp

Combining Theorems \ref{fbvm}, \ref{thm-mins}, and convergence of finite dimensional distributions implies that for product priors the weak BvM theorem in the sense of Definition \ref{bvmp} holds. The following results can be seen to be uniform (`honest') in all $f_0$ that satisfy Condition \ref{tolkf} with fixed constant $M$.

\begin{thm} \label{pfbvm}
Suppose the assumptions of Theorem \ref{thm-mins} are satisfied and that $\vphi$ is continuous near $\{\te_{0,lk}\}$ for every $k\in \mathcal Z_l, l \in \mathcal L$. Let $\delta>1/2$. Then for $\beta$ the bounded Lipschitz metric for weak convergence of probability measures on $H(\delta)$ we have, as $n \to \infty$, $\beta(\Pi_n \circ \tau_{\ixn}^{-1},\mathcal N) \to^{P_0} 0.$ 
\end{thm}
\begin{proof}
We only need to verify Condition \ref{lolk} with some $1/2<\delta'<\delta$ so that we can apply Theorem \ref{fbvm}. From Theorem \ref{thm-mins} with any such $\delta'$ in place of $\delta$, we see that
\begin{equation} \label{mkv}
 nE_0\int \|f-f_0\|^2_{H(\delta')}d\Pi(f|\ixn) = O(1),  
\end{equation}
which verifies Condition \ref{lolk} for some $M$ large enough using Markov's inequality.
Verification of Condition \ref{finidim} has already been done above.
\end{proof}

\vp

\ti{Further applications and uniform semiparametrics} Another important set of applications of the weak nonparametric BvM theorem is related to continuous functionals. Indeed, by the continuous mapping theorem, it immediately follows that the weak convergence result in Definition \ref{bvmp} implies weak convergence of the image measures through any {\em continuous} mapping $\psi:H\to \mathcal{Y}$, for some given space $\mathcal{Y}$. Applications include semiparametric BvM results for
linear and smooth nonlinear functionals, credible bands for selfconvolutions, etc.,  
see \cite{bvmnp}, Section 2. In this perspective, one may see the weak nonparametric BvM as a semiparametric BvM `uniform in many functionals'.

This also leads to a natural question: is the choice of space $H$ canonical? What if the goal is to obtain credible sets in different norms than $\|\cdot\|_2$, such as $\|\cdot\|_\infty$? We consider this next.
 
\section{Nonparametric BvMs in multiscale spaces} \label{sec:bvmmult}

In the previous section we have considered the white noise model and confidence-sets results linked to the $\|\cdot\|_2$-norm. In addition to the question of obtaining results in terms of different norms, it is natural to consider the nonparametric BvM question for other statistical models as well. 

In this section and Section \ref{sec:donsker}, we focus for simplicity on density estimation, following \cite{bvmnp2}. We note that the construction below can also be followed in the white noise model as an alternative to the construction in the previous section. The ideas can be applied to more complex models as well, see Section \ref{sec:sn}. 
\\

We define H\"{o}lder-type spaces $C^s$ of continuous functions on $[0,1]$, for $f_{lk}=\langle f, \psi_{lk} \psd$,
\begin{equation} \label{hold}
C^s([0,1]) =  \left\{f \in C([0,1]):  \|f\|_{s, \infty} := \sup_{l, k} 2^{l(s+1/2)}  |f_{lk}| < \infty \right\}.
\end{equation}

\vp

\ti{Density model, limiting distribution}
Consider the density model \eqref{mod.dens} where we observe $X=(X_1, \dots, X_n)$ i.i.d.~from law $P$ with density $f$ on $[0,1]$. 

The first step is to identify the limiting distribution for the BvM result. In white noise, the identification of the limit 
was somewhat straightforward,  via  the model equation $\ixn = f +n^{-1/2}\dob$, where $\ixn$ can be seen as an estimator of $f$. In the density model, let us take an intermediate step via projections onto the wavelet basis $\{\psi_{lk}\}$, which we assume to be a localised basis such as the Haar or CDV bases used in Chapter \ref{chap:rate1}.  Let $P_n$ denote the empirical measure associated to observations $X_1,\ldots,X_n$. We also denote $Pf=\int fdP$ and $L^2(P)=\{f:[0,1]\to \mathbb R: \int_0^1 f^2dP<\infty\}$. \\

\ti{The $P$-white bridge process} 
A  natural estimate of the wavelet coefficiencts $f_{lk}=\langle f, \psi_{lk} \rangle$ is 
\[ P_n\psi_{lk}\equiv \langle P_n,\psi_{lk}\rangle = \frac{1}{n} \sum_{i=1}^n \psi_{lk}(X_i). \] 
By the central limit theorem, for $k,l$ fixed and as $n \to \infty$, the random variable $\rn(P_n - P)(\psi_{lk})$ converges in distribution to
\begin{equation} \label{GPwhite}
\mathbb G_{P}(\psi_{lk}) \sim \cN(0,\sigma_{lk}^2),
\end{equation}
with $\sigma_{lk}^2=\text{Var}_P(\psi_{lk}(X_1))=\int_0^1 (\psi_{lk}-P\psi_{lk})^2 dP$. 
In analogy to the white noise process $\dob$, the process $\mathbb G_P$ arising from (\ref{GPwhite}) can be defined as the centered Gaussian process indexed by $L^2(P)$
  with covariance function
\[ \mathbb{E}\left[\mathbb G_{P}(g) \mathbb G_{P}(h)\right]=\int_0^1 (g-Pg)(h-Ph)dP. \]
We call $\mathbb G_P$ the {\em $P$-white bridge process}.  Now we turn to the definition of spaces similar to the large space $H$ of the previous section. \\

\ti{Multiscale spaces $\cM(w)$ and $\cM_0(w)$} 
For monotone increasing weighting sequences $w=(w_l: l \ge J_0-1), w_l \ge 1,$ we define multi-scale sequence spaces
\begin{equation} \label{def-multi}
\mathcal M \equiv \mathcal M(w) \equiv \left\{x=\{x_{lk}\}: \ \ \|x\|_{\mathcal M(w)} \equiv \sup_{l}\frac{\max_{k}|x_{lk}|}{w_l} <\infty \right\}.
\end{equation}
The space $\mathcal M(w)$ is a non-separable Banach space (it is isomorphic to $\ell_\infty$). However, the weighted sequences in $\mathcal M(w)$ that vanish at infinity form a separable closed subspace for the same norm, which leads us to define
\begin{equation} \label{em0}
\mathcal M_0=\mathcal M_0(w)= \left\{x \in \mathcal M(w): \ \ 
\lim_{l \to \infty} \max_k \frac{|x_{lk}|}{w_l}=0\right\}.
\end{equation}
Furthermore, we call a sequence $(w_l)$ {\em admissible} 
if $w_l/\sqrt l \uparrow \infty$ as $l \to \infty$.\\

\ti{$P$-white bridge as tight measure on $\M_0(w)$} 
The idea behind the definition of the enlarged space $\M_0(w)$ is, as for $H$, to find a `smallest' (separable) large space the limit $\mathbb G_P$ belongs to. The next proposition also applies to white noise $\dob$. 

\begin{prop} \label{CKRP} Let  $\mathbb G_P$ be a $P$-white bridge. For $\omega=(\omega_l)=\sqrt l$ we have 
$E\|\mathbb G_P\|_{\mathcal M(\omega)} < \infty.$ If $w=(w_l)$ is admissible then $\mathbb G_P$ defines a tight Gaussian Borel probability measure 
in  $\mathcal M_0(w)$. 
\end{prop} 
The idea is similar to the one for $\dob$ in the $H$ space: cince there are $2^l$ i.i.d.~standard Gaussians $g_{lk}=\langle \psi_{lk}, dW \rangle$ at the $l$-th level, we have from a standard bound $E \max_{k}|g_{lk}| \leq C \sqrt l$ for some universal constant $C$. The Borell-Sudakov-Tsirelson inequality applied to the maximum at the $l$-th level gives, for any $M$ large enough,
\begin{align*}
 \Pr\left(\sup_l l^{-1/2} \max_k |g_{lk}| > M \right) 
& \le \sum_l \Pr \left( \max_k|g_{lk}| - E \max_k |g_{lk}| > \sqrt l M - E \max_k |g_{lk}| \right) \\
& \leq 2 \sum_l \exp \left\{- c(M-C)^2 l \right\}.
\end{align*}
Now using $E[X] \le K + \int_K^\infty \Pr[X\ge t]dt$ for any real-valued  random variable $X$ and any $K\ge 0$, one obtains that  $\|\mathbb W\|_{\mathcal M(\omega)}$ has finite expectation.

Now that a candidate limit process has been identified, we discuss briefly possible centerings. Contrary to the Gaussian white noise case, where one could center at $\ix$ itself, here the natural analogue $\mathbb{P}_n$ does not converge in an appropriate sense, but one can use a truncated version thereof. The next paragraphs on truncations are not {\em per se} needed to establish BvM, but rather give examples of possible `efficient' centerings in the BvM result.  \\

\ti{Truncated empirical measure, convergence}
Any $P$ with bounded density $f$ has coefficients $\langle f, \psi_{lk} \rangle \in \ell_2 \subset \mathcal M_0(w)$. We would like to formulate a statement such as $$\sqrt n (P_n -P) \to^d \mathbb G_P \text{ in } \mathcal M_0,$$ as $n \to \infty$, paralleling (\ref{shift}) in the Gaussian white noise setting. The fluctuations of $\sqrt n (P_n-P)(\psi_{lk})/\sqrt{l}$ along $k$ are stochastically bounded for $l$ such that $2^l \le n$, but are unbounded for high frequencies. Thus the empirical process $\sqrt n (P_n-P)$ will not define an element of $\mathcal M_0$ for every admissible sequence $w$. In our nonparametric setting we can restrict to frequencies at levels $l, 2^l \le n$, and introduce an appropriate `projection' $P_n(j)$ of the empirical measure $P_n$ onto $V_j$ via
\begin{equation} \label{def-pn}
 \langle P_n(j), \psi_{lk}\rangle=
\begin{cases}
 \ \psg P_n, \psi_{lk} \psd & \text{ if }\ l\le j \\
 \  0 & \text{ if }\ l> j, 
\end{cases}
\end{equation}
which defines a tight random variable in $\mathcal M_0$. The following theorem shows that $P_n(j)$ estimates $P$ efficiently in $\mathcal M_0$ if $j$ is chosen appropriately. Note that the natural choice $j=L_n$ such that $$2^{L_n} \sim N^{1/(2\gamma+1)},$$ where $N=n$ (if $\gamma>0$) or $N=n/\log n$ (if $\gamma \ge 0$), is possible.

\begin{thm} \label{CLT}
Let $w=(w_l)$ be admissible. Suppose $P$ has density $f$ in $C^\gamma([0,1])$ for some $\gamma \ge 0$. Let $j_n$ be such that $$\sqrt n 2^{-j_n(\gamma+1/2)}w^{-1}_{j_n} = o(1), ~~\frac{2^{j_n} j_n}{n} =O(1).$$ Then we have, as $n \to \infty$, $$\sqrt n (P_n(j_n) - P) \to^d \mathbb G_P\text{ in } \mathcal M_0(w).$$ 
\end{thm}
The proof is based on controlling maxima of  variables $\langle \sqrt n (P_n-P), \psi_{lk}\rangle$ for any $k$ and $l\le j_n$, which one does using Bernstein's inequality (see \cite{bvmnp2}, proof of Thm. 1 for details).\\
 
\ti{Weak nonparametric BvM in $M_0(w)$}
As in the previous section,  we metrise weak convergence of laws in $\mathcal M_0(w)$ via $\beta_{\mathcal M_0(w)}$ (see \eqref{def-weakcv}), 
and view the prior $\Pi$ on the functional parameter $f\in L^2$ as a prior on sequence space $\ell_2$ under the wavelet isometry $L^2 \cong \ell_2$.

\begin{definition} \label{weakbvmp}
Let $w$ be admissible, let $\Pi$ be a prior and $\Pi(\cdot\given X)$ the corresponding posterior distribution on $\ell_2 \subset \mathcal M_0=\mathcal M_0(w)$, obtained from observations $X$ in the density model. Let $\tilde \Pi_n$ be the image measure 
of $\Pi(\cdot \given X)$ under the mapping 
\[\tau: f \mapsto \sqrt n (f-T_n)\]
 where $T_n=T_n(X)$ is an estimator of $f$ in $\mathcal M_0$. Then we say that $\Pi$ satisfies the \sbl{weak Bernstein von Mises} phenomenon \sbl{in $\mathcal M_0$} with centering $T_n$ if, for $X\sim P_{f_0}^{\otimes n}$ and fixed $f_0$, as $n \to \infty$,
\[\beta_{\mathcal M_0}(\tilde \Pi_n, \mathcal N) \to^{P_0} 0,\] 
where $\mathcal N$ is the law in $\mathcal M_0$ of  
$\,\mathbb G_{P_{0}}$ and $f_0 \in L^\infty$. 
\end{definition}
Although we do not explain this in details here, it is possible to give a notion of {\em efficiency} of estimators of $f$ in $\cM_0$, see e.g. \cite{aadstflour, AadWell} and $P_n(j_n)$ from Theorem \ref{CLT} is efficient; this is the centering that will be taken for the BvM results in $\cM_0$ to follow.\\
  
\ti{Sufficient conditions}  
Similar to  results for the space $H$, to prove a weak BvM in $\cM_0(w)$ it suffices to prove tightness in an appropriate space and convergence of finite-dimensional distributions. The proof of the next Proposition is similar in spirit to that of Theorem \ref{fbvm} and is thus omitted (we refer to \cite{bvmnp2} for details).\\

Let $\pi_{V_J}, J \in \mathbb N,$ be the projection operator onto the finite-dimensional space spanned by the $\psi_{lk}$'s with scales up to $l \le J$. Let $f \sim \Pi(\cdot|X)$, $T_n=T_n(X)$, and let $\tilde \Pi_n$ denote the laws of $\sqrt n (f- T_n)$. For $\mathcal N$ equal to the Gaussian probability measure on $\mathcal M_0(w)$ given by 
$\mathbb G_P$ for $P$ with bounded density. The following conditions are sufficient to prove the weak BvM in $\cM_0(w)$ for $\Pi[\cdot\given X]$

\begin{prop} \label{prop:bvmw}
Suppose, using the notation from the above paragraph, that
\begin{enumerate}  
\item   for some sequence $(\bar w)$ with $\bar w_l/\sqrt l \ge 1$, 
for some finite constant $C$,
\begin{equation} \label{inexp}
E\left[\|f-T_n\|_{\cM_0(\bar{w})}\given X\right] = E\left[\sup_l \bar w_l^{-1} \max_{k}|\langle \sqrt n (f - T_n), \psi_{lk} \rangle| \given X\right]=O_{P_0}(1/\rn).
\end{equation}
\item  finite-dimensional distributions converge: i.e. for any finite-dimensional subspace $V_J$,
\begin{equation} \label{fidij}
\beta_{V_J}\left(\tilde \Pi_n \circ \pi_{V_J}^{-1}, \mathcal N \circ \pi_{V_J}^{-1}\right) \to^{P_0} 0.
\end{equation}
\end{enumerate}
Then,  for any $w$ such that $w_l/\bar w_l \uparrow \infty$ as $l \to \infty$,  
\[ \beta_{\mathcal M_0(w)}(\tilde \Pi_n, \mathcal N) \to^{P_0} 0. \] 
\end{prop}

\ti{Weak nonparametric BvM, density model}
We define multi-scale priors $\Pi$ on some space $\mathcal F$ of probability density functions $f$ giving rise to absolutely continuous probability measures. Suppose the true density $f_0$ is bounded away from $0$ and $\infty$.

We choose as cut-off parameter $L_n$ given by, for $\al>0$, (the closest integer solution to)
\begin{equation} \label{def-ln}
 2^{L_n} = n^{\frac{1}{2\al +1}}.
\end{equation}

{\it {\bf (S)} Priors on log-densities.} 
Given a multi-scale wavelet basis $\{\psi_{lk}\}$ as above, consider the prior $\Pi$
induced by, for any $x\in[0,1]$ and $L_n$ as in \eqref{def-ln},
\begin{align}
T(x) & = \sum_{l\le L_n} \sum_{k=0}^{2^l -1} \sil \al_{lk} \psi_{lk}(x)  \label{def-T} \\
f(x)  & = \exp\left\{  T(x) - c(T) \right\},\quad c(T)=\log \int_0^1 e^{T(x)} dx, \label{def-prior-log}
\end{align}
where $\al_{lk}$ are i.i.d. random variables of continuous probability density $\varphi: \mathbb R \to [0,\infty)$ equal to,  for a given $0\le \ta <1$ and 
$x\in \RR$, and $c_\ta$ a normalising constant, 
\begin{equation}
 \varphi_{H,\tau}(x) = c_\ta \exp\{-(1+|x|)^{1-\tau}\}.  \label{phi-ht} 
\end{equation} 
Suppose the prior parameters $\sil$ satisfy, for $\al>1/2$,
\begin{equation}\label{cond-sil}
 \sil =2^{-l(\al+1/2)}.
\end{equation}

{\bf (H)} {\it Random histograms density priors}.
Consider the regular dyadic partition of $[0,1]$ at level $L\ge 1$:  
$I_0^L=[0,2^{-L}]$ and $I_k^L=(k2^{-L},(k+1)2^{-L}]$ for  $k=1,\dots, 2^L-1$.  Consider
\[ \cH_L^1 :=\left\{h:\ h(x) 
= 2^L \sum_{k=0}^{2^L-1} \omega_k  \1_{\ikl}(x),\quad (\omega_k) \in\cS_L \right\},\]
where $\mathcal S_L$ denotes the unit simplex in $\mathbb R^{2^L}$, the set of 
histograms that are densities on $[0,1]$ with $L$ equally spaced dyadic knots.

Let us set $L=L_n$ as defined in \eqref{def-ln} 
and for some fixed constants $a, c_1,c_2>0$  let
\begin{equation} \label{prk}
L = L_n,\qquad \omega_L \sim \cD(\alpha_0, \dots , \alpha_{2^L-1}), \quad
c_1 2^{-La}\le \alpha_k \le c_2,
\end{equation}
for any admissible index $k$, where $\cD$ denotes the discrete Dirichlet distribution on $\mathcal S_L$. \\

The priors {\bf(S), (H)}  are `multiscale' priors where high frequencies are ignored 
-- corresponding to truncated series priors considered frequently in the nonparametric   Bayes literature. The resulting posterior distributions $\Pi(\cdot\given X)$ attain minimax   optimal contraction rates up to logarithmic terms in Hellinger and $L^2$-distance (\cite{cr15}). \\  

The following theorem shows that the above priors satisfy a weak BvM theorem in $\mathcal M_0$ in the sense of Definition \ref{weakbvmp}, with efficient centring $P_n(L_n)$ (cf.~Theorem \ref{CLT}). Denote the law $\mathcal L(\mathbb G_{P_0})$ of $\mathbb G_{P_0}$ from Proposition \ref{CKRP} by $\mathcal N$. 

\begin{thm}\label{thmdens1}
Let $\mathcal M_0=\mathcal M_0(w)$ for any admissible $w=(w_l)$. Let  $X=(X_1, \dots, X_n)$ i.i.d.~from law $P_0$ with density $f_0\in \cF_0$. Let $\Pi$  be a prior on the set of probability densities $\mathcal F$ that is
\begin{enumerate}
\item either of type {\bf (S)}, in which case one assumes $\log f_0 \in C^\alpha$
 for some $\alpha > 1$,
\item or of type {\bf (H)}, and one assumes $f_0 \in C^\alpha$ for some $1/2<\al\le1$. 
\end{enumerate}
Suppose the prior parameters satisfy \eqref{def-ln}, \eqref{cond-sil} and \eqref{prk}. Let $\Pi(\cdot \given X)$ be the induced posterior distribution on $\mathcal M_0$.   
Then, as $n \to \infty$,
\begin{equation} \label{iidbvm}
 \be_{\mathcal M_0}(\Pi(\cdot\given X)\circ\tau_{P_n(L_n)}^{-1}, \mathcal N)\to^{P_0} 0.
\end{equation}
\end{thm}

\ti{Idea of proof} We refer to \cite{bvmnp2} for a detailed proof, we briefly sketch the main ideas now.

To verify convergence of finite-dimensional distributions, it is enough to show asymptotic normality (with correct centering and variance) for any finite collection of induced posteriors on $\psg f,\psi_{lk} \psd_2$, for $l\le J$ and arbitrary $k$, for some given $J\ge 1$. This exactly the purpose of the BvM theorems in Chapter \ref{chap:bvm1} which we can use here. 

In order to verify tightness, one needs to derive posterior contraction at rate $1/\sqrt{n}$ in a $\cM_0(w)$--norm. There are two main difficulties: first, this is not a testing-type distance in general and second, the parametric-type rate makes it fall outside the scope of the generic nonparametric results from the first Chapters, where the rate has to be slower than $1/\sqrt{n}$. The main ideas here will be to
\begin{enumerate}
  \item  prove sub-Gaussianity of the posterior induced on individual coefficients $f_{lk}=\psg f,\psi_{lk}\psd$ (with high probability and in a non-asymptotic fashion): this can be done by controlling the Laplace transform of the posterior on $f_{lk}$, in a similar fashion as for the BvM theorem in Chapter \ref{chap:bvm1}, but keeping the finite sample estimates. This will in particular imply 
 tightness at the level of individual $f_{lk}$'s; 
 \item reconstruct the $\cM_0(w)$--norm from the $f_{lk}$'s, which amounts to controlling a maximum of sub-Gaussian (by the definition of the norm and point 1.) variables, which grows as the square-root-logarithm of the number of variables: this slowly varying term can be compensated using a slowly increasing sequence $(w_l)$.
\end{enumerate}
We will deploy similar arguments  with an explicit proof in  Section \ref{sec:sn} below on posterior convergence in the supremum norm, where we discuss also other models.\\
  
\ti{Application to credible bands} As the multiscale spaces $\cM_0(w)$ are defined via maxima of collections of wavelet coefficients, they are particularly well-suited to the study of the supremum norm. Indeed, by analogy to the study of the credible sets in the previous section, which were shown to have (nearly) optimal diameter in $L^2$, it is possible up  to minor adaptations to carry out the same method to obtain credible sets which are confidence {\em bands} having an optimal diameter (up to an arbitrary undersmoothing factor $M_n\to\infty$) in $L^\infty[0,1]$. We refer to \cite{bvmnp2}, Section 4.2 for explicit statement. Instead in the next Section  we present in some detail a different application.  \\
  
\ti{Adaptive nonparametric BvM} It is of interest to obtain results as above for a prior that is adaptive to the smoothness of $f$: indeed, one can then, under conditions such as self--similarity, deduce confidence sets (in $L^2$ or $L^\infty$ depending on the prior) using the interpolation idea explained above. This was pioneered in the work by Kolyan Ray \cite{ray17} in white noise (see also \cite{cr21}, and \cite{cm21, cr22} in density estimation) using multiscale spike-and-slab priors with `flat initialisation' (i.e. that put no spike on the first levels).

\section{Application: Donsker's theorem} \label{sec:donsker}

 \ti{Bayesian Donsker's theorem}
 Whenever a prior on $f$ satisfies the weak Bernstein-von Mises phenomenon in the sense of Definition \ref{weakbvmp}, we can deduce from the continuous mapping theorem many BvMs via continuous functionals from $\cM_0(w)$ to arbitrary spaces.
 
One may do so for integral functionals $L_g(f) = \int_0^1 g(x) f(x)dx$ \textit{simultaneously} for many $g$'s satisfying bounds on the decay of their wavelet coefficients. More precisely a bound $\sum_k |\langle g, \psi_{lk} \rangle| \le c_l$ for all $l$ combined with a weak BvM for $(w_l)$ such that $\sum c_l w_l<\infty$ is sufficient. Let us illustrate this in a key example $g_t = 1_{[0,t]}, t \in [0,1],$ where we can derive results paralleling the classical Donsker theorem for distribution functions and its BvM version for the Dirichlet process proved by Albert Lo \cite{L83} using conjugacy. For simplicity we restrict to situations where the posterior $f\given X^{(n)}$ is supported in $L^2,$ and where the centering $T_n$ in Definition \ref{weakbvmp} is contained in $L^2$. In that case the primitives  
\[ F(t) = \int_0^tf(x)dx,\quad \mathbb T_n(t)=\int_0^t T_n(x)dx\] 
define random variables in the separable space $C([0,1])$ of continuous functions on $[0,1]$, and we can formulate a BvM-result in that space. Different centerings, such as the empirical distribution function, are discussed below.

\begin{thm} \label{kolmsmir} Let $\Pi$ be a prior supported in $L^2([0,1])$ and suppose the weak Bernstein - von Mises phenomenon in the sense of Definition \ref{weakbvmp} holds true in $\mathcal M_0(w)$ for some sequence $(w_l)$ such that $\sum_l w_l 2^{-l/2}<\infty$, and with centering  $T_n \in L^2$. Define the  cumulative distribution function 
\begin{equation} \label{primit}
F(t) = \int_0^t f(x)dx,\ t \in [0,1].
\end{equation}
Let $G$ be a $P_0$-Brownian bridge.
If $X \sim P_{f_0}^{\otimes n}$ for some fixed $f_0$ then as $n \to \infty$, 
\begin{equation} \label{funlim}
\be_{C([0,1])}(\mathcal L(\sqrt n (F-\mathbb T_n)\given X), \mathcal L(G))
\to^{P_0} 0,
\end{equation} 
\begin{equation} \label{kslim}
\beta_\mathbb R (\mathcal L(\sqrt n \|F-\mathbb T_n\|_\infty\given X),  
\mathcal L(\|G\|_\infty)) \to^{P_0}0.
\end{equation}
\end{thm}
Let us now apply this result to the case of priors  {\bf(S)} or {\bf{(H)}}, for which the weak BvM has been obtained above with centering the truncated empirical measure $P_n(L_n)$.  Theorem \ref{kolmsmir} leads to a result with centering the primitive of $P_n(L_n)$. One can check that this can be replaced by 
\[ F_n(t) = \frac 1n \sum_{i=1}^n 1_{[0,t]}(X_i),\quad t \in [0,1],\] the empirical distribution function based on a sample $X_1,\ldots,X_n$.
\begin{corollary} \label{classd}
Let $\Pi$ be a prior of type {\bf(S)} or {\bf{(H)}} and suppose the conditions of Theorem \ref{thmdens1} are satisfied.  Then, as $n \to \infty$, 
\[ \beta_{L^\infty([0,1])}(\mathcal L(\sqrt n (F-F_n)\given X), \mathcal L(G_{P_{0}})) \to^{P_0} 0, \]
\[ \beta_\mathbb R (\mathcal L(\sqrt n \|F-F_n\|_\infty\given X), \mathcal L(\|G_{P_{0}}\|_\infty)) \to^{P_0} 0. \]
\end{corollary}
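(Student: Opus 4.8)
\textbf{Proof strategy for Corollary \ref{classd}.}

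The plan is to deduce the result from Theorem \ref{kolmsmir}, which already provides the Bayesian Donsker theorem with centering $\mathbb{T}_n$, the primitive of the truncated empirical measure $P_n(L_n)$. First I would check that the hypotheses of Theorem \ref{kolmsmir} are met: by Theorem \ref{thmdens1}, priors of type $\mathbf{(S)}$ or $\mathbf{(H)}$ satisfy the weak BvM in $\mathcal{M}_0(w)$ with centering $P_n(L_n)$ for any admissible $w=(w_l)$; and since an admissible sequence may be chosen with $w_l/\sqrt{l}\uparrow\infty$ but growing slowly enough that $\sum_l w_l 2^{-l/2}<\infty$ (e.g.\ $w_l = l^{3/4}$ works, as $\sum_l l^{3/4} 2^{-l/2}<\infty$), the summability condition in Theorem \ref{kolmsmir} holds. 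The centering $P_n(L_n)$ is supported in $L^2$ since it is a finite wavelet expansion, and the priors $\mathbf{(S)},\mathbf{(H)}$ are supported in $L^2$. Hence Theorem \ref{kolmsmir} applies and gives \eqref{funlim}--\eqref{kslim} with $F$ as in \eqref{primit}, $\mathbb{T}_n$ the primitive of $P_n(L_n)$, and $G$ a $P_0$-Brownian bridge, which in this density setting is exactly the process $\mathbb{G}_{P_0}$ restricted to the indicator functions $g_t=\mathbf{1}_{[0,t]}$.

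The only remaining work is to replace the centering $\mathbb{T}_n(t)=\int_0^t P_n(L_n)$ by the ordinary empirical distribution function $F_n(t)=n^{-1}\sum_{i=1}^n \mathbf{1}_{[0,t]}(X_i)$. For this I would show that $\sqrt{n}\,\|\mathbb{T}_n - F_n\|_\infty = o_{P_0}(1)$, after which the substitution is immediate: the bounded-Lipschitz distance $\be_{C([0,1])}$ is unchanged by a deterministic (data-dependent but $f$-independent) shift that is $o_{P_0}(1/\sqrt{n})$ in sup-norm, and likewise for $\be_{\mathbb{R}}$ applied to the sup-norm functional (which is $1$-Lipschitz). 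To bound $\sqrt{n}\|\mathbb{T}_n-F_n\|_\infty$, note that $\mathbb{T}_n - F_n$ is the primitive of $P_n(L_n) - P_n$, whose wavelet coefficients are $\langle P_n,\psi_{lk}\rangle$ for $l>L_n$ and $0$ for $l\le L_n$. Thus
\[
\sqrt{n}\|\mathbb{T}_n - F_n\|_\infty \le \sqrt{n}\sum_{l>L_n}\Big\|\sum_k \langle P_n,\psi_{lk}\rangle \Psi_{lk}\Big\|_\infty \le C\sqrt{n}\sum_{l>L_n} 2^{-l/2}\max_k |\langle P_n,\psi_{lk}\rangle|,
\]
using that the primitives $\Psi_{lk}=\int_0^\cdot\psi_{lk}$ satisfy $\|\Psi_{lk}\|_\infty \leqa 2^{-3l/2}$ and are supported on the dyadic interval of $\psi_{lk}$ so at most boundedly many overlap at each point (hence the $2^{-l/2}$ after collecting the $2^{l/2}$ normalisation—more carefully one gets $\sum_k|\langle P_n,\psi_{lk}\rangle|\,\|\Psi_{lk}\|_\infty$, and since at each point only $O(1)$ terms are nonzero this is $\leqa 2^{-l}\max_k 2^{l/2}|\langle P_n,\psi_{lk}\rangle|$, giving the displayed bound up to relabelling). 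Since $\langle P_n,\psi_{lk}\rangle = \langle P_0,\psi_{lk}\rangle + (P_n-P_0)(\psi_{lk})$, with $|\langle P_0,\psi_{lk}\rangle|\leqa 2^{-l(\gamma+1/2)}$ for $f_0\in C^\gamma$ and the fluctuation term controlled by Bernstein's inequality (exactly as in the proof of Theorem \ref{CLT}) by $\leqa \sqrt{l/n}$ uniformly in $k$ with high probability, one obtains $\sqrt{n}\|\mathbb{T}_n-F_n\|_\infty \leqa \sqrt{n}\sum_{l>L_n} 2^{-l/2}(2^{-l\gamma} + \sqrt{l/n})$, which tends to $0$ by the choice $2^{L_n}=n^{1/(2\alpha+1)}$ (the deterministic part is $\leqa \sqrt{n}\,2^{-L_n(\gamma+1/2)}\to 0$ since $\gamma\ge \alpha>1/2$ for $\mathbf{(S)}$ resp.\ $\gamma=\alpha>1/2$ for $\mathbf{(H)}$ makes the exponent favourable, and the stochastic part is $\leqa \sum_{l>L_n}\sqrt{l}\,2^{-l/2}\to 0$).

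The main obstacle is the second step: obtaining the uniform-in-$k$ high-probability control of the empirical process coefficients $(P_n-P_0)(\psi_{lk})$ for all high frequencies $l>L_n$ simultaneously, and checking that the resulting tail series in $l$ genuinely sums to something $o(1)$ after multiplication by $\sqrt{n}$. This requires a union bound over the $2^l$ coefficients at level $l$ followed by Bernstein's inequality (using $\|\psi_{lk}\|_\infty\leqa 2^{l/2}$ and $\mathrm{Var}_{P_0}(\psi_{lk}(X_1))\leqa 1$), exactly the argument already used for Theorem \ref{CLT} in \cite{bvmnp2}; one must be careful that the $2^{-l/2}$ gain from integrating the wavelet dominates both the $2^{l/2}$ sup-norm blow-up of $\psi_{lk}$ and the $\sqrt{l}$ from the union bound, which it does. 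Everything else—the two continuous-mapping deductions and the Lipschitz perturbation argument—is routine. Finally, the Brownian-bridge limit $G$ in Theorem \ref{kolmsmir} is identified with $G_{P_0}$ since for $g_t=\mathbf{1}_{[0,t]}$ one has $\mathbb{G}_{P_0}(g_t)\sim\mathcal{N}(0, F_0(t)(1-F_0(t)))$ with the Brownian-bridge covariance $F_0(s)\wedge F_0(t) - F_0(s)F_0(t)$, matching the time-changed Brownian bridge.
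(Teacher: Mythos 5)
Your overall route is exactly the paper's: the text preceding the corollary reduces it to Theorem \ref{kolmsmir} applied with the centering $\mathbb{T}_n$ (the primitive of $P_n(L_n)$), plus the remark that "one can check" the centering may be swapped for $F_n$. Your verification of the hypotheses of Theorem \ref{kolmsmir} (admissible $w$ with $\sum_l w_l 2^{-l/2}<\infty$, e.g.\ $w_l=l^{3/4}$; $L^2$-supported prior and centering; identification of $G$ with $G_{P_0}$) and the Lipschitz-perturbation argument for swapping a centering that differs by $o_{P_0}(1/\sqrt n)$ in sup-norm are both fine. The entire content is therefore the claim $\sqrt n\,\|\mathbb{T}_n-F_n\|_\infty=o_{P_0}(1)$, and it is there that your execution has a genuine gap.

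Writing $\sqrt n(F_n(t)-\mathbb{T}_n(t))=\sqrt n\sum_{l>L_n}\sum_k\langle P_n,\psi_{lk}\rangle\langle 1_{[0,t]},\psi_{lk}\rangle$ and bounding level by level via $\sup_t\sum_k|\langle 1_{[0,t]},\psi_{lk}\rangle|\leqa 2^{-l/2}$ is correct in principle, but the stochastic control $\max_k|(P_n-P_0)(\psi_{lk})|\leqa\sqrt{l/n}$ with high probability fails once $2^l\gg n$: since $\|\psi_{lk}\|_\infty\asymp 2^{l/2}$, Bernstein's inequality is in its Poissonian regime there and only yields $\max_k|(P_n-P_0)(\psi_{lk})|\leqa l\,2^{l/2}/n$, whose contribution $\sqrt n\cdot 2^{-l/2}\cdot l2^{l/2}/n=l/\sqrt n$ per level is \emph{not} summable over all $l$. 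So the tail series does not close as written. The standard fix is to abandon the level-by-level decomposition for the far tail and instead bound directly $\sup_t|\sqrt n(P_n-P_0)(g_{t,n})|$ with $g_{t,n}=1_{[0,t]}-K_{L_n}(1_{[0,t]})$: this is a uniformly bounded class contained in (differences of) Donsker classes with $\sup_t\|g_{t,n}\|_{L^2(P_0)}^2\leqa 2^{-L_n}\to 0$, so asymptotic equicontinuity of the empirical process gives $o_{P_0}(1)$. Two smaller slips: $\|\int_0^\cdot\psi_{lk}\|_\infty\leqa 2^{-l/2}$, not $2^{-3l/2}$ (your parenthetical recovers the right level bound anyway); and in the bias term you substitute $2^{-l\gamma}$ where your own (correct) coefficient bound is $2^{-l(\gamma+1/2)}$ — as written, $\sqrt n\,2^{-L_n(\gamma+1/2)}$ is only $O(1)$, not $o(1)$, in the relevant case $\gamma=\alpha$; with the correct exponent the bias is $\leqa\sqrt n\,2^{-L_n(\gamma+1)}=n^{-1/(2(2\alpha+1))}\to 0$, so this part is salvageable.
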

This result is the Bayesian analogue of Donsker's theorem for the empirical distribution function $F_n$. To our knowledge, most results of this kind in a Bayesian context  have been obtained under some form of, at least partial, conjugacy of the model and prior. Note that here the results are obtained from general principles.  \\
  
\begin{proof}[Proof of Theorem \ref{kolmsmir}] 
Consider the mapping $L$ from $\cM_0(w)$ to $L^\infty[0,1]$ --it can be checked that its range is in fact included in $\cC^0([0,1])$-- defined as 
\begin{equation} \label{defl}
\{h_{lk}\} \mapsto L_t(\{h_{lk}\}):=\sum_{l,k} h_{lk} \int_0^t \psi_{lk}(x)dx, ~t \in [0,1].
\end{equation}
This is a well-defined and continuous map since, for $0<c<C<\infty$,
\begin{align*}
\left|\sum_{l,k} h_{lk} \int_0^t  \psi_{lk}(x)dx \right| &\le \sum_{l,k} |h_{lk}| |\langle 1_{[0,t]}, \psi_{lk} \rangle| \\
& \le c\sup_{l,k} w_l^{-1}  |h_{lk}| \sum_{l} w_l 2^{-l/2}  \le C\|h\|_{\mathcal M_0},
\end{align*}
where we have used $\sup_{t \in [0,1]} \sum_k |\langle 1_{[0,t]}, \psi_{lk} \rangle| \le c2^{-l/2}$, using the localisation properties of the wavelet basis (see e.g. Lemma 3 in \cite{ginenickl09}). Also, $L$ coincides with the primitive map on any function $h$ with wavelet coefficients $\{h_{lk}\}\in \ell_2$, since then $$L_t(\{h_{lk}\}) = \sum_{l,k} h_{lk} \langle 1_{[0,t]}, \psi_{lk} \rangle = \psg h,1_{[0,t]} \psd = \int_0^t h(x)dx$$ in view of Parseval's identity. Moreover, if $\mathbb G$ is a tight Gaussian random variable in $\cM_0$ then the linear transformation $L(\mathbb G)$ is a tight Gaussian random variable in $C([0,1])$, equal in law to  a $P$-Brownian bridge for our choice $\mathbb G = \mathbb G_P$, after checking the identity of the corresponding reproducing kernel Hilbert spaces (and using again that $L$ coincides with the primitive map on $L^2$). The result now follows from the continuous mapping theorem. 
\end{proof}
\vp

\ti{Application: confidence bands for the CDF $F$} 
A natural credible band for $F$ is to take $C_n, R_n$ such that, with $L$ the map defined in \eqref{defl},
\begin{equation} \label{band0}
C_n = \left\{F: \|F-\mathbb T_n\|_\infty 
\le R_n/\sqrt n \right\}, ~~~\Pi\circ L^{-1}(C_n|X)=1-\alpha.
\end{equation}
The proof of the following result implies in particular that $C_n$ asymptotically coincides with the usual Kolmogorov-Smirnov confidence band. The result is true also with centring $\mathbb T_n = F_n$ (in which case the proof requires minor modifications).
\begin{corollary}
Under the conditions of Theorem \ref{kolmsmir}, let $X \sim P_{f_0}, F_0= \int_0^\cdot f_0(t)dt$ and $C_n$ as in (\ref{band0}). Then we have, as $n \to \infty$, $$P_{f_0}(F_0 \in C_n) \to 1-\alpha,~~\text{and}~~R_n \to^{P_{f_0}} const.$$  
\end{corollary}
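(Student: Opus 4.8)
The plan is to deduce this corollary from Theorem \ref{kolmsmir} together with the weak BvM result of Theorem \ref{thmdens1}. First I would check that the priors \textbf{(S)} and \textbf{(H)} produce posterior draws supported in $L^2([0,1])$: for \textbf{(S)} this holds because the log-density is a finite wavelet series (truncated at level $L_n$), hence bounded and in $L^2$; for \textbf{(H)} a histogram with $2^{L_n}$ pieces is manifestly in $L^2$. The centring $P_n(L_n)$ from \eqref{def-pn} is likewise a finite wavelet sum, so it lies in $L^2$ and its primitive $\mathbb T_n$ is a well-defined element of $C([0,1])$. Next I would verify the summability condition $\sum_l w_l 2^{-l/2} < \infty$ required by Theorem \ref{kolmsmir}: since $w$ is admissible, $w_l = o(2^{\delta l})$ for every $\delta>0$, in particular $w_l \leqa 2^{l/4}$ eventually, and then $\sum_l w_l 2^{-l/2} \leqa \sum_l 2^{-l/4} < \infty$. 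Hence all hypotheses of Theorem \ref{kolmsmir} are met, and applying it with centring $T_n = P_n(L_n)$ gives the two limits \eqref{funlim}--\eqref{kslim} with $\mathbb T_n$ the primitive of $P_n(L_n)$ and limiting process $G = \mathbb G_{P_0}$ (the $P_0$-white bridge, which restricted to indicators $1_{[0,t]}$ is exactly a $P_0$-Brownian bridge).

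The remaining point is to replace the centring $\mathbb T_n$, the primitive of $P_n(L_n)$, by the empirical distribution function $F_n(t) = n^{-1}\sum_{i=1}^n 1_{[0,t]}(X_i)$. For this it suffices, by the triangle inequality and the fact that adding an $o_{P_0}(n^{-1/2})$ shift does not affect weak convergence in $C([0,1])$ or the distribution of the supremum, to show that
\[
\sqrt{n}\,\|\mathbb T_n - F_n\|_\infty = o_{P_0}(1).
\]
Writing $F_n(t) = \int_0^t dP_n$ and $\mathbb T_n(t) = \int_0^t P_n(L_n)$, the difference is $\int_0^t (P_n - P_n(L_n))$, i.e. the primitive of the high-frequency part $\sum_{l>L_n}\sum_k \langle P_n,\psi_{lk}\rangle \psi_{lk}$ of the empirical measure. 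Using the localisation of the wavelet basis, $\sup_t \sum_k |\langle 1_{[0,t]},\psi_{lk}\rangle| \leqa 2^{-l/2}$, one gets
\[
\sqrt{n}\,\|\mathbb T_n - F_n\|_\infty \leqa \sqrt{n}\sum_{l>L_n} 2^{-l/2}\max_k |\langle P_n,\psi_{lk}\rangle|.
\]
By Bernstein's inequality (as in the proof of Theorem \ref{CLT} in \cite{bvmnp2}), $\max_k |\langle \sqrt n (P_n - P_0),\psi_{lk}\rangle| = O_{P_0}(\sqrt{l} + 2^{l/2}\log n / \sqrt n)$ uniformly over a suitable range of $l$, while $|\langle P_0,\psi_{lk}\rangle| \leqa 2^{-l(\gamma+1/2)}$ by the H\"older regularity of $f_0$; combining these and summing the geometric-type series over $l > L_n = \lfloor \log_2 n^{1/(2\al+1)}\rfloor$ shows the right-hand side is $o_{P_0}(1)$, using $\al > 1/2$. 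This yields the first two displays of the corollary.

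Finally, for the confidence-band statement I would argue exactly as in the proof of Theorem \ref{sct}. The collection of centred balls $\{\|\cdot\|_\infty \le t\}$ in $C([0,1])$ is a $\mathcal L(G_{P_0})$-uniformity class (the law of $\|G_{P_0}\|_\infty$ has a continuous distribution function, being the supremum of a non-degenerate Gaussian process), so \eqref{kslim} upgrades to uniform convergence of the posterior CDF of $\sqrt n\|F - \mathbb T_n\|_\infty$ to that of $\|G_{P_0}\|_\infty$. Plugging in the $(1-\al)$-quantile $R_n$ defined by $\Pi\circ L^{-1}(C_n\given X) = 1-\al$ forces $R_n \to^{P_{f_0}} q_{1-\al}$, the $(1-\al)$-quantile of $\|G_{P_0}\|_\infty$; then
\[
P_{f_0}(F_0 \in C_n) = P_{f_0}\big(\sqrt n \|F_0 - \mathbb T_n\|_\infty \le R_n\big) \to P(\|G_{P_0}\|_\infty \le q_{1-\al}) = 1-\al,
\]
where the middle step uses Donsker's theorem for $\sqrt n(F_0 - F_n)$ together with the centring-replacement bound above. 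I expect the main obstacle to be the quantitative control of the high-frequency tail $\sqrt n \sum_{l>L_n}2^{-l/2}\max_k|\langle P_n,\psi_{lk}\rangle|$: one must be careful that the stochastic fluctuation term from Bernstein's inequality (of order $2^{l/2}\log n/\sqrt n$ at level $l$) is still summable past $L_n$, which is where the precise choice of $L_n$ and the assumption $\al>1/2$ (resp.\ $\al>1$ in case \textbf{(S)}, which only helps) enter; everything else is a routine application of the continuous mapping theorem and the uniformity-class argument already used for Theorem \ref{sct}.
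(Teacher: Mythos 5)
Your proof is correct and follows essentially the same strategy as the paper, which simply reruns the argument of Theorem \ref{sct} with $H(\delta)$ replaced by the separable Banach space $C([0,1])$. Two remarks on where you diverge. First, for the efficiency of the centering (the fact that $\sqrt n(\mathbb T_n - F_0)$ converges in law to the $P_0$-Brownian bridge, needed in the final coverage computation), the paper invokes Theorem \ref{CLT}, i.e.\ $\sqrt n(P_n(j_n)-P)\to^d \mathbb G_P$ in $\cM_0(w)$, and pushes it through the continuous map $L$; you instead use classical Donsker for $\sqrt n(F_n-F_0)$ plus a hand-made high-frequency tail bound $\sqrt n\|\mathbb T_n - F_n\|_\infty = o_{P_0}(1)$. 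Both work, but your route duplicates work already packaged in Theorem \ref{CLT} (and the centering-replacement estimate is really the content of the remark before Corollary \ref{classd}, not needed for $C_n$ as defined in \eqref{band0}, which is centred at $\mathbb T_n$ itself). Second, you justify the convergence $R_n\to q_{1-\alpha}$ using only \emph{continuity} of the distribution function $\Phi$ of $\|G_{P_0}\|_\infty$; continuity gives the uniformity class, but to invert and conclude $R_n\to\Phi^{-1}(1-\alpha)$ you also need $\Phi$ to be \emph{strictly increasing} near the quantile. The paper supplies this explicitly by observing that every shell $\{g: s<\|g\|_\infty<t\}$ contains an element of the RKHS of the $P_0$-Brownian bridge, hence has positive $\mathcal N$-mass. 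This is a standard fact, but it is the one step your argument leaves implicit.
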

\begin{proof}
The proof is similar to that of Theorem \ref{sct}, replacing $H(\delta)$ by $C([0,1])$ (a separable Banach space): The function $\Phi$ in that proof is strictly increasing: to check this it suffices to note that any shell $\{g\in C([0,1]):\ s<\|g\|_\infty<t\}$, $0\le s<t$, contains an element of  the RKHS  of the $P_0$-Brownian bridge. Using also Theorem \ref{CLT} in the sampling model case all arguments from the proof of Theorem \ref{sct} go through. 
\end{proof}

\section{Posterior rates in the supremum norm: examples} \label{sec:sn}

Let us now discuss  the case of the supremum-norm, which is not in general a testing distance. 
One generic way of proceeding is to use a `multiscale' approach as outlined above for $\cM_0$--norms. One bounds the $\|\cdot\|_\infty$--norm from above in terms of wavelet coefficients $f_{lk}=\psg f,\psi_{lk}\psd_2$, and then control these individually and in an uniform way. In principle this method can be applied as soon as one can control (the Laplace transform of) posteriors over linear functionals, which we have studied in Chapter \ref{chap:bvm1}. 

Let us give now a very simple example and proof where the posterior Laplace transform can be evaluated explicitly, but this can be replaced by the tools from Chapter \ref{chap:bvm1}. Below we discuss other existing results. 

We assume that $f_0$ is $\al$--H\"older-regular in the sense of \eqref{tolkf}, with here $R,\al$ assumed {\it known for simplicity}.   
Define a prior $\Pi$ on $f$ via an independent product prior on its coordinates $f_{lk}$ onto the considered basis. The  component  $f_{lk}$ is assumed
to be sampled from a prior with density $\sil^{-1}\varphi(\cdot/\sil)$ with respect to Lebesgue measure on $[0,1]$, where, for $\al, R$ as in \eqref{tolkf}, $x\in\RR$ and a given $B>R$
\begin{equation} \label{prior-unif}
 \vphi(x) = \frac{1}{2B} \ind{[-B,B]}(x),\qquad \sil=2^{-l(\frac12+\al)}.
\end{equation} 
This gives a simple example of a random $f$ with bounded $\alpha$-H\"{o}lder norm (see also \cite{gn11}, Section 2.2)
\begin{prop} \label{prop-unif} 
Consider observations $X$ from the Gaussian white noise model.
 Let  $f_0$ and $\alpha$ satisfy Condition \ref{tolkf}
 and let the prior be chosen according to \eqref{prior-unif}. Then there exists $M>0$ such that for $\veps_{n,\al}^*=(\log{n}/n)^{\frac{\al}{2\al+1}}$,
\[ E_0 \int \|f-f_0\|_{\infty} d\Pi(f\given X^{(n)}) = O\left( \veps_{n,\al}^* \right). \]
\end{prop}
Uniform wavelet priors thus lead to the minimax rate of convergence in sup-norm. \\
 
 \begin{proof}
Let $L_n$ be defined by $2^{L_n}=(n/\log{n})^{\frac{1}{2\al+1}}$. Denote by $f^{L_n}$ the orthogonal projection  of $f$ 
 in $L^2[0,1]$ onto $\text{Vect}\{\psi_{lk},\ l\le L_n,\ 0\le k<2^l\}$, and $f^{L_n^c}$ the projection of $f$ onto 
$\text{Vect}\{\psi_{lk},\ l> L_n,\ 0\le k<2^l\}$. Then
\[ f-f_0 = f^{L_n} - \hat{f}^{L_n} + \hat{f}^{L_n} - f_0^{L_n} + f^{L_n^c} -f_0^{L_n^c}, \]
where $\hat{f}^{L_n}$ is the projection estimator onto the basis $\{\psi_{lk}\}$ with cut-off $L_n$. Note that the previous equality as such is an equality in $L^2$. However, if the wavelet series of $f$ into the basis $\{\psi_{lk}\}$ is absolutely convergent $\Pi$-almost surely (which is the case for all priors considered in this paper),  we  also have  $f(x)=f^{L_n}(x)+f^{L_n^c}(x)$ pointwise for Lebesgue-almost every $x$, $\Pi$-almost surely, and similarly for $f_0$. Now, 
\begin{align*}
& E^\Pi[ \|f-f_0\|_{\infty} \given X^{(n)} ]  = \int \|f-f_0\|_{\infty} d\Pi(f\given X^{(n)}) &\\ 
& \le  \underbrace{\int \|f^{L_n} -\hat{f}^{L_n} \|_{\infty} 
d\Pi(f\given X^{(n)})}_{(i)} + \underbrace{\int  \|f^{L_n^c}\|_{\infty} d\Pi(f\given X^{(n)})}_{(ii)} 
+  \underbrace{\|\hat{f}^{L_n}-f_0\|_{\infty}}_{(iii)}. & 
\end{align*}
We have (iii)$\ \le \|f_0^{L_n^c}\|_{\infty}+ \|\hat{f}^{L_n} - f_0^{L_n}\|_{\infty}$. Using \eqref{tolkf} and the localisation property of the wavelet basis $\| \sum_k |\psi_{lk}|\|_\infty \leqa 2^{l/2}$, see below, one obtains
\[ \|f_0^{L_n^c}\|_{\infty} 
\le   \sum_{l>L_n} \left[ \max_{k}|f_{0,lk}|    \| \sum_{k} |\psi_{lk}| \|_{\infty} \right]
\leqa h_n^{\al} \leqa \veps_{n,\al}^*,
  \]
where $\leqa$ means less or equal to up to some universal constant. The term $\|\hat{f}^{L_n} - f_0^{L_n}\|_{\infty}$  depends on the randomness of the observations only,
\[  \|\hat{f}^{L_n} - f_0^{L_n}\|_{\infty}  
= \frac{1}{\rn} \| \sum_{l\le L_n,\, k} \veps_{lk} \psi_{lk}(\cdot)\|_{\infty}.  \]
Using that the expectation of the maximum of order $2^{L_n}$ standard Gaussian variables is at most $C\sqrt{L_n}$ and that $\|\psi_{lk}\|_\infty\leqa 2^{l/2}$, the last display  is bounded under $E_0$ by a constant times $\veps_{n,\al}^*$.

{\it Term (i).} By definition $\hat f^{L_n}$ has coordinates $\hat f_{lk}$ in the basis $\{\psi_{lk}\}$,  so using the localisation property of the wavelet basis as above, one obtains
\[ \|f^{L_n} - \hat{f}^{L_n} \|_{\infty} \leqa \frac{1}{\rn} \sum_{l\le L_n} 
2^{l/2}\left[ \max_{0\le k < 2^l} \rn|f_{lk} - x_{lk}| \right].  \]
For $t>0$, via Jensen's inequality and bounding the maximum by the sum, using $\Pi_n$ as a shorthand notation for the posterior $\Pi[\cdot\given X^{(n)}]$,
\[ t E_0 E^{\Pi_n}[\max_{0\le k < 2^l} \rn|f_{lk} - x_{lk}|]
\le \log\sum_{k=0}^{2^l-1}  E_0 E^{\Pi_n}\left[e^{t \rn (f_{lk} - x_{lk})} +
e^{-t \rn (f_{lk} - x_{lk})} \right], \]
for any $l\ge 0$.  

Simple computations similar to the proof of Theorem \ref{thm-mins} (indices in $\cJ_n$) 
 yield a sub-Gaussian behaviour for the  Laplace transform of $\rn(f_{lk}-x_{lk})$ under the posterior distribution, which is bounded above by $C e^{t^2/2}$ for a constant $C$ independent of $l\le L_n$ and $k$. 
From this deduce, for any $t>0$ and $l\le L_n$,
\[ E_0 E^{\Pi}[\max_{0\le k< 2^l} \rn|f_{lk} - x_{lk}|\given X^{(n)}]
\leqa \frac{\log(C2^l)}{t} + \frac t 2. \]
The choice $t=\sqrt{2\log(C2^l)}$ leads us to the bound  
\begin{align*}
E_0 (i) & \leqa \frac 1 \rn \sum_{l\le L_n} \sqrt{l} 2^{l/2}
 \leqa \sqrt{L_n/(nh_n)} \leqa \veps_{n,\al}^*.
\end{align*}
{\it Term (ii).}  Under the considered  prior, the wavelet coefficients of $f$ are bounded by $\sil$, so using again the localisation property of the wavelet basis, 
\begin{align*}
 E_0 (ii) 
 &  \leqa \sum_{l >L_n} 2^{l/2} E_0 E^{\Pi}\left[\ \max_{k} |f_{lk}| \  \given X \right]
  \leqa   \sum_{l >L_n} 2^{l/2}\sigma_l \leqa h_n^{\al} = \veps_{n,\al}^*.
\end{align*} 
\end{proof}
\vp

{\em Further results.} Convergence in the supremum norm can be obtained using the previous techniques for a variety of models -- so far for histogram or wavelet series priors -- including regression, density estimation, survival analysis, the Cox model and diffusion models.\\

We note also the existence of results for specific models and/or priors, mostly using (partial) conjugacy or specific properties of the prior or model:
\begin{itemize}
\item spike--and--slab priors achieve supremum-norm rates adaptive to the regularity in regression models \cite{hrs15} and density estimation by exponentiation and normalisation \cite{zn22};
\item tree--based priors in the spirit of Bayesian CART achieve adaptive supremum-norm rates in regression models \cite{hrs15} up to a logarithmic factor;
\item it is possible to construct counterparts of the previous results in density estimation (at least for H\"older regularities between $0$ and $1$), without going through renormalisation, by considering a P\'olya tree structure \cite{cm21, cr22};
\end{itemize}

\chapter{Classification and multiple testing} \label{chap:mtc}

In this chapter we consider the simple sparse sequence model
\begin{equation} \label{mod.spa7}
 X_i = \theta_i + \veps_i,\quad i=1,\ldots,n,
\end{equation}
with $\veps_i$ iid $\cN(0,1)$ and an unknown vector of means $\te=(\te_1,\ldots,\te_n)$.  

One assumes that the true $\theta_0$ is {\em sparse} in that it belongs to the class of {\em nearly black vectors}
\begin{equation}
 \ell_0[s_n] = \left\{ \theta\in\RR^n:\ \text{Card}\{i: \te_i  \neq 0 \} \le s_n \right\},
\end{equation}
where the sparsity parameter $s_n$ is typically assumed to verify  $s_n=o(n)$ and $s_n\to\infty$ as $n\ra\infty$. It can be seen as a `signal+noise' model with a sparsity constraint on $\te$.\\

Classical well-studied problems for inference over the unknown $\te$ in this model are those of {\em estimation} (where one wishes to find estimators $T=T(X)$ making the quadratic risk $E_{\te}\| T-\te\|^2$ small), {\em signal detection} (where one wishes to test the presence of non-zero signal, i.e. $H_0:\, \te=0$ against $H_1:\, \te$ non-zero with some structure), {\em confidence sets} among others, see the survey paper \cite{bcg21} and references therein.\\

Here we consider the `multiple testing' and `classification' problems, to be defined just below. Roughly speaking the risks for these problems measure how well a procedure does in terms of number or proportion of errors, that is e.g. how many coordinates corresponding to zero signals are incorrectly labelled as `signal' (this is called `type I' error), and vice-versa how many coordinates corresponding to signals are labelled as `noise' (called the `type II' error). Although we focus on standard Gaussian noise for simplicity, the results below can be adapted much beyond that, the main condition being that the tail of the noise should be lighter than exponential (i.e. excluding Laplace noise for example). This chapter is based on a series of recent results on Bayesian multiple testing, mainly \cite{acr23} (see also \cite{cr20, acr22, acg22}).

\section{General principles: sequence model and decision theory}

A multiple testing or classification procedure is a $\vphi=(\vphi_i(X))_{1\le i\le n}\in\{0,1\}^n$ measurable with
\begin{center}
$\vphi_i=1\ $ if and only if the null hypothesis $H_{0i}$ is rejected.\\
\end{center}

{\em The classification problem.} The classification loss is defined as 
\begin{align*}
 L_C(\te,\vphi) & =  \sum_{i=1}^n \ind{\te_{i}=0,\vphi_i=1} 
+  \sum_{i=1}^n \ind{\te_{i}\neq 0,\vphi_i=0} \\
& =: \qquad \ \ \sbl{N_{FP}(\te,\vphi) \qquad+\qquad N_{FN}(\te,\vphi)},
\end{align*}
where $N_{FP}(\te,\vphi)$ (resp. $N_{FN}(\te,\vphi)$) is the number of false positives (resp. false negatives) of a procedure $\vphi$ at $\te$. The corresponding {\em classification risk} is
\[ R_C(\te,\vphi) = E_\te \sbl{N_{FP}(\te,\vphi)} + E_\te \sbl{N_{FN}(\te,\vphi)}.
\]

\noindent {\em The multiple testing problem.} Suppose one wants to simultaneously test 
\[ H_{0i}:\ \te_i=0\qquad \text{against}\qquad H_{1i}:\ \te_i\neq 0,\qquad \text{for all }
1\leq i \leq n. \]
The \sbl{False Discovery Rate (FDR)} of $\sbl{\vphi}$ at vector $\te$ is defined as
\[ \FDR(\te,\vphi) = E_{\te}\left[ \frac{\sum_{i=1}^n \ind{\te_{i}=0,\vphi_i=1}}{1\vee  \sum_{i=1}^n \ind{\vphi_i= 1}} \right] 
\sbl{=E_{\te}\left[\frac{\text{nb. false discoveries}}{\text{nb. discoveries}}\right]}\]
The \sbl{False Negative Rate (FNR)} of $\sbl{\vphi}$ at $\te$ is defined as
\[ 
\FNR(\te,\vphi) 
= E_{\te}\left[\frac{\sum_{i=1}^n \ind{\te_{i}\neq 0} (1-\vphi_i(X))}{1\vee \sum_{i=1}^n \ind{\te_{i}\neq 0}}\right] \sbl{=E_{\te}\left[\frac{\text{nb. false negatives}}{\text{nb. non--zeros}}\right]}
\]
The overall {\em multiple testing risk}, denoted by $\fr$, is 
\[ \fr(\te,\vphi) :=  \FDR(\te,\vphi)+  \FNR(\te,\vphi). \]

If, as we do below for simplicity, one restricts to signals in $\ell_0[s_n]$ that have exactly $s_n$ non-zero coordinates, the type II error for the classification risk is just $s_n$ times $E_\te \sbl{N_{FN}(\te,\vphi)}$. For the type I error, the comparison is less clear as, if the number of discoveries is of order e.g. $2s_n$, the ratio inside the expectation defining the FDR can differ from $N_{FP}(\te,\vphi)/s_n$ by a factor $2$. \\

{\em Goal(s).} From the multiple testing point of view, one would like to understand for which signals $\te$ the $\cR$ risk can be made either to vanish (i.e. that there exists $\vphi$ such that $\fr(\te,\vphi)$ goes to zero for any of these signals), or to be kept under a certain, typically small, level $t$, say e.g. $t=0.1$ or $t=0.05$. Similarly, for classification, one would like to make the corresponding $R_C$ risk be a $o(s_n)$, that is find classes of signals and procedures for which one missclassifies at most a small number (relative to $s_n$) of the $n$ coordinates of $\te$.

\subsubsection{Natural questions}

To make the goals a little more precise, we formulate some associated questions. \\

{\em Question 1 (under beta-min conditions).} Suppose the non-zero coordinates of $\te_0$ (the ``signals") are all above in absolute value above a certain threshold $M$. What is the `phase-transition' in terms of $M$ from impossibility to possibility of multiple testing or classification?\\

{\em Question 2 (arbitrary sparse signals).} In fact one would like to say more, that is, for arbitrary values of signals, what is the intrinsic difficulty of the multiple problem, in terms for instance of the best possible $\cR$--risk achievable? For instance, are the different signals on Figure \ref{fig:mtconfig} of similar difficulty? (the answer is no except for very special values)\\
\begin{figure}[h!] \label{fig:mtconfig}
\begin{center}
\includegraphics[scale=.7]{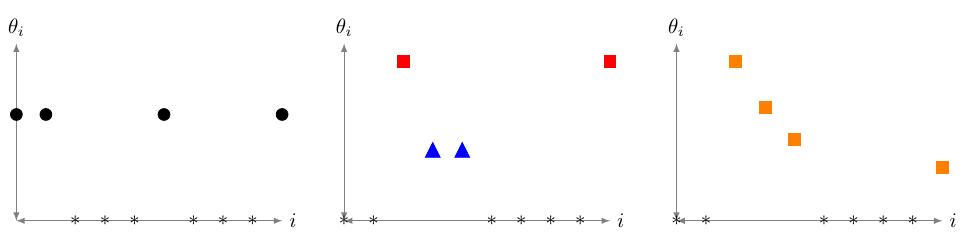}
\end{center}
\caption{Different signal configurations}
\end{figure}

{\em Question 3 (trade--off between type I and type II error?)} Since it is customary to allow for a small testing error, say $t$ (small), can one compensate errors of type I by errors of type II and vice-versa? \\

{\em Question 4 (large signals)} If all (or most) of the signals are very large, the $\fr$ risk will go to zero; what is the optimal rate in this case, and can it be achieved?

\subsubsection{A simple Bayesian procedure}

Putting a prior distribution $\Pi$ on $\te$, such as a spike--and--slab prior as we did in this context in Chapter \ref{chap:ada2}, one can form a posterior distribution $\Pi[\cdot\given X]$ in the sequence model $\cP=\{\otimes_{i=1}^n \cN(\te_i,1),\, \te_i\in\RR\}$. 

\begin{definition} Given a prior  $\Pi$ on $\theta$ and data $X$ from the model, define the \sbl{$\ell$-value} as
 \[ \ell_i(X) = \Pi[ \te_i = 0 \given X]. \]
Let $t\in(0,1)$ be  given. The \sbl{$\ell$--value procedure} at level $t$ is 
\[ \vphi^{\ell}_i = (\vphi^{\ell}_t)_i =\1\{\ell_i(X) \le t\}. \]
\end{definition}

This procedure is natural: it rejects the null if the posterior probability of being zero is small(-er than $t$). If $\Pi=\Pi_\al$ is the spike and slab prior that draws coordinates independently with law $(1-\al)\delta_0+\al \Ga$ for a weight $\al\in[0,1]$ and slab distribution $\Ga$, we have the explicit expression
\[ \ell_i(x) =  \frac{(1-\al)\phi(x)}{(1-\al)\phi(x) + \al g(x)}, \]
where $g$ is the convolution $g=\ga*\phi$, for $\ga$ density of $\Ga$.

\subsubsection{Basic decision--theoretic facts}

\begin{prop} \label{prop:bayes}
Let $\Pi$ be a fixed given prior on $\te$. 

The Bayesian classification risk over procedures $\vphi$
\[ R_{C,B}(\Pi,\vphi) := \int R_C(\te,\vphi)d\Pi(\te) \]
is minimal for the $\ell$--value procedure at level $1/2$.

Moreover, the $\ell$--value procedure $\vphi_t$ at level $t\in(0,1)$ satisfies
\begin{align*}
\FDR_B(\Pi,\vphi_t):= \int \FDR(\te,\vphi_t)d\Pi(\te)\le t. 
\end{align*}
That is, it controls the Bayesian FDR at level $t$
\end{prop} 
\begin{proof}
Let us note, for $E$ the expectation in the Bayesian model, using the chain rule on 
 \[ R_{C,B}(\Pi,\vphi) = EE[ L_C(\te,\vphi)\given \te] = E[ L_C(\te,\vphi)] =E E[L_C(\te,\vphi)\given X].   \]
Now $E[L_C(\te,\vphi)\given X]$ can be written as
\[ E[L_C(\te,\vphi)\given X] = 
 \sum_{i=1}^n \Big\{ \ell_i(X) \ind{\vphi_i=1} +  (1-\ell_i(X)) \ind{\vphi_i=0} \Big\}
 = C_X + \sum_{i=1}^n  (2\ell_i(X)-1) \ind{\vphi_i=1},
 \]
where $C_X$ is independent of $\vphi$.  The last display is minimal for $\vphi_i=\ind{2\ell_i(X)-1\le 0}$, that is for the $\ell$--value procedure at level $1/2$. 
 
On the other hand, using again the chain rule on conditional expectations, and writing $\vphi=\vphi^t$ 
\begin{align*}
\FDR_B(\Pi,\vphi) & 
= E \left[  E \bigg[ \, \frac{\sum_{i=1}^n \1_{\te_i= 0} \1_{\vphi_i(X)=1}}{1\vee \sum_{i=1}^n  \1_{\vphi_i(X)=1}}\,\given\, X\, \bigg] \right]
 =E_{X} \left[\frac{\sum_{i=1}^n \ell_i(X) \1_{\{\ell_i(X)\leq t\}}}{1\vee \sum_{i=1}^n  \1_{\{\ell_i(X)\leq t\}}}\right] \\
 & \leq t \:P(\exists i\::\: \ell_i(X)\leq t) \le t.
\end{align*}
\end{proof}

The above basic facts say that in an ideal setting where the prior were true,  $\vphi^\ell_{1/2}$ would be optimal from the point of view of the Bayesian classification risk. The latter depends on $\Pi$ and is an integrated risk. For the frequentist perspective, one would like to control the usual {\em pointwise} risk (say $R_C(\te,\vphi)$ in classification) at a given sparse $\te$. Somewhat surprisingly perhaps, the previous simple procedure gives an (asymptotically) optimal answer also in the frequentist sense for spike--and--slab priors, provided a proper choice of the weight $\al$ is made: here we consider a data--dependent choice, i.e. an empirical Bayes approach, which we already introduced in Chapter \ref{chap:ada1}.\\

{\em Remark.} It is also possible to build procedures that control the Bayesian FDR very close to $t$ \cite{acr22} (not only below $t$), such as the $q$ value--procedure  $q_i(X')=\Pi[\te_i=0\given |X|\ge |X'|]$ (in fact, the famous Benjamini--Hochberg multiple testing procedure is an empirical Bayes $q$--value procedure \cite{storey03}).

\subsubsection{A data-driven Bayesian procedure}

Let us recall the empirical Bayes spike and slab procedure: one simply choses the spike-and-slab posterior with $\al$ replaced by $\hat\al$ chosen by maximising the marginal likelihood.

 \bfr
\sbl{Bayesian multiple testing procedure $\vphi^\ell$ $\ $ \pgr{``EB $\ell$-value procedure"}} 
\begin{enumerate}
\item Spike and slab prior: $\al\in[0,1],\ \ \Ga$ law of density $\ga$ on $\RR$ 
    \begin{center}
 $ \Pi_\al \, \sim \, \displaystyle\bigotimes_{i=1}^n\, (1-\al)\delta_0 + \al \Ga$\\
  \end{center} 
\item `Estimate' $\al$ by $\hat{\al}$ the MMLE, where $g=\ga*\phi$,
 \[ \hat \al = \argmax{\al\in [1/n,1]}\ \,
 \prod_{i=1}^n \ (1-\al)\phi(X_i) + \al g(X_i)  \qquad \]
\item Define $\ell_i(X) = \Pi_{\hat \al}[ \te_i = 0 \given X]$ and set for fixed $t$ (e.g. $t=1/2$),
\[ \vphi^\ell_i(X) = \ind{\ell_i(X)\le t}. \]
\end{enumerate}
\efr
 
Empirical Bayes procedures  have been advocated for multiple testing among others by Bradley Efron \cite{efron07, efron08} (Efron even suggests taking a data--driven slab $\hat\Ga$, which we do not do here), without a theoretical analysis though. One can prove that the EB $\ell$--value procedure controls the (pointwise, i.e. frequentist) FDR at any sparse $\te_0$ (\cite{cr20}, just assuming polynomial sparsity $s_n=O(n^{1-\be})$ for some $\be\in(0,1)$).

\section{Sharp classification and multiple testing in sequence model}

In order to address Question 1 above, let us introduce the set, for a real $b$,
\[ \cL_0[s_n;b]= \bigg\{ \theta\in \ell_0[s_n] \::\: |\theta_{i}| \geq \: \sqrt{2\log \frac{n}{s_n}}  + b\ \text{ for all } i\in S_\te,\ |S_\te|=s_n \bigg\} \]
We wish to investigate optimality for the multiple testing risk $\mathfrak{R}(\te,\vphi) = \FDR(\te,\vphi) + \FNR(\te,\vphi)$. 
\vspace{.05cm}

\begin{thm}[minimax rate]  \label{thm:minmax}
Let $\pgr{b}\in[-\infty,+\infty]$. As $n, s_n,n/s_n\to\infty$,   
\vp
\[ \rho_{\pgr{b}} := \inf_{\vphi} \sup_{\te\in \cL_0[s_n;b]} 
 \mathfrak{R}(\te,\vphi)  = \bar\Phi(\pgr{b})+o(1). \]
\begin{itemize}
\item[(i)] if $\pgr{b}=b_n\to -\infty$, then $\rho_{\pgr{b}}\to 1$. Multiple testing is impossible for any procedure.
\item[(ii)] if $\pgr{b}=b\in\RR$ fixed, then $\rho_{\pgr{b}}\to \bar\Phi(\pgr{b})$. Multiple testing is partly possible.
\item[(iii)] if $\pgr{b}=b_n\to +\infty$, then $\rho_{\pgr{b}}\to 0$. Multiple testing is possible with $o(1)$ error.
\end{itemize}
The same result holds for the normalised classification risk $R_C(\te,\vphi)/s_n$.
\end{thm}
This result addresses Question 1 if the parameters $b,s_n$ are given (the `non-adaptive' case). To prove it, one shows first that the minimax risk is at most $\bar\Phi(\pgr{b})+o(1)$ by exhibiting a procedure that achieves this bound, namely the {\em oracle} procedure 
\[ \vphi_i^*=\ind{\,|X_i|>\sqrt{2\log(n/s_n)}\,}. \] 
Note that $\vphi^*$ depends on $s_n$, which in general is unknown to the statistician. 
The lower bound is based on bounding the minimax risk by a well-chosen Bayes risk, see Section \ref{sec:proofmt} for proofs. 

The following result, again for the $\ell$--value procedure $\vphi^\ell$, addresses adaptation: it shows that the EB $\ell$--value procedure above is asymptotically minimax adaptive, i.e. it achieves the bound without using knowledge of the class parameters $s_n,b$. 

\begin{thm}[Adaptation]
\begin{itemize}
\item The EB \sbl{$\ell$--value procedure}  achieves boundary \sbl{(iii)} i.e. for arbitrary $\pgr{b}\to +\infty$
\[  \sup_{\te\in \cL_0[s_n;b]} 
 \mathfrak{R}(\te,\vphi^\ell) = o(1). \]
\item The EB \sbl{$\ell$--value  procedure} is sharp minimax adaptive \sbl{(ii)}: for $t\in(0,1)$, $\pgr{b}\in\RR$,
\[ \sup_{\te\in \cL_0[s_n;b]} 
 \mathfrak{R}(\te,\vphi^\ell)  \to \bar\Phi(\pgr{b}). \]
\end{itemize}
\end{thm}

In earlier work, \cite{butucea18} using a Lespki--type method showed for classification that boundary \sbl{(iii)} can be achieved adaptively if $b\geqa \log\log^{1/2}(n/s_n)$. The next result covers both boundaries without condition on $b$. 

\begin{thm}
For any real $b$ (or $b=b_n\to+\infty$, with $\bar\Phi(b)+o(1)$ replaced by $o(1)$ below),
\[ \sup_{\te\in \cL_0[s_n;b]} E_\te\left[L_C(\te,\vphi^{\ell})/s_n\right] \le \bar\Phi(b)+o(1). \]
So the $\ell$-value procedure $\vphi^\ell$ achieves boundaries \sbl{(ii)} and \sbl{(iii)} sharply.
\end{thm}

We now turn to Question 3 and determine whether there can be a trade--off between the two types of risks  in terms of the optimal minimax constant.

\begin{definition} \label{def:spap}
A procedure $\vphi=\vphi(X)\in\{0,1\}^n$ is said to be \pgr{sparsity-preserving} up to a multiplicative factor $A_n$ if
\[ \sup_{\te\in \cL_0[s_n;b]} P_{\te}\left[\sum_{i=1}^n \vphi_i(X) > A_n s_n \right]=o(1)\]
\end{definition}

Many procedures can be shown to be sparsity-preserving for $A_n\uparrow\infty$ slowly or even $A_n=A$ fixed, including the $\vphi^\ell$ value procedure for any fixed $t$, but also most estimation procedures encountered in the literature and that produce sparse estimators (e.g. the LASSO with well--chosen parameter).

\begin{thm}[No trade--off]  \label{thm:notrade}
Let $\cC_{A_n}[s_n]$ denote the class of {\em sparsity preserving} $\vphi$'s up to a multiplicative factor $A_n$. Suppose $(\log A_n)\le (\log{n/s_n})^{1/4}$. Then for any real $b$, as $n\to\infty$, 
\[ \inf_{\vphi\in \cC_{A_n}[s_n]} \sup_{\te\in \cL_0[s_n;b]} \FNR(\te,\vphi) =   \bar\Phi(\pgr{b})+o(1). \]
\end{thm}

The interpretation of Theorem \ref{thm:notrade} is that for most procedures, one cannot trade--off a part of  type I error for less type II error: any sparsity preserving procedure as above must have its type II error rate (FNR) at least equal to $\bar\Phi(\pgr{b})$ over the class $\cL_0[s_n;b]$. 

As an important consequence, any sparsity-preserving procedure that controls the FDR at level say $\al'$ close to $\al$ (this is the case for the popular BH--procedure at fixed level $\al$, which has an FDR equal to $c_n\al$ for $c_n\in(0,1)$ bounded away from $0$) must have an $\cR$-risk of at least $\al'+\bar\Phi(\pgr{b})>\bar\Phi(\pgr{b})$ over the class $\cL_0[s_n;b]$, and hence be suboptimal in terms of the $\cR$-risk. Similar results also hold for the classification loss, we omit the detailed statement. \\

While we have now answered Questions 1 and 3 above, a somewhat unsatisfactory aspect of the formulation of the results for now is that they hold only under a quite strong condition on signals, i.e. all signals must be above 
$\sqrt{2\log(n/s_n)}+b$. Recalling the three types of signals from Figure \ref{fig:mtconfig},  intuitively results above should be sharp only for the signal on the left, and there should be room for improvement for the middle and right signals. We state now a result for {\em arbitrary} signals.\\

For the formulation of the result, we need a more precise class. Define, for $\bb=(b_1,\ldots, b_{s_n})$ a vector of real numbers such that $\sqrt{2 \log(n/s_n)} +b_j>0$ for all $j$, the class
\begin{align*} 
 \Theta_{\bb}  =
\Big\{ \theta\in \ell_0[s_n] \::\: 
 \exists\,  & 
 i_1, \ldots,  i_{s_n} \mbox{ all distinct, } \
 |\theta_{i_j}| \ge \sqrt{2 \log(n/s_n)} +b_j>0
 \Big\}.
\end{align*}
Note that the union of all possible classes $ \Theta_{\bb} $ when $\bb$ varies gives the set of sparse vectors with exactly $s_n$ non-zero coordinates. Further denote
\begin{equation} \label{lamdab}
\Lambda_n (\bb)=s_n^{-1} \sum_{j =1 }^{s_n} \bar\Phi\left(b_j \right).
\end{equation}

\begin{thm}[Sharp minimaxity and adaptation for arbitrary signals] \label{thm:mtarbitrary}
For any possibly $n$--dependent vector $\bb$, and $\Theta_{\bb}, \La_n(\bb)$ as above, as $n\to\infty$,
\[  \inf_\vphi \sup_{\te\in\Theta_{\bb}}  \mathfrak{R} (\theta,\vphi)= \La_n(\bb) + o(1). \]
This bound is achieved by the EB $\ell$--value procedure (under polynomial sparsity if $\limsup_{n}\La_n(\bb)=1$).
\end{thm}

\begin{figure}[h!]
\begin{center}
\begin{tabular}{cc}
\hspace{-5mm}\includegraphics[scale=0.65]{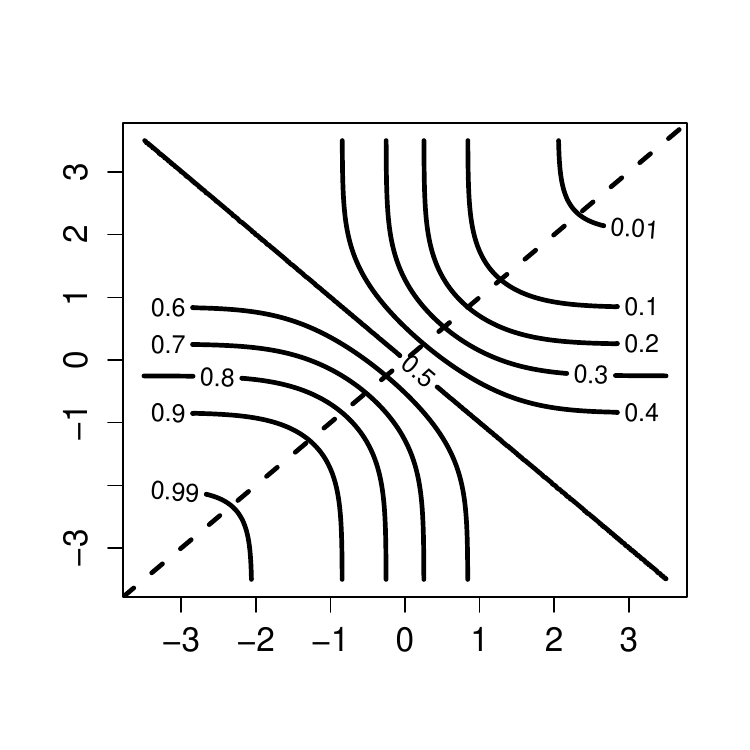}&\hspace{-1cm}\includegraphics[scale=0.65]{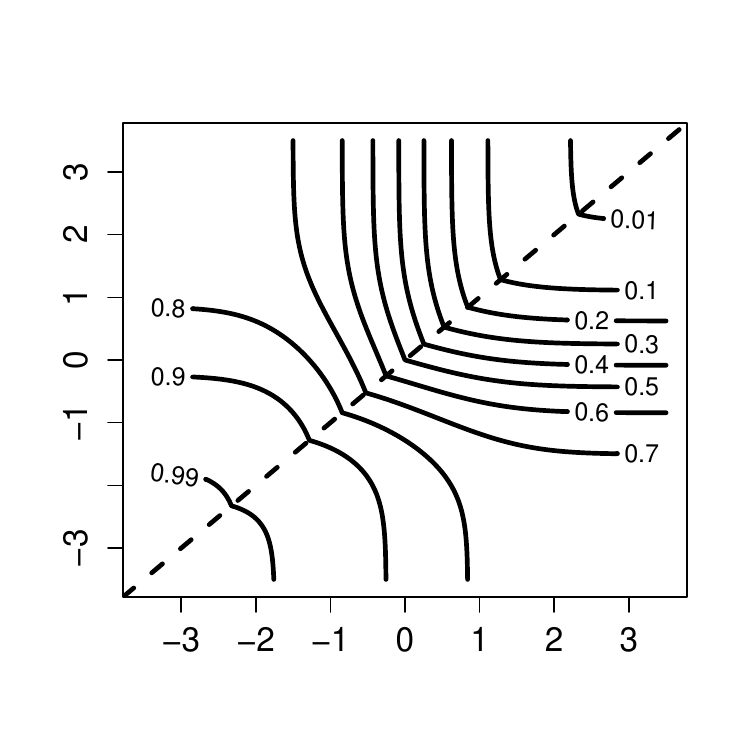}
\vspace{-1cm}
\end{tabular}
\caption{Level sets of $\Lambda_\infty$ in example of two signal strengths, with $\lfloor s_n q\rfloor$ of the $\te_i$'s equal to $\sqrt{2\log(n/s_n)}+\max(x,y)$ and others equal to $\sqrt{2\log(n/s_n)}+\min(x,y)$. Left: $q=1/2$; right: $q=1/4$. \label{fig:generalboundary}} 
\end{center}
\end{figure}

\begin{figure}[h!]
\begin{center} 
\hspace{-10mm}
\includegraphics[scale=0.45]{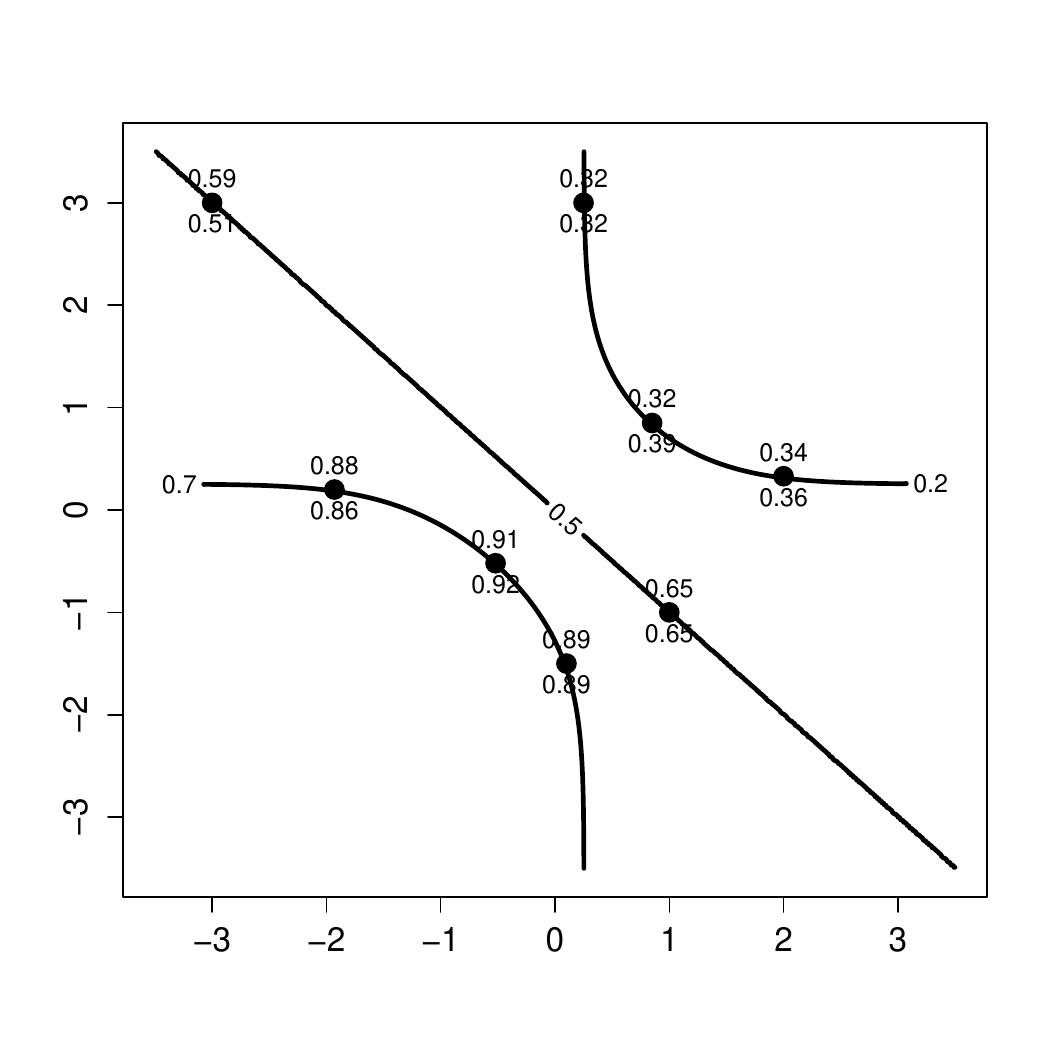}
\hspace{-10mm}
\vspace{-1cm}
\includegraphics[scale=0.45]{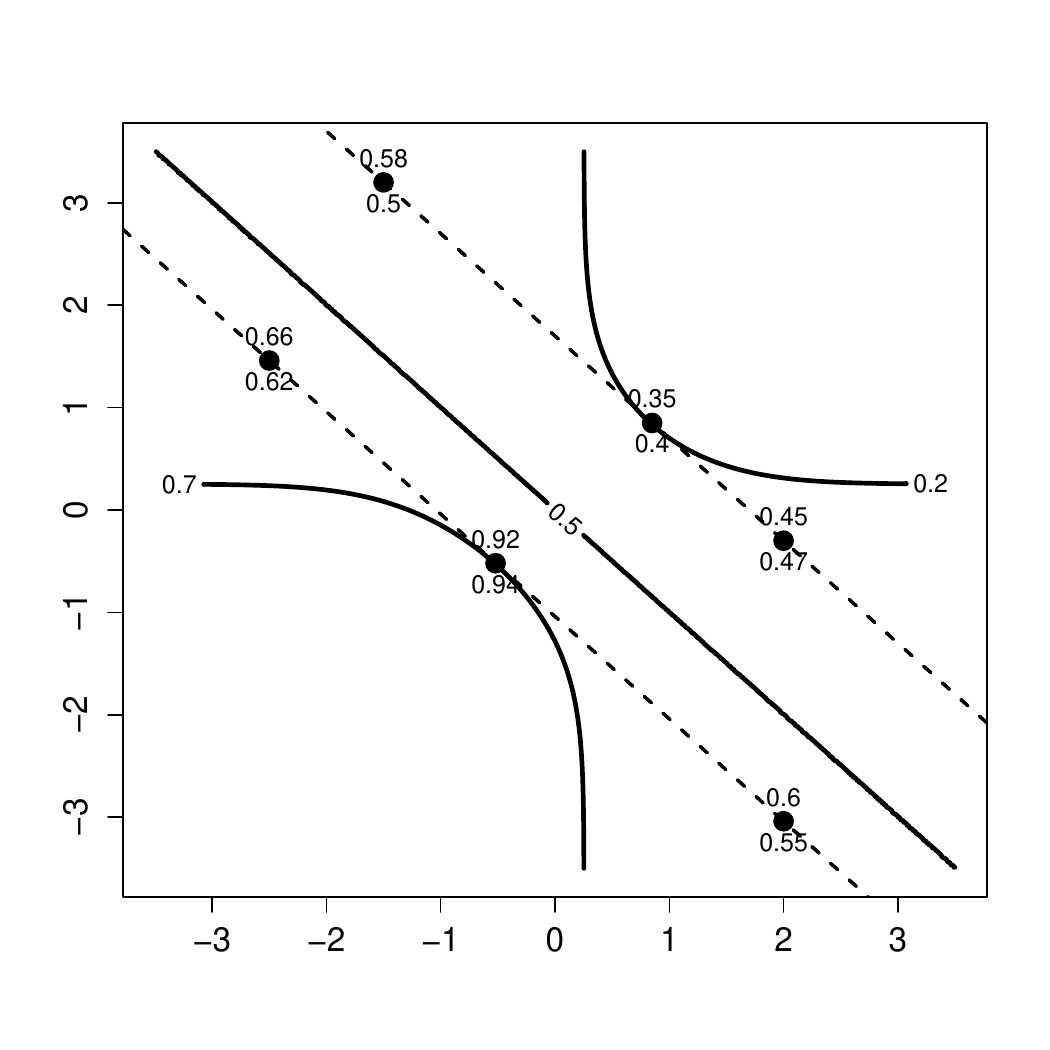}
\caption{Same as Figure~\ref{fig:generalboundary} (left, case $q=1/2$) with finite sample risk ($n=10^6$, $s_n=20$) at some particular configurations (displayed by black dots). The risk is computed via $100$ Monte-Carlo simulations. Two procedures are implemented:  the EB $\ell$-value procedure ($t=0.3$, risk displayed below each dot) and the BH procedure ($\alpha=0.1$, risk displayed above each dot). Left: dots are located on the $\Lambda_\infty$-level sets of values in $\{0.7,0.5,0.2\}$ of the boundary function. Right: dots are located on lines such that the average of  $x$ and $y$ is kept constant (not $\Lambda_\infty$-level sets). \label{fig:generalboundarywithpoints}}
\end{center}
\end{figure}

\noi Theorem \ref{thm:mtarbitrary} provides an answer to Question 2. We give three applications to specific signals.\\
 
\noindent \sbl{Example [Single signal strength]} Suppose $\bb=(b,\dots,b)$ then 
$\Theta_{\bb}=\cL_0[s_n;b]$ and  
\[ \Lambda_n(\bb)= \overline{\Phi}(b). \]

\noindent \sbl{Example [Two signals]}	
Let $x,y\in \RR^2$. 
 Define $\bb\in \RR^{s_n}$ by,  for   $q\in (0,1)$,
 \[ b_j=
 \begin{cases}
 M=x \vee y, & \quad 1\leq j\leq \lfloor s_n  q\rfloor, \\
 m= x \wedge y, & \quad \lfloor s_n q\rfloor < j\leq s_n. 
 \end{cases}
 \]
 Then as $n\to\infty$ we have
\begin{align*}
{\Lambda}_n(\bb) & = \frac{1}{s_n} \sum_{j=1}^{s_n} \overline{F}(b_j)
 = q \overline{\Phi}(M) + (1-q) \overline{\Phi}(m) + o(1).
\end{align*} 
Denoting $\La_\infty(x,y) = q \overline{\Phi}(x \vee y)+(1-q) \overline{\Phi}(x \wedge y)$, level lines of $\La_\infty$ are displayed in Figure \ref{fig:generalboundary}. \\

\noindent \sbl{Example [mixed signals]}
Suppose the nonzero entries of $\theta$ are given by $(Aj/s_n)\sqrt{2\log(n/s_n)}$ for $j=1,\ldots,s_n$ and $A\ge 1$ fixed. This is a special example of the third case in  Figure \ref{fig:mtconfig}. Then 
\[ \La_n(\theta) = A^{-1} + o(1). \]  
(to check this, one can e.g. separate signal coordinates $i$'s in three subsets delimited by $(s_n/A)(1\pm r_n)$ for $r_n=o(1)$ suitably slowly). In this setting, what contributes to $\La_n(\te)$  is the proportion of signals below the optimal threshold $a_n^*=\sqrt{2\log(n/s_n)}$.\\

As a consequence of Theorem \ref{thm:mtarbitrary}, one obtains a {\em testability} condition for an arbitrary sparse vector $\theta$ with $s_n$ non-zero coordinates. Namely, there exists a multiple testing procedure that has a $\fr$--risk of at most $\alpha$ asymptotically if and only if
\[ \varlimsup_{n} \ \frac1{s_n} \sum_{j\in S_\theta} \overline{\Phi}\left(|\theta_j|-\sqrt{2\log(n/s_n)}\right) \le \alpha \]

\vp 
 
The previous theory extends to other noise distributions, for instance to Subbotin noise -- density proportional to $\exp(-|x|^\zeta/\zeta)$--, as well as other forms of the model (e.g. scale instead of location). The following is a set of possible assumptions (see \cite{acr23}). The noise density is denoted by $f$, we set $f_a(\cdot)=f(\cdot-a)$ and $F, F_a$ are the corresponding distribution functions, with $\overline{F}(x)=1-F(x)$. 

\vp

{\em Other noise assumptions.} Consider a noise density $f$ that verifies $f_{-a}(-x)=f_a(x)$ for $a,x\in\RR$. Suppose also that there exists a constant $L$ such that  $F$ is $L$-Lipschitz and that $f_{a}(x)/f_0(x)$  is increasing in $x\in\RR$ for any $a>0$. Finally, suppose there exist sequences of positive numbers $a_n^*\to \infty$, $\delta_n\to 0$ such that
	\begin{align} \label{eqn:modelassumption2}
		(n/s_n)\overline{F}_0\brackets[\big]{a_n^*-\delta_n} \to \infty, \\
		\label{eqn:modelassumption3}
		(n/s_n)\overline{F}_0\brackets[\big]{a_n^*} \to 0.
	\end{align}
The standard Gaussian density is a special case, with $(\phi_a/\phi)(x)=e^{ax-a^2/2}$ and for which one can take, for instance, $a_n^*=\sqrt{2\log(n/s_n)}$ and $\delta_n=(\log(n/s_n))^{-1/4}$. More generally, for a Subbotin density with $\zeta>1$, one may take  $a_n^*= (\zeta \log(n/s_n))^{1/\zeta}$ and $\delta_n = (\log(n/s_n))^{-\kappa}$ for $\kappa\in (0,1-1/\zeta)$.

\section{Adaptation to large signals}

In this section we consider a specific situation when there is a lot of signal:  all non-zero signals are above a certain large value and the multiple testing risk $\fr$ goes to $0$. We then investigate the optimal rates of decrease of the $\fr$ risk. We then only state one further result illustrating that this case is perhaps harder than it looks at first in that there are no adaptive estimators in this setting.

We focus for simplicity on the case of polynomial sparsity: for some $\be\in(0,1)$, and $0<a<b$,
\begin{equation} \label{polspa}
an^{1-\be} \le s_n \le b n^{1-\be},
\end{equation} 
with $a,b$ fixed throughout (they can be known or unknown).
Define, some $r>0$, a number quantifying the  signal strength
\[  M(r)=\sqrt{2r\log{n}}. \]         
Let us define a class of large signals with parameters $r>\be$ and $\be\in(0,1)$,
\begin{align*}
  \Theta(r,\be) &= \bigcup_{s_n\in[an^{1-\be},bn^{1-\be}]}\left\{ \te:\ |S_\te|=s_n,\ \ \ |\te_i|\ge M(r)\ \ \forall\, i\in S_\te \right\}.             
\end{align*} 
The condition $r>\be$ corresponds to the fact that we consider here large signals above the `boundary' regime i.e. signals around level $\sqrt{2\log{n/s_n}}=\sqrt{2\be\log{n}}=M(\be)$. As an immediate consequence of the results in the previous section, the $\cR$ risk goes to $0$ uniformly over $\Theta(r,\be)$. Our goal here is to find the best (in the minimax sense) of decrease to $0$.

Define for $r>\be$
\begin{align}
\sqrt{\kappa(r,\be)} & =(\sqrt{r}-\be/\sqrt{r})/2,  \label{kapparb}\\
\ta(r,\be) & = \left(\sqrt{r}-\sqrt{\kappa(r,\be)}\right)\sqrt{2\log{n}}. \label{taurb}
\end{align}

\begin{thm}[minimax rate, large signals]  
For $\be\in (0,1)$ and $\be<r$, for $\kappa(r,\be)$ as in \eqref{kapparb},
\[ \inf_\vphi \sup_{\te\in\Theta(r,\be) }  \mathfrak{R} (\theta,\vphi) \asymp n^{-\kappa(r,\be)}\] 
This rate is achieved for the thresholding procedure with oracle threshold $\ta(r,\be)$ as in \eqref{taurb}.
\end{thm}

Recall $L_C(\te,\vphi)=N_{FP}(\te,\vphi)+N_{FN}(\te,\vphi)$ and $R_C(\te,\vphi)=E_\te L_C(\te,\vphi)$ is the classification risk. \\

\begin{thm}[minimax rate for classification, large signals]   
Again for $\kappa(r,\be)$ as in \eqref{kapparb},
\[ \inf_\vphi \sup_{\te\in\Theta(r,\be) }  R_C (\theta,\vphi) \asymp s_n n^{-\kappa(r,\be)} \asymp n^{1-\be-\kappa(r,\be)}. \] 
This rate is achieved for the thresholding procedure with oracle threshold $\ta(r,\be)$.
\end{thm}

There are two regimes 
\bi
\item $1-\be-\kappa>0$: an overwhelming portion of signals can be recovered\\
but not every signal: this is called the ``almost sure recovery" regime, 
\item $1-\be-\kappa<0$: the $R_C$ risk goes to zero; it can be shown that support can be recovered with $1-o(1)$ probability, called the ``exact recovery" regime. 
\ei

A natural question is then to find a  procedure $\vphi$ that achieves the rate $n^{-\ka(\be,r)}$ for the $\cR$ risk in an adaptive way with respect to $\be, r$.

\begin{thm} \label{mt:impos}
Adaptation is impossible over the whole range of parameters $r>\be$ and $\be\in(0,1)$: there is a polynomial in $n$ price to pay for not knowing $s_n,r$.
\end{thm}

The intuition behind Theorem \ref{mt:impos} is that although in principle an `easy' problem from the point of view of rates (i.e. the risk goes to $0$), it can be very hard for adaptation since only one error can ruin the rate, especially in the exact recovery regime, where all of the support of $\te$ can be recovered with high probability. We show in \cite{acr23} that adaptation is possible: either if one assumes that $1-\be-\kappa>0$, or for the partial adaptation question where either $s_n$ or $r$ is known.

\section{Bayesian multiple testing in structured settings}

Although conceptually a very simple and computationally attractive method (provided one can compute or at least approximate posterior inclusion probabilities of variables, i.e. the $\ell$--values), aside from the previous results, there is so far very little on frequentist justification of its use for (multiple) testing. It would be desirable to have a theory of (possibly multiple) testing paralleling the one presented earlier here for estimation. \\

Let us now briefly discuss a slightly different setting where the signal values under the alternative $H_1$ are drawn randomly according to a certain process (whose parameters are unknown). We say that they are `structured'. An example is as follows. 

Consider a nonparametric Hidden Markov Model (HMM) setting where one observes (only) $X=(X_1,\ldots,X_N)$ with
\begin{align*} 
\te=(\te_1,\ldots,\te_N) \:\: & \sim \: \text{Markov}_{K=2}(\pi,Q) \qquad \pgr{\te_i\in\{0,1\} \text{ latent state}}
\\
X_i \given \te_i \: \  \ &{\sim}\  \: f_{\te_i} \ \ \text{indep.} \qquad \qquad \quad \text{\pgr{emission densities}}
\end{align*} 
where $f_0, f_1$ are {\em emission} densities with respect to Lebesgue measure on $\RR$, and $\text{Markov}_{K=2}(\pi,Q)$ is the distribution (in the stationary regime say) of a Markov chain with $2$ states $0$ and $1$, initial distribution $\pi$ and $2*2$ transition matrix $Q$. One practical example: two people speak and an observer registers their voices; you would like to know who is speaking at each time-point between $1$ and $n$, which is a multiple testing problem.

This model can be interpreted as a Bayesian model where the latent variables get a prior distribution verifying the Markov property. Since the parameters, here $H=(\pi, Q, f_0, f_1)$, are unknown, one may estimate using estimates $\hat{H}=(\hat\pi, \hat{Q}, \hat{f}_0, \hat{f}_1)$ and then base testing on $\ell$--values 
\[ \ell_{\hat{H}}(X)=\Pi_{\hat H}[\te_i=0\given X], \]
where $\Pi_H$ is the probability distribution in the Bayesian model, for a set of parameters given by $H$. One can show that the cumulative $\ell$--values procedure has a number of optimality properties \cite{acg22}. Similar results have been obtained in other settings, e.g. \cite{sc09} for earlier results in the HMM parametric setting and  \cite{rrv22} on pairwise comparison with an underlying (unseen) stochastic block model structure.

\section{Proofs}  \label{sec:proofmt}

We give here the proof of Theorem \ref{thm:minmax} and refer to \cite{acr23} for a proof of the other results. A main difficulty here will be the lower bound result. A standard observation to start with is that the minimax risk is bounded from below by the Bayesian risk: for a given parameter set $\Theta$ and $\pi$ a prior on $\Theta$,
\begin{equation} \label{lbrisk}
 \inf_\vphi \sup_{\te\in\Theta} \fr(\te,\vphi) \ge \int_\Theta  \fr(\te,\vphi) d\pi(\te). 
\end{equation} 
To make the lower bound as large as possible, one tries to find an appropriate ``least-favorable" $\pi$.
 Below we denote by $P_\pi$ (resp. $E_\pi$) the probability (resp. expectation) under the Bayesian model $X\given \te\sim P_\te$ and $\te\sim\pi$. \\

\subsubsection*{Proof of Theorem  \ref{thm:minmax}}

\begin{proof}[Proof of Theorem \ref{thm:minmax}, upper bound]

We first start by proving that the minimax rate does not exceed $\bar\Phi(b)$ in each case.\\

Since the trivial test $\vphi=0$ has $\fr$--risk equal to $1$, the upper-bound for the case $b=b_n\to -\infty$ is immediate. 

Define, for $a_n^*=\sqrt{2\log(n/s_n)}$, the procedure
\[ \vphi_i(X) = \ind{|X_i|>a_n^*}.\]
Write $V=N_{FP}$ and $S=N_{TP}$ for the number of false discoveries and true discoveries, respectively, made by $\vphi$. Then $V$ is binomial with parameters $n-|S_\te|$ and $2\bar\Phi(a_n^*)= (s_n/n)\cdot o(1)$.

By Markov's inequality, for any $a_n$ the number of false discoveries satisfies
\[ P_\te (V>a_n s_n) \leq  E_\te[V]/(a_ns_n) \le 2n\bar\Phi(a_n^*)/(a_n s_n).\] This latter expression tends to zero  if $a_n$ tends to zero slowly enough, yielding that $V=o_P(s_n)$.

Similarly, using that $\Var_\te(S)\leq  E_\te[S]\leq s_n$ and applying Chebyshev's inequality, the number of true discoveries satisfies
\[ P_\te[|S-E_\te S|\ge s_n^{3/4} ]\le s_n^{-1/2}. \] 
Put together the last two display lead us to define $\mathcal{A}$, an event of probability tending to one on which $V$ and $S$ are suitably bounded.

Denoting $\phi_a(\cdot)=\phi(\cdot-a)$ (and similarly  $\bar\Phi_a(\cdot)=\int_\cdot^{+\infty} \phi_a$), we have by symmetry 
\[ P_{\theta_i=a}(\abs{X_i}>a_n^*) = \overline{\Phi}_a(a_n^*)+\Phi_a(-a_n^*)=\overline{\Phi}_a(a_n^*)+\overline{\Phi}_{-a}(a_n^*).\] 
Noting that $\overline{\Phi}_{-\abs{a}}(a_n^*)\leq \overline{\Phi}_0(a_n^*)\to 0$, 
so is a $o(1)$ uniformly in $a$, we thus calculate
\begin{align*}
	E_\theta [S] &= \sum_{i\in S_\theta} P_{\theta_i}(\abs{X_i}>a_n^*) = \sum_{i\in S_\theta} (\overline{\Phi}_{\abs{\theta_i}}(a_n^*) + o(1)) \\ 
	&\ge  s_n (\overline{\Phi}_{a_n^*+b}(a_n^*) + o(1)) = s_n [1-\overline{\Phi}(b) +o(1)].\
\end{align*}
The combined risk of $\vphi$ is then, for fixed $b$,
\begin{align*} \mathfrak{R}(\theta,\vphi) &= E_\theta \frac{V}{V + S} +E_\theta \frac{s_n-S}{s_n} \\ &\leq  P(\mathcal{A}^c) + \frac{ o(s_n)}{o(s_n)+ s_n(1-\overline{\Phi}(b)+o(1))-s_n^{3/4}} + \frac{s_n(\overline{\Phi}(b)+o(1))}{s_n} 
\end{align*} 
and the last display is $\overline{\Phi}(b) + o(1)$ as desired (and for $b=b_n\to\pli$, the last term is $o(1)$).
\end{proof}

\begin{proof}[Proof of Theorem \ref{thm:minmax}, lower bound] 
We apply the general lower bound of Theorem~\ref{thm:bayesLB}: for any  $\rho\in(0,1), \eta>0$, and any prior that puts mass $1$ on $\cL_0[s_n;b]$, if
\begin{align*}
M_\rho & = \sum_{i=1}^n P_\pi\left[\, \theta_i\neq 0\,,\, \ell_i(X)> \rho/(1+\rho)\,\right], \\
\la        & =  (1-\eta) M_\rho /s_n,
\end{align*}
where $\ell_i(X)=P_\pi(\te_i=0 \given X) = \pi(\te_i =0 \given X)$ for $1\leq i\leq n$ are the $\ell$--values, we have
\[	 \inf_\vphi \sup_{\te\in\cL_0[s_n;b]}  \mathfrak{R} (\theta,\vphi) \geq   \left(\lambda \wedge \frac{\rho \lambda}{1+\rho \lambda}\right)  (1-  e^{- c\eta^2 M_\rho}), \]
	for some universal constant $c>0$.	
We  apply this with $\rho=\rho_n$ and $\eta=\eta_n$ certain positive sequences converging slowly to infinity and $0$ respectively to be specified later on.

Let us define $\pi$ as a product prior over $s_n$ blocks of consecutive coordinates 
\[ Q_1=\{1,2,\ldots, q\}\,,\, Q_2=\{q+1,\ldots,2q\}\,,\,\ldots, Q_{s_n}=\{(s_n-1)q+1,\ldots,n'\},\]
 where $q= \floor{n/s_n}$ and $n'=qs_n$. We write $Q_\infty$ for the (possibly empty) set $\{n'+1,\dots,n\}$. 
Over each block $Q_j$, $1\leq j\leq s_n$, define $\pi$ as follows:  
first draw an integer $I_j$ from the  uniform distribution $\cU(Q_j)$ over the block $Q_j$ and next for each $i\in Q_j$ set $\theta_{i}=a_n^*+b$ if $i=I_j$ and $\theta_{i}=0$ otherwise.  
For $i\in Q_\infty$, set $\theta_{i}=0$. Let us also denote $a:=a_n:=a_n^*+b$ for short. By construction $\pi(\cL_0[s_n;b])=1$. 

For all $1\leq j\leq s_n$ and $i\in Q_j$, denoting $h(x,a):=(\phi_a/\phi)(x)$, noting that by the prior's definition $\te_i=0$ means $i\neq I_j$, the $\ell$--value can be written, using Bayes' formula,
\begin{align*}
	\ell_i(X)&=P_\pi(i\neq I_j\:|\: X)=1- w_i^j(X) \\
	w_i^j(X)&= \frac{(\phi_a/\phi)(X_i)}{\sum_{k\in Q_j} (\phi_a/\phi)(X_k)}= \frac{h(X_i,a)}{\sum_{k\in Q_j} h(X_k,a)},
\end{align*} 
In addition,
\begin{align}
	M_\rho&=  \sum_{i=1}^n P_\pi[ \theta_i\neq 0,\ell_i(X)> \rho/(1+\rho)]=\sum_{j=1}^{s_n} \sum_{i\in Q_j} P_\pi[ i=I_j\,,\,\ell_i(X)> \rho/(1+\rho)]\nonumber\\
	&=\sum_{j=1}^{s_n} \sum_{i\in Q_j} P_\pi\left[ i=I_j\,,\, (\rho+1) h(X_i,a) < \sum_{k\in Q_j} h(X_k,a) \right]\nonumber\\
	&\geq \sum_{j=1}^{s_n} \sum_{i\in Q_j} P_\pi\left[ i=I_j\, ,\, \#\{ k\in Q_j\backslash\{i\}\::\:  h(X_k,a)> h(X_i,a)  \}\geq \rho\right]\nonumber\\
	&=  s_n  P_{X_1\sim \phi_{a}}\left[ \#\{ k\in Q_1\backslash\{1\}\::  h(\varepsilon_k,a)> h(X_1,a)  \}\geq \rho\right],\label{equMrho}
	\end{align}
where we have used symmetry to see that all the probabilities on the third line are the same (hence one may take $i=j=1$ on the last line), and where  the $\varepsilon_k$'s are iid standard normal.     

Since $x\to(\phi_a/\phi)(x)$ is increasing for any $a>0$, 
we see that $\varepsilon_k> X_1$ implies $h(\varepsilon_k,a)> h(X_1,a)$, so that the last display can be further lower bounded by 
\begin{equation*}
	s_n  P_{X_1\sim \phi_{a}}\left[ \#\{ k\in Q_1\backslash\{1\}\ :  \varepsilon_k> X_1 \}\geq \rho\right]. 
	\end{equation*}
By Lemma~\ref{lem:ControlOfpn}, provided $\rho$ verifies \eqref{condrho}, we have
\[P_{X_1\sim \phi_a}\left[ \#\{ k\in Q_1\backslash\{1\}\ : \varepsilon_k\geq X_1 \}> \rho\right]=\Phi_a(a_n^*)+o(1)\] so that continuing the inequalities we have, since $a=a_n^*+b$ and $\Phi(-x)=\overline\Phi(x)$ for all $x$,
 \begin{align}
	M_\rho/s_n
	&\geq \Phi_a \left(a_n^* \right) +o(1)= \overline\Phi(b) +o(1) ,\label{equMrhoboundedA}
\end{align}
 This gives the lower bound 
\begin{align*}
\inf_\vphi \sup_{\theta\in  \cL_0[s_n;b]} \fr(\te,\vphi) \geq &  \left([ \overline\Phi(b) +o(1)](1-\eta) \wedge \frac{\rho [ \overline\Phi(b) +o(1)](1-\eta)}{1+\rho [ \overline\Phi(b) +o(1)](1-\eta)}\right) \\&\times (1-  e^{- c\eta^2 [ \overline\Phi(b) +o(1)](1-\eta)s_n}) .
\end{align*}
Now using that for all $x\in [0,1]$, $A,y>0$, we have 
$$[x\wedge (y x/(1+y x))](1-e^{-A x})\geq x+0\wedge(1-1/(1+yx)-x) -xe^{-Ax}\geq x- 1/y - 1/(Ae),$$
we deduce
\begin{align*}
\inf_\vphi \sup_{\theta\in  \cL_0[s_n;b]} \fr(\te,\vphi) \geq & [ \overline\Phi(b) +o(1)](1-\eta) - \rho^{-1}-1/(c\eta^2 s_n e).
\end{align*}
Choosing $\rho=\lfloor(n/(3s_n))\overline{\Phi}(a_n^*-\delta_n)\rfloor\to \infty$ and $\eta=s_n^{-1/4}$, one gets
$$
\liminf_n \inf_\vphi \sup_{\theta\in  \cL_0[s_n;b]} \fr(\te,\vphi)\geq  \overline\Phi(b)+o(1),
$$
which proves the lower bound.
\end{proof}

\subsubsection*{Generic lower bounds}

We present in this section general lower bounds that can be applied in any model where we observe $X\sim P_\theta$, $\theta\in \Theta$, for any parameter set $\Theta\subset \R^n$.

\begin{thm} \label{thm:bayesLB} For $\pi$ prior on $\R^n$, 
	let $ \ell_i(X)=P_\pi(\theta_i=0\:|\: X)$, $1\leq i\leq n$. For $\eta\in (0,1), \rho>0$, let 
\begin{align*}
M_\rho & = \sum_{i=1}^n P_\pi\left[\, \theta_i\neq 0\,,\, \ell_i(X)> \rho/(1+\rho)\,\right], \\
\la        & =  (1-\eta) M_\rho /s_n, \qquad (s_n\ge 1).
\end{align*}
Suppose $\pi$ is a prior on $\cL_0[s_n;b]$, i.e. $\pi(\cL_0[s_n;b])=1$. Then,   for a universal constant $c>0$,
\[ 	\inf_\vphi \sup_{\te\in \cL_0[s_n;b]}  \mathfrak{R} (\theta,\vphi)  \geq   \left(\lambda \wedge \frac{\rho \lambda}{1+\rho \lambda}\right)  (1-  e^{- c\eta^2 M_\rho}). \]
	\end{thm}
		
		The lower bound (i) says roughly that the Bayes risk for $ \mathfrak{R}$ is lower bounded by the type-two error of the Bayes procedure for the $\rho$-weighted classification risk problem. 	\\
			\begin{proof}
	For all $\theta\in \R^n$ and $\vphi$, let us write $D_n(X) = \sum_{i\leq n} \vphi_i(X)$ and let
	$$
	Q(\theta,\vphi,X)= \sum_{i=1}^n \left\{ \ind{\te_i=0} \frac{\vphi_i(X)}{1\vee D_n(X)}
	+ \ind{\te_i\neq 0} \frac{1-\vphi_i(X)}{\norm{\theta}_0 \vee 1} \right\},
	$$
	so that $ \mathfrak{R} (\theta,\vphi) =E_\theta Q(\theta,\vphi,X)$. 
	Denote  
	$$
	\fr_\pi:=E_\pi \mathfrak{R} (\theta,\vphi).  
		$$
By \eqref{lbrisk}, it suffices to bound $\fr_\pi$ from below. Let us introduce a weighted classification loss
	\begin{equation}\label{equLrho}
	L_\rho(\te,\varphi)= \sum_{i=1}^n \left\{ \ind{\te_i=0} \vphi_i(X)+ \rho \ind{\te_i\neq 0} (1-\vphi_i(X)) \right\}.
	\end{equation}
For any $\delta_n>0$, if $D_n(X)\leq s_n(1+\delta_n)$ and $\norm{\theta}_0\leq s_n$, we have that $Q(\theta,\vphi,X)$ is at least 
	\[
	\left(\frac{1}{1+\delta_n}\wedge \rho^{-1} \right)	\frac{L_\rho(\te,\varphi)}{s_n}.
	\]
	If $D_n(X)\geq s_n(1+\delta_n)$ (hence $D_n(X)\geq 1$ for $s_n\ge 1$) and $\norm{\theta}_0\leq s_n$, we have 
	\[ Q(\theta,\vphi,X) 
\geq  \frac{D_n(X)-\norm{\theta}_0}{ D_n(X)}\geq  \frac{D_n(X)-s_n}{   D_n(X)}\geq  \frac{\delta_n}{1+\delta_n}. \]
	Hence, since $\pi(\|\te\|_0\le s_n)=1$, we have for all $\delta_n>0$,
	\begin{align*} 
	&E_\pi[Q(\theta,\vphi,X)] = E_\pi[E_\pi[Q(\theta,\vphi,X)\:|\:X]] \\
	&\ge  E_\pi\left(\ind{D_n(X)\leq s_n(1+\delta_n)} 
	 \left(\frac{1}{1+\delta_n}\wedge \rho^{-1}\right)\frac{E_\pi[L_\rho(\te,\varphi) \given X]}{s_n}\right.\\
	&\qquad +\left.\ind{D_n(X)> s_n(1+\delta_n)} \frac{\delta_n}{1+\delta_n}\right).
	\end{align*}
	By taking the minimum of terms inside the expectation (and the inf over $\vphi$),
	\begin{align*}
	 \fr_\pi &\geq E_\pi\left( \frac{\delta_n}{1+\delta_n} \wedge \left\{ \left(\frac{1}{1+\delta_n}\wedge \rho^{-1}\right)\frac{\inf_\vphi E_\pi (L_\rho(\te,\vphi)|X)}{s_n}\right\}  \right).
	 \end{align*}
	 Now arguing as in Proposition \ref{prop:bayes}, the infimum in $\vphi$  for the (weighted) classification loss is attained for the  procedure $ \ind{\ell_i(X)> \rho/(1+\rho)}$, so that 
	 \begin{align}
	 \inf_\vphi E_\pi (L_\rho(\te,\vphi)|X)& = \sum_{i=1}^n \left\{ \ell_i(X) \ind{\ell_i(X)\leq \rho/(1+\rho)}
	+ \rho(1-\ell_i(X))  \ind{\ell_i(X)> \rho/(1+\rho)} \right\} \nonumber\\
	&\geq \rho  \sum_{i=1}^n  (1-\ell_i(X))  \ind{\ell_i(X)> \rho/(1+\rho)} 
	=: \rho L'_\rho. \label{equLprimerholb}
	 \end{align}
	  This entails, for $L'_\rho$ as in \eqref{equLprimerholb},
	 \begin{align}
	\fr_\pi &\geq E_\pi\left( \frac{\delta_n}{1+\delta_n} \wedge \left\{ \left(\frac{1}{1+\delta_n}\wedge \rho^{-1}\right)\frac{\rho L'_\rho}{s_n}\right\}  \right).	\label{equinterm}
	\end{align}
Let us note  that  $M_\rho=\sum_{i=1}^n E_\pi[(1-\ell_i(X))  \ind{\ell_i(X)> \rho/(1+\rho)}]=E_\pi L'_\rho$ by the chain rule on conditional expectations. Also, by Bernstein's inequality, for $c>0$ some constant,
	\[ 
P_\pi\left(L'_\rho < M_\rho (1-\eta)\right)\leq e^{- c \eta^2 M_\rho}.
	\]		
Noting that introducing the indicator $\ind{L'_\rho \ge M_\rho (1-\eta)}$ inside the expectation in \eqref{equinterm} only makes it smaller, deduce that the right-hand side of \eqref{equinterm}  is at least
	 \begin{align*}
	\left[ \frac{\delta_n}{1+\delta_n} \wedge \left\{ \left(\frac{\rho}{1+\delta_n}\wedge 1\right)\frac{M_\rho (1-\eta) }{s_n}\right\} \right]  (1-  e^{- c \eta^2 M_\rho}).
	\end{align*}
	The result 
	follows by letting $\delta_n=\rho M_\rho (1-\eta)/s_n$.
	\end{proof}

The following lower bound is similar to the one of Theorem~\ref{thm:bayesLB}, but somewhat more classical, because it is for the classification risk $E_\theta  \lc(\te,\vphi)/s_n$.

\begin{thm} \label{thm:bayesLB-classif}
	Using the same notation and conditions as in Theorem \ref{thm:bayesLB},
$$
	\inf_\vphi \sup_{\te\in\cL_0[s_n;b]}  E_\theta  \lc(\te,\vphi)/s_n \geq  M_1/s_n,
		$$
where $M_1=\sum_{i=1}^n P_\pi[ \theta_i\neq 0,\ell_i(X)> 1/2]$ (that is, $M_1$ is $M_\rho$ of Theorem~\ref{thm:bayesLB} for $\rho=1$).
	\end{thm}

\begin{proof}
Using \eqref{lbrisk} for the classification risk, for $\Theta=\cL_0[s_n;b]$ and $\Pi$ a prior on $\Theta$,
\[\sup_{\te\in\Theta }  E_\theta  \lc(\te,\vphi)/s_n \geq E_{\pi} \lc(\te,\vphi)/s_n.\] 
Taking the infimum over $\vphi$ and using Proposition \ref{prop:bayes}, one gets, for $\vphi_i^\ell=\ind{\ell_i(X)>1/2}$,
$$
\inf_\vphi E_{\pi} \lc(\te,\vphi) \ge E_{\pi} \lc(\te,\vphi^\ell)\ge M_1,
$$
by keeping only the part of the loss regarding false negatives. The result follows.
\end{proof}

\vp

\begin{lem}\label{lem:ControlOfpn} Let $a_n^*=\sqrt{2\log(n/s_n)}$, $\delta_n=(\log(n/s_n))^{-1/4}$. 
	For any integer sequence $\rho=\rho_n$ satisfying 
	\begin{equation} \label{condrho}
	1\leq \rho \leq (n/2s_n)\overline{\Phi}(a_n^*-\delta_n)-1
	\end{equation}
	 we have, for any $a$ (which may depend on $n$), 
	\[ P\left(\# \braces{2\leq i \leq n/s_n : \eps_i>X_1\sim \phi_a}\leq \rho-1\right) = \overline{\Phi}_{a}(a_n^*)+o(1).\]
	In particular, one may choose $\rho$ tending to infinity or $\rho=1$. The $o(1)$ term is uniform in $a$. 
\end{lem}
\begin{proof}
Write $A_n$ for the event $A_n = \braces[\big]{ \#\braces{2\leq i \leq n/s_n : \eps_i>X\sim \phi_a}\leq \rho-1}$. 
For $\delta_n$ as in the lemma, set $U_n = \#\braces{2 \leq i \leq n/s_n : \eps_i > a_n^* - \delta_n}$ and note that \[U_n \sim \operatorname{Bin}(\floor{n/s_n}-1, \overline{\Phi} (a_n^*-\delta_n)).\] Define $V_n$ correspondingly without the $\delta_n$. As $a_n^*, \delta_n$ verify  \eqref{eqn:modelassumption2}--\eqref{eqn:modelassumption3} with $F_0=\Phi$,  
we see that		\[ E[U_n] \to \infty, \qquad E[V_n] = (\floor{n/s_n}-1)\overline{\Phi}(a_n^*)\to 0.\]
For any $\rho=\rho_n\geq 1$ such that $\rho-1<E[U_n]/2$ (which is true under the specified condition on $\rho$), we may apply Chebyshev's inequality with the bound $\Var(U_n)\leq E[U_n]$ to obtain
\[ P(U_n \leq \rho-1) \leq  P( \abs{U_n-E[U_n]} \geq E[U_n]-(\rho-1)) \leq P\brackets[\Big]{\abs{U_n-E[U_n]}>\frac{E U_n}{2}} \leq \frac{4}{E U_n}\to 0.\]
Similarly, applying Markov's inequality, we have for any $\rho\geq 1$,
\[ P(V_n>\rho-1) = P(V_n\geq  \rho)\leq  \frac{E V_n}{\rho}\to 0.\]
We have thus shown that on an event $B_n$ of probability tending to 1, 
$U_n\geq \rho$ and $V_n\leq \rho-1$.  In words, on $B_n$, at most $\rho-1$ of the numbers $(\eps_i,~2\leq i\leq n/s_n)$ are larger than $a_n^*$ and at least $\rho$ of them are larger than $a_n^*-\delta_n$. It follows that, on $B_n$, the event $A_n$ holds if $X_1>a_n^*$ and fails if $X_1\leq a_n^*-\delta_n$. Thus, 
\begin{align*} P(A_n) \geq P_a(X_1 > a_n^*) - P(B_n^c),\\
	P(A_n^c) \geq P_a(X_1 \leq a_n^*-\delta_n) -P(B_n^c).
\end{align*}
Since $\Phi_a$ is Lipschitz we deduce the result.  
\end{proof}

\section*{{\em Exercises}}
\addcontentsline{toc}{section}{{\em Exercises}} 

\begin{enumerate}
\item Check that for Subbotin noise with $\zeta\in(1,2]$, for $\kappa\in (0,1-1/\zeta)$, the sequences 
\[ a_n^*= (\zeta \log(n/s_n))^{1/\zeta}, \ \delta_n = (\log(n/s_n))^{-\kappa} \]
verify \eqref{eqn:modelassumption2}--\eqref{eqn:modelassumption3}. Verify that the proofs above go through provided one replaces  $\sqrt{2\log(n/s_n)}$ in the definition of $\cL_0[s_n;b]$ by this new $a_n^*$ and the sharp testing constant $\overline{\Phi}(b)$ by $\overline{\Phi}^{\zeta}(b)$, where $\Phi^\zeta$ is the cumulative distribution function of the Subbotin density.
\end{enumerate}

\chapter{Variational approximations} \label{chap:vb}

In this chapter, we consider the use of variational approximations to posterior distributions. The three complementary works \cite{ar20, ypb20, zg20} provide generic results and conditions under which approximations of posterior distributions in certain variational classes converge at (at least) the same rate as the posterior distribution itself. The case of tempered posterior distributions is also considered. These results apply already to a variety of models and priors, including many non-parametric or latent variable models. Here we follow mostly the presentation of \cite{zg20}. The case of high-dimensional models needs a separate treatment, and is considered in \cite{rs22}: we present it briefly.

\section{General principles}

In variational methods, one wishes to find a best (or close to best) approximation of a given {\em target} distribution (in the framework of these lectures it will be the {\em posterior} distribution) within a given class of simple distributions. The approximation will be quantified in terms of a measure of distance (or divergence) between distributions. 

\subsection{Divergences}

The $\rho$--\sbl{Rényi divergence} between probability measures $P$ and $Q$ is defined as, for $\rho>0$ and $\rho\neq 1$, 
\[ D_\rho(P,Q) = \frac{1}{\rho-1} \log \int \left(\frac{dP}{dQ}\right)^{\rho-1} dP, \]
if $P$ is absolutely continuous with respect to $Q$, and $+\infty$ otherwise. In the first case, and if $P, Q$ have densities $p,q$ with respect to $\mu$, we have
\[ D_\rho(P,Q) = \frac{1}{\rho-1} \log \int p^\rho q^{1-\rho} d\mu. \]
If $\rho=1$, one similarly defines, for $P\ll Q$ (otherwise we set it to $+\infty$ as above),
\[ D_1=K(P,Q) = \int \log\left(\frac{dP}{dQ}\right) dP\]
  the Kullback--Leibler divergence between $P$ and $Q$. \\
  
The following facts are classical, see for example the review paper \cite{vanerven}: $\rho\to D_\rho(P,Q)$ is an increasing function; as $\rho\to 1$, $D_\rho \to D_1$. Also, $D_{1/2}, D_2$ are related respectively to the squared--Hellinger distance $h^2$ and the $\chi^2$ divergence in the sense that
\[ D_{1/2} = -2\log(1-h^2/2),\qquad D_2= \log(1+\chi^2). \] 

\subsection{Variational families and optimisation} 

\begin{definition}
Let $\cS$ be a family of distributions. The \sbl{variational posterior} with respect to the family $\cS$ is the miminiser of the KL-divergence between any element of $\cS$ and the posterior distribution. That is,
\begin{equation} \label{varpb}
 \hat{Q} = \underset{Q\in\cS}{\text{argmin}}\, K(Q,\Pi(\cdot\given X)). 
\end{equation} 
\end{definition}

Often, an exact solution to \eqref{varpb} is not available, but an approximation is; then the results that follow also hold for this approximation as long as the latter is close enough to an exact solution, if it exists, of \eqref{varpb}.\\

If the class $\cS$ is very large, it may even contain the true posterior in which case one would have $\hat{Q}=\Pi(\cdot\given X))$. Of course, the purpose is to choose a class $\cS$ sufficiently simple so that the optimisation problem \eqref{varpb} is simpler to solve compared to direct sampling from the posterior. For direct sampling from (an approximation of) $\Pi(\cdot\given X)$, unless the posterior is available in closed form (which is rarely the case), one generally resorts to a general method such as MCMC (Monte Carlo Markov Chain). However in high dimensions or in problems with latent variables the MCMC method may be slow to converge. In such cases, variational approximations of the posterior are very popular in practice. The idea is to choose a class both sufficiently rich to approach the true posterior reasonably well, but at the same time sufficiently simple so that \eqref{varpb} is fast to solve numerically.  In other words, there is a {\em trade--off} between good approximation properties and computability. \\

We will not focus much more here on this trade-off, but give two examples of popular classes below. Before this, we note that a nice property of the optimisation problem \eqref{varpb} is that the normalising constant in the expression of the posterior density from Bayes' formula (i.e. the denominator) vanishes when one optimises in $Q\in\cS$. Indeed, writing $dQ=qd\mu$ and, using Bayes' formula, $d\Pi(\te\given X) = p_\te(X)\pi(\te)/\int   p_\te(X)\pi(\te)d\mu$, and noting that $D_X=  \int   p_\te(X)\pi(\te)d\mu$ depends only on $X$ but not on $Q$ or $\te$,
\begin{align*} K(Q,\Pi(\cdot\given X)) & = \int \log\left(\frac{q(\te)}{p_\te(X)\pi(\te)/D_X}\right) q(\te)d\mu \\
& = \int \log\left(\frac{q(\te)}{p_\te(X)\pi(\te)}\right) q(\te)d\mu + \log D_X,
\end{align*}
and the last term is independent of $Q$ so it is enough to minimise the first term (note that $D_X$ does not vanish if one would consider $K(\Pi(\cdot\given X),Q)$ in \eqref{varpb}). In particular, when solving the variational problem, there is no need to compute $D_X$, which often can be delicate or at least time-consuming. 

\begin{definition}\sbl{[Mean--field classes]} \label{defmf}
Suppose the parameter $\te\in \Theta$ can be written $\te=(\te_1,\te_2,\ldots, \te_m)$ with $m$ an integer, or $m=+\infty$. 
 The mean--field variational class $\cS_{MF}$ is the class of distributions
\[ \cS_{MF} = \left\{Q:\ dQ(\te) = \prod_{j=1}^m dQ_j(\te_j) \right\}. \]
That is, $\cS_{ML}$ consists of product measures only. As a special subcase, one may consider specific distributions for the $Q_j$. Let $\cG=\{\cN(\mu,\sigma^2),\ \mu\in\RR, \sigma\ge 0\}$ be the set of $1$--dimensional Gaussian distributions. The \sbl{Gaussian mean field class} is 
\[ \cS_{GMF} = \left\{Q:\ dQ(\te) = \prod_{j=1}^m dQ_j(\te_j),\ \ \ Q_j\in \cG\ \ (\forall\, j) \right\}. \]
\end{definition}

The idea of the mean--field class is to ignore dependencies in the posterior distribution and to approximate it by a distribution of product form. Of course, some information is then typically lost in this process: for instance, for $\te\in\RR^2$, a Gaussian distribution $\cN(\te,\Sigma)$ with $\Sigma$ a {\em non-diagonal} $2\times 2$ covariance matrix cannot be perfectly approximated by a product of $1$--dimensional Gaussians. One may think though that the loss is `of the order of a multiplicative factor in the variance', so maybe not huge. 

\section{A generic result for variational posteriors}

\subsection{Statement}

Consider a statistical model $\cP=\{P_\te^{(n)},\ \te\in\Theta\}$ as before in the course, dominated by $\mu^{(n)}=\mu$, where $\Theta$ is a parameter set (e.g. space of functions).\\

Let $L(\cdot,\cdot)$ be a loss function between probability measures such that $L(P,Q)\ge 0$ for any such measures $P,Q$. Examples of losses include $L=n h^2$, i.e. $n$ times the Hellinger squared distance, or also, if $\te$ is a sequence, $L(P_{\te}^{(n)},P_{\te'}^{(n)})=n\|\te-\te'\|^2$. Note the specific normalisation chosen with a multiplicative factor $n$: this is related to the fact that a typical example is the one of product measures $P_{\te}^{(n)}=P_{\te}^{\otimes n}$ for which typical divergences such as the KL scale with $n$ (recall that $K(P_{\te}^{\otimes n},P_{\te'}^{\otimes n})=nK(P_{\te},P_{\te'})$). \\
 
{\em Generic conditions.} Consider the following conditions
\begin{enumerate}
\item[(\underline{T})] $\quad$ For any $\veps>\veps_n$, there exists $\Theta_n(\veps)$ measurable subsets of $\Theta$ and $\vphi_n$ test functions such that
\[ E_{\te_0}\vphi_n + \sup_{\te\in\Theta_n(\veps),\ L(P_{\te}^{(n)},P_{\te_0}^{(n)})>C_1n\veps^2} E_\te(1-\vphi_n) \le e^{-Cn\veps^2}. \]
\item[(\underline{S})] $\quad$ For  any $\veps>\veps_n$,
\[ \Pi(\Theta_n(\veps)^c) \le e^{-Cn\veps^2}. \]
\item[(\underline{P})] $\quad$ There exists $\rho>1$ such that
\[ \Pi\left( D_\rho(P_{\te_0}^{(n)},P_{\te}^{(n)}) \le C_3n\veps_n^2 \right)
\ge e^{-C_2n\veps_n^2}.
 \]
\end{enumerate}
These conditions are almost identical to the ones used before in the lectures. There are two differences. First, (\underline{T}) and (\underline{S}) are required to hold for any $\veps>\veps_n$. It is generally not too difficult to find a sequence of sets $\Theta_n(\veps)$ indexed by $\veps$ verifying (\underline{T}) for any $\veps>\veps_n$ (and not just for $\veps=\veps_n$). Second, the KL-neighborhood used before is replaced by a $D_\rho$--neighborhood where $\rho>1$. This is useful in that it enables one to obtain posterior masses of complements of neighborhoods that decrease exponentially fast to $0$ (instead of just polynomially -- recall we used simply Tchebychev's inequality in proving the GGV theorem -- here we get rather an exponential-type inequality). 

See Lemma \ref{lemga2}, where it is shown that under such slightly strengthened assumptions compared to Theorem \ref{thm:ggvt}, the original posterior converges at rate $\veps_n$ and with an exponential decrease to $0$.
\\

In what follows, for a given function $f$, we use the notation $Qf=\int f dQ$.

\begin{thm}\sbl{[Convergence rate for $\hat{Q}$, \cite{zg20}]} \label{thmvar}
Let $(\veps_n)$ be a sequence such that $n\veps_n^2\ge 1$. Let $\Pi$ be a prior distribution on $\Theta$. 
Consider $\hat{Q}$ the variational Bayes approximation \eqref{varpb} to the posterior distribution $\Pi[\cdot\given X]$ with variational class $\cS$ and set
\begin{equation}\label{defga}
\ga_n^2=\frac1n \inf_{Q\in\cS} E_{\te_0} K(Q,\Pi(\cdot\given X)).
\end{equation}
 Suppose the generic conditions (\underline{T}), (\underline{S}), (\underline{P}) are verified with rate $\veps_n$, loss function $L$,  positive constants $C_1,C_2,C_3$, $C>C_2+C_3+2$ and $\rho>1$. Then there exists $M=M(C_1,C,\rho)$ such that
 \[ E_{\te_0} \hat{Q} L(P_\te^{(n)},P_{\te_0}^{(n)}) \le Mn(\veps_n^2+\ga_n^2). \] 
\end{thm}

The interpretation is as follows: $\veps_n$ is the convergence rate of the original posterior distribution in terms of the loss $L$, while $\gamma_n$ is the contribution arising from considering the variational approximation. Note that $\gamma_n^2$ is defined in terms of the posterior and is this still implicit. In the next lines, we bound it from above by a more universal quantity.

\subsection{Sufficient conditions}

\begin{lem} \label{lemga}
The rate $\ga_n$ defined in \eqref{defga} verifies
\begin{equation} \label{ubga}
\ga_n^2 \le \frac1n \inf_{Q\in\cS} \left[K(Q,\Pi) + Q K(P_{\te_0}^{(n)},P_{\te}^{(n)}) \right].
\end{equation}
\end{lem}
\begin{proof}
Denote as shorthand $\Pi_X=\Pi(\cdot\given X)$ and $P_\Pi^{(n)}=\int P_\te^{(n)}d\Pi(\te)$ so that the denominator in Bayes' formula is $p_{\Pi}^{(n)}=\int p_\te^{(n)}d\Pi(\te)$ the marginal density of $X$ in the Bayesian setting.  Bayes' formula writes  $d\Pi_X=p_\te^{(n)}d\Pi/p_{\Pi}^{(n)}$. 
Then $K(Q,P_X)=\int \log(dQ/d\Pi_X)dQ$, and
\[ \log \frac{dQ}{d\Pi_X} = \log \frac{dQ}{d\Pi} + \log \frac{p_{\Pi}^{(n)}}{p_\te^{(n)}}. \]
Deduce that $K(Q,P_X)$ can be further written as
\begin{align*}
E_{\te_0} K(Q,\Pi_X)
& = \int \log \frac{dQ}{d\Pi}dQ + E_{\te_0}\int \log\frac{p_{\Pi}^{(n)}}{p_\te^{(n)}} dQ
\end{align*} 
and, using Fubini's theorem and $K(P,Q)\ge 0$,
\begin{align*}
\lefteqn{E_{\te_0}\int \log\frac{p_{\Pi}^{(n)}}{p_\te^{(n)}} dQ  =  Q \int \log\frac{dP_{\Pi}^{(n)}}{dP_\te^{(n)}}dP_{\te_0}^{(n)} }& \\ 
& = Q\left[ \int \log \frac{dP_{\te_0}^{(n)}}{dP_{\te}^{(n)}} dP_{\te_0}^{(n)}+
\int \log\frac{dP_{\Pi}^{(n)}}{dP_{\te_0}^{(n)}} dP_{\te_0}^{(n)} \right] \\
& = Q\left[ K(P_{\te_0}^{(n)}, P_\te^{(n)}) - K(P_{\te_0}^{(n)}, P_{\Pi}^{(n)})\right] \le  QK(P_{\te_0}^{(n)}, P_\te^{(n)}).
\end{align*}
The lemma follows by using the definition of $\ga_n$ and taking the infimum over $Q\in \cS$.
\end{proof}

By combining Lemma \ref{lemga} and Theorem \ref{thmvar}, we immediately obtain the following.\\

\begin{corollary}
Under the conditions of Theorem \ref{thmvar}, suppose further that, for 
\[ \cE = \left\{Q:\ \text{Supp}(Q)\subset \{\te:\ K(P_{\te_0}^{(n)},P_{\te}^{(n)})\le C_2 n\veps_n^2\} \right\}, \]
it holds
\begin{equation}
\inf_{Q\in \cS\cap \cE}\, K(Q,\Pi) \le C_1n\veps_n^2.
\end{equation}
Then for a large enough constant $M'$, 
\[ E_{\te_0} \hat{Q} L(P_\te^{(n)},P_{\te_0}^{(n)}) \le M'n\veps_n^2. \]
\end{corollary}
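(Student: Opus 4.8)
The plan is to combine the two preceding results — Theorem~\ref{thmvar} and Lemma~\ref{lemga} — with the additional hypothesis that the variational family is rich enough to contain distributions supported on a Kullback--Leibler--type neighborhood of $P_{\te_0}^{(n)}$ and close to the prior in KL. The strategy is entirely modular: Theorem~\ref{thmvar} already gives
\[
E_{\te_0} \hat{Q} L(P_\te^{(n)},P_{\te_0}^{(n)}) \le Mn(\veps_n^2+\ga_n^2),
\]
so it suffices to show that the extra hypothesis forces $\ga_n^2 \leqa \veps_n^2$, after which one sets $M'=M(C_1+C_2+1)$ (or any large enough constant) and the result follows.

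First I would invoke Lemma~\ref{lemga}, which bounds
\[
\ga_n^2 \le \frac1n \inf_{Q\in\cS}\left[K(Q,\Pi) + Q K(P_{\te_0}^{(n)},P_{\te}^{(n)})\right].
\]
Rather than optimising over all of $\cS$, I would restrict the infimum to the smaller set $\cS\cap\cE$, which only makes the right-hand side larger, hence still a valid upper bound. For any $Q\in\cS\cap\cE$, the support of $Q$ lies inside $\{\te: K(P_{\te_0}^{(n)},P_{\te}^{(n)})\le C_2 n\veps_n^2\}$, so the second term $Q K(P_{\te_0}^{(n)},P_{\te}^{(n)})$ is bounded pointwise on the support of $Q$ by $C_2 n\veps_n^2$, and therefore $Q K(P_{\te_0}^{(n)},P_{\te}^{(n)})\le C_2 n\veps_n^2$ for every such $Q$. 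The first term is controlled directly by the new hypothesis: $\inf_{Q\in\cS\cap\cE} K(Q,\Pi)\le C_1 n\veps_n^2$. Picking a near-minimiser $Q^\star\in\cS\cap\cE$ of $K(\cdot,\Pi)$ over $\cS\cap\cE$ (or an exact one if it exists; otherwise one works up to an arbitrarily small additive slack that is absorbed into the constant), one gets
\[
\ga_n^2 \le \frac1n\left[K(Q^\star,\Pi) + Q^\star K(P_{\te_0}^{(n)},P_{\te}^{(n)})\right] \le (C_1+C_2)\veps_n^2.
\]

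Plugging this back into the conclusion of Theorem~\ref{thmvar} yields
\[
E_{\te_0} \hat{Q} L(P_\te^{(n)},P_{\te_0}^{(n)}) \le Mn\bigl(\veps_n^2 + (C_1+C_2)\veps_n^2\bigr) = M(1+C_1+C_2)\, n\veps_n^2,
\]
so one may take $M'=M(1+C_1+C_2)$, which depends only on the fixed constants $C_1,C_2,C,C_3,\rho$ and hence is admissible. I do not anticipate any genuine obstacle here: the statement is essentially a bookkeeping corollary that makes the implicit quantity $\ga_n$ from Theorem~\ref{thmvar} explicit under a transparent approximability condition on the variational class. The only point requiring a word of care is the handling of the infimum when it is not attained — one simply chooses $Q^\star$ with $K(Q^\star,\Pi)\le C_1 n\veps_n^2 + \delta$ for arbitrary $\delta>0$ and lets $\delta\to 0$, or equivalently notes that the bound on $\ga_n^2$ is a bound on an infimum and so only needs to hold in the limit — and the fact that one must verify $\cS\cap\cE$ is nonempty, which is exactly what the hypothesis $\inf_{Q\in\cS\cap\cE}K(Q,\Pi)\le C_1 n\veps_n^2$ implicitly guarantees (a finite infimum over the empty set being $+\infty$).
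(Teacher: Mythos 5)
Your proof is correct and follows exactly the route the paper intends: the paper presents this corollary as an immediate combination of Lemma \ref{lemga} (restricting the infimum to $\cS\cap\cE$, where the support condition bounds $QK(P_{\te_0}^{(n)},P_{\te}^{(n)})$ by $C_2 n\veps_n^2$ and the hypothesis bounds $K(Q,\Pi)$) with Theorem \ref{thmvar}. Your handling of the near-minimiser and the non-emptiness of $\cS\cap\cE$ is careful bookkeeping that the paper leaves implicit; nothing is missing.
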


\subsection{Result for mean-field class}

\begin{thm}\sbl{[Convergence rate for $\hat{Q}$, mean-field case]} \label{thmmf}
Under the conditions of Theorem \ref{thmvar}, suppose that one can find a distribution $\tilde{Q}$ in the mean-field class $\cS_{MF}$ and a subset 
\[ \cA_m := \bigotimes_{j=1}^m \tilde{\Theta}_j \ \subset \Theta \]
of product form which verifies
\begin{align*} 
(A)\ \qquad \qquad  \cA_m
& \subset \left\{\te:\ K(P_{\te_0}^{(n)}, P_\te^{(n)})\le C_1n\veps_n^2,\
\log\left(\frac{d\tilde{Q}}{d\Pi}(\te)\right) \le C_2n\veps_n^2 
\right\}, \\
(B)\ \ \  \qquad  \tilde{Q}\left( \cA_m \right) 
& \ge e^{-C_3n\veps_n^2}.
\end{align*}
Then the term $\ga_n$ in \eqref{defga} with $\cS=\cS_{MF}$ verifies
\[ \ga_n^2 \le (C_1+C_2+C_3)\veps_n^2. \]
In particular, the conclusion of Theorem \ref{thmvar} holds with rate $\veps_n^2$. 
\end{thm}

Note that in case the prior itself belongs to $\cS_{MF}$, one can take $\tilde{Q}=\Pi$, which simplifies the conditions even further. Also, condition (B) in the Theorem can be interpreted as asking a prior mass condition which is ``coherent with the structure of the variational class".

\subsection{An example of application: the sequence model}

Consider the Gaussian sequence model $X_i=\te_i+\xi_i/\sqrt{n}$ with $\xi_i$ independent $\cN(0,1)$ variables. Suppose the true $\te_0=(\te_{0,1},\te_{0,2},\ldots,)$ belongs to a Sobolev ball 
\[ \te_0\in \{\te:\ \sum_{j\ge 1} j^{2\be}\te_{j}^2\le L^2\}.\] 
Also set $L(P_{\te}^{(n)},P_{\te'}^{(n)})=n\|\te-\te'\|^2$. \\

Consider a sieve prior $\Pi$ defined hierarchically: sample $k$ from a distribution $\pi$ on integers; then given $k$ sample $\te_1,\ldots,\te_k$ independently with density $f_j$ on coordinate $j$;  set $\te_j=0$ for all $j>k$. 

Let us consider the variational posterior $\hat{Q}$ using the mean-field class
\[ \hat{Q} = \underset{Q\in\cS_{MF}}{\text{argmin}}\, K(Q,\Pi(\cdot\given X)), \]
where $\cS_{MF}$ is as in Definition \ref{defmf}. The next result shows that the variational posterior $\hat{Q}$ is adaptive to smoothness and reaches an optimal contraction rate in $\ell^2$ up to a logarithmic term.

\begin{thm}
In the Gaussian sequence model, suppose $\te_0$ and $\Pi$ are as described above, for some $\be, L>0$. Take as $\pi$ the prior on integer such that $\pi(k)\propto e^{-\ta k}$ and take $f_j$ to be the standard normal density for any coordinate $j$ that is nonzero under the prior. Then
\[ E_{\te_0}\hat{Q}\|\te-\te_0\|^2 \leqa \left( \frac{\log{n}}{n}\right)^{\frac{2\be}{2\be+1}}. \]
\end{thm}

\section{Proof of the generic theorem}

\subsubsection*{Useful lemmas and their proofs}

The proofs of Theorems \ref{thmvar} and \ref{thmmf} are quite direct applications of the combination of the next Lemmas. Lemma \ref{lemga2} makes the conclusion of Theorem \ref{thm:ggvt} more precise by assuming slightly stronger conditions.

\begin{lem} \label{lemtr}
Let $f\ge 0$ and let $P,Q$ be two probability measures. Then
\[ \int f dQ \le K(Q,P) + \log \int e^{f(x)} dP(x). \]
\end{lem}
\begin{proof}
One writes, with the notation $\int e^f dP=P e^f$,
\begin{align*}
 K(Q,P) + \log \int e^{f(x)} dP(x)
& = \int \log\left(\frac{dQ}{dP}\cdot Pe^f\right)dQ \\
& = \int  \log\left(\frac{dQ}{ e^f dP}\cdot e^f \cdot Pe^f\right)dQ
= \int  \log\left(\frac{dQ}{ dP'} \right)dQ + \int f dQ,
\end{align*}
with $dP'=e^f dP/(Pe^f)$. The results follows by using $\int  \log\left(dQ/ dP' \right)dQ=K(Q,P')\ge 0$.
\end{proof}

\begin{lem} \label{lemun}
\[ E_{\te_0}\hat{Q}L(P_\te^{(n)},P_{\te_0}^{(n)})
\le \inf_{a>0} \frac1a 
\left[ \inf_{Q\in \cS} E_{\te_0}K(Q,\Pi(\cdot\given X))+
\log E_{\te_0}\int e^{aL(P_{\te}^{(n)},P_{\te_0}^{(n)})}d\Pi(\te\given X)
\right].
\]
\end{lem}

\begin{proof}
One applies Lemma \ref{lemtr} with $Q=\hat{Q}$, $P=\Pi[\cdot\given X]$ and $f(\te)=a L(P_\te^{(n)},P_{\te_0}^{(n)})$ for a given $a>0$. Deduce, writing $P_\te=P_\te^{(n)}$ as shorthand,
\[
a \hat{Q}L(P_\te,P_{\te_0}) \le K(\hat{Q},\Pi[\cdot\given X])
+ \log \left[ \Pi[\cdot\given X]\left\{ e^{a L(P_\te,P_{\te_0})} \right\} \right].
 \] 
First, one takes the expectation under $E_{\te_0}$ and uses Jensen's inequality with the logarithm. Upon noting that $K(\hat{Q},\Pi[\cdot\given X])\le K(Q,\Pi[\cdot\given X])$ for any $Q\in\cS$ by definition, the result follows by dividing by $a$, taking the infimum over such $Q$'s, followed by the infimum over $a>0$.
\end{proof}

\begin{lem} \label{lemga2}
Suppose conditions (\underline{T}), (\underline{S}), (\underline{P}) hold. Then for $\la=\rho-1$ and for any $\veps>\veps_n$,
\[ E_{\te_0} \Pi( L(P_{\te}^{(n)},P_{\te_0}^{(n)})>C_1n\veps^2\given X)
\le e^{-Cn\veps^2} + e^{-\la n\veps^2}+2e^{-n\veps^2}.
 \]
\end{lem}

\begin{proof}
Writing $P_\te=P_\te^{(n)}$ as shorthand, let us set, with $\rho:=1+\la$,
\[ U_n = \left\{\te: L(P_\te,P_{\te_0}) > C_1n\veps^2 \right\},\quad 
 K_n =  \left\{\te: D_\rho(P_{\te_0},P_\te) > C_3n\veps^2 \right\},
\]
and, for $\tilde{\Pi}=\Pi|_{K_n}=\Pi(\cdot\cap K_n)/\Pi(K_n)$,
\[ A_n =  \left\{ \int \frac{dP_\te}{dP_{\te_0}}(X) d\tilde{\Pi}(\te) > e^{-(C_3+1)n\veps^2} \right\}.
\] 
Using the tests $\vphi_n$ in $(\underline{T})$, the quantity at stake for the Lemma is bounded by
\[ 
E_{\te_0}\Pi(U_n\given X)\le E_{\te_0}\left[\Pi(U_n\given X)(1-\vphi_n)1_{A_n}\right]
+P_{\te_0}A_n^c+E_{\te_0}\vphi_n.
\]
The last term is bounded by $e^{-Cn\veps^2}$ by $(\underline{T})$. 
Using Markov's inequality,
\begin{align*}
P_{\te_0}A_n^c & \le  P_{\te_0}\left[ 
\left\{ \int \frac{dP_\te}{dP_{\te_0}}(X) d\tilde{\Pi}(\te)\right\}^{-\la}
 > e^{\la(C_3+1)n\veps^2} \right]\\
 & \le e^{-\la(C_3+1)n\veps^2} E_{\te_0}\left\{ \int \frac{dP_\te}{dP_{\te_0}}(X) d\tilde{\Pi}(\te)\right\}^{-\la}\\
& \le e^{-\la(C_3+1)n\veps^2}  E_{\te_0}\left\{ \int \left(\frac{dP_\te}{dP_{\te_0}}(X)\right)^\la d\tilde{\Pi}(\te)\right\},
\end{align*}
where the last line uses Jensen's inequality and convexity of $x\to x^{-\la}$ on $\RR^+$.  Fubini's theorem implies
\[ E_{\te_0}\left\{ \int \left(\frac{dP_\te}{dP_{\te_0}}(X)\right)^\la d\tilde{\Pi}(\te)\right\}
= \int \int \frac{(dP_{\te_0})^{\la+1}}{(dP_\te)^{\la}} d\tilde{\Pi}(\te)
= \int e^{\la D_{1+\la}(P_{\te_0},P_{\te})} d\tilde{\Pi}(\te)\le e^{\la C_3n\veps^2},
\]
using the definition of $K_n$, on which $\tilde{\Pi}$ is supported. 
Putting the previous inequalities together gives $P_{\te_0}A_n^c \le e^{-\la n\veps^2}$. 
It remains to bound $\Pi(U_n\given X)(1-\vphi_n)$ on the event $A_n$. By definition, on $A_n$,
\[ \int  \frac{dP_\te}{dP_{\te_0}}(X)d\Pi(\te)\ge \Pi[K_n] 
\int \frac{dP_\te}{dP_{\te_0}}(X)d\tilde\Pi(\te)\ge \Pi[K_n] e^{-(C_3+1)n\veps^2}. \]
We have $\Pi[K_n] \ge e^{-C_2n\veps_n^2}\ge e^{-C_2n\veps^2}$ for $\veps>\veps_n$ using $(\underline{P})$. The last display is thus bounded from below by $e^{-(C_2+C_3+1)n\veps^2}$.  Using this fact, one can bound the denominator of Bayes' formula (written with $dP_\te/dP_{\te_0}$) from below to get
\begin{align*}
E_{\te_0}\left[ \Pi(U_n\given X)(1-\vphi_n)1_{A_n}\right]
& \le e^{(C_2+C_3+1)n\veps^2} E_{\te_0}\left[ \int_{U_n} \frac{dP_\te}{dP_{\te_0}}(X) (1-\vphi_n) d\Pi(\te) \right] \\
& \le e^{(C_2+C_3+1)n\veps^2} \int_{U_n} P_{\te}(1-\vphi_n) d\Pi(\te).
\end{align*}
Let us further bound from above, using $(\underline{T}), (\underline{S})$, 
\begin{align*}
\int_{U_n} P_{\te}(1-\vphi_n) d\Pi(\te) & \le \Pi\left[ \Theta_n(\veps)^c \right]
+ \int_{U_n \cap \Theta_n(\veps)} P_{\te}(1-\vphi_n) d\Pi(\te)
\le e^{-Cn\veps^2} + e^{-Cn\veps^2}.
\end{align*}
Putting the previous inequalities together yields, provided $C>C_2+C_3+2$,
\[ E_{\te_0}\left[\Pi(U_n\given X)(1-\vphi_n)1_{A_n}\right]
 \le 2e^{(C_2+C_3+1)n\veps^2 -Cn\veps^2}\le 2e^{-Cn\veps^2}.\]
Combining all previous bounds gives the result.
\end{proof}
 
\begin{lem} \label{lemlap}
Suppose the random variable $X$ verifies 
\[ P(X\ge t) \le c_1e^{-c_2 t}\qquad \text{for any } t\ge t_0>0.\]
Then for any $a\in(0,c_2/2]$, 
\[ E e^{aX} \le e^{at_0}+c_1. \]
\end{lem}
\begin{proof}
Using the formula $EY\le M+\int_M^\infty P[Y\ge y]dy$ for $Y=e^{aX}$ and the assumption,
\[ E[e^{aX}] \le M+\frac{c_1}{c_2-a} a M^{1-(c_2/a)}. \]
Setting $M=e^{at_0}$ and using $a\le c_2/2$, the former is bounded by $M+c_1(a/a)e^{a-c_2}\le M+c_1$.
\end{proof}

\subsubsection*{Proof of the main results}

\begin{proof}[Proof of Theorem \ref{thmvar}]
From Lemma \ref{lemga2}, one deduces that for any $t\ge t_0=C_1n\veps_n^2$,
\[ E_{\te_0}\Pi[L(P_\te,P_{\te_0})>t \given X] \le C_1e^{-C_2t}. \]
One then uses Lemma \ref{lemlap} to deduce, for small $a$, that
\[ E_{\te_0}\left\{\Pi[\cdot\given X]\left[ e^{aL(P_\te,P_{\te_0})}\right]\right\}\le 4+e^{aC_1n\veps_n^2}\]
for small $a$. The result now follows from an application of Lemma \ref{lemun}.
\end{proof}

\begin{proof}[Proof of Theorem \ref{thmmf}]
Invoking Lemma \ref{lemga}, it is enough to find $Q\in \cS_{MF}$ such that
\[ QK(P_{\te_0},P_\te) + K(Q,\Pi) \le (C_1+C_2+C_3)\veps_n^2.  \]
Define $Q = \bigotimes_{j=1}^m Q_j$, with $Q_j = \tilde{Q}_j|_{\tilde{\Theta}_j}$ the restriction of $\tilde{Q}_j$ to $\tilde{\Theta}_j$, both defined in the statement of the lemma. By definition $Q\in \cS_{MF}$ and $\text{Supp}(Q)\subset \bigotimes_{j=1}^m \tilde{\Theta}_j$. 
\end{proof}

\section{High-dimensional regression}

Consider the high-dimensional regression model 
\[ Y = X\te + \veps,\]
where the notation is as in Chapter \ref{chap:ada2}. Here for simplicity we focus on one example of design matrix, namely we assume it has independent Gaussian entries
\begin{equation} \label{condx}
X_{ij} \sim \cN(0,1) \quad\text{iid}.
\end{equation} 
The results below also hold under conditions on $X$ similar in spirit to those stated in Chapter \ref{chap:ada2} (and related to those in \cite{spahd}). \\

Let us consider a subset-selection prior on $\te$ as in Section \ref{sec:gene} (with here $p$ instead of $n$) 
\begin{equation} \label{subsprior2}
k \sim \pi_p, \qquad  S\given k  \sim \text{Unif}(\cS_k), \qquad 
\te\given S \sim \bigotimes_{i\in S} \Gamma \,\otimes\, \bigotimes_{i\notin S} \delta_0, 
\end{equation}
with here $\Ga=\text{Lap}(\la)$ a Laplace distribution with parameter $\la$.
 Suppose the following slightly faster than exponential decrease: there exist constants $A_1, \ldots, A_4>0$ with  
\begin{equation} \label{condpvb}
 A_1p^{-A_3}\pi_p(s-1)\le \pi_p(s)\le A_2p^{-A_4}\pi_p(s-1),
\end{equation} 
for $s=1,\ldots,p$. This condition is satisfied for instance for the following hierarchical Bayes version of the spike and slab prior, for some fixed $u>1$ and $\la>0$,
\begin{align*}
\al & \sim \text{Beta}(1,p^u)\\
\te=(\te_i)_{1\le i\le p} \given \al & \sim \bigotimes_{i=1}^p\, (1-\al)\delta_0 + \al \text{Lap}(\la),
\end{align*}

As a variational class, let us consider the {\em mean-field} spike and slab class
\begin{equation} \label{mfhd}
\cP_{MF} = \left\{P_{\mu,\sigma,\ga}
=\bigotimes_{i=1}^p \, (1-\ga_i)\delta_0 + \ga_i \cN(\mu_i,\sigma_i^2),\quad  \mu_i\in\RR, \sigma_i\in\RR^+, \ga_i\in[0,1]  \right\}.
\end{equation}
Define the corresponding variational Bayes posterior distribution
\begin{equation} \label{tildepivb}
\tilde{\Pi}\, =\, \underset{P_{\mu,\sigma,\ga}\in\cP_{MF}}{\text{argmin}}\ \, K(P_{\mu,\sigma_\ga} ,  \Pi(\cdot\given Y) ).
\end{equation}
By taking the mean-field class \eqref{mfhd} in this context, one enforces substantial independence in the variational posterior distribution, with a much reduced complexity in terms of models: there are only $p$ inclusion variables in \eqref{mfhd} instead of $2^p$ models the posterior puts mass on. Note also that while one may choose a Gaussian distribution for slabs from the variational class, it is important to keep a Laplace slab in the prior itself (otherwise one may face over-shrinkage).

\begin{thm} \label{thmvbhd}
 Let the prior $\Pi$ be a subset-selection prior as in  \eqref{subsprior2}  that satisfies \eqref{condpvb} with slab $\Ga$ in \eqref{subsprior2} a standard Laplace 
  variable. Suppose also that $s_n=o(\sqrt{n/\log{n}})$. Then, on an event of overwhelming probability under the law of $X$ as in \eqref{condx}, for $M_n$ going to infinity arbitrarily slowly,
\[ \sup_{\te_0\in\ell_0[s_n]} E_{\te_0}\tilde{\Pi}
\left(\te: \|\te-\te_0\|_2 > M_n \sqrt{ \frac{s_n\log{p}}{n} } \right) = o(1). \]
\end{thm}

Theorem \ref{thmvbhd} is a special case of Theorem 1 in \cite{rs22}, obtained by using the conditions on $X$ assumed therein are verified for the design \eqref{condx} with overwhelming probability (see e.g. \cite{spahd}, Section 2.2 for a discussion and further references). On the other hand, under the same conditions on $X$ and for the same prior on $\Pi$, it follows from \cite{spahd}, Theorem 2, that the original posterior distribution $\Pi[\cdot\given Y]$ converges towards $\te_0$ in $\|\cdot\|_2$ norm at the same rate $\sqrt{s_n\log{p}/n}$ which can be shown to be (near)-optimal in this setting. 
\\

This shows that the variational Bayes approximation $\tilde\Pi[\cdot\given Y]$ converges at the same rate as the original posterior $\Pi[\cdot\given Y]$. An advantage here of the VB-posterior is that this approximation is quite fast to compute, while sampling from the original posterior typically requires the use of MCMC algorithms that scale significantly slower in terms of dimension.\\

For more details on the proposed variational algorithm (used to solve the optimisation problem, i.e. finding the best approximant in the considered mean-field class) we refer to \cite{rs22}; the method is implemented in the R package \textsf{sparsevb} \cite{csr21}, which covers both linear and logistic regression.

\addtocontents{toc}{\protect\vskip20pt}

\appendix

\chapter{Appendix} \label{chap:app}

\section{Distances between probability measures} \label{app-dist}

\begin{definition}\label{defhel}
Let $P,Q$ probability distributions dominated by a measure $\mu$, i.e. $dP=pd\mu$ and $dQ=qd\mu$. The \sbl{$L^1$--distance} is defined as 
\[ \|P-Q\|_1 = \int |p-q| d\mu \]
and the \sbl{Hellinger distance} as 
\[ h(P,Q) = \left( \int (\sqrt{p}-\sqrt{q})^2 d\mu \right)^{1/2}. \]
\end{definition}

These distances verify the following properties (left as an exercise)
\begin{itemize}
\item  $\|P-Q\|_1\le 2$ and $h(P,Q) \le \sqrt{2}$.
\item $\|P-Q\|_1\le 2 h(P,Q)$ [use Cauchy-Schwarz]
\item If $\max(p,q)\ge c_0>0$ then $h(P,Q)\le C\|P-Q\|_1$ for some $C>0$.
\item Defining the \sbl{total variation norm} (between measures defined on a common $\sigma$--field $\cA$) as $\|P-Q\|_{TV}=\sup_{A\in\cA}|P(A)-Q(A)|$,
\[ \| P-Q\|_1 = 2\|P-Q\|_{TV}. \]
\end{itemize}

\begin{lem}[\sbl{Total variation distance}] \label{tvl1}
Let $P, Q$ be two probability measures defined on a joint $\sigma$--field $\cA$ and dominated by $\mu$, that is $dP=pd\mu, dQ=qd\mu$. 
The total variation distance $\|P-Q\|_{TV}=\sup_{A\in\cA}|P(A)-Q(A)|$ verifies
\begin{align*}
 2\|P-Q\|_{TV} & = \|p-q\|_1.
\end{align*} 
Also, the supremum  defining the total variation distance is attained for  $A=\{x:\ q(x)<p(x)\}$.
\end{lem}
\begin{proof}
Let $A$ denote the set $A=\{x:\ q(x)<p(x)\}$ and $A^c$ its complement. Then
\begin{align*}
\|p-q\|_1
& = \int_A (p-q)d\mu + \int_{A^c} (q-p)d\mu -\int_{p=q}(q-p)d\mu \\
& = P(A)-Q(A) + Q(A^c)-P(A^c) =2(P(A)-Q(A))=2\int_{A} (p-q)d\mu.
\end{align*}
By symmetry, one also has $\|p-q\|_1=2\int_{q>p} (q-p)d\mu$.
On the other hand, for any $B\in\cA$, 
\[ P(B)-Q(B)=\int \1_B (p-q)d\mu\le \int_{p>q} \1_B(p-q)d\mu = \int_{A} (p-q)d\mu=\|p-q\|_1/2.\]
By symmetry, $Q(B)-P(B)\le \int_{q>p} (q-p)d\mu=\|p-q\|_1/2$. Combining all these facts gives the result.  
\end{proof}
\vp

For any $\al\in(0,1)$, the \sbl{$\al$--R\'enyi divergence} between distributions $P,Q$ having densities $p$ and $q$ with respect to $\mu$ is defined as 
\begin{align*}
		D_\alpha(P, Q) = -\frac{1}{1-\alpha}\log \left( \int p^\alpha q^{1-\alpha}d\mu \right).
	\end{align*}

\begin{lem}[R\'enyi divergence between Gaussians] \label{lem:renyig}
Fix $\rho\in(0,1)$ and let $\mu, \nu\in\RR^p$ and $\sigma, \ta>0$, then
\[ D_\rho\left( \cN(\mu,\sigma^2), \cN(\nu,\tau^2) \right) 
=\rho \frac{(\mu-\nu)^2}{2\sigma_\rho^2} + \frac{1}{1-\rho}\log\frac{\sigma_\rho}{\sigma^{1-\rho}\ta^\rho},
\]
where $\sigma_\rho^2=(1-\rho)\sigma^2+\rho\ta^2$ (see e.g. \cite{vanerven}, Eq. (10)).
\end{lem}

\sbl{Bounded--Lipschitz metric.} The space of bounded Lipschitz functions on the metric space $(H,d)$ is \[ BL_H(1)=\left\{f: H \to \mathbb R,\quad \sup_{s \in H}|f(s)| + \sup_{s \ne t, s,t \in H } |f(s)-f(t)|/d(s,t) \le 1 \right \}. \]
The bounded--Lipschitz metric on probability measures on $H$ is $\beta:=\beta_H$ defined as
\begin{equation} \label{def-weakcv}
\beta_H(\mu, \nu) = \sup_{u \in BL_H(1)}\left|\int_H u(s)(d\mu-d\nu)(s)\right|. 
\end{equation}
 The metric  $\beta$ metrises weak convergence of probability measures on $H$, see  e.g. \cite{D02}, Theorem 11.3.3. \\ 
 
\sbl{Uniformity classes for weak convergence. } 
Suppose one has shown $\be_H(\mu_n,\cN)\to0$ as $n\to\infty$ for a sequence of probability measures on $H$ and $\cN$ a fixed distribution on $H$. 
We call a family $\mathcal U$ of measurable real-valued functions defined on $H$ \sbl{a $\cN-$uniformity class for weak convergence} if for any sequence $\mu_n$ of Borel probability measures on $H$ that converges weakly to $\cN$ we also have 
\begin{equation} \label{cunif}
\sup_{u \in \mathcal U}\left|\int_H u(s)(d\mu_n-d\cN)(s)\right| \to 0
\end{equation}
as $n \to \infty$.  For any subset $A$ of $H$, define  the $\delta$-boundary of $A$ by $\partial_\delta A = \{x \in H: d(x,A)<\delta, d(x,A^c)<\delta\}.$ By Theorem 2 of Billingsley and Tops{\o}e \cite{BT67}, a family $\mathcal A$ of measurable subsets of $H$ is a $\cN$-uniformity class if and only if  
\begin{equation} \label{nunif}
\lim_{\delta \to 0} \sup_{A \in \mathcal A} \cN(\partial_\delta A)=0.
\end{equation}
This typically allows for enough uniformity to deal with a variety of concrete nonparametric statistical problems. 
By general properties of Gaussian measures on separable Banach spaces, the collection of all centered balls (or rather, in a $L^2$-perspective, ellipsoids) for the $\|\cdot\|_H$-norm verify \eqref{nunif} and thus form a $\cN$-uniformity class.

\section{Inequalities}

\begin{lem}[\sbl{Hoeffding's inequality}, see e.g. \cite{blmbook}] \label{hoeff}$\ $
Let $Z_i$ be independent random variables with $a_i\le Z_i\le b_i$ for reals $a_i,b_i$ and $1\le i\le n$. Then
\[ P\left[ \sum_{i=1}^n Z_i >t \right] \le \exp\left\{-\frac{2t^2}{\sum_{i=1}^n (b_i-a_i)^2}\right\}. \]
\end{lem}

\begin{lem} \label{lem:dist}
For any bounded functions $v,w$, if one denotes $p_v=e^v/\int_0^1 e^{v(u)} du$, 
\begin{align*}
h(p_v,p_w) & \le  \|v-w\|_\infty e^{\|v-w\|_\infty/2}\\
K(p_v,p_w) & \leqa \|v-w\|_\infty^2 (1+\|v-w\|_\infty)e^{\|v-w\|_\infty}\\
V(p_v,p_w) & \leqa \|v-w\|_\infty^2 (1+\|v-w\|_\infty)^2e^{\|v-w\|_\infty}.
\end{align*}
\end{lem}

\begin{proof}
We prove the first inequality. For the second and third, we refer to \cite{vvvz} (or to the book \cite{gvbook}). 
\begin{align*}
h(p_v,p_w) & = \| \frac{e^{v/2}}{\|e^{v/2}\|_2} - \frac{e^{w/2}}{\|e^{w/2}\|_2} \|_2 \\
& = \| \frac{e^{w/2}-e^{v/2}}{\|e^{w/2}\|_2} + e^{v/2}\left( \frac{1}{\|e^{w/2}\|_2}-  
\frac{1}{\|e^{v/2}\|_2}\right) \|_2 \\
& \le 2 \frac{\| e^{w/2}-e^{v/2}\|_2}{\|e^{w/2}\|_2}.
\end{align*}
One can also bound from above
\begin{align*}
| e^{v/2}-e^{w/2}| & = e^{w/2} |e^{v/2-w/2}-1| \\
& \le e^{w/2}\|\frac{v-w}{2}\|_\infty e^{\|v-w\|_\infty/2},
\end{align*}
where one uses the inequality $|e^x -1|\le |x|e^{|x|}$, valid for all $x>0$. Combining the previous bounds, 
\[ h(p_v,p_w)^2 \le \frac{\int e^{w+\|v-w\|_\infty} \|v-w\|_\infty^2}{\int e^w}, \]
which is no more than $\|v-w\|_\infty^2 e^{\|v-w\|_\infty}$, as requested.
\end{proof}

\section{Testing and Entropy} \label{app:testing}

\subsubsection*{Tests in density estimation}

\begin{proof}[Proof of Theorem \ref{thm:tests} for $d=\|\cdot\|_1$] Let $f_0, f_1$ two densities with $\|f_0-f_1\|_1 > \veps$. Let $\mathbb{P}_n=n^{-1}\sum_{i=1}^n \delta_{X_i}$ denote the empirical measure associated to $X_1,\ldots X_n$ and for a measurable set $B\subset [0,1]$, let
\[ \mathbb{P}_n(B) = \frac1n \sum_{i=1} \1_{X_i\in B}.\]
Let $A$ denote the set, abbreviated as $A= \{f_0<f_1\}$, 
\[ A =\{x:\ f_0(x)<f_1(x)\} \]
Let us define the test
\[ \vphi_n  = \1\left\{\mathbb{P}_n(A) > P_{f_0}(A) + \frac{\|f_0-f_1\|_1}{3}\right\}. \]
The term $E_{f_0}\vphi_n$, also called type I--error of the test, is bounded by
\begin{align*}
E_{f_0}\vphi_n & = 
 P_{f_0}\left[ \sum_{i=1}^n (\1_{X_i\in A}-P_{f_0}(A)) >  \|f_0-f_1\|_1/3\right] \\
 & \le \exp\{ -Cn\|f_0-f_1\|_1^2 \}\le e^{-Cn\veps^2},
\end{align*}
where we use Hoeffding's inequality Lemma \ref{hoeff} and $\| f_0-f_1\|_1>\veps$.\\

Let us now consider the term $E_{f}(1-\vphi_n)$, also called type II--error of the test, for $f$'s in the ball $\{f:\ \|f-f_1\|<a\veps\}$ with $a=1/5$.
\[ E_f(1-\vphi_n) = 
P_{f_0}\left[ \mathbb{P}_n(A)-P_f(A) \le P_{f_0}(A)-P_f(A)+\|f_0-f_1\|_1/3\right]
\]
We now claim that with $f$ chosen as above the last display, the term $P_{f_0}(A)-P_f(A)$ is at most $-D\|f_0-f_1\|_1$ for suitably large $D>0$.
\[ P_{f_0}(A)-P_f(A)
= P_{f_0}(A)-P_{f_1}(A) + P_{f_1}(A)-P_f(A)=:(i)+(ii).
\] 
The choice of $A$ ensures $(i)=-\|f_0-f_1\|_1/2$ by Lemma \ref{tvl1}, which also implies $|(ii)|\le \|P_{f_1}-P_f\|_{TV}=\|f_1-f\|_1/2\le a\veps/2=\veps/10\le \|f_0-f_1\|_1/10$. So $(i)+(ii)\le  -(2/5)\|f_1-f_0\|_1$. As $-2/5+1/3=-1/15$, one obtains
\[
E_f(1-\vphi_n) \le 
P_{f_0}\left[ \mathbb{P}_n(A)-P_f(A) \le - \|f_1-f_0\|_1/15 \right] 
\le e^{-c'n\|f_1-f_0\|_1^2}\le e^{-c'n\veps^2},
 \]
invoking Hoeffding's inequality (Lemma \ref{hoeff}) again with $c'>0$ a suitably small constant.  
\end{proof}

\subsubsection*{Tests in the Gaussian sequence model}

Let us now see an example of verification of the testing condition (T) in the non--iid setting: recall the Gaussian sequence model where $X=(X_k)_{k\ge 1}$ is observed and, for $\veps_k$ iid $\cN(0,1)$ and $\te_k$ a given sequence in $\ell^2$,
\[ X_k = \te_k +\frac{\veps_k}{\rn},\quad k\ge 1.\]
For sequences $a=(a_k)$ and $b=(b_k)$ let us write, provided the corresponding series converge 
\[ \psg a,b\psd = \sum_{k\ge 1} a_k b_k, \quad \|a\|^2 = \sum_{k\ge 1} a_k^2. \] 
\begin{lem} \label{lem:testseq}
Let $\te, \te_1, \te_0$ be squared--integrable sequences, and let for $r=\|\te_0-\te_1\|/4$,
\[ B(x_1,r)= \{\te:\ \|\te-\te_1\| \le r\}. \]
The test $\vphi_n=\1\{2 \psg \te_1-\te_0,X\psd > \|\te_1\|^2-\|\te_0\|^2\}$ verifies, for $\bar\Phi(u)=P(\cN(0,1)>u)$, 
\begin{align*}
E_{\te_0}\vphi_n & \le \bar\Phi(\rn\|\te_1-\te_0\|/2),\\
\sup_{\te\in B(\te_1,r)}\, E_\te(1-\vphi_n) & \le \bar\Phi(\rn\|\te_1-\te_0\|/4)
\end{align*}
In particular, condition (T) is verified.
\end{lem}
\begin{proof}
For the first inequality, one works under $E_{\te_0}$
\begin{align*}
P_{\te_0}[2\psg \te_1-\te_0,X\psd > \|\te_1\|^2-\|\te_0\|^2] & 
= P[2\psg \te_1-\te_0,\te_0\psd + 2\psg \te_1-\te_0,\veps/\rn\psd > \|\te_1\|^2-\|\te_0\|^2] \\
& = P[2\psg \te_1-\te_0,\veps \psd > \rn \|\te_0-\te_1\|^2] = P[\cN(0,1)
>\rn \|\te_0-\te_1\|/2],
\end{align*}
where the last line uses that $\psg \te_1-\te_0,\veps \psd$ is a random variable of distribution $\cN(0,\|\te_1-\te_0\|^2)$, which gives the first inequality. For the second, one works this time under $E_\te$, for $\te\in B(\te_1,r)$. 
 \begin{align*}
P_\te[2\psg \te_1-\te_0,X\psd \le \|\te_1\|^2-\|\te_0\|^2] & 
= P[2\psg \te_1-\te_0,\te\psd + 2\psg \te_1-\te_0,\veps/\rn\psd 
\le \|\te_1\|^2-\|\te_0\|^2].
\end{align*}
We now write $\psg \te_1-\te_0,\te\psd = 
\psg \te_1-\te_0,\te-\te_0\psd + \psg \te_1-\te_0,\te_0 \psd$.   By writing $\te=\te_1+rv$ with $\|v\|\le 1$, 
\[\psg \te_1-\te_0,\te-\te_0\psd = \|\te_1-\te_0\|^2
+r \psg \te_1-\te_0,v\psd\ge \|\te_1-\te_0\|^2-r\|\te_1-\te_0\|\ge \frac34 \|\te_1-\te_0\|^2,
 \]
 where the last line uses Cauchy--Schwarz' inequality and $\|v\|\le 1$. Rearranging the probability at stake,
 \begin{align*}
  E_\te[1-\vphi_n] &\le P\left[ 2\psg \te_1-\te_0,\veps/\rn\psd \le \|\te_1-\te_0\|^2-\frac32\|\te_1-\te_0\|^2\right]\\
& \le P[\cN(0,\|\te_1-\te_0\|^2)\le -\rn \|\te_1-\te_0\|^2/4]=\bar\Phi(\rn\|\te_1-\te_0\|/4)
 \end{align*}
as desired. Property (T) now immediately follows for $\|\te_1-\te_0\|>\veps$ by using the standard inequality $\bar\Phi(u)\le e^{-u^2/2}$ for $u>0$.  
\end{proof}

\subsubsection*{Tests in nonparametric regression}

In the Gaussian white noise model, a similar proof as in the sequence model above shows that the test, with $\|\cdot\|_2$ denoting the $L^2$--norm on functions,
\[ \vphi_n =\1\{2 \int_0^1 (f_1-f_0)(t)dX^{(n)}(t)
>  \|f_1\|^2-\|f_0\|^2\} \} \]
verifies the conclusions of Lemma \ref{lem:testseq}. Also, $B_n(f_0,\veps_n)=\{f:\ \|f-f_0\|_2\le \veps_n\}$ is again an $L^2$--ball.\\

In the nonparametric regression with fixed design, similar properties hold as in the sequence model, upon replacing $\psg\cdot,\cdot\psd$ and $\|\cdot\|$ by 
\[\psg f,g\psd = \frac1n \sum_{i=1}^n f(t_i)g(t_i),\quad
\|f\|_n^2 = \frac1n\sum_{i=1}^n f(t_i)^2. \]
\ma{Exercise.} Establish an analogue of Lemma \ref{lem:testseq} in this case.

\subsubsection*{Entropy of unit ball}

For $y\in\RR^k$, a radius $R\ge 0$, and for $\|x\|^2=\sum_{i=1}^k x_i^2$ the standard euclidian norm, let
 \[ B_{\RR^k}(y,R)=\{x\in\RR^k:\ \|x-y\|\le R\}\] 
 denote the euclidian ball of center $y$ and radius $R$.  

\begin{lem}[\sbl{Entropy of unit ball in $\RR^k$}] \label{lem:entub}
For any $\delta>0$, for any $M>0$, the covering number of $B_{\RR^k}(0,M)$ with respect to the euclidean norm verifies, with $a\vee b=\max(a,b)$,
\[ N(\delta,B_{\RR^k}(0,M),\|\cdot\|) \le \left(1 \vee \frac{3M}{\delta}\right)^k. \]
\end{lem}

\begin{proof}
If $\delta\ge M$, the result is clear: one ball suffices to cover, so one assumes $\delta<M$. By applying an homothecy of ratio $M$, which multiplies norms by $M$, we 
see that it is enough to consider the case $M=1$ with $\delta< 1$ (up to setting $\delta'=\delta/M$). Let 
\[ N:= N(\delta,B,\|\cdot\|),\quad\text{with } B:=B_{\RR^k}(0,1).\]
Let $N'=N_s(\delta)$ denote the {\em maximal} number of points of $B$ separated by at least $\delta$ for $\|\cdot\|$.  Consider a collection of such points $(x_i,\ 1\le i\le N')$ and note that the collection of balls $B(x_i,\delta)=B_{\RR^k}(x_i,\delta)$ must cover $B$, otherwise one could find a point $y$ separated from all the $x_i$'s by at least $\delta$, contradicting maximality.  On the other hand, since $x_i$'s are in $B$,
\[ B(x_1,\frac{\delta}{2})\bigcup \cdots\bigcup B(x_{N'},\frac{\delta}{2})\ \subset B(0,1+\frac{\delta}{2}).\]
Also, the balls $B(x_i,\frac{\delta}{2})$ are disjoint by definition of $\delta$--separation. Denoting by $\mathscr{V}(A)$ the volume of a measurable subset $A$ of $\RR^k$ with respect to Lebesgue measure, one deduces
\[ \sum_{i=1}^{N'} \mathscr{V}(B(x_i,\delta/2)) \le \mathscr{V}(B(0,1+\delta/2)).\]
A change of variables gives $\mathscr{V}(B(0,r))=r^k \mathscr{V}(B(0,1))$ for $r>0$. Since $\mathscr{V}(B(x_i,\delta/2)) =\mathscr{V}(B(0,\delta/2))$, one obtains
\[ N' (\delta/2)^k \mathscr{V}(B(0,1)) 
\le  \left(1+\frac{\delta}{2}\right)^k \mathscr{V}(B(0,1)).
\]  
One concludes that $N\le N'\le (\frac{2+\delta}{\delta})^k\le (3/\delta)^k$ as announced.
\end{proof}

\section{Gaussian processes} \label{app:gps}

\subsection{Definitions and examples}

We now define two notions: the one of \sbl{Gaussian process}, interpreted as a collection of normal random variables, and the one of (Banach--valued) Gaussian random variable. In standard settings such as within separable Banach spaces,  both notions are essentially equivalent, as we briefly discuss below. 

 \begin{definition} \label{def-gpp}
A \sbl{Gaussian process} $W=(W_t)_{t\in T}$ is a stochastic process (i.e. a collection of random variables) indexed by the set $T$ such that for any $t_1,\ldots,t_k\in T$ and any $k\ge 1$, the vector $(W_{t_1},\ldots,W_{t_k})$ is a Gaussian random vector.
\end{definition}
In the sequel to fix ideas we take $T=[0,1]$ as index set. 
Recall that a Gaussian vector is characterised by its mean and variance--covariance matrix. So, a Gaussian process (we also write GP) $W$, if it exists (we assume so), must be characterised by the quantities
\begin{align*}
\mu(t) & = E[W_t]\\
K(s,t) & = \text{Cov}(W_s,W_t)=E[(W_s-EW_s)(W_t-EW_t)].
\end{align*}
Those are called respectively {\em mean function} and {\em covariance} (operator).\\

In the sequel we restrict for simplicity to {\em centered} Gaussian processes, i.e. we   take $\mu(t)=0$ for all $t$. The map $(s,t)\to K(s,t)$ is symmetric and {\em definite--positive} in that for any finite collection $t_1,\ldots,t_k\in[0,1]$, the matrix $(K(t_i,t_j))$ is definite positive. This follows immediately from the expression of $K(\cdot,\cdot)$. \\

Let us insist again that the definition above gives the finite--dimensional distributions (also called FIDIs) $(W_{t_1},\ldots,W_{t_k})$ but we shall not construct a process $W(t)=W_t$ that verifies this (it is possible to do so).\\

The map $t\to W(t)$ is called \sbl{trajectory} (or realisation) of $W$. It can then happen that the process admits a \sbl{version} (that is, there exists $(Z_t)_{t\in T}$ with $P[Z_t=W_t]$ for any $t\in T$, which imples the FIDIs are the same) whose trajectories are continuous, that is $t\to Z_t(\omega)$ is continuous. Then the image of the map $\omega\to (Z_t(\omega))_t$ is included in $\cC^0[0,1]$. More generally, $W$ may have a version that has trajectories in a {\em separable Banach space} $\mb$, such as $(\cC^0[0,1],\|\cdot\|_\infty)$ above. 

\begin{definition} \label{def-gpv}
A \sbl{$\mb$--valued Gaussian random variable} is a map $W:\Omega\to \mb$, measurable for the Borel $\sigma$--field of $\mb$, such that for any $b^*$ in the dual space $\mb^*$, the real variable $b^*W$ is Gaussian.
\end{definition}

Note that if $Y$ is a Gaussian variable in $\mb$ according to this definition, then if $\cT\subset \mb^*$, the collection $(b^*Y,\ b^*\in\cT)$ is a Gaussian process indexed by $\cT$ (this is because: a random vector is Gaussian iff any finite linear combination of its coordinates is Gaussian, and: $\mb^*$ is a linear space).  For example, if $\mb=\cC^0[0,1]$ as above, and $\cT$ is the set of linear maps $b_t:f\to f(t)$ for $f\in\mb$ (they are continuous, therefore in $\mb^*$), then the collection of variables $(W(\omega)(t),\ t\in[0,1])$ is a Gaussian process on $[0,1]$ with trajectories in $\mb$.\\

Under a measurability condition, we have that if $(W_t)$ is a Gaussian process with trajectories in $\mb$, then it is also a Gaussian random variable in $\mb$. 

\begin{lem} 
Si $(W_t)_{t\in T}$ has a version that admits trajectories in a separable Banach space $\mb$, and if for any $w\in\mb$, the quantity $\|W-w\|_\mb$ is a random variable (i.e. is a measurable quantity as function of $\omega$), then $W$ is a Gaussian random variable in $\mb$.
\end{lem}
We omit the proof: it is based on the fact that when $\mb$ is separable the Borel $\sigma$--field is generated by balls. In the sequel, since we generally work with separable Banach spaces, we will use interchangeably both concepts.

\begin{definition} \label{smallb}
For $W$ taking values in the separable Banach space $(\mb,\|\cdot\|_\mb)$, we call \sbl{small ball probability} the quantity, for $\veps>0$,
\[ P[\|W\|_\mb<\veps]=: \exp(-\vphi_0(\veps)). \]
Sometimes $\vphi_0(\veps)=-\log P[\|W\|_\mb<\veps]$ is called small--ball term.
\end{definition}

\subsection{RKHS of a Gaussian process}

Let $(W_t)_{t\in T}$ be a Gaussian process. Consider the space
\[ \cC_W=\overline{\text{Vect}\{W(t),\ t\in T\}}^{L^2}
= \overline{\left\{ \sum_{i=1}^p\al_i W(t_i),\ t_i\in T, p\ge 1\right\}}^{L^2},\]
where $\overline{\cA}^{L^2}$ stands for the completion of given set $\cA$ of random variables (defined on a common probability space $\Omega$) in $L^2(\Omega)$. 
This space is called the {\em first order chaos} associated to $W$.

\begin{definition} \label{rkhs}
The RKHS of a centered Gaussian process $W=(W_t)_{t\in T}$ is the set 
\[ \mh = \left\{ g_H:T\to\RR,\ \ g_H(t)=E[W_t H], \ H\in\cC_W \right\}. \]
This set is equipped with the inner product $\psg \cdot,\cdot \psd_\mh$ given by, 
for $H_1, H_2\in \mh$,
\[ \psg g_{H_1} , g_{H_2} \psd_\mh = E[H_1 H_2]. \]
The space $\mh$ is a Hilbert space called Reproducing Kernel Hilbert Space (RKHS) of $W$.
\end{definition}
Note that $\mh$ is indeed a Hilbert space since the map $H\to g_H$, mapping $\cC_W$ into $\mh$, is by definition an isometry, and $\cC_W$ is a Hilbert space as closed sub--space of $L^2$.\\

We now list a number of useful properties
\begin{itemize}
\item If $H=W_s$, then $g_H(t)=E(W_tW_s)=K(s,t)$ so $g_H(\cdot)=K(s,\cdot)$ in this case.
\item If $H=\sum_{i=1}^p a_i W_{s_i}$, then similarly $g_H(\cdot)=\sum_{i=1}^p K(s_i,\cdot)$. 
\item For $H\in\cC_W$ arbitrary, we have
\[ g_H(t)=E[W_tH]=\psg K(t,\cdot),g_H(\cdot)\psd_\mh\]
This is sometimes called the \sbl{reproducing formula}, as it expresses the value of any element of the RKHS at a point as an inner product involving the function itself.
\item Paralleling the definition of $\cC_W$ and thanks to the isometry $H\to g_H$, we have 
\[ \overline{ \left\{ \sum_{i=1}^p\al_i K(s_i,\cdot),\ s_i\in T, p\ge 1\right\} }^{\mh}=\mh. \]
\item In case $W$ has trajectories in a separable Banach space $\mb$, it can be proved that $\mh$ identifies with a subspace of $\mb$. We will see this is indeed the case in the examples investigated below.
\end{itemize}

Let $\cS_W$ denote the space $\text{Vect}\{W(t),\ t\in T\}$, so that $ \cC_W=\overline{\cS_W}^{L^2}$. \\

{\em Example 1 (Gaussian vector in $\RR^k$).}  Let $W=(W_1,\ldots,W_k)^T$ be a (column) Gaussian vector. It is a Gaussian process indexed by $T=\{1,\ldots,k\}$. Here we will identify a vector in $\RR^k$ and a function $T=\{1,\ldots,k\}\to \RR$. \\

If $W\sim\cN(0,\Sigma)$ with an invertible covariance matrix $\Sigma$ then, for any $u,v\in\RR^k$,
\begin{equation} \label{rkhs:vec} 
(\mh,\|\cdot\|_\mh)=(\RR^k,\|\cdot\|_\mh),\qquad \psg u,v \psd_\mh
=u^T \Sigma^{-1}v.
\end{equation}
This means that the RKHS of a Gaussian vector in $\RR^k$ coincides with the ambient space, but with a twisted geometry that is given by the inner product as above. Let us check this: for $H=\sum_{j=1}^k a_j W_j$ in $\cS_W$ and $a=(a_1,\ldots,a_k)$, $W=(W_1,\ldots,W_k)$,
\[ g_H(i)=E[HW_i]=\sum_{j=1}^k a_j E[W_jW_i]=\sum_{j=1}^k a_j \Sigma_{i,j}=(\Sigma a)_i, \]
so that $g_H$ can be identified with the vector $\Sigma a$. In particular, as $\Sigma$ is invertible, any vector $v\in\RR^k$ is in $\mh$, as $v=\Sigma a_v$ with $a_v=\Sigma^{-1}v$. Now $v=E[WW^T]a_v=E[W W^Ta_v]$. For $u,v\in\RR^k$,
\[ \psg u,v \psd_\mh=E[W^T a_u W^T a_v]=a_u^T E[WW^T] a_v=a_u^T\Sigma a_v= u^T \Sigma^{-1}v, \]
using that for any real number  $x^T=x$ (here $x=W^T a_u$),  which proves \eqref{rkhs:vec}. \\

{\em Example 2 (Random series).} More generally, if one now considers
\[ W_t=\sum_{j=1}^{\infty} \sigma_j \zeta_j e_j(t),\qquad t\in[0,1],\]
with $(\sigma_j)\in\ell^2$, $\zeta_j$ independent $\cN(0,1)$ variables and 
$\{e_j(\cdot)\}$ an orthonormal basis of $L^2[0,1]$, it can be shown that
\[ \mh = \left\{ \sum_{j=1}^{\infty} \la_j  e_j(t),\quad
(\la_j) \text{ such that } \sum_{j=1}^{\infty} \sigma_j^{-2}\la_j^2 <\infty 
\right\},\]
equipped with the inner product
\[ \psg  \sum_{j=1}^{\infty} \la_j  e_j, \sum_{j=1}^{\infty} \mu_j  e_j\psd_\mh = 
\sum_{j=1}^{\infty} \sigma_j^{-2} \la_j  \mu_j.
\]

{\em Example 3 (Brownian motion).} Consider standard Brownian motion $(B_t)_{t\in[0,1]}$. 

\begin{lem}\label{rkhs:bm}
The RKHS of Brownian motion on $[0,1]$ is
\[ \mh=\left\{ \int_0^\cdot g(u)du,\quad g\in L^2[0,1]\right\}, \]
equipped with the inner product
\[ \psg \int_0^\cdot g_1(u)du, \int_0^\cdot g_2(u)du \psd_{\mh}
= \int_0^1 g_1(u) g_2(u)du.
\]
\end{lem}

The proof of Lemma \eqref{rkhs:bm}, left as an exercise, uses the density of step functions in $L^2$.

{\em Remark. } The last inner product can equivalently been written, observing that elements of $\mh$ are differentiable almost everywhere, $\psg h_1,h_2\psd_\mh= \int_0^1 h_1' h_2'$. Also, note that for any $h\in\mh$ as above, we have $h(0)=0$. One can `release' Brownian motion at zero and instead consider the process $Z_t=B_t+N$, for $N$ a standard normal variable independent of $(B_t)$. Using a similar proof as for Lemma \ref{rkhs:bm}, one can show that the RKHS $\mh_{Z}$ of this process is $\mh=\left\{ c+\int_0^\cdot g(u)du,\ \ g\in L^2[0,1], c\in\RR\, \right\}$ with inner--product $\psg f_1,f_2\psd_{\mh_Z}=f_1(0)f_2(0)+\int_0^1 f_1'f_2'$.\\

\subsection{Fundamental properties via RKHS and concentration function}

\subsubsection{Cameron--Martin formula} 

\begin{thm}\re{[Cameron--Martin]} $\ $ \label{thm:cameronm}
For a Gaussian random variable $W$ taking values in $\mb$ separable Banach space and $h\in\mb$, the distributions $P_{W+h}$ and $P_{W}$ of $W+h$ and $W$ are mutually absolutely continuous if and only if $h\in\mh$. Let us further define the map 
\begin{align*}
U:\mh & \to\cC_W \\
h=g_H & \to H.     
\end{align*}
Then for any $h\in\mh$,
\[ \frac{dP_{W+h}}{dP_W}(W)=\exp\left\{Uh-\frac{\|h\|_\mh^2}{2}\right\}.\]
\end{thm}

{\em Remark.} For Brownian motion, this in fact coincides with Girsanov's formula.

\subsubsection{Ball probabilities and concentration function}

Using Cameron--Martin formula above, it is possible to show that if $h$ belongs to $\mh$, then for any $\veps>0$,
\begin{equation}\label{usmallb}
 P[\|W-h\|_\mb<\veps]\ge e^{-\|h\|_\mh^2/2}P[\|W\|_\mb<\veps]. 
\end{equation}
If $h$ does not belong to $\mh$, some control of the probability on the left-hand side is possible via the concentration function that we define now.

\begin{definition}
Let $W$ be a Gaussian random variable taking its values in $\mb$ separable Banach space, with RKHS $\mh$. 
Let $w$ belong to $\overline{\mh}^\mb$, the closure in $\mb$ (with respect to the norm of $\mb$) of $\mh$. For any $\veps>0$, define
\begin{align*}
 \vphi_w(\veps)
& = \inf_{h\in\mh,\ \|h-w\|_\mb<\veps} \frac12\|h\|_\mh^2
- \log P[ \|W\|_\mb<\veps ] \\
& \ =: \vphi^A_{w}(\veps) + \vphi_0(\veps) 
\end{align*}
The function $\vphi_w(\cdot)$ is called the \sbl{concentration function} of the process $W$. 
\end{definition}

The concentration function is the sum of the small--ball term and of an approximation term, which measures the ability of elements of $\mh$ to approximate a given $w$. Note that for $w\in\overline{\mh}^\mb$ and a given $\veps>0$, the approximation term is always finite. If $w\in \overline{\mh}^\mb$ but $w\notin\mh$, then $\vphi_w^A(\veps)\to +\infty$ as $\veps\to 0$ (otherwise by extracting a subsequence the norm $\|h\|_\mh$ would be finite).

\begin{thm} \label{thm:usb}
Let $W$ be a Gaussian random variable taking its values in $\mb$ separable Banach space, with RKHS $\mh$. 
Suppose $w\in\overline{\mh}^\mb$. Then for any $\veps>0$,
\[ e^{-\vphi_w(\veps/2)}
\le P(\|W-w\|_\mb <\veps) 
\le e^{-\vphi_w(\veps)}.
 \]
\end{thm}

This result is particularly useful for Bayesian nonparametric arguments, as, if the $\|\cdot\|_\mb$--norm can be related to the KL--type divergence defining the KL--type neighborhood $B_{KL}(f_0,\veps)$, then the above result in particular can provide a lower bound on the prior mass term $\Pi[B_{KL}(f_0,\veps_n)]$ appearing in the third condition of the GGV theorem.

\subsubsection{Borell's inequality}

Let $\mb_1$ and $\mh_1$ respectively denote the unit ball of $\mb$ and of $\mh$. \\

By definition  $P[W\in\veps\mb_1]=P[\|W\|_\mb<\veps]=e^{-\vphi_0(\veps)}$. Borell's inequality generalises this result. In words, it says that for large $M$, slightly enlarging $M\mh_1$ by adding elements of norm (in $\mb$) of at most $\veps$, the resulting set captures most of the mass of $W$.

\begin{thm}\re{[Borell]} $\ $ \label{thm:borell} For $W$ a Gaussian random variables taking values in $\mb$ separable Banch space, for any $\veps>0$ and any $M>0$, for $\Phi(u)=P(\cN(0,1)\le u)$,
\[ P[W\in \veps\mb_1 + M\mh_1]\ge \Phi(\Phi^{-1}(e^{-\vphi_0(\veps)})+M). \]
\end{thm}

\section{More on specific statistical models} \label{app:models}

\ti{Gaussian white noise and Gaussian sequence models} (\cite{ginenicklbook}, Section 6.1.1) 
Recall the definition of the white noise model, for $t\in[0,1]$
\[ dX^{(n)}(t)=f(t)dt+dW(t)/\sqrt{n},\]
where $f$ is in $L^2[0,1]$. Denote by $P^X_f$ the distribution induced on $\cC^0[0,1]$ by $X^{(n)}(x)=\int_0^x f(t)dt + W(x)/\sqrt{n}$. Then $P^X_f$ is absolutely continuous with respect to $P^X_0$ (the law for $f$ equal to the function identically $0$), with likelihood ratio given by
\[ \frac{dP^X_f}{dP^X_0}(X) =\exp\left\{n \int_0^1f(t)dX(t) -n\|f\|_2^2/2\right\}.\] 
In particular, the model is dominated with dominating measure $P^X_0$.\\

A similar result holds for the Gaussian sequence model, for $k\ge 1$,
\[ X_k = \te_k + \veps_k/\rn, \]
where $\te$ in a sequence in $\ell^2$. Let $P^X_\te$ denote the law on $\RR^{\mathbb{N}^*}$ induced by a tensor product of $\cN(\te_k,1/\sqrt{n})$ variables. Then $P^X_f$ is absolutely continuous with respect to $P^X_0$ (the law for $\te$ equal to the null sequence), with likelihood ratio given by
\[ \frac{dP^X_f}{dP^X_0}(X) =\exp\left\{n \sum_{k\ge1} f_k X_k -n\|f\|_2^2/2\right\}.\] 
In particular, the model is dominated with dominating measure $P^X_0$.\\

In the sequence model, it holds, adapting the formula above for $\te_0=0$, 
\[ \frac{dP_\te^{(n)}}{dP_{\te_0}^{(n)}}(X)=
e^{n\psg X,\te-\te_0\psd -\frac{n}{2}\|\te\|^2+\frac{n}{2}\|\te_0\|^2}. \]
One deduces  $K(P_{\te_0}^{(n)},P_\te^{(n)}) = n\|\te_0-\te\|^2/2$ and 
$V(P_{\te_0}^{(n)},P_\te^{(n)})=n\|\te_0-\te\|^2$, so that \[ B_n(\te_0,\veps_n) = \left\{ \te\in\ell^2:\ \|\te-\te_0\|^2\le \veps_n^2\right\} \]
(both inclusions hold, despite the $1/2$ factor above). 
In this case, the KL--type neighborhood is just a ball for the $L^2$--norm.\\

\ti{Nonparametric survival model: definition} In this model one is interested in (not necessarily observed) i.i.d. survival times $T_1, \ldots, T_n$, whose observation is possibly interfered with by i.i.d. censoring times $C_1, \ldots, C_n$, which are independent of the survival times, so that we observe \[ X=X^n =((Y_1, \delta_1), \ldots, (Y_n, \delta_n))\]
 i.i.d. pairs, where $Y_i = T_i \wedge C_i$ the minimum between the $i$th time of interest and the i$th$ censoring time and $\delta_i = \1\{T_i \leq C_i\}$ a variable indicating whether censoring has occurred or not, for $i=1,\ldots,n$.\\

One main object of interest is the {\em survival function} $S(t) = P(T_1 > t)$.  
The {\em hazard rate} is  
\[ \lambda(t) = \lim_{h \downarrow 0} h^{-1}P(t \leq T_1 < t + h \mid T \geq t).\] Integrating the hazard yields the {\em cumulative hazard} $\Lambda(\cdot) = \int_0^\cdot \la(u)du$.\\

\ti{Cox proportional hazards model: definition} This model can be viewed as a semiparametric version of the survival model, allowing for the presence of covariates. 
The observations $X=X^n$ are $n$ independent identically distributed (i.i.d.) triplets given by 
\[ X = ((Y_1, \delta_1, Z_1), \dots, (Y_n, \delta_n, Z_n))\] 
where the observed $Y_i$ are censored versions of (not necessarily observed) survival times $T_i$,
with $\delta_i$ indicator variables informing on whether $T_i$ has been observed or not: that is, $Y_i = T_i \wedge C_i$ and $\delta_i = \1\{T_i \leq C_i\}$, where $C_i$'s are i.i.d. censoring times.  The  variables $Z_1, \dots, Z_n \in \mathbb{R}^p$, $p$ fixed, are called covariates. 
For a fixed covariate vector $z\in\mathbb{R}^p$ and $t>0$, define the {\it conditional hazard rate}  $\lambda(t\given z) = \lim_{h\to0} h^{-1} P(t \leq T \leq t+h \given T \geq t, Z = z)$. 

The Cox model assumes, for some unknown {\em parameter of interest} $\theta\in\mathbb{R}^p$ and denoting by $\theta^T z$ the standard inner product in $\mathbb{R}^p$, 
\[ \lambda(t \given z) = e^{\theta^T z} \lambda(t),\]
where $\lambda(t)$ is the {\it baseline hazard function}. The latter is generally not known, so that the parameters of the model are $(\te,\la(\cdot))$, making it a semiparametric model.

\addcontentsline{toc}{chapter}{Bibliography}

\bibliographystyle{abbrv}
\bibliography{bibstf}

\end{document}